\newtheorem{theorem}{Theorem}[section]\newtheorem{lemma}[theorem]{Lemma}\newtheorem{proposition}[theorem]{Proposition}\theoremstyle{definition}\newtheorem{definition}[theorem]{Definition}
\newtheorem{remark}[theorem]{Remark}
\theoremstyle{plain}\newtheorem{thm}{Theorem}[section]\newtheorem{prop}{Proposition}[section]\newtheorem{cor}{Corollary}[section]\theoremstyle{definition}\theoremstyle{remark}
\newcommand{\solu}[1]{\begin{sol}{\bf (\ref{#1})}}
\def\g{\mathfrak{g}}\def\A{\mathcal{A}}
\DeclareFontFamily{U}{russian}{}
\DeclareFontShape{U}{russian}{m}{n}
        { <5><6> wncyr5
        <7><8><9> wncyr7
        <10><10.95><12><14.4><17.28><20.74><24.88> wncyr10 }{}
\DeclareSymbolFont{Russian}{U}{russian}{m}{n}
\DeclareSymbolFontAlphabet{\mathcyr}{Russian}
\let\@math@cyr\mathcyr
\renewcommand{\mathcyr}[1]{\@math@cyr{\cyracc #1}}
\newcommand{\sh}{{\mathcyr{sh}}} 
\begin{document}


\title{A Tannakian interpretation of the elliptic infinitesimal braid Lie algebras}
\author{Benjamin Enriquez}
\address{D\'epartement de math\'ematiques, Universit\'e de Strasbourg, et  IRMA (CNRS), 
7 rue Ren\'e Descartes, 67000 Strasbourg, France}
\email{b.enriquez@math.unistra.fr}
\author{Pavel Etingof}
\address{Department of Mathematics, Massachusetts Institute of Technology,
Cambridge, MA 02139, USA}
\email{etingof@math.mit.edu}

\dedicatory{To Aleksandr Aleksandrovich Kirillov on his $3^{2^2}$-th birthday, with admiration}

\maketitle 

\begin{abstract}

Let $n\geq 1$. The pro-unipotent completion of the pure braid group of $n$ points on a genus 1
surface has been shown to be isomorphic to an explicit pro-unipotent group with graded Lie algebra
using two types of tools: (a) minimal models (Bezrukavnikov), (b) the choice of a complex structure on
the genus 1 surface, making it into an elliptic curve $E$, and an appropriate flat connection on the
configuration space of $n$ points in $E$ (joint work of the authors with D. Calaque). Following a suggestion
by P. Deligne, we give an interpretation of this isomorphism in the framework of the Riemann-Hilbert 
correspondence, using the total space $E^\#$ of an affine line bundle over $E$, which identifies with the
moduli space of line bundles over $E$ equipped with a flat connection. 

\end{abstract}

\subsection*{Introduction}

Let $T$ be a topological 2-torus, i.e., a closed, compact topological surface of genus 1. For $n\geq 1$, let $C_n(T)$ be its 
configuration space, defined as the complement of the union of all the diagonals of $T^n$. A base point $x\in C_n(T)$ being fixed, 
we denote by $\mathrm{PB}_{1,n}^x$ the fundamental group of $C_n(T)$ relative to $x$; it is called
the pure braid group of genus 1. One attaches to this group its prounipotent completion over $\mathbb Q$, which is a 
prounipotent $\mathbb Q$-group, and the Lie algebra of this $\mathbb Q$-group, which is a pronilpotent
$\mathbb Q$-Lie algebra, which we will denote $\mathrm{Lie}\mathrm{PB}_{1,n}^x$. This Lie algebra is equipped with the descending 
filtration associated with the lower central series. The associated graded Lie algebra $\mathrm{gr}\mathrm{Lie}\mathrm{PB}_{1,n}^x$
is then a positively graded $\mathbb Q$-Lie algebra. 

We denote by $\mathrm{Lie}\mathrm{PB}_{1,n}^x\hat\otimes\mathbb C$ the completed tensor product of 
$\mathrm{Lie}\mathrm{PB}_{1,n}^x$ with $\mathbb C$ (inverse limit of the tensor products with $\mathbb C$ of the 
filtered quotients of $\mathrm{Lie}\mathrm{PB}_{1,n}^x$). Then $\mathrm{Lie}\mathrm{PB}_{1,n}^x\hat\otimes\mathbb C$
is a complete, filtered complex Lie algebra, and is associated graded is isomorphic to $\mathrm{gr}\mathrm{Lie}\mathrm{PB}_{1,n}^x
\otimes\mathbb C$.

Let $\mathfrak t_{1,n}$ be the Lie algebra with generators $x_i,y_i$ ($i\in[1,n]$), $t_{ij}$ ($i\neq j\in[1,n]$), 
and relations
\begin{equation}\label{rel:t:xx:yy}
\forall i,j\in[1,n], \quad [x_i,x_j]=[y_i,y_j]=0, 
\end{equation}
\begin{equation}\label{rel:t:xy:t}
\forall i\neq j\in[1,n],\quad [x_i,y_j]=t_{ij}=t_{ji}, 
\end{equation}
\begin{equation}\label{rel:t:xy:tt}
\forall i\in[1,n],\quad [x_i,y_i]=-\sum_{j|j\neq i}t_{ij}, 
\end{equation}
\begin{equation}\label{rel:t:x:t:y:t}
\forall i,j,k\in[1,n]\quad\text{with}\quad\#\{i,j,k\}=3,\quad [x_k,t_{ij}]=[y_k,t_{ij}]=0, 
\end{equation}
\begin{equation}\label{rel:t:xx:t:yy:t}
\forall i\neq j\in[1,n],\quad [x_i+x_i,t_{ij}]=[y_i+y_j,t_{ij}]=0. 
\end{equation}
We set $\mathrm{deg}(x_i)=1$, $\mathrm{deg}(y_i)=0$ for $i\in[n]$, which induces a grading on 
$\mathfrak t_{1,n}$. 

We set $\mathfrak t_{1,n}^{\mathbb C}:=\mathfrak t_{1,n}\otimes\mathbb C$. We denote by $\hat{\mathfrak t}_{1,n}^{\mathbb C}$ 
the degree completion of $\mathfrak t_{1,n}^{\mathbb C}$. 

 \begin{thm}[\cite{Bez}] \label{thm:bez}
\begin{itemize}
\item
The graded Lie algebras $\mathrm{gr}\mathrm{Lie}\mathrm{PB}_{1,n}^x\otimes\mathbb C$ and $\mathfrak t_{1,n}^{\mathbb C}$
are isomorphic. 
\item $\mathrm{Lie}\mathrm{PB}_{1,n}^x\hat\otimes\mathbb C$ is isomorphic, as a completed filtered Lie algebra, to 
$\hat{\mathfrak t}_{1,n}^{\mathbb C}$. 
\end{itemize}
\end{thm}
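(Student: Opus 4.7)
My approach would follow the Riemann--Hilbert/Tannakian route foreshadowed in the abstract. Fix a complex structure on $T$ so that $T=E$ for an elliptic curve, making $C_n(T)=C_n(E)$ a smooth complex algebraic variety. By Tannakian duality, the pro-unipotent completion of $\pi_1(C_n(E),x)$ over $\mathbb C$ is the automorphism group of the fibre-at-$x$ functor on the category $\mathcal U$ of unipotent $\mathbb C$-local systems on $C_n(E)$; by Deligne's Riemann--Hilbert correspondence, $\mathcal U$ is equivalent to the category of unipotent algebraic flat connections with regular singularities along the diagonals, which I would realise on a partial compactification coming from $E^{\#}$ (the affine line bundle over $E$ whose total space parametrises line bundles with flat connection). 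The role of $E^{\#}$ is essential: the extra affine-fibre directions provide enough global holomorphic $1$-forms to build a universal connection with values in $\hat{\mathfrak t}_{1,n}^{\mathbb C}$, something not available on $C_n(E)$ directly.

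\textbf{Universal connection and monodromy.} The second step is to write down an explicit $\hat{\mathfrak t}_{1,n}^{\mathbb C}$-valued $1$-form
\[
\omega=\sum_{i=1}^n x_i\,\alpha_i + \sum_{i=1}^n y_i\,\beta_i + \sum_{i\neq j}t_{ij}\,\omega_{ij}
\]
on $C_n(E^{\#})$, where $\alpha_i,\beta_i$ come from the two coordinates on the $i$-th copy of $E^{\#}$ and $\omega_{ij}$ is an elliptic Kronecker-type $1$-form with simple pole of residue $1$ along the diagonal $z_i=z_j$. The key calculation, to be done once, is that the Maurer--Cartan equation $d\omega=\tfrac12[\omega,\omega]$ is \emph{equivalent} to the defining relations (\ref{rel:t:xx:yy})--(\ref{rel:t:xx:t:yy:t}) of $\mathfrak t_{1,n}$; this should follow from the standard identities obeyed by the Kronecker form together with the residue and bilinear-relation arguments on $E$. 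The resulting flat connection then has monodromy in $\exp(\hat{\mathfrak t}_{1,n}^{\mathbb C})$, and Tannakian reconstruction yields a filtered Lie algebra morphism
\[
\Phi:\mathrm{Lie}\mathrm{PB}_{1,n}^x\hat\otimes\mathbb C\longrightarrow\hat{\mathfrak t}_{1,n}^{\mathbb C},
\]
which restricts to a graded morphism $\mathrm{gr}\Phi$ on associated gradeds and so addresses both bullets of the theorem simultaneously.

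\textbf{Isomorphism and the main obstacle.} By a standard completion argument, $\Phi$ is an isomorphism of filtered Lie algebras as soon as $\mathrm{gr}\Phi$ is an isomorphism of graded Lie algebras, so the remaining task is to prove $\mathrm{gr}\Phi$ bijective. Surjectivity should fall out of checking that the classes of $\alpha_i,\beta_i,\omega_{ij}$ generate $H^1(C_n(E),\mathbb C)$ together with its Massey-product structure, forcing the image of $\Phi$ to hit every degree-$1$ generator while the Lie bracket relations account for the rest. Injectivity is the main obstacle: it demands a matching upper bound on the graded dimensions of $\mathrm{Lie}\mathrm{PB}_{1,n}^x\otimes\mathbb C$. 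Within the Tannakian framework, I would derive it by showing that $\mathcal U$ is the Tannakian subcategory generated by the subquotients of the universal connection on $C_n(E^{\#})$, so that no ``extra'' unipotent local systems exist beyond those detected by $\mathfrak t_{1,n}^{\mathbb C}$; equivalently, I would invoke $1$-formality of $C_n(E)$ so that its rational homotopy Lie algebra is the quadratic dual of its cohomology ring, matching $\mathfrak t_{1,n}^{\mathbb C}$ by combinatorial inspection. The genuinely hard point, where the minimal-model argument of Bezrukavnikov or an equivalent mixed-Hodge-theoretic input is needed, is this spanning/completeness statement for $\mathcal U$: the Tannakian framework produces the map $\Phi$ and packages both bullets cleanly, but it does not by itself supply the required dimension count.
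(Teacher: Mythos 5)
Your overall strategy is the one the paper follows: pass to the de Rham side via the Riemann--Hilbert correspondence, transport the problem to $(E^\#)^n$, and compute the Tannakian group of the category of unipotent flat connections. The gap is in the step you yourself flag as ``the genuinely hard point'': the completeness/dimension count needed for injectivity of $\mathrm{gr}\Phi$. Your two proposed ways of supplying it do not work. Invoking $1$-formality of $C_n(E)$, so that the Malcev Lie algebra is the quadratic dual of the cohomology ring, is essentially a restatement of Bezrukavnikov's theorem rather than an input to it; and ``showing that $\mathcal U$ is generated by subquotients of the universal connection'' is exactly the assertion to be proved, for which you offer no mechanism. As written, the proposal therefore reduces the theorem to itself: it produces the morphism $\Phi$ but not the matching upper bound.

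The paper closes precisely this gap, and does so without any formality input, by classifying the \emph{entire} category of unipotent flat connections rather than exhibiting one universal object inside it. The chain of equivalences (a)--(e) (the affine fibration $(E^\#)^n\to E^n$, Deligne's canonical extension, descent through a desingularization, and triviality of unipotent bundles on $(E^\#)^n$ since $H^1((E^\#)^n,\mathcal O)=0$) identifies $\mathrm{VBFC}(C_n(E))_{unip}$ with the category of Maurer--Cartan elements on \emph{trivial} bundles over $(E^\#)^n$ with simple poles at $D$. Lemmas \ref{l1} and \ref{l2} then force any such form to be logarithmic with everywhere-regular residues, i.e.\ to lie in $\bm{\Omega}^1\otimes\mathrm{End}(V)$ for the finite-type space $\bm{\Omega}^1$ whose explicit basis $(\underline{dc}_i,\underline{dp}_i,\omega_{ij}^\alpha)$ is computed in \S\ref{sec:spaces:of:diffs}. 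The upper bound you are missing is the exact determination of $\mathrm{Ker}(\owedge:\Lambda^2(\bm{\Omega}^1)\to\bm{\Omega}^2)$ by iterated residue maps together with the algebraic Fay identity (Lemma \ref{lemma:comp:kernel}); dualizing yields a presentation of the Tannakian Lie algebra $\mathfrak G$ by generators and relations, and Prop.~\ref{prop:iso:G:t} matches it with $\mathfrak t_{1,n}^{\mathbb C}$ by explicit mutually inverse homomorphisms. This gives surjectivity and injectivity at once, and the homogeneity of the Fay relations makes $\mathfrak G$ graded, which is what delivers the first bullet. A smaller inaccuracy: your ansatz $\sum_i x_i\alpha_i+\sum_i y_i\beta_i+\sum_{i\neq j}t_{ij}\,\omega_{ij}$ is not flat; the connection requires the full tower of forms $\omega_{ij}^\alpha$ paired with $(\mathrm{ad}\,x_i)^\alpha(t_{ij})$, i.e.\ the Kronecker generating series, cf.\ (\ref{def:omega}) and \S\ref{sect:rwtukc}.
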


The proof from \cite{Bez} is based on the minimal model theory. In \cite{CEE}, we gave another proof of this theorem
by choosing a complex structure on $T$, making it into an elliptic curve $E$, and by constructing a 
suitable connection on a principal bundle over $C_n(E)$ with structure group $\mathrm{exp}(\hat{\mathfrak t}_{1,n}^{\mathbb C})$
(for a survey of the construction of this connection see \cite{Hain}). The monodromy of this connection defines a morphism 
\begin{equation}\label{morph}
\mathrm{PB}_{1,n}^x\to\mathrm{exp}(\hat{\mathfrak t}_{1,n}^{\mathbb C}),
\end{equation} 
which gives rise to isomorphisms $\mathrm{Lie}\mathrm{PB}_{1,n}^x\hat\otimes\mathbb C\simeq \hat{\mathfrak t}_{1,n}^{\mathbb C}$ 
and $\mathrm{gr}\mathrm{Lie}\mathrm{PB}_{1,n}^x\otimes\mathbb C\simeq\mathfrak t_{1,n}^{\mathbb C}$
enabling one to prove the announced statements. The whole construction arises as an elliptic analogue of the similar genus zero 
construction, and may also be viewed as a universal version of the KZB connection. 

One of the main features of Theorem \ref{thm:bez} is that it says that $\mathrm{Lie}\mathrm{PB}_{1,n}^x\hat\otimes\mathbb C$ is 
isomorphic to the completion of a graded Lie algebra. 

The purpose of this paper is, following a suggestion of P. Deligne,  to give an interpretation of this isomorphism in the framework of the 
Riemann-Hilbert (RH) correspondence, thus providing a categorical approach to the results of \cite{CEE}. Let us recall the framework of 
this correspondence. 

Let $X$ be a smooth complex algebraic variety. According to \cite{Del}, there is an equivalence of tensor categories (the RH correspondence) between: 

(i) the category $\mathrm{VBFC}(X)$ of vector bundles with a flat connection on $X$ with regular singularities;  

(ii) the category $\mathrm{LS}(X)$ of topological local systems on $X$; 

One attaches to each tensor category its unipotent part (see \S\ref{subsect:unip}). The RH correspondence then induces an equivalence
between the unipotent parts of its two sides, namely
\begin{equation}\label{RH:equiv}
\mathrm{RH}_{unip}:\mathrm{VBFC}(X)_{unip}\stackrel{\sim}{\to}\mathrm{LS}(X)_{unip};   
\end{equation}
it attaches, to each object of $\mathrm{VBFC}(X)$, the local system of its horizontal sections. 

Any point $x\in X$ gives rise to a fiber functors $F^{ls}_x:\mathrm{LS}(X)\to\mathrm{Vec}_{\mathbb C}$
and $F_x^{vb}:\mathrm{VBFC}(X)\to\mathrm{Vec}_{\mathbb C}$, equipped with a canonical isomorphism 
$F^{ls}_x\circ\mathrm{RH}\simeq F_x^{vb}$.

The Tannakian group corresponding to $F^{ls}_x$ is $\mathrm{Aut}^{\otimes}(F_x^{ls})\simeq\pi_1^B(X,x)$ (this is the Betti 
fundamental group of $X$ with base point $x$). 

Set $X:=C_n(E)$. Then $\pi_1^B(X,x)=\mathrm{PB}_{1,n}^{x,unip}(\mathbb C)$, where the exponent $unip$ means the prounipotent 
completion of a discrete group and $-(\mathbb C)$ denotes the group of $\mathbb C$-points, so that one of the sides of 
the isomorphism of Theorem \ref{thm:bez} relates to the left-hand side of the RH equivalence (\ref{RH:equiv}). 

We prove: 
\begin{thm}
\begin{itemize}
\item[1)]There exists: \begin{itemize}\item[a)] an explicit tensor functor 
$$ F:\mathrm{VBFC}(C_n(E))_{unip}\to \mathrm{Vec}_{\mathbb C}  $$
\item[b)] a natural isomorphism 
\begin{equation}\label{equiv}
\mathrm{VBFC}(C_n(E))_{unip}\ni (\mathcal E,\nabla)\mapsto i_{(\mathcal E,\nabla)}\in\mathrm{Iso}_{\mathrm{Vec}_{\mathbb C}}(F(\mathcal E,\nabla),F_x^{vb}(\mathcal E,\nabla))
\end{equation}
between the functors $F$ and $F_x^{vb}$, 
\item[c)] a canonical isomorphism $\mathrm{Aut}^\otimes(F)\simeq\mathrm{exp}(\hat{\mathfrak t}_{1,n}^{\mathbb C})$. 

\end{itemize}
\item[2)] The composed isomorphim 
$$
\xymatrix{
\mathrm{exp}(\hat{\mathfrak t}_{1,n}^{\mathbb C})\ar^{\sim}[r]  &
\mathrm{Aut}^\otimes(F)\ar^{\sim}[r]  &
\mathrm{Aut}^\otimes(F_x^{vb})\ar^{\sim}_{\mathrm{RH}}[r] &
\mathrm{Aut}^\otimes(F_x^{ls})\ar^{\sim}[r]  &
\mathrm{PB}_{1,n}^{x,unip}(\mathbb C)
}
$$
coincides with the inverse of the completion of (\ref{morph}).  
\end{itemize}
\end{thm}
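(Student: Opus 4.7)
The plan is Tannakian and geometric, following Deligne's hint: I will construct an explicit fiber functor $F$ on $\mathrm{VBFC}(C_n(E))_{unip}$ from the $E^\#$-geometry, produce a natural isomorphism $i$ between $F$ and the tautological fiber functor $F_x^{vb}$, identify $\mathrm{Aut}^{\otimes}(F)$ with $\mathrm{exp}(\hat{\mathfrak t}_{1,n}^{\mathbb C})$, and then check that the resulting comparison of the two fiber functors matches the KZB monodromy of \cite{CEE}.

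For parts 1(a)--(b), the input is the universal pair $(\mathcal L,\nabla_{\mathcal L})$ on $E\times E^\#$. Given $(\mathcal E,\nabla)\in\mathrm{VBFC}(C_n(E))_{unip}$, I would form on $C_n(E)\times(E^\#)^n$ the twisted family
\[
\widetilde{\mathcal E}:=\mathrm{pr}^*(\mathcal E,\nabla)\otimes\bigotimes_{i=1}^n\mathrm{pr}_i^*(\mathcal L,\nabla_{\mathcal L}),
\]
where $\mathrm{pr}:C_n(E)\times(E^\#)^n\to C_n(E)$ is the projection and $\mathrm{pr}_i:C_n(E)\times(E^\#)^n\to E\times E^\#$ projects to the $i$-th $E$ factor (through $C_n(E)\hookrightarrow E^n$) and to the $i$-th factor of $(E^\#)^n$. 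Using the unipotence of $\mathcal E$ together with the affine-bundle structure of $E^\#\to E$, I expect $\widetilde{\mathcal E}$ to admit a canonical finite-dimensional space of \emph{regularized} horizontal sections (in the $C_n(E)$-direction) over a distinguished stratum of $(E^\#)^n$; this space will be $F(\mathcal E,\nabla)$, tensor compatibility being built into the definition. The natural isomorphism $i_{(\mathcal E,\nabla)}:F(\mathcal E,\nabla)\stackrel{\sim}{\to}F_x^{vb}(\mathcal E,\nabla)$ will be evaluation of a regularized section at a canonical lift of $x$ in $C_n(E)\times(E^\#)^n$, coming from a distinguished origin in $E^\#$.

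The hard part is 1(c). I would classify tensor-compatible infinitesimal automorphisms of $F$ by varying the $(E^\#)^n$-parameters and reading off geometric generators: the affine-bundle structure $E^\#\to E$ yields two commuting families of deformations matching $x_i$ (base direction) and $y_i$ (fiber direction), while the residues of the connection on $\widetilde{\mathcal E}$ along the diagonals of $E^n$ produce the $t_{ij}$. Verifying the defining relations \eqref{rel:t:xx:yy}--\eqref{rel:t:xx:t:yy:t}---in particular the elliptic Kohno--Drinfeld identity \eqref{rel:t:xy:tt}, which must encode the total-residue/curvature balance---is the technical core of the argument. Part 2) then reduces to book-keeping: by construction, $i_{(\mathcal E,\nabla)}$ is evaluation-at-$x$ of a regularized horizontal section, so under $\mathrm{RH}$ (passage to horizontal sections) and the canonical identification $\mathrm{Aut}^{\otimes}(F_x^{ls})\simeq\mathrm{PB}_{1,n}^{x,unip}(\mathbb C)$, the composite $\mathrm{exp}(\hat{\mathfrak t}_{1,n}^{\mathbb C})\to\mathrm{PB}_{1,n}^{x,unip}(\mathbb C)$ sends $\gamma\in\mathrm{PB}_{1,n}^x$ to its parallel transport for the universal connection underlying $F$; since this connection must coincide with the KZB connection of \cite{CEE}, the composite is the inverse of the completion of \eqref{morph}.
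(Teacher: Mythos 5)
Your overall strategy---build a fiber functor $F$ out of the geometry of $(E^\#)^n$, identify $\mathrm{Aut}^\otimes(F)$ with $\mathrm{exp}(\hat{\mathfrak t}_{1,n}^{\mathbb C})$, and match the comparison of fiber functors with the KZB monodromy---is the right one, but the two places where the actual content of the theorem lives are left as assertions. First, your definition of $F$ is not yet a definition: after tensoring $\mathrm{pr}^*(\mathcal E,\nabla)$ with the universal line bundles you invoke ``a canonical finite-dimensional space of regularized horizontal sections over a distinguished stratum,'' but no regularization is specified, and nothing in your construction controls the behavior of the connection along the diagonals of $E^n$. That is exactly where the work is: one must pull $(\mathcal E,\nabla)$ back along the affine fibration $(E^\#)^n\to E^n$ (\S\ref{tcaf}), apply Deligne's canonical extension on a resolution $\tilde X\to (E^\#)^n$ in which the preimage of the diagonal divisor is normal crossing (\S\ref{de}), descend the extended bundle back to $(E^\#)^n$ (Proposition~\ref{prop:Deligne}, which uses $R^{>0}\pi_*\mathcal O=\nobreak 0$), and only then use $H^1((E^\#)^n,\mathcal O)=0$ to conclude that the extended unipotent bundle is trivial, so that $F(\mathcal E,\nabla):=\Gamma((E^\#)^n,\overline{\mathcal E})$ is finite dimensional and $i_{(\mathcal E,\nabla)}$ is evaluation at $x$. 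None of these steps (resolution, extension with nilpotent residues, descent, cohomological triviality) is supplied by ``regularized sections.''

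Second, part 1(c) is not a matter of exhibiting deformations matching $x_i,y_i$ and residues matching $t_{ij}$ and then checking relations. The group $\mathrm{Aut}^\otimes(F)$ is computed as $\mathrm{exp}$ of the completion of the graded dual $\mathfrak G$ of a Lie coalgebra built from the space $\bm{\Omega}^1=\sum_{i<j}\Gamma((E^\#)^n,\Omega^1(\mathrm{log}D_{ij}))$ of logarithmic forms together with $d$ and $\owedge$ into $\bm{\Omega}^2$; this Lie algebra has infinitely many generators $X_i,Y_i,T_{ij}^\alpha$ ($\alpha\geq 0$, dual to the forms $\omega_{ij}^\alpha$), and its relations are read off from $\mathrm{Ker}(\owedge:\Lambda^2(\bm{\Omega}^1)\to\bm{\Omega}^2)$, whose computation rests on the algebraic Fay identity on $(E^\#)^n$ (identity (\ref{algebraic:fay:ijk})) and a residue analysis (Lemma~\ref{lemma:comp:kernel}). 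The elliptic Kohno--Drinfeld relation you single out is only one of several families; the others (the relations $\sigma''$, $\sigma'''$, $\kappa$, $\kappa'$ of \S\ref{subsect:5:5}) encode Fay, and the isomorphism $\mathfrak G\simeq\mathfrak t_{1,n}^{\mathbb C}$ requires constructing mutually inverse morphisms with $T_{ij}^\alpha\mapsto -(\mathrm{ad}x_i)^\alpha(t_{ij})$ (\S\ref{section:iso:G:t}). Without this, the claimed canonical isomorphism in 1(c) has no proof. Likewise, in part 2) the statement that the universal connection underlying $F$ ``must coincide with the KZB connection'' is precisely what has to be proved; in the paper it is Theorem~\ref{tisaibtfpbwfcoC}, established by the explicit gauge transformation $e^{\sum_i c_ix_i}$ on $\mathbb C^{2n}$ (\S\ref{sect:rwtukc}).
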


The group $\mathrm{Aut}^\otimes(F_x^{vb})$ is the de Rham fundamental group of $C_n(E)$ with base point $x$, denoted 
$\pi_1^{DR}(C_n(E),x)$, and the isomorphism $\pi_1^{DR}(C_n(E),x)\simeq\mathrm{Aut}^\otimes(F_x^{vb}) \stackrel{\mathrm{RH}}{\to}
\mathrm{Aut}^\otimes(F_x^{ls})\simeq\pi_1^{B}(C_n(E),x)$ is the 'comparison isomorphism'. 

The construction of the functor $F$ depends on some geometric background. We fix a resolution of singularities $\pi_0:\tilde X_0\to E^n$, 
such that if $D_0\subset E^n$ is the union of all diagonals, then $\tilde D_à:=\pi_0^{-1}(D_0)$ is a normal crossing 
divisor. Let $E^\#$ be the universal additive extension of $E$. This is a 2-dimensional commutative algebraic group, fitting in an exact sequence 
$0\to H^0(E,\mathcal O)^\vee\to E^\#\to E\to 0$ (see \S\ref{sect:ellmat}). It gives rise to a morphism $p:(E^\#)^n\to E^n$.
Let $\tilde X:=(E^\#)^n\times_{E^n}\tilde X_0$ and let $D:=D_0\times_{E^n}(E^\#)^n$, $\tilde D:=D_0\times_{E^n}\tilde X$. 
Then $\tilde D\subset\tilde X$ is a normal crossing divisor and there is a commutative diagram (see \S\ref{sect:geomdata})
$$
\xymatrix{ \tilde D\ar[r]\ar_{\subset}[d]& D\ar[r]\ar_{\subset}[d] & D_0 \ar_{\subset}[d]\\ \tilde X\ar[r]& (E^\#)^n\ar[r] & E^n}
$$

Let us now explain the construction of the functor $F$, which was proposed by P. Deligne in \cite{Del1,Del2}. 
Let $(\mathcal E,\nabla)$ be a unipotent vector bundle with flat connection on 
$E^n-D_0$. One lifts $(\mathcal E,\nabla)$ to a unipotent vector bundle with flat connection on $\tilde X-\tilde D$. It canonically 
gives rise to a vector bundle on $\tilde X$ with flat connection on $\tilde X-\tilde D$ admitting simple poles at $\tilde D$ (see 
\S\ref{sect:summary}, isomorphism $(c)$). This object is the lift of a vector bundle on $(E^\#)^n$, equipped with a flat connection on 
$(E^\#)^n-D$ and simple poles at $D$. Since this is a unipotent object, and a unipotent vector bundle on $(E^\#)^n$ is trivial 
by the homological properties of $(E^\#)^n$, the obtained vector bundle $\mathcal E^\#$ on $(E^\#)^n$ is trivial. We have then 
$\mathcal E^\#\simeq V\otimes\mathcal O$, where $V:=H^0((E^\#)^n,\mathcal E^\#)$. We then set $F(\mathcal E,\nabla):=V$. 

So we see that the homological properties of $E^\#$ enable one to apply to the 
elliptic situation the framework from \cite{Del3}, \S12.

The equivalence (\ref{equiv}) is then given by the specialization map 
$$
(\mathcal E,\nabla)\mapsto [F(\mathcal E,\nabla)=V\simeq (V\otimes\mathcal O)_x\simeq\mathcal E_x=F_x^{vb}(\mathcal E,\nabla)]. 
$$

In order to prove the isomorphism $\mathrm{Aut}^\otimes(F)\simeq\mathrm{exp}(\hat{\mathfrak t}_{1,n}^{\mathbb C})$, we construct a category 
equivalence between $\mathrm{VBFC}(C_n(E))_{unip}$ and the category $\mathrm{Vec}((E^\#)^n,D)$ of flat connections on 
trivial vector bundles over $(E^\#)^n$, unipotent and with simple poles at $D$. The computation of the latter category relies on the 
study of the algebra of differential forms over $(E^\#)^n$.  Computation then shows that the Lie algebra $\mathrm{Der}^\otimes(F)$
is graded; this fact originates from the graded structure of differential forms over $(E^\#)^n$ with poles at $D$, which comes from 
homogeneity properties of the Fay relations. 

The organization of the text is described in the following table of contents.  

\tableofcontents

\subsection*{Acknowledgements}
We are very grateful to P. Deligne for his letters to one of us \cite{Del1,Del2}, which initiated this work and contain many of the 
main ideas used there. PE 
was partially supported by NSF grant no DMS-1502244. 

\section{Basic material and constructions}\label{sect:background}

In this paper, we work over an algebraically closed field $\mathbf k$ of characteristic 0. 

\subsection{Categories}

In this \S, we attach, to each tensor functor $\mathcal C\to\mathcal D$, a tensor functor $\mathcal C_{unip}\to\mathcal D_{unip}$. 
We will denote by $\mathrm{Vec}$ the tensor category of finite dimensional $\mathbf k$-vector spaces. 

\subsubsection{Tensor categories}

Let $\mathcal C$ be a locally finite, $\mathbf k$-linear, abelian, rigid monoidal category, such that the endomorphism ring of
its unit object $\mathbf 1_{\mathcal C}$ is isomorphic to $\mathbf k$ and such that  its tensor product 
$\otimes:\mathcal C\times\mathcal C\to\mathcal C$ is bilinear on morphisms. Such a category is called {\it tensor} 
in \cite{EGNO}. According to {\it loc. cit.}, Prop. 4.21, the tensor product bifunctor $\otimes$ is then biexact. 

\subsubsection{Unipotent parts of tensor categories}\label{subsect:unip}

Define $\mathrm{Ob}(\mathcal C_{unip})$ to be the subclass of $\mathrm{Ob}(\mathcal C)$ consisting of all objects 
$O$ that admit a filtration $0=O_0\subset O_1\subset\cdots\subset O_n=O$, such that each quotient $O_i/O_{i-1}$
is isomorphic to $\mathbf 1_{\mathcal C}$. One checks that $\mathrm{Ob}(\mathcal C_{unip})$ 
is stable under the tensor product of $\mathcal C$. 

Define $\mathcal C_{unip}$ to be the full subcategory of $\mathcal C$ whose class of objects is $\mathrm{Ob}(\mathcal C_{unip})$. 
Then $\mathcal C_{unip}$ is again a tensor category.  

Let $\mathcal D$ be another tensor category. A tensor functor from $\mathcal C$ to $\mathcal D$ is a pair $(F,J)$ of an exact and faithful 
$\mathbf k$-linear functor $F:\mathcal C\to\mathcal D$ and a functorial isomorphism $J:F(-)\otimes F(-)\to F(-\otimes-)$, satisfying 
diagram (2.23) in \cite{EGNO}. One checks that such a tensor functor induces a tensor functor $(F_{unip},J_{unip})$ 
from $\mathcal C_{unip}$ to 
$\mathcal D_{unip}$. One then has a diagram of tensor functors
$$
\xymatrix{
\mathcal C_{unip}\ar^{F_{unip}}[r]\ar[d] & \mathcal D_{unip}\ar[d]\\ \mathcal C\ar_{F}[r] & \mathcal D
}
$$
where the vertical functors are fully faithful. 

\subsection{Divisors and residues} Let $X$ be a smooth irreducible $\mathbf k$-variety. 

\subsubsection{Sheaves}\label{sect:sheaves}

We denote by $\mathcal O_X$ the structure sheaf of $X$ and by $\mathcal K_X$ its sheaf of rational functions; 
this is a constant sheaf. If $\delta$ is a divisor of $X$ (union of codimension 1 subvarieties), we denote by $\mathcal O_{X,\delta}$ the subsheaf 
of $\mathcal K_X$, 
such that for each open subset $U$ of $X$, the space $\Gamma(U,\mathcal O_{X,\delta})$ is the space of rational functions over $U$ (or $X$), 
that are regular on a dense open set in $U\cap\delta$. Restriction to $\delta$ induces a 
$\mathcal O_X$-sheaf morphism $\mathcal O_{X,\delta}\to i_*(\mathcal K_\delta)$, where $i:\delta\to X$ is the canonical inclusion
and $\mathcal K_\delta$ is the direct sum $\oplus_i\mathcal K_{\delta_i}$, where $(\delta_i)_i$ are the irreducible components of $\delta$. 

If $\mathcal E$ is a quasi-coherent $\mathcal O_X$-sheaf, we define $\mathcal E^{rat}$ to be the sheaf 
$\mathcal E\otimes_{\mathcal O_X}\mathcal K_X$ and $\mathcal E_{\delta}$ to be the sheaf 
$\mathcal E\otimes_{\mathcal O_X}\mathcal O_{X,\delta}$. Then $(\mathcal O_X)_\delta=\mathcal O_{X,\delta}$. 

We also denote by $\Gamma_{rat}(X,\mathcal E)$ the space of rational sections of $\mathcal E$. So 
$\Gamma_{rat}(X,\mathcal E)=\Gamma(X,\mathcal E^{rat})$. 

\subsubsection{Divisors}\label{sect:divisors}

A {\it special divisor} (SD) in $X$ is a divisor whose components are non-singular and such that any 
pair of components intersects transversally. 

A {\it reduced normal crossing divisor} (RNCD) is a divisor $D=\cup_{i\in I}D_i$, whose components are non-singular 
and satisfying the following condition. For each point $p$ of $X$, let $J(p)$ be the set of indices $j$ such that $p$ lies in $D_j$. 
Then $p$ should have a neighborhood $U(p)$, in which each $D_j$, $j\in J(p)$ may be defined by an 
equation $f_j=0$, and the collection of differentials $(df_j(p))_{j\in J(p)}$ is a linearly independent family in $T^*_p(X)$.

\subsubsection{Logarithmic differential forms for RNCDs}\label{sect:ldffrncd}
\label{sect:1:2}

Let $D$ be a divisor. For $k$ an integer $\geq 0$, the sheaf $\Omega_X^k(\mathrm{log}D)$ is 
the subsheaf of $\Omega_X^k(*D)$ whose local sections 
are differentials $\alpha$ such that both $\alpha$ and $d\alpha$ are regular except for a possible simple pole along $D$. 
The definition shows that if $D$ is a RNCD, then $\Omega_X^k(\mathrm{log}D)$ can also be defined as follows (see \cite{EV}). For $p$ 
as above, the space 
of sections of this sheaf over $U(p)$ is given by the linear span, over all subsets $S\subset J(p)$, of all the differentials 
$(\bigwedge_{s\in S}(df_s/f_s))\wedge a_S$, where $a_S$ lies in $\Gamma(U(p),\Omega^{k-|S|}_X)$. 
This alternative definition shows that the collection of sheaves $\Omega_X^\bullet(\mathrm{log}D)$ is stable under the 
differential and the wedge product.  

\subsubsection{Poincar\'e residue: a sheaf morphism $\Omega^k(X,\mathrm{log}\delta)^{rat}\to(\Omega^{k-1}_\delta)^{rat}$}
\label{subsect:pr}

Let $\delta$ be a smooth irreducible codimension 1 subvariety of $X$. 

Let $k\geq 0$. According to \S\ref{sect:sheaves}, $\Omega^k_X(\mathrm{log}\delta)_\delta$ is the subsheaf of $(\Omega^k_X)^{rat}$ 
defined as follows. 
For $U$ an open subset of $X$, in which $\delta$ is defined by an equation $f=0$, where $f\in\Gamma(U,\mathcal O_X)$, 
the space $\Gamma(U,\Omega^k_X(\mathrm{log}\delta)_\delta)$ is the set of differentials in $\Gamma_{rat}(X,\Omega^k_X)$ of the form 
$\omega=(df/f)\wedge a+b$, where $a\in\Gamma(U,(\Omega^{k-1}_X)_\delta)$ and $b\in\Gamma(U,(\Omega^k_X)_\delta)$. 
In other terms, this space is the space of rational forms $\alpha$ on $U$, such that both $\alpha$ and $d\alpha$ have at most a simple 
pole at $\delta\cap U$. The map taking $\omega$ to the restriction of $a$ to $\delta$ is well-defined and induces a sheaf morphism to 
the sheaf of rational differentials on $\delta$
$$
\mathrm{Res}^{(k)}_\delta:\Omega^k_X(\mathrm{log}\delta)_\delta\to(\Omega^{k-1}_\delta)^{rat}.  
$$
Taking global sections over $X$ of this sheaf morphism, we obtain a linear map 
$$
\Gamma_{rat}(X,\Omega^k_X)\hookleftarrow \Gamma(X,\Omega^k_X(\mathrm{log}\delta)_\delta)\stackrel{\mathrm{Res}^{(k)}_\delta}{\to}
\Gamma(\delta,(\Omega^{k-1}_\delta)^{rat})=\Gamma_{rat}(\delta,\Omega^{k-1}_\delta). 
$$

\subsection{Vector spaces and maps attached to special divisors} 

Let $X$ be an irreducible, smooth $\mathbf k$-variety. We fix a SD $D=\cup_{i\in I}D_i$ in $X$ (for convenience, 
we assume $I$ to be ordered). 

\subsubsection{Vector spaces attached to special divisors}

Recall that for each $i\in I$, the space of global sections $\Gamma(X,\Omega^1_X(\mathrm{log}D_i))$ is a subspace of
$\Gamma_{rat}(X,\Omega^1_X)$. The sum of these spaces is a subspace 
\begin{equation}\label{space:H1}
\bm{\Omega}^1:=\sum_{i\in I}\Gamma(X,\Omega^1_X(\mathrm{log}D_i))\subset\Gamma_{rat}(X,\Omega^1_X). 
\end{equation} 
Similarly, for each pair $\{i,j\}\in\mathcal P_2(I)$ (the set of parts of $I$ with cardinality 2), the space of global sections 
$\Gamma(X,\Omega^2_X(\mathrm{log}D_i\cup D_j))$ is a subspace of
$\Gamma_{rat}(X,\Omega^2_X)$. The sum of these spaces is a subspace 
\begin{equation}\label{space:H2}
\bm{\Omega}^2:=\sum_{\{i,j\}\in\mathcal P_2(I)}\Gamma(X,\Omega^2_X(\mathrm{log}D_i\cup D_j))\subset\Gamma_{rat}(X,\Omega^2_X). 
\end{equation}

\subsubsection{Maps attached to special divisors}\label{subsect:maps}

Let $i\in I$. 
As $\Omega_X^\bullet(\mathrm{log}D_i)$ is stable under the wedge product, there is a commutative diagram 
$$
\xymatrix{
\Lambda^2(\Gamma(X,\Omega^1_X(\mathrm{log}D_i)))
\ar[d]\ar@{^(->}[r]& \ar[d]\Lambda^2(\Gamma_{rat}(X,\Omega^1_X))\\
\Gamma(X,\Omega^2_X(\mathrm{log}D_i))\ar@{^(->}[r]& \Gamma_{rat}(X,\Omega^2_X)}
$$
Let $\{i,j\}\in\mathcal P_2(I)$. As there are injective morphisms $\Omega_X^\bullet(\mathrm{log}D_i)\to\Omega_X^\bullet(\mathrm{log}D_i\cup D_j)$, 
$\Omega_X^\bullet(\mathrm{log}D_j)\to\Omega_X^\bullet(\mathrm{log}D_i\cup D_j)$, and as $\Omega_X^\bullet(\mathrm{log}D_i\cup D_j)$ 
is stable under the wedge product, there is a commutative diagram 
$$
\xymatrix{
\Gamma(X,\Omega^1_X(\mathrm{log}D_i))\otimes \Gamma(X,\Omega^1_X(\mathrm{log}D_j))
\ar@{^(->}[d]\ar@{^(->}[r]& \ar@{=}[d]\Gamma_{rat}(X,\Omega^1_X)^{\otimes 2}\\
\Gamma(X,\Omega^1_X(\mathrm{log}D_i\cup D_j))^{\otimes 2}\ar[d]& \Gamma_{rat}(X,\Omega^1_X)^{\otimes 2}\ar[d]\\ 
\Gamma(X,\Omega^2_X(\mathrm{log}D_i\cup D_j))\ar@{^(->}[r]& \Gamma_{rat}(X,\Omega^2_X)}
$$
The direct sum of these diagrams, together with the decomposition 
\begin{align*}
& \Lambda^2(\oplus_{i\in I}\Gamma(X,\Omega^1_X(\mathrm{log}D_i))) \\ & \simeq
\oplus_{i\in I}\Lambda^2(\Gamma(X,\Omega^1_X(\mathrm{log}D_i)))\oplus\Big(\oplus_{\{i,j\}\in\mathcal P_2(I)}
\Gamma(X,\Omega^1_X(\mathrm{log}D_i))\otimes \Gamma(X,\Omega^1_X(\mathrm{log}D_j))\Big)
\end{align*}
gives a commutative diagram 
$$
\xymatrix{
\Lambda^2(\oplus_{i\in I}\Gamma(X,\Omega^1_X(\mathrm{log}D_i)))
\ar[d]\ar[r]& \ar[d]\Lambda^2(\Gamma_{rat}(X,\Omega^1_X))\\
\oplus_{i\in I}\Gamma(X,\Omega^2_X(\mathrm{log}D_i))\oplus
\Big( \oplus_{\{i,j\}\in\mathcal P_2(I)}\Gamma(X,\Omega^2_X(\mathrm{log}D_i\cup D_j)) \Big) \ar[r]& \Gamma_{rat}(X,\Omega^2_X)}
$$
As the images of the horizontal maps are respectively $\Lambda^2(\sum_{i\in I}\Gamma(X,\Omega^1_X(\mathrm{log}D_i)))$ and 
$$
\sum_{\{i,j\}\in\mathcal P_2(I)}\Gamma(X,\Omega^2_X(\mathrm{log}D_i\cup D_j)),
$$ 
this gives: 
\begin{lemma} There is a commutative diagram 
\begin{equation}\label{comm:diag:wedge}
\xymatrix{
\Lambda^2(\bm{\Omega}^1)=\Lambda^2(\sum_{i\in I}\Gamma(X,\Omega^1_X(\mathrm{log}D_i)))
\ar[d]\ar@{^(->}[r]& \ar[d]\Lambda^2(\Gamma_{rat}(X,\Omega^1_X))\\
\bm{\Omega}^2=\sum_{\{i,j\}\in\mathcal P_2(I)}\Gamma(X,\Omega^2_X(\mathrm{log}D_i\cup D_j))\ar@{^(->}[r]& \Gamma_{rat}(X,\Omega^2_X)}
\end{equation}
\end{lemma}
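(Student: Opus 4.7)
My plan is to observe that the actual content of the lemma is the existence of the left vertical arrow. The right vertical is simply the wedge product on rational forms; the bottom horizontal is the inclusion of $\bm{\Omega}^2$ into $\Gamma_{rat}(X,\Omega^2_X)$ coming directly from its definition (\ref{space:H2}); and the top horizontal is the map induced from the inclusion $\bm{\Omega}^1\hookrightarrow\Gamma_{rat}(X,\Omega^1_X)$ by functoriality of $\Lambda^2$. What must be shown is that the wedge product on rational forms carries $\Lambda^2(\bm{\Omega}^1)$ into $\bm{\Omega}^2$; the commutativity of the square is then automatic.

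To do this, I plan to assemble the two families of commutative diagrams already set up in \S\ref{subsect:maps}: the $\Lambda^2$-diagrams indexed by $i\in I$, and the tensor-product diagrams indexed by $\{i,j\}\in\mathcal{P}_2(I)$. Writing $V_i:=\Gamma(X,\Omega^1_X(\mathrm{log}D_i))$ and taking the direct sum of all these diagrams, I would combine them into a single commutative square via the canonical decomposition
$$
\Lambda^2\Bigl(\bigoplus_{i\in I}V_i\Bigr)\simeq \bigoplus_{i\in I}\Lambda^2(V_i)\;\oplus\;\bigoplus_{\{i,j\}\in\mathcal{P}_2(I)}V_i\otimes V_j,
$$
whose upper right-hand corner is $\Lambda^2(\Gamma_{rat}(X,\Omega^1_X))$ and whose lower right-hand corner is $\Gamma_{rat}(X,\Omega^2_X)$. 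The last step is to pass to the images of the two horizontal maps of this assembled square: the top factors through the surjection $\Lambda^2(\oplus_i V_i)\twoheadrightarrow \Lambda^2(\sum_i V_i)=\Lambda^2(\bm{\Omega}^1)$ (using that $\Lambda^2$ preserves surjections of vector spaces), hence has image $\Lambda^2(\bm{\Omega}^1)$, while the bottom has image $\bm{\Omega}^2$ by (\ref{space:H2}). Restricting to images yields the required left vertical $\Lambda^2(\bm{\Omega}^1)\to\bm{\Omega}^2$, and the commutativity of the sought square is inherited from that of the assembled one.

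I do not expect a genuine obstacle here: the argument is essentially a compatibility check once the two smaller families of diagrams are in hand. The only step calling for mild care is the verification that the induced left vertical is well-defined, i.e.~that the kernel of $\Lambda^2(\oplus_i V_i)\twoheadrightarrow \Lambda^2(\bm{\Omega}^1)$ maps to $0$ in $\bm{\Omega}^2$; this is immediate from commutativity of the assembled square together with injectivity of $\bm{\Omega}^2\hookrightarrow\Gamma_{rat}(X,\Omega^2_X)$.
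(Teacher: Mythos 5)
Your proposal is correct and follows essentially the same route as the paper: assemble the per-index $\Lambda^2$-diagrams and the pairwise tensor-product diagrams from \S\ref{subsect:maps} via the decomposition $\Lambda^2(\oplus_i V_i)\simeq\oplus_i\Lambda^2(V_i)\oplus\oplus_{\{i,j\}}V_i\otimes V_j$, then pass to images of the horizontal maps. Your explicit remark on well-definedness of the induced left vertical (via injectivity of $\bm{\Omega}^2\hookrightarrow\Gamma_{rat}(X,\Omega^2_X)$) is the same point the paper leaves implicit.
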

We denote the resulting map by $\owedge:\Lambda^2(\bm{\Omega}^1)\to\bm{\Omega}^2$. 
The product in $\Lambda^\bullet(\bm{\Omega}^1)$
will be denoted $\underline\wedge$, and we use the notation $a\owedge b:=\owedge(a\underline\wedge b)$ for 
$a,b\in\bm{\Omega}^1$. 

As $\Omega_X^\bullet(\mathrm{log}D_i)$ is stable under the differential, there is a commutative diagram 
$$
\xymatrix{
\Gamma(X,\Omega^1_X(\mathrm{log}D_i))
\ar_{d}[d]\ar@{^(->}[r]& \ar^d[d]\Gamma_{rat}(X,\Omega^1_X)\\
\Gamma(X,\Omega^2_X(\mathrm{log}D_i))\ar@{^(->}[r]& \Gamma_{rat}(X,\Omega^2_X)}
$$
The direct sum of these diagrams yields a commutative diagram 
$$
\xymatrix{
\oplus_{i\in I}\Gamma(X,\Omega^1_X(\mathrm{log}D_i))
\ar_{d}[d]\ar[r]& \ar^d[d]\Gamma_{rat}(X,\Omega^1_X)\\
\oplus_{i\in I}\Gamma(X,\Omega^2_X(\mathrm{log}D_i))\ar[r]& \Gamma_{rat}(X,\Omega^2_X)}
$$
Taking images of the horizontal maps, we obtain: 
\begin{lemma} There is a commutative diagram 
\begin{equation}\label{comm:diag:diff}
\xymatrix{
\bm{\Omega}^1=\sum_{i\in I}\Gamma(X,\Omega^1_X(\mathrm{log}D_i))
\ar_{d}[d]\ar@{^{(}->}[r]& \ar^d[d]\Gamma_{rat}(X,\Omega^1_X)\\
\bm{\Omega}^2=\sum_{i\in I}\Gamma(X,\Omega^2_X(\mathrm{log}D_i))\ar@{^{(}->}[r]& \Gamma_{rat}(X,\Omega^2_X)}
\end{equation}
The resulting map is denoted $d:\bm{\Omega}^1\to\bm{\Omega}^2$. 
\end{lemma}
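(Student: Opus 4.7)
The plan is to leverage the stability of $\Omega^\bullet_X(\mathrm{log}\,D_i)$ under the exterior differential, which is already recorded in \S\ref{sect:ldffrncd}. For each individual $i\in I$, this stability produces a commutative square in which the top arrow is the inclusion $\Gamma(X,\Omega^1_X(\mathrm{log}\,D_i))\hookrightarrow \Gamma_{rat}(X,\Omega^1_X)$, the bottom arrow is $\Gamma(X,\Omega^2_X(\mathrm{log}\,D_i))\hookrightarrow \Gamma_{rat}(X,\Omega^2_X)$, and the vertical arrows are both induced by $d$. This is exactly the first diagram written just before the lemma; I would begin by explicitly citing \S\ref{sect:ldffrncd} to justify that the restriction of $d$ lands in $\Gamma(X,\Omega^2_X(\mathrm{log}\,D_i))$.

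Next I would sum these squares over $i\in I$ to obtain a commutative diagram whose top row is the canonical sum map $\bigoplus_{i\in I}\Gamma(X,\Omega^1_X(\mathrm{log}\,D_i))\to \Gamma_{rat}(X,\Omega^1_X)$ and whose bottom row is the analogous sum map into $\Gamma_{rat}(X,\Omega^2_X)$, with $d$ acting diagonally on the left. By the very definitions (\ref{space:H1}) and (\ref{space:H2}), the image of the top arrow is $\bm{\Omega}^1$, while the image of the bottom arrow is $\sum_{i\in I}\Gamma(X,\Omega^2_X(\mathrm{log}\,D_i))$. The latter is contained in $\bm{\Omega}^2$ because, for any fixed $j\neq i$, there is a natural inclusion $\Gamma(X,\Omega^2_X(\mathrm{log}\,D_i))\hookrightarrow \Gamma(X,\Omega^2_X(\mathrm{log}\,D_i\cup D_j))$, and the right-hand side is a summand of $\bm{\Omega}^2$.

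It then suffices to pass to images horizontally. The induced map $\bm{\Omega}^1\to \bm{\Omega}^2$ is automatically well defined, since it coincides with the restriction of the $\mathbf k$-linear differential $d:\Gamma_{rat}(X,\Omega^1_X)\to \Gamma_{rat}(X,\Omega^2_X)$ on the ambient spaces of rational forms; independence of the chosen decomposition $\alpha=\sum_{i}\alpha_i$ of an element of $\bm{\Omega}^1$ is therefore a non-issue. Commutativity of the resulting diagram (\ref{comm:diag:diff}) is inherited from the commutativity at the level of direct sums.

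No serious obstacle is expected; the statement is essentially a bookkeeping consequence of the single-component stability. The one subtlety worth flagging is the apparent mismatch between the bottom-left label in (\ref{comm:diag:diff}), namely $\sum_{i\in I}\Gamma(X,\Omega^2_X(\mathrm{log}\,D_i))$, and the defining expression (\ref{space:H2}) for $\bm{\Omega}^2$ involving pairs $\{i,j\}$. This reflects the content that no "mixed" logarithmic poles along $D_i\cup D_j$ can be created by differentiating a form with logarithmic poles along a single $D_i$, and is accounted for by the inclusion argument above.
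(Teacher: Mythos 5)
Your argument is correct and is essentially identical to the paper's: both start from the stability of $\Omega_X^\bullet(\mathrm{log}\,D_i)$ under $d$ for each single $i$, form the direct sum of the resulting squares over $i\in I$, and pass to images of the horizontal maps. Your explicit justification of the inclusion $\sum_{i\in I}\Gamma(X,\Omega^2_X(\mathrm{log}\,D_i))\subset\bm{\Omega}^2$ via $\Omega^2_X(\mathrm{log}\,D_i)\hookrightarrow\Omega^2_X(\mathrm{log}\,D_i\cup D_j)$ is a point the paper leaves implicit, and it is a worthwhile clarification.
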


Let $i\in I$. For any $\{k,l\}\in\mathcal P_2(I)$, both sheaves $\Omega^2_X(\mathrm{log}D_k\cup D_l)$ and $\Omega^2_X(\mathrm{log}D_i)_{D_i}$ 
are subsheaves of $(\Omega^2_X)^{rat}$. Inspection shows that there exists a natural sheaf morphism 
$$
\Omega^2_X(\mathrm{log}D_k\cup D_l)\to\Omega^2_X(\mathrm{log}D_i)_{D_i}
$$
making the diagram 
$$\xymatrix{
\Omega^2_X(\mathrm{log}D_k\cup D_l)\ar[d]\ar[r]& (\Omega^2_X)^{rat}\\ 
\Omega^2_X(\mathrm{log}D_i)_{D_i}\ar[ur]& }
$$
commute. Taking global sections over $X$, one obtains a linear map 
$$
\Gamma(X,\Omega^2_X(\mathrm{log}D_k\cup D_l))\to \Gamma(X,\Omega^2_X(\mathrm{log}D_i)_{D_i}). 
$$
Taking sums over $\{k,l\}\in\mathcal P_2(I)$, one obtains the upper morphism of the following diagram
$$
\xymatrix{
\oplus_{\{k,l\}\in\mathcal P_2(I)}\Gamma(X,\Omega^2_X(\mathrm{log}D_k\cup D_l))\ar[r]\ar@{>>}[d]& \Gamma(X,\Omega^2_X(\mathrm{log}D_i)_{D_i})
\ar@^{^(->}[d]\\ 
\sum_{\{k,l\}\in\mathcal P_2(I)}\Gamma(X,\Omega^2_X(\mathrm{log}D_k\cup D_l))\ar@^{^(->}[r]&\Gamma_{rat}(X,\Omega^2_X)
}
$$ 
The composition of the left and bottom arrows of this diagram is the decomposition of the map 
$\oplus_{\{k,l\}\in\mathcal P_2(I)}\Gamma(X,\Omega^2_X(\mathrm{log}D_k\cup D_l))\to\Gamma_{rat}(X,\Omega^2_X)$ induced by the 
sum, and the composition of the top and right arrows is also shown to coincide with the same map, 
so that the diagram commutes. Inspecting the diagram, one obtains the inclusion 
$$
\sum_{\{k,l\}\in\mathcal P_2(I)}\Gamma(X,\Omega^2_X(\mathrm{log}D_k\cup D_l))\subset \Gamma(X,\Omega^2_X(\mathrm{log}D_i)_{D_i}). 
$$
One may compose this map with $\mathrm{Res}_{D_i}^{(2)}:\Gamma(X,\Omega^2_X(\mathrm{log}D_i)_{D_i})\to
\Gamma_{rat}(D_i,\Omega^1_{D_i})$. 
Examining the form of the images of elements of $\Gamma(X,\Omega^2_X(\mathrm{log}D_k\cup D_l))$ through the composed
map, one sees that these images are 0 if $k,l\neq i$ and lie in $\Gamma(D_i,\Omega^1_{D_i}(\mathrm{log}D_i\cap D_l))$
(resp., in $\Gamma(D_i,\Omega^1_{D_i}(\mathrm{log}D_i\cap D_k))$) 
if $k=i$ (resp., if $l=i$), one obtains: 
\begin{lemma} \label{lemma:143}
There is a unique map 
\begin{equation}\label{residue:2}
\mathrm{Res}_{D_i}^{(2)}:\bm{\Omega}^2=\sum_{\{k,l\}\in\mathcal P_2(I)}\Gamma(X,\Omega^2_X(\mathrm{log}D_k\cup D_l))\to\sum_{j\in I|j\neq i}\Gamma(D_i,\Omega^1_{D_i}(\mathrm{log}D_i\cap D_j))=:\mathbf D_i^1
\end{equation}
making the following diagram commute 
$$
\xymatrix{
\Gamma(X,\Omega^2_X(\mathrm{log}D_i)_{D_i})\ar^{\mathrm{Res}_{D_i}^{(2)}}[r] 
& 
\Gamma_{rat}(D_i,\Omega^1_{D_i}) 
\\ 
\sum_{\{k,l\}\in\mathcal P_2(I)}\Gamma(X,\Omega^2_X(\mathrm{log}D_k\cup D_l))
\ar^{\mathrm{Res}_{D_i}^{(2)}}[r]\ar@^{^(->}[u]
& 
\sum_{j\in I|j\neq i}\Gamma(D_i,\Omega^1_{D_i}(\mathrm{log}D_i\cap D_j))
\ar@^{^(->}[u]
}
$$
\end{lemma}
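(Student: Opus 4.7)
Uniqueness of the map is immediate from the fact that the right vertical arrow of the displayed diagram is injective (since $\mathbf D_i^1$ sits as a sum of subspaces of the common ambient space $\Gamma_{rat}(D_i,\Omega^1_{D_i})$). So the real task is to prove existence, i.e., that the restriction of the already-defined residue map $\mathrm{Res}^{(2)}_{D_i}$ to $\bm{\Omega}^2 \subset \Gamma(X,\Omega^2_X(\mathrm{log}D_i)_{D_i})$ takes values in the subspace $\mathbf D_i^1$. The commutativity of the diagram will then be built into the construction, since by definition the lower horizontal arrow factorises the top residue map through the inclusion $\bm{\Omega}^2\hookrightarrow \Gamma(X,\Omega^2_X(\mathrm{log}D_i)_{D_i})$.

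Because $\bm{\Omega}^2 = \sum_{\{k,l\}\in\mathcal P_2(I)}\Gamma(X,\Omega^2_X(\mathrm{log}D_k\cup D_l))$, it is enough to verify the image lies in $\mathbf D_i^1$ on each summand, and the natural split is into the two cases $i \notin \{k,l\}$ and $i \in \{k,l\}$. In the first case, sections of $\Omega^2_X(\mathrm{log}D_k\cup D_l)$ are regular along $D_i$, so their image under $\mathrm{Res}^{(2)}_{D_i}$ is zero and lies trivially in $\mathbf D_i^1$. In the second case, by symmetry I may take $k=i$ and $l=j$ with $j\neq i$, and use the transversality of $D_i$ and $D_j$ (which is built into the SD hypothesis) to choose, at a point $p$ of $D_i$, local defining equations $f_i$ for $D_i$ and (if $p\in D_j$) $f_j$ for $D_j$, whose differentials at $p$ are linearly independent. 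One then decomposes locally
$$\omega = \alpha + (df_i/f_i)\wedge\beta_i + (df_j/f_j)\wedge\beta_j + g\,(df_i/f_i)\wedge(df_j/f_j),$$
with $\alpha,\beta_i,\beta_j,g$ regular. Re-grouping as $\omega = (df_i/f_i)\wedge a + b$ with $a = \beta_i + g\,(df_j/f_j)$ (regular at generic points of $D_i$ since $D_i\cap D_j$ has codimension $\geq 2$), the residue reads $a|_{D_i} = \beta_i|_{D_i} + g|_{D_i}\cdot d(f_j|_{D_i})/(f_j|_{D_i})$, which is a section of $\Omega^1_{D_i}(\mathrm{log}(D_i\cap D_j))$ near $p$. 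Patching over different open sets (and noting that at generic points of $D_i$ away from $D_j$ the residue is outright regular) assembles these local pieces into a global section of $\Omega^1_{D_i}(\mathrm{log}(D_i\cap D_j))\subset \mathbf D_i^1$.

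The hard step is the local computation in the second case; more precisely, the assertion that the residue along $D_i$ of a logarithmic form with poles along $D_i\cup D_j$ is again logarithmic along the transversal trace $D_i\cap D_j$. This is exactly where the SD hypothesis on $D$ is genuinely used, since it ensures that at each point of $D_i\cap D_j$ there exist local coordinates in which both $f_i$ and $f_j$ serve as independent defining functions, so that $f_j|_{D_i}$ really is a local equation for $D_i\cap D_j$ inside $D_i$. Everything else in the proof is either formal bookkeeping over the indexing set $\mathcal P_2(I)$ or a direct consequence of the injectivity of the inclusion $\mathbf D_i^1\hookrightarrow\Gamma_{rat}(D_i,\Omega^1_{D_i})$.
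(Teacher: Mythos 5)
Your proof is correct and follows essentially the same route as the paper: uniqueness from the injectivity of $\mathbf D_i^1\hookrightarrow\Gamma_{rat}(D_i,\Omega^1_{D_i})$, and existence by restricting the residue to each summand $\Gamma(X,\Omega^2_X(\mathrm{log}D_k\cup D_l))$, getting $0$ when $i\notin\{k,l\}$ and a section of $\Gamma(D_i,\Omega^1_{D_i}(\mathrm{log}D_i\cap D_j))$ when $i\in\{k,l\}$. The only difference is that you spell out the local normal-crossing decomposition behind the paper's "examining the form of the images, one sees", which is a welcome (and correct) expansion of that step.
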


Dropping in this Subsection the normal crossing condition from the hypotheses on $\overline D$ and arguing as 
in the construction leading to Lemma \ref{lemma:143}, one obtains: 
\begin{lemma}
There is a unique linear map 
\begin{equation}\label{residue:1}
\mathrm{Res}_{D_i}^{(1)}:\sum_{k\in I}\Gamma(X,\Omega^1_X(\mathrm{log}D_k))\to \Gamma(D_i,\mathcal O_{D_i})
\end{equation}
making the following diagram 
$$
\xymatrix{
\Gamma(X,\Omega^1_X(\mathrm{log}D_i)_{D_i})\ar^{\mathrm{Res}_{D_i}^{(1)}}[r] 
& 
\Gamma_{rat}(D_i,\mathcal O_{D_i}) 
\\ 
\sum_{k\in I}\Gamma(X,\Omega^1_X(\mathrm{log}D_k))
\ar^{\mathrm{Res}_{D_i}^{(1)}}[r]\ar@^{^(->}[u]
& 
\Gamma(D_i,\mathcal O_{D_i})
\ar@^{^(->}[u]
}
$$
commute.
The restriction of $\mathrm{Res}_{D_i}^{(1)}$ to each $\Gamma(X,\Omega^1_X(\mathrm{log}D_k))$, $k\neq i$, is zero. 
\end{lemma}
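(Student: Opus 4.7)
The plan is to mirror the construction leading to Lemma \ref{lemma:143}, now in degree one. The only property of the divisor that is used is the smoothness of each component $D_k$, so the normal crossing hypothesis is indeed unnecessary; correspondingly, the Poincar\'e residue $\mathrm{Res}^{(1)}_{D_i}$ of \S\ref{subsect:pr} is available on $\Gamma(X,\Omega^1_X(\mathrm{log}D_i)_{D_i})$ and will serve as the target of the construction.

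For each $k\in I$ I would first exhibit a natural sheaf morphism $\Omega^1_X(\mathrm{log}D_k)\to\Omega^1_X(\mathrm{log}D_i)_{D_i}$ by realising both as subsheaves of $(\Omega^1_X)^{rat}$. When $k=i$ this is the canonical map $\Omega^1_X(\mathrm{log}D_i)\hookrightarrow\Omega^1_X(\mathrm{log}D_i)_{D_i}$. When $k\neq i$, a local section of $\Omega^1_X(\mathrm{log}D_k)$ is of the form $(df/f)h+r$, where $f$ is a local equation of $D_k$ and $h,r$ are regular; since $D_k\neq D_i$, the function $f$ is regular and nowhere zero on the dense open subset $D_i\setminus(D_i\cap D_k)$, so the section is regular along a dense open of $D_i$ and hence lies in $\Omega^1_X(\mathrm{log}D_i)_{D_i}$. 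Taking global sections, summing over $k\in I$ and composing with $\mathrm{Res}^{(1)}_{D_i}:\Gamma(X,\Omega^1_X(\mathrm{log}D_i)_{D_i})\to\Gamma_{rat}(D_i,\mathcal O_{D_i})$ produces a linear map $\sum_{k\in I}\Gamma(X,\Omega^1_X(\mathrm{log}D_k))\to\Gamma_{rat}(D_i,\mathcal O_{D_i})$ that fits into the right half of the desired diagram.

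The remaining content of the lemma is that this map lands in the regular (not merely rational) sections and vanishes on the summands with $k\neq i$. Both points follow by examining the map on each summand. For $k\neq i$, the discussion above shows that near a generic point of $D_i$ a section is already regular, so when written in the normal form $(dg/g)a+b$ relative to a local equation $g$ of $D_i$ one may take $a=0$, hence the residue is zero; this simultaneously gives the vanishing statement. For $k=i$, a section $\omega\in\Gamma(X,\Omega^1_X(\mathrm{log}D_i))$ may locally be written $(dg/g)a+b$ with $a,b$ regular, so $\mathrm{Res}^{(1)}_{D_i}(\omega)=a|_{D_i}\in\Gamma(D_i,\mathcal O_{D_i})$. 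Uniqueness of the dashed map is immediate from the injectivity of $\Gamma(D_i,\mathcal O_{D_i})\hookrightarrow\Gamma_{rat}(D_i,\mathcal O_{D_i})$.

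I do not expect a serious obstacle: the construction is a straightforward degree-one analogue of that of Lemma \ref{lemma:143}, and the only mildly delicate point is the verification that a logarithmic $1$-form along $D_k\neq D_i$ extends regularly to a dense open of $D_i$, which is where the smoothness of each component is used.
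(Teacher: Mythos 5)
Your proposal is correct and follows essentially the same route as the paper, which simply says to argue as in the construction leading to Lemma \ref{lemma:143}: realise each $\Gamma(X,\Omega^1_X(\mathrm{log}D_k))$ inside $\Gamma(X,\Omega^1_X(\mathrm{log}D_i)_{D_i})$, apply the Poincar\'e residue of \S\ref{subsect:pr}, and check summand by summand that the image is regular and vanishes for $k\neq i$. The key verifications you single out (a logarithmic form along $D_k\neq D_i$ is regular on a dense open of $D_i$, and uniqueness via injectivity of $\Gamma(D_i,\mathcal O_{D_i})\hookrightarrow\Gamma_{rat}(D_i,\mathcal O_{D_i})$) are exactly the ones the paper's argument relies on.
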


Pick $i\in I$. Replacing the collection $(X,I,(D_i)_{i\in I},D,i)$ by $(D_i,I-\{i\},(D_j\cap D_i)_{j\in I-\{i\}},
D\cap D_i,j)$, the map (\ref{residue:1}) yields a linear map 
\begin{equation}\label{residue:1:i}
\mathrm{Res}_{D_i\cap D_j}^{(1)}:\mathbf D_i^1=\sum_{j\in I-\{i\}}\Gamma(D_i,\Omega^1_{D_i}(\mathrm{log}D_i\cap D_j))
\to \Gamma(D_i\cap D_j,\mathcal O_{D_i\cap D_j})=:\mathbf D_{ij}^0. 
\end{equation}

The sum of residue maps is a map  
$$
\Gamma_{rat}(X,\Omega^1_X)\hookleftarrow\sum_{i\in I} \Gamma(X,\Omega^1_X(\mathrm{log}D_i))\to \oplus_{i\in I}
\Gamma(D_i,\mathcal O_{D_i}). 
$$
The elements of the middle space are rational differentials on $X$, which are regular except for simple poles at $D_i$. 
The elements of the kernel of the map $\sum_{i\in I} \Gamma(X,\Omega^1_X(\mathrm{log}D_i))\to \oplus_{i\in I}
\Gamma(D_i,\mathcal O_{D_i})$ are therefore regular everywhere, that is, elements of $\Gamma(X,\Omega^1_X)$. We derive from there
the exact sequence 
\begin{equation}\label{exact:sequence}
0\to \Gamma(X,\Omega^1_X)\to \sum_{i\in I} \Gamma(X,\Omega^1_X(\mathrm{log}D_i))\to \oplus_{i\in I}
\Gamma(D_i,\mathcal O_{D_i}). 
\end{equation}

Summarizing the results of this \S, we obtain: 
\begin{lemma}
To $X$ and its SD $D=\cup_{i\in I}D_i$ are attached: 
\begin{itemize}
\item vector spaces $\bm{\Omega}^1$ and $\bm{\Omega}^2$ given by (\ref{space:H1}) and (\ref{space:H2}); 
\item linear maps $\owedge:\Lambda^2(\bm{\Omega}^1)\to\bm{\Omega}^2$ and $d:\bm{\Omega}^1\to\bm{\Omega}^2$ 
(see (\ref{comm:diag:wedge}) 
and (\ref{comm:diag:diff})); 
\item vector spaces $(\mathbf D_i^1)_{i\in I}$ and $(\mathbf D_{ij}^0)_{\{i,j\}\in\mathcal P_2(I)}$ (see (\ref{residue:2}) and (\ref{residue:1:i})); 
\item maps $\mathrm{Res}_{D_i}^{(2)}:\bm{\Omega}^2\to\mathbf D_i^1$ and $\mathrm{Res}_{D_i\cap D_j}^{(1)}:
\mathbf D_i^1\to\mathbf D_{ij}^0$;  
\item an exact sequence $0\to \Gamma(X,\Omega_X^1)
\to\bm{\Omega}^1\stackrel{\oplus_{i\in I}\mathrm{Res}_{D_i}^{(1)}}{\longrightarrow}\oplus_{i\in I}\Gamma(D_i,\mathcal O_{D_i})$ (see (\ref{exact:sequence})).  
\end{itemize}
\end{lemma}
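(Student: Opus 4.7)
The plan is to observe that this lemma is a bookkeeping statement: all five bulleted items have already been constructed or proved in the body of \S\ref{sect:divisors} and \S\ref{subsect:maps}, so the proof will consist of traversing the list and citing the appropriate reference, with a single genuine verification needed for the exact sequence.

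First, for the data: the vector spaces $\bm{\Omega}^1$ and $\bm{\Omega}^2$ are introduced as subspaces of $\Gamma_{rat}(X,\Omega^k_X)$ in (\ref{space:H1}) and (\ref{space:H2}), so there is nothing to prove about them. The maps $\owedge$ and $d$ are produced by the two intermediate lemmas of \S\ref{subsect:maps} via the commutative diagrams (\ref{comm:diag:wedge}) and (\ref{comm:diag:diff}); the non-trivial content there is that the wedge product and the de Rham differential on rational forms preserve the subspaces $\bm{\Omega}^1$ and $\bm{\Omega}^2$, which follows from the stability of each $\Omega^\bullet_X(\log D_i)$ and $\Omega^\bullet_X(\log D_i\cup D_j)$ under these operations. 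The targets $\mathbf D_i^1$ and the residue maps $\mathrm{Res}^{(2)}_{D_i}$ are introduced in Lemma \ref{lemma:143}. Finally, the spaces $\mathbf D_{ij}^0$ and the maps $\mathrm{Res}^{(1)}_{D_i\cap D_j}$ are defined by (\ref{residue:1:i}), itself obtained by applying the residue construction (\ref{residue:1}) to the SD $(D_j\cap D_i)_{j\neq i}$ induced on $D_i$; I would briefly note that the intersection of components of a SD does yield a SD on each component, so the iteration is legitimate.

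The only point requiring a small argument is the exact sequence (\ref{exact:sequence}). Injectivity at the first term is automatic since $\Gamma(X,\Omega^1_X)\subset\bm{\Omega}^1$ as an inclusion of subspaces of $\Gamma_{rat}(X,\Omega^1_X)$. For exactness at $\bm{\Omega}^1$, I would argue as in the paragraph preceding the lemma: an element $\omega\in\bm{\Omega}^1$ is a rational $1$-form on $X$ with at worst simple poles along $D$, and by the preceding residue lemma the restriction of $\mathrm{Res}^{(1)}_{D_i}$ to $\Gamma(X,\Omega^1_X(\log D_k))$ vanishes for every $k\neq i$. Consequently the vanishing of $\mathrm{Res}^{(1)}_{D_i}(\omega)$ for all $i$ forces $\omega$ to be regular along every component of $D$, hence regular on $X$ by smoothness of $X$ and of the $D_i$, so $\omega\in\Gamma(X,\Omega^1_X)$.

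No step presents a real obstacle: the lemma is a summary of earlier work, and the only non-formal ingredient is the classical fact that on a smooth variety a rational form without poles on any irreducible divisor is globally regular. I would therefore write the proof very concisely as a list of pointers to the earlier constructions, followed by the short verification above for the exact sequence.
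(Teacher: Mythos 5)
Your proposal is correct and matches the paper exactly: the lemma is stated there as a summary ("Summarizing the results of this \S, we obtain:") with no separate proof, all items being supplied by the preceding constructions, and the one substantive point — exactness of (\ref{exact:sequence}) at $\bm{\Omega}^1$ — is handled in the paper by the same argument you give, namely that an element of the kernel of $\oplus_i\mathrm{Res}^{(1)}_{D_i}$ has no poles along any component of $D$ and is therefore regular everywhere.
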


\subsection{Tensor categories of geometric origin} For 
$X$ a smooth, irreducible quasiprojective variety over $\mathbf k$, we define $\mathrm{VBFC}(X)$ to be the category of pairs 
$(\mathcal F,\nabla_{\mathcal F})$, where $\mathcal F$ is a vector bundle over $X$ and $\nabla_{\mathcal F}:
\mathcal F\to\mathcal F\otimes_{\mathcal O_X}\Omega^1_X$ is a flat connection on $\mathcal F$. When equipped with the tensor 
product of vector bundles with connection, it is a symmetric tensor category with unit object the pair $(\mathcal O_X,d:\mathcal O_X\to
\Omega^1_X)$. 

If $D\subset X$ is a divisor, we define $\mathrm{VBFC}(X,D)$ to be the tensor category of vector bundles over $X$ with flat connection over 
$X-D$ and simple poles at $D$.

\section{The geometric setup}

\subsection{Geometric data}\label{sect:geomdata}

We give ourselves the following data: 
\begin{itemize}
\item a smooth projective variety $X_0$; 
\item an affine fibration $p:X\to X_0$; 
\item a special divisor $D_0\subset X_0$ (see \S\ref{sect:divisors}); we set $D_0:=\cup_{\alpha\in I} D_{0\alpha}$, so that the $D_{0\alpha}$
are smooth and pairwise normal crossing;  
\item a resolution of singularities $\pi_0:\tilde X_0\to X_0$ such that $\tilde D_0:=\pi^{-1}_0(D_0)$ is a normal crossing divisor (NCD). 
\end{itemize} 
We define $D\subset X$ by $D:=p^{-1}(D_0)$. We then set $\tilde X:=\tilde X_0\times_{X_0}X$ and $\tilde D\subset\tilde X$
by $\tilde D:=\tilde D_0\times_{X_0}X$. Then there is a commutative diagram 
\begin{equation}\label{diag:ddxxxxdd}
\xymatrix{
\tilde D\ar[rrr]\ar[ddd]\ar@{^{(}->}[rd] & & & D\ar@{_{(}->}[ld]\ar[ddd]\\ 
 & \tilde X\ar^\pi[r]\ar_{\tilde p}[d]&X\ar^p[d] & \\ 
 & \tilde X_0\ar^{\pi_0}[r]&X_0 & \\ 
\tilde D_0\ar@{^{(}->}[ru]\ar[rrr]  & & & D_0\ar@{_{(}->}[lu]}
\end{equation}
where the squares are Cartesian, and the divisors $\tilde D_0\subset\tilde X_0$ and  $\tilde D\subset\tilde X$ are NCDs. 

\subsection{A class of examples}\label{sect:examples}

We will work with the following example, based on the datum of an elliptic curve $E$ over $\mathbf k$ and an integer $n\geq 1$: 
\begin{itemize}
\item $X_0:=E^n$; 
\item the affine fibration $p:X\to X_0$ is $(E^\sharp)^n\to E^n$ (see \S\ref{sect:ellmat}); 
\item the divisor $D_0\subset X_0$ is $\cup_{i<j\in[n]}D_{ij}$;    
\item the resolution of singularities $\pi_0:\tilde X_0\to X_0$ is obtained by the Hironaka desingularization theorem (explicit examples 
may be derived from \cite{FM} or \cite{U}).  
\end{itemize} 

\section{The main results}\label{sect:tmr}

In \S\ref{sect:summary}, we explain the main result of this paper, which consists of the construction of a sequence of equivalences 
of tensor categories. The construction of these equivalences is explained  in the following subsections of the present \S. In the last 
subsection (\S\ref{sect:rwtuzc}), we explain the relation between this construction and the KZB connection from \cite{CEE}.    

\subsection{The main result: construction of a tensor equivalence} \label{sect:summary}

Let $X_0,D_0,\ldots$ be as in \S\ref{sect:examples}. 

We will construct a sequence of equivalences of tensor categories
$$
\mathrm{VBFC}(X_0-D_0)_{unip}
\stackrel{(a)}{\simeq} \mathrm{VBFC}(X-D)_{unip}
\stackrel{(b)}{\simeq} \mathrm{VBFC}(\tilde X-\tilde D)_{unip}
\stackrel{(c)}{\simeq}\mathrm{VBFC}(\tilde X,\tilde D)_{unip}
$$
\begin{equation}\label{sequence:equivalences}
\stackrel{(d)}{\simeq} \mathrm{VBFC}(X,D)_{unip}\stackrel{(e)}{\simeq} 
\mathrm{Vec}(X,D)\stackrel{(f)}{\simeq} \mathrm{Mod}(\mathfrak G)_{unip}\stackrel{(g)}{\simeq} \mathrm{Mod}(\mathfrak t_{1,n})_{unip},   
\end{equation}
where $\mathrm{Vec}(X,D)$ is the category defined in \S\ref{acvo} and $\mathrm{Mod}(\mathfrak g)$ denote the category of finite 
dimensional modules over a Lie algebra $\mathfrak g$. In this diagram, (a) comes from \S\ref{tcaf}, (b) comes from 
$X-D=\tilde X-\tilde D$, (c) comes from taking the unipotent part of 
the isomorphism from \S\ref{de}, (d) is the result from \S\ref{sect:ceibd}, (e) comes from \S\ref{acvo}, (f) comes from \S\ref{alagatapxd}, 
(g) comes from \S\ref{cogitpc}. All this induces an equivalence of tensor categories 
$\mathrm{VBFC}(C(E,n))_{unip}\simeq\mathrm{Mod}(\mathfrak t_{1,n}^{\mathbb C})_{unip}$. 

The fiber functor on $\mathrm{VBFC}(X_0-D_0)_{unip}$ by this equivalence is the functor 
$$
F:\mathrm{VBFC}(X_0-D_0)_{unip}\to\mathrm{Vec}
$$ 
constructed as follows. To a unipotent bundle with flat connection $(\mathcal E,\nabla)$ on $X_0-D_0$, we attach its lift 
$(\tilde{\mathcal E},\tilde\nabla)$ to $X-D$. One then extends this bundle with flat connection to a pair $(\overline{\mathcal E},\overline\nabla)$
of a bundle $\overline{\mathcal E}$ over $X$ with a flat connection $\overline\nabla$ with regular singularities on $D$. Unipotent 
bundles over $X$ are trivial, so $\overline{\mathcal E}$ is canonically isomorphic to $\Gamma(X,\overline{\mathcal E})\otimes\mathcal O_X$. 
One then sets 
\begin{equation}\label{def:F}
F(\mathcal E,\nabla):=\Gamma(X,\overline{\mathcal E}). 
\end{equation}

It follows from (\ref{sequence:equivalences}) and from the computation (\ref{def:F}): 
\begin{theorem} 
The functor $F:\mathrm{VBFC}(X_0-D_0)_{unip}\to\mathrm{Vec}$ defined by (\ref{def:F}) is a fiber functor. There is a tensor equivalence
$$
\mathrm{VBFC}(C(E,n))_{unip}\simeq\mathrm{Mod}(\mathfrak t_{1,n}^{\mathbb C})_{unip},  
$$
compatible with the fiber functors on both sides. 
\end{theorem}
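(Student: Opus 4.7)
The plan is to assemble the statement from the seven tensor equivalences (a)--(g) of (\ref{sequence:equivalences}), each established in its own subsection, and then to trace the standard fiber functor on $\mathrm{Mod}(\mathfrak t_{1,n}^{\mathbb C})_{unip}$ backwards through the chain and identify it with $F$ as defined by (\ref{def:F}). The proof itself reduces to bookkeeping once the individual equivalences are granted.

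First I would string together the equivalences, treating each as a black box referring to the cited subsection. Specifically, (a) is pull-back along the affine fibration $p:X\to X_0$ from \S\ref{tcaf}; (b) is the tautological identification $X-D=\tilde X-\tilde D$ from the Cartesian squares of diagram (\ref{diag:ddxxxxdd}); (c) is the unipotent part of Deligne's canonical extension along the NCD $\tilde D\subset\tilde X$ from \S\ref{de}; (d) is descent from $\tilde X$ back to $X$ of bundles with logarithmic connection, from \S\ref{sect:ceibd}; (e) is the triviality of unipotent bundles on $X=(E^\#)^n$ together with its effect on connections with simple poles at $D$, established in \S\ref{acvo}, and it is here that the homological properties of $E^\#$ enter decisively; (f) rewrites $\mathrm{Vec}(X,D)$ as modules over an explicit Lie algebra $\mathfrak G$ built from the logarithmic $1$-forms and residue maps assembled in \S\ref{subsect:maps} (\S\ref{alagatapxd}); and (g) identifies $\mathfrak G\simeq\mathfrak t_{1,n}^{\mathbb C}$ via Fay-type relations (\S\ref{cogitpc}). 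Composing these seven tensor equivalences yields the asserted equivalence $\mathrm{VBFC}(C(E,n))_{unip}\simeq\mathrm{Mod}(\mathfrak t_{1,n}^{\mathbb C})_{unip}$.

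Second I would transport the forgetful fiber functor on $\mathrm{Mod}(\mathfrak t_{1,n}^{\mathbb C})_{unip}$ back through (g), (f), (e), (d), (c), (b), (a). Under (g) and (f) it remains the underlying vector space functor; combined with (e) this sends $(\overline{\mathcal E},\overline\nabla)\in\mathrm{VBFC}(X,D)_{unip}$ to $\Gamma(X,\overline{\mathcal E})$, because the triviality step in (e) provides a canonical isomorphism $\overline{\mathcal E}\simeq\Gamma(X,\overline{\mathcal E})\otimes\mathcal O_X$ intertwining tensor structures. Then (d), (c), (b), (a) read in reverse reproduce exactly the recipe stated just before (\ref{def:F}): from an object $(\mathcal E,\nabla)$ of $\mathrm{VBFC}(X_0-D_0)_{unip}$ one pulls back to $X-D$, then to $\tilde X-\tilde D$, applies Deligne canonical extension on $(\tilde X,\tilde D)$, and descends to a pair $(\overline{\mathcal E},\overline\nabla)$ on $(X,D)$. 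Hence $F$ is the image of the forgetful functor under the chain of tensor equivalences; it is therefore a fiber functor, and compatibility of fiber functors on the two sides of the final equivalence is automatic.

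The main obstacle I expect lies in the interaction of steps (c) and (e). For (c), one needs that Deligne's canonical extension commutes with the unipotent truncation and is compatible with the tensor product on the NCD $\tilde D$, so that taking the unipotent part before or after extension yields the same tensor equivalence. For (e), one needs the right cohomological vanishing for $(E^\#)^n$, namely that every unipotent bundle on $(E^\#)^n$ is trivial with a canonical trivialization compatible with tensor structure; this is precisely the feature that singles out the universal additive extension $E^\#$ in place of $E$ itself. Granted these two technical points, which are established in the cited subsections, the theorem follows by composition.
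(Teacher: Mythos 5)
Your proposal is correct and follows essentially the same route as the paper: the theorem is obtained there precisely by composing the seven equivalences (a)--(g) of (\ref{sequence:equivalences}), each delegated to its own subsection, and by observing that transporting the forgetful fiber functor back through the chain reproduces the recipe (\ref{def:F}) for $F$. The two technical points you flag (compatibility of the Deligne extension with unipotent parts and tensor products in step (c), and the canonical trivialization of unipotent bundles on $(E^\#)^n$ in step (e)) are indeed where the substance lies, and they are handled in \S\ref{de} and \S\ref{acvo} exactly as you anticipate.
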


\remark{This result could as well be obtained as the following sequence of equivalences of tensor categories
$$
\mathrm{VBFC}(X_0-D_0)_{unip}\stackrel{(b')}{\simeq}\mathrm{VBFC}(\tilde X_0-\tilde D_0)_{unip}
\stackrel{(c')}{\simeq}\mathrm{VBFC}(\tilde X_0,\tilde D_0)_{unip}
\stackrel{(d')}{\simeq}\mathrm{VBFC}(X_0,D_0)_{unip}
$$
$$
\stackrel{(a')}{\simeq}\mathrm{VBFC}(X,D)_{unip}\stackrel{(e)}{\simeq}\mathrm{Vec}(X,D)
\stackrel{(f)}{\simeq}\mathrm{Mod}(\mathfrak G)_{unip}
\stackrel{(g)}{\simeq}\mathrm{Mod}(\mathfrak t_{1,n}^{\mathbb C})_{unip}, 
$$
based on the "left-bottom" part of diagram (\ref{diag:ddxxxxdd}) (as opposed to the "top-right" part). }

\subsection{Tensor categories and affine fibrations (equiv. (a))} \label{tcaf}

If $Y$ is an affine variety over $\mathbf k$, then $H^1(Y,\mathcal O_Y)=\mathrm{Ext}^1_{Y}(\mathcal O_Y,\mathcal O_Y)=0$. 
Applying this for $Y=\mathbb A^m$, we obtain that if $(\mathcal E,\nabla)$ is an object of $\mathrm{VBFC}(\mathbb A^m)_{unip}$, 
and if $0=(\mathcal E_0,\nabla_0)\subset\cdots\subset(\mathcal E_N,\nabla_N)=(\mathcal E,\nabla)$ is a filtration whose subquotients 
are isomorphic to the trivial object, then the diagram $0=\mathcal E_0\subset\cdots\subset\mathcal E_N=\mathcal E$ is isomorphic 
to the diagram $0=\mathcal O_{\mathbb A^m}^{\oplus 0}\subset\cdots\subset\mathcal O_{\mathbb A^m}^{\oplus N}$. The image 
of the connection $\nabla$ is then necessarily of the form $d+A$, where $A$
is a strictly upper-triangular $N\times N$ matrix with coefficients in $\Omega^1_{\mathbb A^m}$, such that $(d+A)^2=0$. Using the
acyclicity of $\Omega^0_{\mathbb A^m}\to\Omega^1_{\mathbb A^m}\to\Omega^2_{\mathbb A^m}$ (vanishing of the first De Rham 
cohomology group of $\mathbb A^m$), one constructs a unipotent $N\times N$ matrix $n$ such that $d+A=ndn^{-1}$. Therefore 
$(\mathcal E,\nabla)\simeq(\mathcal O_{\mathbb A^m}^{\oplus N},d)$: any object of $\mathrm{VBFC}(\mathbb A^m)$ is isomorphic to 
$(\mathcal O_{\mathbb A^m},d)^{\oplus N}$ for some $N\geq 0$. All this implies: 
\begin{lemma}\label{lemma:eq}
The functor $\mathrm{VBFC}(\mathbb A^m)\to\mathrm{Vec}$ of global sections defines an equivalence of categories 
$$
\mathrm{VBFC}(\mathbb A^m)_{unip}\stackrel{\sim}{\to}\mathrm{Vec}. 
$$
A quasi-inverse is given by the operation $-\otimes(\mathcal O_{\mathbb A^m},d)$ of taking the tensor product of a 
vector space with the unit object $(\mathcal O_{\mathbb A^m},d)$. 
\end{lemma}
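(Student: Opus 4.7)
The plan is to leverage the classification established just before the statement of the lemma, which asserts that every object $(\mathcal{E},\nabla)$ of $\mathrm{VBFC}(\mathbb{A}^m)_{unip}$ is isomorphic to $(\mathcal{O}_{\mathbb{A}^m}^{\oplus N},d)$ for some $N\geq 0$; modulo this, all that remains is bookkeeping.

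First, I would pin down what ``global sections'' means: since $\mathrm{Vec}$ consists of \emph{finite-dimensional} $\mathbf{k}$-vector spaces while $\Gamma(\mathbb{A}^m,\mathcal{O}_{\mathbb{A}^m})=\mathbf{k}[x_1,\ldots,x_m]$ is infinite-dimensional, the functor intended must be the \emph{horizontal} global sections functor
$$
G:(\mathcal{E},\nabla)\mapsto\Gamma(\mathbb{A}^m,\mathcal{E})^{\nabla}=\{s\in\Gamma(\mathbb{A}^m,\mathcal{E})\mid\nabla s=0\}.
$$
On $(\mathcal{O}_{\mathbb{A}^m}^{\oplus N},d)$ the horizontal sections are precisely the constant $\mathbf{k}^N$-valued sections, so $G$ does land in $\mathrm{Vec}$, and $\dim_{\mathbf k}G(\mathcal{E},\nabla)$ agrees with the rank of the underlying bundle.

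Second, I would introduce the candidate quasi-inverse $H:V\mapsto(V\otimes\mathcal{O}_{\mathbb{A}^m},\mathrm{id}_V\otimes d)$. Its image lies manifestly in $\mathrm{VBFC}(\mathbb{A}^m)_{unip}$ (in fact in its full subcategory of direct sums of the unit object), and the natural isomorphism $H(V)\otimes H(W)\simeq H(V\otimes W)$ endows it with the structure of a tensor functor. The unit $V\to GH(V)$, $v\mapsto v\otimes 1$, identifies $V$ with the space of constant sections and is tautologically an isomorphism.

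Third, I would verify that the counit $\varepsilon_{(\mathcal{E},\nabla)}:HG(\mathcal{E},\nabla)\to(\mathcal{E},\nabla)$, which is the evaluation map $f\otimes s\mapsto fs$, is an isomorphism. By the classification recalled above, choose an isomorphism $\varphi:(\mathcal{E},\nabla)\stackrel{\sim}{\to}(\mathcal{O}_{\mathbb{A}^m}^{\oplus N},d)$; on the right-hand side $G$ returns $\mathbf{k}^N$ and the counit is the canonical isomorphism $\mathcal{O}_{\mathbb{A}^m}\otimes\mathbf{k}^N\stackrel{\sim}{\to}\mathcal{O}_{\mathbb{A}^m}^{\oplus N}$; by naturality of $\varepsilon$ along $\varphi$, $\varepsilon_{(\mathcal{E},\nabla)}$ is also an isomorphism. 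Combining this with the compatibility of $H$ (hence of $G$) with tensor products, one obtains the asserted tensor equivalence.

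The main, and only, substantive obstacle is the classification of unipotent flat connections on $\mathbb{A}^m$ as trivial bundles with trivial connection, which has already been handled in the paragraph preceding the lemma: the vanishing $H^1(\mathbb{A}^m,\mathcal{O})=0$ trivializes the underlying bundle, and the acyclicity of the de Rham complex of $\mathbb{A}^m$ produces a unipotent gauge transformation killing the connection matrix. Once that reduction is available, the verification above is formal.
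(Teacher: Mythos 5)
Your proof is correct and follows essentially the same route as the paper: the entire content is the classification $(\mathcal E,\nabla)\simeq(\mathcal O_{\mathbb A^m},d)^{\oplus N}$ established in the paragraph preceding the lemma, after which the unit/counit verification is formal, exactly as you say. Your observation that ``global sections'' must mean \emph{horizontal} global sections (the paper leaves this implicit, but it is the reading consistent with the later definition of $p_*$ via $\Gamma_{hor}$) is the right one.
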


Let $Y$ be a $\mathbf k$-variety and $\mathcal E$ be a vector bundle over $Y$.  According to 
\cite{Del}, \S I.2.2.4, p. 6, a connection $\nabla$ over $\mathcal E$ is equivalent to the data, for any $\mathbf k$-scheme $S$, 
any pair of morphisms $x,y:S\to Y$, infinitely close in the sense that the resulting morphism $S\to Y\times Y$
factorizes through the first infinitesimal neighborhood of the diagonal,  
of an isomorphism $\gamma_{x,y}:x^*\mathcal E\simeq y^*\mathcal E$, 
functorial with respect to base change and such that $\gamma_{x,x}$ is the identity. Moreover, $\nabla$ is flat iff one has 
$\gamma_{y,z}\circ\gamma_{x,y}=\gamma_{x,z}$. 

Let $p:Y^\#\to Y$ be a morphism of smooth, quasiprojective varieties over $\mathbf k$, locally isomorphic to a projection 
$\mathbb A^m\times U\to U$. Let $({\mathcal E}^\#,\nabla^\#)\in\mathrm{VBFC}(Y^\#)_{unip}$. For $x:S\to Y$, 
set $\mathcal E_x:=\Gamma_{hor}({\mathcal E}_{Y_x^\#},\nabla^\#_{|Y_x^\#})$, where
$Y^\#_x:=Y^\#\times_x S$ and $({\mathcal E}^\#_{Y_x^\#},\nabla^\#_{|Y^\#_x})$ is the pull-back of 
$({\mathcal E}^\#,\nabla^\#)$ to $Y_x^\#$. Using a trivialization of the projection, one obtains an isomorphism 
$\gamma_{x,y}:\mathcal E_x\simeq \mathcal E_y$, satisfying the base change and flatness conditions. In this way, and using 
Lemma \ref{lemma:eq}, we define a tensor functor 
$$
p_*:\mathrm{VBFC}(Y^\#)_{unip}\to\mathrm{VBFC}(Y)_{unip}. 
$$
On the other hand, pull-back defines a tensor functor $\mathrm{VBFC}(Y)\to\mathrm{VBFC}(Y^\#)$, which 
restricts to a tensor functor 
$$
p^*:\mathrm{VBFC}(Y)_{unip}\to\mathrm{VBFC}(Y^\#)_{unip}. 
$$
Arguing locally as in the proof of Lemma \ref{lemma:eq}, one obtains: 
\begin{lemma}\label{lemma:eq:global}
Let $p:Y^\#\to Y$ be a morphism of smooth, quasiprojective varieties over $\mathbf k$, locally isomorphic to a projection 
$\mathbb A^m\times U\to U$. The functors $p_*,p^*$ define quasi-inverse category equivalences between 
$\mathrm{VBFC}(Y)_{unip}$ and $\mathrm{VBFC}(Y^\#)_{unip}$. 
\end{lemma}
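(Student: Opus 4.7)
The plan is to construct the adjunction isomorphisms $\eta:\mathrm{id}_{\mathrm{VBFC}(Y)_{unip}}\to p_*p^*$ and $\epsilon:p^*p_*\to\mathrm{id}_{\mathrm{VBFC}(Y^\#)_{unip}}$ explicitly and prove that each is an isomorphism by a local-to-global argument. The key observation is that the statement is local on $Y$: cover $Y$ by affine opens $U_\alpha$ over which $p$ trivializes as $\mathrm{pr}_2:\mathbb A^m\times U_\alpha\to U_\alpha$. Since the functors $p_*$ and $p^*$ are constructed via the infinitesimal data $\gamma_{x,y}$ (respectively by pullback), which are manifestly local on $Y$, it suffices to check the isomorphism property on each $U_\alpha$ and then note that the resulting natural transformations automatically agree on overlaps, since they come from canonical adjunction data.

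For the easier direction $p_*p^*\simeq\mathrm{id}$, take $(\mathcal E,\nabla)\in\mathrm{VBFC}(Y)_{unip}$ and a point $x:S\to Y$. Then $(p^*\mathcal E)_{|Y^\#_x}$ is the pullback of $x^*\mathcal E$ under the projection $Y^\#_x\simeq\mathbb A^m_S\to S$, and its connection is the pullback of the trivial connection on $x^*\mathcal E$. By the version of Lemma~\ref{lemma:eq} relative to $S$ (the proof goes through verbatim, as it only used the acyclicity of $\Omega^\bullet_{\mathbb A^m}$, which yields $H^0(\mathbb A^m_S,\mathcal O)=\mathcal O_S$ as flat sections), the space of horizontal sections $\Gamma_{hor}((p^*\mathcal E)_{|Y^\#_x},(p^*\nabla)_{|Y^\#_x})$ is canonically $x^*\mathcal E$. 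One then checks that these identifications are functorial in $x$ and intertwine the $\gamma_{x,y}$ of $p_*p^*(\mathcal E,\nabla)$ with the given connection data of $(\mathcal E,\nabla)$, giving a well-defined natural isomorphism.

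For the essential direction $p^*p_*\simeq\mathrm{id}$, define $\epsilon_{(\mathcal E^\#,\nabla^\#)}:p^*p_*\mathcal E^\#\to\mathcal E^\#$ as the evaluation-at-the-point map: a section of $p^*p_*\mathcal E^\#$ at a point $y^\#\in Y^\#$ with $p(y^\#)=y$ is a horizontal section $\sigma$ of $\mathcal E^\#$ on the fiber $Y^\#_y$, and we send it to $\sigma(y^\#)\in\mathcal E^\#_{y^\#}$. To prove $\epsilon$ is an isomorphism, restrict to $p^{-1}(U_\alpha)\simeq\mathbb A^m\times U_\alpha$ and apply the relative analogue of Lemma~\ref{lemma:eq}: any unipotent object $(\mathcal E^\#,\nabla^\#)$ of $\mathrm{VBFC}(\mathbb A^m\times U_\alpha)$ is isomorphic to $\mathrm{pr}_2^*(\mathcal G,\nabla_{\mathcal G})$ for some object $(\mathcal G,\nabla_{\mathcal G})\in\mathrm{VBFC}(U_\alpha)_{unip}$; under this identification, both sides of $\epsilon$ are canonically $\mathrm{pr}_2^*\mathcal G$ and the map is the identity.

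The main obstacle is establishing this relative version of Lemma~\ref{lemma:eq}: given a filtration $0=\mathcal E^\#_0\subset\cdots\subset\mathcal E^\#_N=\mathcal E^\#$ by subbundles whose quotients are trivial, the underlying bundle is trivial since $H^1(\mathbb A^m\times U_\alpha,\mathcal O)=H^1(U_\alpha,\mathcal O)$ does not obstruct extensions along the $\mathbb A^m$-direction (one may shrink $U_\alpha$ to be affine to kill all such cohomology). The connection then has the form $d+A$ with $A$ strictly upper triangular, and the required unipotent gauge transformation $n$ with $n(d+A)n^{-1}=\mathrm{pr}_2^*(\text{connection})$ is constructed inductively using the acyclicity of the relative de Rham complex along $\mathbb A^m$, i.e., the vanishing $H^i(\mathbb A^m,\Omega^\bullet_{\mathbb A^m/\mathrm{pt}})=0$ for $i>0$, applied fibrewise over $U_\alpha$. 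Tensor compatibility of $p_*$ and $p^*$, and of $\eta$ and $\epsilon$, is automatic from the canonical nature of the constructions.
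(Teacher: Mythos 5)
Your proposal is correct and follows essentially the same route as the paper, which proves the lemma by "arguing locally as in the proof of Lemma \ref{lemma:eq}": you reduce to affine opens over which $p$ trivializes, establish the relative version of Lemma \ref{lemma:eq} there, and observe that the canonical nature of the unit and counit makes the local verifications glue. Your write-up merely makes explicit the adjunction maps and the gluing step that the paper leaves implicit.
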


\remark{
The present functor $p_*$ is related to the direct image functor $p_*$ for $D$-modules as follows
$$
\xymatrix{
\text{VBFC}(Y^\#)_{unip}\ar_{p_*}[d]\ar^{can}[rr] & & \mathcal D_{Y^\#}\text{-mod}\ar^{p_*}[d]\\ 
\text{VBFC}(Y)_{unip}\ar^{can}[r]& \mathcal D_Y\text{-mod}\ar^{-[-d]}[r]& D^b(\mathcal D_Y\text{-mod})}
$$
where $-[-d]$ is the shift by degree $-d$. }

\subsection{Deligne extension (equiv. (c))}\label{de}

In this \S, we assume that $Y$ is a smooth quasi-projective $\mathbf k$-variety and that $D\subset Y$ is a NCD.  

Let $\tau:\mathbb C/\mathbb Z\to\mathbb C$ be a lift of the canonical projection $\mathbb C\to\mathbb C/\mathbb Z$. 

Let $\mathrm{VBFC}(Y-D)_{unip.mon.}$ be the full subcategory of $\mathrm{VBFC}( Y- D)$ 
corresponding to the vector bundles with flat connection over $ Y- D$ for which the monodromy around each 
component of $ D$ is unipotent. 

Let $\mathrm{VBFC}( Y, D)_{nilp.res.}$ (resp., $\mathrm{VBFC}( Y, D)_\tau$) be the full subcategory of 
$\mathrm{VBFC}( Y, D)$ corresponding to the vector bundles over $ Y$ with flat connection on $ Y- D$ 
with at most simple poles at $ D$, whose residue at each component of $ D$ is nilpotent (resp., has spectrum contained 
in the image of $\tau$). Contrary to $\mathrm{VBFC}( Y,D)_\tau$, the categories $\mathrm{VBFC}(Y,D)_{nilp.res.}$ and 
$\mathrm{VBFC}(Y-D)_{unip.mon.}$ are tensor subcategories of their ambient categories. 

We decorate by the superscript $an$ the analytic analogues of these categories. 

Restriction induces a tensor functor 
\begin{equation}\label{res}
\widetilde{res}:\mathrm{VBFC}(Y,D)^{an}\to\mathrm{VBFC}(Y-D)^{an},
\end{equation} 
which further restricts to a functor 
$$
res:\mathrm{VBFC}(Y,D)_{nilp.res.}^{an}\to\mathrm{VBFC}(Y-D)_{unip.mon.}^{an}.
$$ 
In \cite{Del}, Prop. 5.2, p. 91, Deligne constructed a tensor functor 
\begin{equation}\label{canext}
can.ext.:\mathrm{VBFC}(Y-D)_{unip.mon.}^{an}\to
\mathrm{VBFC}(Y,D)_{nilp.res.}^{an}
\end{equation}
(canonical extension), such that $res\circ can.ext.=id$. This functor is extended in \cite{Del}, Prop. 5.4, to a functor 
$$
\widetilde{can.ext.}:\mathrm{VBFC}(Y-D)^{an}\to\mathrm{VBFC}(Y,D)_\tau^{an}
$$
such that $\widetilde{res}\circ\widetilde{can.ext.}=id$. In \cite{Bri}, Cor. 2, it is proved that $\widetilde{res}$ is a category equivalence. 
It follows that $res$ is a (tensor) category equivalence. Taking unipotent parts, one obtains:

\begin{lemma}
If $Y$ is a quasiprojective $\mathbf k$-variety and if $D\subset Y$ is a NCD, 
then there is an equivalence of tensor categories
$$
\mathrm{VBFC}(Y-D)_{unip}^{an}\simeq \mathrm{VBFC}(Y,D)_{unip}^{an}
$$ 
\end{lemma}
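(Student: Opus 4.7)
The plan is to deduce the equivalence by taking unipotent parts on both sides of the tensor equivalence $res:\mathrm{VBFC}(Y,D)_{nilp.res.}^{an}\stackrel{\sim}{\to}\mathrm{VBFC}(Y-D)_{unip.mon.}^{an}$ recalled above, which follows from \cite{Del}, Prop.~5.2 together with \cite{Bri}, Cor.~2. Both $res$ and its quasi-inverse $can.ext.$ are tensor functors sending the unit object to the unit object, so the general construction of \S\ref{subsect:unip} produces an induced tensor equivalence
$$
(\mathrm{VBFC}(Y,D)_{nilp.res.}^{an})_{unip}\simeq(\mathrm{VBFC}(Y-D)_{unip.mon.}^{an})_{unip}.
$$

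The substance of the proof is then to identify the unipotent part of each intermediate subcategory with the unipotent part of its ambient category. I would argue two inclusions. First, any object of $\mathrm{VBFC}(Y-D)^{an}$ admitting a filtration with trivial subquotients has unipotent monodromy: in a basis compatible with the filtration, parallel transport is block-upper-triangular with identity diagonal blocks, so the object already lies in $\mathrm{VBFC}(Y-D)_{unip.mon.}^{an}$. Second, any object of $\mathrm{VBFC}(Y,D)^{an}$ admitting such a filtration has nilpotent residues at each component of $D$, since the residue of the trivial connection vanishes and residues are additive in extensions, forcing the residue of an iterated extension of the unit to be strictly upper-triangular (in a basis compatible with the filtration), hence nilpotent.

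Combining these inclusions with the fact that $\mathrm{VBFC}(Y-D)_{unip.mon.}^{an}$ and $\mathrm{VBFC}(Y,D)_{nilp.res.}^{an}$ are full subcategories stable under sub- and quotient-objects (the defining conditions on monodromy and residue are inherited by sub- and quotient-objects in each ambient category), the filtrations witnessing unipotence in the ambient categories automatically stay within the subcategories. Hence the unipotent parts computed inside the subcategories coincide with those computed in the ambient categories, yielding the identifications $(\mathrm{VBFC}(Y-D)_{unip.mon.}^{an})_{unip}=\mathrm{VBFC}(Y-D)_{unip}^{an}$ and $(\mathrm{VBFC}(Y,D)_{nilp.res.}^{an})_{unip}=\mathrm{VBFC}(Y,D)_{unip}^{an}$. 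Chaining the three identifications gives the asserted tensor equivalence.

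The only mild obstacle in this argument is the conceptual bookkeeping between the various notions of \emph{unipotent part} computed in different ambient categories; it dissolves cleanly once one records the closure of the subcategories under sub- and quotient-objects, but it is the one point where the argument requires care rather than quotation of earlier results.
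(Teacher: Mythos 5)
Your proposal is correct and follows essentially the same route as the paper: one takes the tensor equivalence $res:\mathrm{VBFC}(Y,D)_{nilp.res.}^{an}\stackrel{\sim}{\to}\mathrm{VBFC}(Y-D)_{unip.mon.}^{an}$ furnished by Deligne's canonical extension together with \cite{Bri}, Cor.~2, and passes to unipotent parts. The only difference is that you make explicit the identifications $(\mathrm{VBFC}(Y-D)_{unip.mon.}^{an})_{unip}=\mathrm{VBFC}(Y-D)_{unip}^{an}$ and $(\mathrm{VBFC}(Y,D)_{nilp.res.}^{an})_{unip}=\mathrm{VBFC}(Y,D)_{unip}^{an}$, which the paper leaves implicit in the phrase ``taking unipotent parts''; your verification of these is correct.
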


\remark{The category $\mathrm{VBFC}(Y,D)_{unip}$ is "much smaller" than $\mathrm{VBFC}(Y,D)_{unip.mon.}$. 
For example, if $Y=\mathbb P^1$ and $D$ is a set of points in $\mathbb A^1$, if $N\geq 1$, if $(A_p)_{p\in D}$ is a collection of 
nilpotent matrices in $M_N(\mathbf k)$ with $\sum_{p\in D}A_p=0$, and if $A:=\sum_{p\in D}A_p\cdot dz/(z-p)$, then the pair 
$(\mathcal O_Y^{\oplus N},d+A)$
lies in $\mathrm{Ob}(\mathrm{VBFC}(Y,D)_{unip.mon.})$, but it lies in $\mathrm{Ob}(\mathrm{VBFC}(Y,D)_{unip})$
only if the matrices $(A_p)_{p\in D}$ can be simultaneously be upper-triangularized. }

\remark{
The canonical extension map from \cite{Del,Bri} makes sense in the analytic category. To make sense of it in the 
algebraic category (in the unipotent case), one proceeds as follows: (1) start with $(\mathcal E,\nabla)$ over $Y-D$; 
(2) as $(\mathcal E,\nabla)$ has regular singularities, it admits an extension $(\overline{\mathcal E},\overline\nabla)$ with 
simple poles; (3) because of unipotence of monodromy, residues should have integer eigenvalues; 
(4) this induces a splitting of the restriction of $\overline{\mathcal E}$ to $D$ according to generalized eigenvalues
of the residues; (5) one modifies $\overline{\mathcal E}$ in order that the eigenvalues of the residues become�$\ 0$; 
(6) the uniqueness of the extension is proved as follows: an isomorphism (over $Y-D$) between two extensions satisfies 
a differential equation, which implies that it has logarithmic growth. On the other hand, it is rational. Hence it is regular.  }

\subsection{Category equivalences induced by desingularization (equiv. (d))}\label{subsect:ceibd}

One proves in \S\ref{sect:ceibd}: 
\begin{lemma}\label{equivd}
Let $Y$ be a smooth quasiprojective $\mathbf k$-variety and $D\subset Y$ be a divisor. Let us fix a desingularization of 
$(Y,D)$, that is a morphism $\pi_Y:\tilde Y\to Y$ of smooth quasiprojective $\mathbf k$-varieties, such that the preimage 
$\tilde D:=\pi_Y^{-1}(D)$ of $D$ under $\pi_Y$  is a NCD in $\tilde Y$ and such that $\pi_Y$ restricts to an isomorphism 
$\tilde Y-\tilde D\simeq Y-D$.  There is a tensor equivalence 
$$
\mathrm{VBFC}(\tilde Y,\tilde D)_{unip}\simeq \mathrm{VBFC}(Y,D)_{unip}. 
$$
\end{lemma}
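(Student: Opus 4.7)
The strategy is to reduce to the NCD case of \S\ref{de} by comparing both sides to the common category $\mathcal{C}:=\mathrm{VBFC}(Y-D)_{unip}=\mathrm{VBFC}(\tilde Y-\tilde D)_{unip}$, where the equality uses the isomorphism $\tilde Y-\tilde D\simeq Y-D$. By equivalence $(c)$ of \S\ref{de} applied to the NCD $\tilde D\subset\tilde Y$, the restriction functor $\mathrm{VBFC}(\tilde Y,\tilde D)_{unip}\to\mathcal{C}$ is a tensor equivalence, with quasi-inverse given by Deligne's canonical extension. It thus suffices to show that the restriction $R\colon\mathrm{VBFC}(Y,D)_{unip}\to\mathcal{C}$ is also a tensor equivalence; composing with the canonical extension on $(\tilde Y,\tilde D)$ then yields the claim.

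A quasi-inverse $E\colon\mathcal{C}\to\mathrm{VBFC}(Y,D)_{unip}$ is constructed by first taking the Deligne canonical extension $(\tilde{\mathcal F},\tilde\nabla)\in\mathrm{VBFC}(\tilde Y,\tilde D)_{unip}$ of $(\mathcal F,\nabla)\in\mathcal{C}$, then setting $E(\mathcal F,\nabla):=\pi_{Y*}(\tilde{\mathcal F},\tilde\nabla)$. The two homological inputs enabling this are: (i) $\pi_{Y*}\mathcal O_{\tilde Y}=\mathcal O_Y$, which follows from Zariski's main theorem since $\pi_Y$ is proper birational with smooth (hence normal) target; and (ii) $R^i\pi_{Y*}\mathcal O_{\tilde Y}=0$ for $i\geq 1$, because smooth varieties have rational singularities. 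Induction on the length of the unipotent filtration of $\tilde{\mathcal F}$, with (i) handling the base case and (ii) ensuring that extensions push forward to extensions, shows that $\pi_{Y*}\tilde{\mathcal F}$ is a vector bundle on $Y$ equipped with a flat connection having simple poles along $D$.

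The functors $R$ and $E$ are shown to be quasi-inverse as follows. The isomorphism $R\circ E\simeq\mathrm{id}_\mathcal{C}$ is immediate, since $\pi_Y$ restricts to an isomorphism on the complement of $\tilde D$. For $E\circ R\simeq\mathrm{id}$, given $(\mathcal E,\nabla)\in\mathrm{VBFC}(Y,D)_{unip}$, the pullback $\pi_Y^*(\mathcal E,\nabla)$ lies in $\mathrm{VBFC}(\tilde Y,\tilde D)_{unip}$, as pullback sends the trivial object to the trivial object and preserves extensions, while the unipotent hypothesis ensures that the pulled-back residues along every component of $\tilde D$ remain nilpotent. By the uniqueness half of Deligne's canonical extension (Prop.~5.2 of \cite{Del}), $\pi_Y^*(\mathcal E,\nabla)$ coincides with the canonical extension of $(\mathcal E,\nabla)|_{Y-D}$. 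The projection formula together with input (i) then yields $\pi_{Y*}\pi_Y^*(\mathcal E,\nabla)\simeq(\mathcal E,\nabla)$, completing the argument.

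The main obstacle is the well-definedness of $\pi_Y^*$ and $\pi_{Y*}$ on VBFCs with simple pole data when $D$ is not normal crossing: a rational $1$-form with only a simple pole along a singular $D$ could in principle acquire higher-order poles on the exceptional components of $\tilde D$ after pullback, and analogously for pushforward. The unipotent hypothesis is precisely what circumvents this obstruction, as it allows the entire analysis to be reduced, via the filtration, to the cohomological vanishings (i) and (ii) for $\mathcal O_{\tilde Y}$ on $\tilde Y$, rather than to any pointwise control of connection forms.
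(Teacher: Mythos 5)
Your overall architecture is sound and genuinely different from the paper's: you factor both categories through their common restriction to $Y-D$, invoke the Deligne-extension equivalence of \S\ref{de} on the normal crossing side, and propose $\pi_{Y*}\circ(\mathrm{can.ext.})$ as a quasi-inverse to restriction on the $(Y,D)$ side; the paper instead works directly with $\pi_{Y*}$ and $\pi_Y^*$ between $\mathrm{VBFC}(\tilde Y,\tilde D)_{unip}$ and $\mathrm{VBFC}(Y,D)_{unip}$, the key point (Proposition \ref{prop:Deligne}) being that a unipotent bundle on $\tilde Y$ is trivial on $\pi_Y^{-1}(U)$ for every affine $U$, whence $V\cong\pi_Y^*\pi_{Y*}V$. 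The homological input ($\pi_{Y*}\mathcal O=\mathcal O$ and $R^{>0}\pi_{Y*}\mathcal O=0$) is the same in both arguments. However, there is a genuine gap in your verification of $E\circ R\simeq\mathrm{id}$, at the assertion that $\pi_Y^*(\mathcal E,\nabla)$ lies in $\mathrm{VBFC}(\tilde Y,\tilde D)_{unip}$. The delicate point there is not the nilpotence of the residues but the \emph{order} of the poles of the pulled-back connection along the exceptional components of $\tilde D$. You correctly flag this as the main obstacle in your last paragraph, but your claimed resolution --- that the unipotent filtration reduces everything to the cohomological vanishings (i) and (ii) --- does not work: the filtration reduces the pole-order question to that of the extension classes, which are sections of $\mathcal E'\otimes\Omega^1_Y(D)$ over $Y$, and the pullback of such a section acquires, a priori, a pole of order equal to the multiplicity of $\pi_Y^*D$ along each exceptional component, which can exceed $1$. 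No vanishing of coherent cohomology controls this.

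What actually saves the statement is a differential rather than a cohomological input: because the connection is flat with simple poles and takes values in a nilpotent Lie algebra, its connection form is automatically logarithmic (Lemma \ref{l1} of \S\ref{sect:description}), so that locally $\omega=\sum_m A_{i_m}\,dz_m/z_m+(\text{regular})$; logarithmic derivatives pull back to logarithmic derivatives, and the residue along an exceptional component is $\sum_m d_m A_{i_m}$ (with $d_m$ the vanishing orders of the $\pi_Y^*z_m$), which is nilpotent because the $A_{i_m}$ are simultaneously strictly upper triangular. This is exactly the computation carried out in the remark following the paper's proof. Without it, the identification of $\pi_Y^*(\mathcal E,\nabla)$ with the canonical extension of $(\mathcal E,\nabla)|_{Y-D}$ --- and hence your proof of $E\circ R\simeq\mathrm{id}$ --- does not get off the ground. (An alternative repair within your setup is to prove instead that the restriction functor $R$ is fully faithful, which together with $R\circ E\simeq\mathrm{id}$ already makes $R$ an equivalence with quasi-inverse $E$; but fullness of $R$ again requires the regularity/logarithmic-growth argument to extend flat morphisms across $D$, so the non-cohomological input cannot be avoided.)
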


\subsection{A tensor category $\mathrm{Vec}(Y,D)$ (equiv. (e))}\label{acvo}

\subsubsection{A tensor category $\mathrm{Vec}(Y,D)$}\label{atcv}

Let $Y$ be a quasiprojective $\mathbf k$-variety such that $H^0(Y,\mathcal O_Y)=\mathbf k$ and $H^1(Y,\mathcal O_Y)=0$, 
and let $D\subset Y$ be a divisor. Let $\mathrm{Vec}(Y,D)$ be the following category: 
\begin{itemize}
\item objects are the pairs $(V,\omega)$, where $V$ is a finite dimensional vector space, and $\omega$ is an element of 
$\Gamma(Y,\Omega^1_Y(D))\otimes\mathrm{End}(V)$, such that $\omega$ is strictly compatible with some filtration 
of $V$ (i.e., satisfies $\omega(V^i)\subset \Gamma(Y,\Omega^1_Y(D))\otimes V^{i+1}$ where 
$V=V^0\supset V^1\supset\cdots\supset V^N=0$ 
is the filtration of $V$) and satisfies the Maurer-Cartan equation $d\omega+[\omega,\omega]=0$ 
(equality in $\Gamma(Y,\Omega^2_X(2D))\otimes\mathrm{End}(V)$); 

\item the set of morphisms from $(V,\omega)$ to $(V',\omega')$ is the 
set of linear maps $f:V\to V'$, such that $f\omega=\omega'f$ (equality in $\Gamma(Y,\Omega^1_Y(D))\otimes
\mathrm{Hom}_{\mathbf k}(V,V')$). 
\end{itemize}
If $(V,\omega)$ and $(V',\omega')$ are two objects, then: 
\begin{itemize}
\item 
their direct sum is defined as $(V,\omega)\oplus(V',\omega'):=(V\oplus V',\omega+\omega^{\prime})$, where
$\omega$ is induced by the canonical map $\mathrm{End}(V)\to\mathrm{End}(V\oplus V')$, and $\omega^{\prime}$
is defined in a similar way; 
\item their tensor product is defined as $(V,\omega)\otimes(V',\omega')
:=(V\otimes V',\omega\otimes 1+1\otimes\omega^{\prime})$, where $\omega\otimes 1$ is induced by the map 
$\Gamma(Y,\Omega^1_Y(D))\otimes\mathrm{End}(V)\to \Gamma(Y,\Omega^1_Y(D))\otimes\mathrm{End}(V)\otimes
\mathrm{End}(V')\simeq \Gamma(Y,\Omega^1_Y(D))\otimes\mathrm{End}(V\oplus V')$ given by tensor product with $\mathrm{id}_{V'}$
and $1\otimes\omega^{\prime}$ is defined similarly. 
\end{itemize}

\begin{lemma}
Let $Y$ be a projective $\mathbf k$-variety and let $D\subset Y$ be a divisor. Then 
$\mathrm{Vec}(Y,D)$ is a tensor category with unit object $(\mathbf k,0)$. 
\end{lemma}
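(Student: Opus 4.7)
My plan is to verify, one by one, the axioms of a tensor category in the EGNO sense: $\mathbf k$-linearity, biexact symmetric monoidal structure with unit object $(\mathbf k, 0)$, abelian structure, rigidity, local finiteness, and $\mathrm{End}((\mathbf k, 0)) = \mathbf k$. The forgetful functor $\mathrm{Vec}(Y, D) \to \mathrm{Vec}$, $(V, \omega) \mapsto V$, is faithful and will transport the associator, braiding, and duality data from $\mathrm{Vec}$, so the task reduces to checking that the required objects (tensor products, duals, kernels, cokernels) live inside $\mathrm{Vec}(Y, D)$, i.e., their $\omega$-data satisfy the MC equation and strict filtration compatibility.

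First I would check that $\otimes$ and $\oplus$ are well defined. For the direct sum, both conditions are immediate from the block-diagonal form. For the tensor product $\Omega := \omega \otimes 1 + 1 \otimes \omega'$, the MC equation expands as
$$
d\Omega + [\Omega, \Omega] = (d\omega + [\omega, \omega]) \otimes 1 + 1 \otimes (d\omega' + [\omega', \omega']) + 2\,[\omega \otimes 1,\; 1 \otimes \omega'],
$$
where the first two summands vanish by hypothesis, and the mixed bracket vanishes because, expanding $\omega = \sum_k \alpha_k \otimes M_k$ and $\omega' = \sum_j \beta_j \otimes N_j$, one gets $\sum_{k,j}(\alpha_k \wedge \beta_j + \beta_j \wedge \alpha_k) \otimes (M_k \otimes N_j) = 0$ by graded commutativity of the wedge on 1-forms. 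Strict compatibility with the tensor filtration $(V \otimes V')^n := \sum_{i+j=n} V^i \otimes V'^j$ is immediate from $\omega(V^i) \subset \Gamma(Y,\Omega^1_Y(D)) \otimes V^{i+1}$ and its counterpart for $\omega'$. The unit axiom and $\mathrm{End}((\mathbf k, 0)) = \mathbf k$ are then trivial via $V \otimes \mathbf k \simeq V$.

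Next come abelianness and rigidity. Given $f:(V, \omega) \to (V', \omega')$, I would take kernel and cokernel in $\mathrm{Vec}$; the crucial point is that $\omega$ preserves $\ker f$, which follows by decomposing $\omega = \sum_k \alpha_k \otimes M_k$ with $\mathbf k$-linearly independent $\alpha_k$: the identity $f\omega = \omega' f$ forces $fM_k = M'_k f$ for each $k$, so each $M_k$ restricts to $\ker f$ and descends to $\mathrm{coker}\, f$; the induced filtrations $V^\bullet \cap \ker f$ and $(V'^\bullet + \mathrm{im}\, f)/\mathrm{im}\, f$ are strict. For rigidity, the dual of $(V, \omega)$ is $(V^*, -\omega^\top)$, where $\omega^\top \in \Gamma(Y, \Omega^1_Y(D)) \otimes \mathrm{End}(V^*)$ is the pointwise transpose; a short computation gives $(-\omega^\top) \wedge (-\omega^\top) = -(\omega \wedge \omega)^\top$, whence the MC equation for $-\omega^\top$ follows from that for $\omega$ by transposition, the strict filtration being the annihilator filtration $(V^{N-\bullet})^\perp$. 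Evaluation and coevaluation are the linear maps inherited from $\mathrm{Vec}$; their compatibility with $\omega$ is a direct check. Local finiteness is clear from finite-dimensionality of objects, and biexactness of $\otimes$ is inherited from $\mathrm{Vec}$.

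The main obstacle is not any individual verification but the bookkeeping of the \emph{strict} filtration compatibility under every categorical operation (kernel, cokernel, tensor product, dual); once this is tracked carefully along the lines above, the result drops out. Note that the hypotheses $H^0(Y, \mathcal O_Y) = \mathbf k$ and $H^1(Y, \mathcal O_Y) = 0$ from the opening of \S\ref{atcv} play no direct role here, but will be essential in the later equivalence (e) identifying $\mathrm{Vec}(Y, D)$ with $\mathrm{VBFC}(Y, D)_{unip}$.
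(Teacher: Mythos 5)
Your verification is correct, and it is exactly the routine check the paper leaves implicit: the lemma is stated in \S 3.5.1 without proof, the only data supplied there being the definitions of $\oplus$ and $\otimes$ that you verify. Your observations that the cross-term in the Maurer--Cartan equation for $\omega\otimes 1+1\otimes\omega'$ vanishes, that strictness propagates via the convolution, induced, and annihilator filtrations, and that $H^0(Y,\mathcal O_Y)=\mathbf k$, $H^1(Y,\mathcal O_Y)=0$ are only needed later for equivalence (e), are all accurate.
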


\subsubsection{Isomorphism $\mathrm{VBFC}(X,D)_{unip}\simeq \mathrm{Vec}(X,D)$}

Assume that $(X,D)$ are as in \S\ref{sect:examples}. 

It is known that $H^1(E^\#,\mathcal O_{E^\#})=\mathbf k$. Together with the K\"unneth formula and $X=(E^\#)^n$, this implies 
that $H^1(X,\mathcal O_X)=0$. It follows that any diagram $0=\mathcal E_0\subset\cdots\subset \mathcal E_N=\mathcal E$
of vector bundles over $X$ with subquotients $\simeq\mathcal O_X$ is isomorphic to a diagram $0=\mathcal O_X\otimes 
V_0\subset\cdots\subset\mathcal O_X\otimes V_N=\mathcal O_X\otimes V$, where $0=V_0\subset\cdots\subset V_N=V$
is a maximal filtration of a finite dimensional vector space $V$. 

If then $(\mathcal E,\nabla)$ is an object of $\mathrm{VBFC}(X,D)_{unip}$, the above isomorphism necessarily takes $\nabla$ to a 
connection on $\mathcal O_X\otimes V$ of the form $d+\omega$, where $\omega$ is as described in the definition of the 
category $\mathrm{Vec}(X,D)$. We obtain in this way: 

\begin{lemma}\label{1201}
If $(X,D)$ are as in \S\ref{sect:examples}, then there is an equivalence of tensor categories 
$$
\mathrm{VBFC}(X,D)_{unip}\simeq \mathrm{Vec}(X,D). 
$$
\end{lemma}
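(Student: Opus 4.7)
The plan is to construct an explicit pair of mutually quasi-inverse tensor functors, building directly on the paragraph preceding the lemma. In one direction, I would set
\[
\Psi: \mathrm{Vec}(X,D) \longrightarrow \mathrm{VBFC}(X,D)_{unip}, \quad (V,\omega) \mapsto (\mathcal{O}_X \otimes_{\mathbf{k}} V,\; d + \omega),
\]
sending a morphism (a linear intertwiner $f: V \to V'$) to $\mathrm{id}_{\mathcal{O}_X} \otimes f$. I would verify well-definedness on objects as follows: the pole hypothesis on $\omega$ yields at most a simple pole of $d+\omega$ along $D$; the Maurer--Cartan equation $d\omega + [\omega,\omega] = 0$ gives flatness; and the strictly compatible filtration $V = V^0 \supset \cdots \supset V^N = 0$ induces a filtration $\mathcal{O}_X \otimes V^\bullet$ of $(\mathcal{O}_X \otimes V, d + \omega)$ whose successive subquotients are direct sums of copies of $(\mathcal{O}_X, d)$, so the image is unipotent.

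In the other direction, I would define
\[
\Phi: \mathrm{VBFC}(X,D)_{unip} \longrightarrow \mathrm{Vec}(X,D), \quad (\mathcal{E},\nabla) \mapsto (V,\omega),
\]
by appealing to the paragraph preceding the lemma: since $H^1(X, \mathcal{O}_X) = 0$, every unipotent filtration of $\mathcal{E}$ is isomorphic to $\mathcal{O}_X \otimes V^\bullet$ for some filtration $V^\bullet$ of a finite-dimensional $V$. Under such a trivialization, $\nabla$ becomes $d + \omega$ with $\omega \in \Gamma(X, \Omega^1_X(D)) \otimes \mathrm{End}(V)$; flatness of $\nabla$ gives the Maurer--Cartan equation, and the origin of the filtration gives strict compatibility, so $(V,\omega)$ is indeed an object of $\mathrm{Vec}(X,D)$.

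The one technical step I would then address is independence of $\Phi$ from the choice of trivialization. Any two trivializations differ by an $\mathcal{O}_X$-linear automorphism of $\mathcal{O}_X \otimes V$; since $H^0(X,\mathcal{O}_X) = \mathbf{k}$, one has
\[
\mathrm{Hom}_{\mathcal{O}_X}(\mathcal{O}_X \otimes V, \mathcal{O}_X \otimes V') = \mathrm{Hom}_{\mathbf{k}}(V, V'),
\]
so the automorphism is induced by a constant $\mathbf{k}$-linear map $g$, under which $\omega$ transforms as $g^{-1} \omega g + g^{-1} dg = g^{-1} \omega g$. This is precisely an isomorphism in $\mathrm{Vec}(X,D)$. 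The same identification makes $\Phi$ functorial on morphisms: an arrow in $\mathrm{VBFC}(X,D)_{unip}$ between trivialized bundles is $\mathcal{O}_X$-linear, hence a constant $\mathbf{k}$-linear map, and commutation with the connection reduces to the intertwining relation $f\omega = \omega' f$.

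It then remains to check that $\Phi \circ \Psi$ and $\Psi \circ \Phi$ are canonically isomorphic to the identity---the former via the tautological trivialization, the latter via the one chosen in defining $\Phi$---and that both functors respect direct sum and tensor product, since these operations on trivialized bundles with connection are computed componentwise on the underlying $(V,\omega)$, matching verbatim the formulas in the definition of $\mathrm{Vec}(X,D)$. The only genuinely non-trivial step is producing the filtration-compatible trivialization in the construction of $\Phi$, and this is granted by the vanishing $H^1(X, \mathcal{O}_X) = 0$ established just before the lemma; the remainder is routine unwinding of definitions.
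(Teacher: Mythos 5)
Your proof is correct and follows essentially the same route as the paper: use $H^1(X,\mathcal O_X)=0$ to trivialize any unipotent filtration as $\mathcal O_X\otimes V^\bullet$, whence the connection becomes $d+\omega$ with $\omega$ as in the definition of $\mathrm{Vec}(X,D)$, and the Maurer--Cartan equation corresponds to flatness. The additional checks you carry out (independence of the trivialization via $H^0(X,\mathcal O_X)=\mathbf k$, functoriality, and compatibility with $\oplus$ and $\otimes$) are exactly the routine verifications the paper leaves implicit.
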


\subsubsection{Full subcategories of $\mathrm{Vec}(X,D)$}

Let us come back to the framework of \S\ref{atcv}, so $Y$ is a projective $\mathbf k$-variety and $D\subset Y$ is a divisor.

Let $\Sigma\subset \Gamma(Y,\Omega^1_Y(D))$ be a vector subspace. Define $\mathrm{Vec}_\Sigma(Y,D)$
as the full subcategory of $\mathrm{Vec}(Y,D)$, where the objects are the pairs $(V,\omega)$ as in the definition of 
$\mathrm{Vec}(Y,D)$, such that $\omega\in\Sigma\otimes\mathrm{End}(V)$. Then:
\begin{lemma}
Let $Y$ be a projective $\mathbf k$-variety, let $D\subset Y$ be a divisor, and let $\Sigma\subset \Gamma(Y,\Omega^1_Y(D))$ 
be a vector subspace. Then $\mathrm{Vec}_\Sigma(Y,D)$ is a tensor subcategory of $\mathrm{Vec}(Y,D)$. 
\end{lemma}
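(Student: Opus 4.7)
The plan is to verify the three standard closure properties: closure under morphisms (automatic from fullness), existence of the unit, and closure under direct sum and tensor product. Since $\mathrm{Vec}_\Sigma(Y,D)$ is defined as a \emph{full} subcategory of $\mathrm{Vec}(Y,D)$, the composition of morphisms and the identity morphisms are inherited with no additional work, so the essential content is to verify that the subclass of objects is closed under the monoidal operations and contains $(\mathbf k,0)$.

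First I would dispose of the unit: the object $(\mathbf k, 0)$ trivially satisfies $0 \in \Sigma \otimes \mathrm{End}(\mathbf k)$, and it is already an object of $\mathrm{Vec}(Y,D)$ by the previous lemma, so it lies in $\mathrm{Vec}_\Sigma(Y,D)$. Next I would handle direct sums: given $(V,\omega), (V',\omega') \in \mathrm{Vec}_\Sigma(Y,D)$, the direct sum $(V\oplus V', \omega + \omega')$ is an object of $\mathrm{Vec}(Y,D)$ (by the ambient tensor structure), and the condition $\omega + \omega' \in \Sigma \otimes \mathrm{End}(V\oplus V')$ follows immediately, since the canonical maps $\mathrm{End}(V), \mathrm{End}(V') \hookrightarrow \mathrm{End}(V\oplus V')$ send the $\Sigma$-hypothesis forward.

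The substantive — though still routine — step is the tensor product. Given two objects $(V,\omega)$ and $(V',\omega')$ with $\omega \in \Sigma \otimes \mathrm{End}(V)$ and $\omega' \in \Sigma \otimes \mathrm{End}(V')$, the tensor product in $\mathrm{Vec}(Y,D)$ is $(V\otimes V', \omega \otimes 1 + 1\otimes \omega')$. Now $\omega\otimes 1$ lies in the image of the map
\[
\Sigma \otimes \mathrm{End}(V) \xrightarrow{\ -\otimes \mathrm{id}_{V'}\ } \Sigma \otimes \mathrm{End}(V\otimes V'),
\]
and similarly $1\otimes \omega'$ lies in $\Sigma \otimes \mathrm{End}(V\otimes V')$. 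Since $\Sigma \otimes \mathrm{End}(V\otimes V')$ is a linear subspace, the sum $\omega \otimes 1 + 1\otimes \omega'$ belongs to it, which is precisely the defining condition. The Maurer-Cartan equation and the strict compatibility with a filtration on the tensor product are already guaranteed by the fact (used in the preceding lemma) that $\mathrm{Vec}(Y,D)$ is a tensor category, so no independent verification is needed.

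There is no genuine obstacle here: the argument is purely formal, resting on the observation that $\Sigma \otimes (-)$ is an exact functor on finite-dimensional vector spaces and therefore commutes with the direct-sum and tensor-product constructions used to define the monoidal structure of $\mathrm{Vec}(Y,D)$. If anything, the only point requiring a moment's care is to check that the inclusion $\mathrm{Vec}_\Sigma(Y,D) \hookrightarrow \mathrm{Vec}(Y,D)$ is compatible with the associativity and unit constraints — but these constraints are defined on the underlying vector spaces, so they automatically restrict to the full subcategory once the objects, unit, and tensor bifunctor are seen to preserve it.
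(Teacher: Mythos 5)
Your verification is correct, and it matches the paper exactly in the sense that the paper states this lemma without proof, treating precisely the checks you perform (unit, direct sums, tensor products, all reducing to the fact that $\Sigma\otimes\End(-)$ is a linear subspace stable under the maps $-\otimes\mathrm{id}$) as routine. The only point worth adding, since a tensor subcategory in the sense of \cite{EGNO} should also be closed under subquotients, is that for a subobject or quotient of $(V,\omega)$ with $\omega=\sum_a\sigma_a\otimes A_a$ and the $\sigma_a\in\Sigma$ linearly independent, the intertwining relation forces each $A_a$ to preserve the relevant subspace, so the induced form again lies in $\Sigma\otimes\End(-)$; this is the same kind of formal check and closes the argument.
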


\subsubsection{Equality $\mathrm{Vec}(Y,D)=\mathrm{Vec}_{\Sigma_{log}}(Y,D)$}

Let $(D_i)_{i\in I}$ be the components of $D$, so $D=\cup_{i\in I}D_i$; we assume that each divisor $D_i$ is smooth and that these divisors 
intersect pairwise transversally. Let 
$\Sigma_{log}\subset \Gamma(Y,\Omega^1_Y(D))$ be the subspace of differentials $\alpha$ defined by the 
following conditions: 
\begin{itemize}
\item[a)] $\alpha$ is a logarithmic form with poles only at $D$ (see \S\ref{sect:ldffrncd}); 
\item[b)] for each $i\in I$, $\mathrm{res}_{D_i}(\alpha)$ is regular of the whole of $D_i$. 
\end{itemize}

Lemmas \ref{l1} and \ref{l2} from \S\ref{sect:description} imply: 

\begin{lemma}\label{lemma:lxbapkv}
Let $Y$ be a projective $\mathbf k$-variety, let $D\subset Y$ be a divisor, then 
$\mathrm{Vec}(Y,D)=\mathrm{Vec}_{\Sigma_{log}}(Y,D)$. 
\end{lemma}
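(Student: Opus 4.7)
The inclusion $\mathrm{Vec}_{\Sigma_{log}}(Y,D) \subseteq \mathrm{Vec}(Y,D)$ is tautological, so what remains is to show that any object $(V,\omega) \in \mathrm{Vec}(Y,D)$ automatically has $\omega \in \Sigma_{log}\otimes\mathrm{End}(V)$. Equivalently, $\omega$ must be logarithmic with poles only at $D$ (condition a) and each residue $\mathrm{res}_{D_i}(\omega)$ must be regular on the whole of $D_i$ (condition b). The plan is to work locally on $Y$ and to combine the Maurer--Cartan equation $d\omega + [\omega,\omega] = 0$ with the nilpotence of $\omega$ coming from strict compatibility with the filtration $V = V^0 \supset \cdots \supset V^N = 0$; the latter condition means that $\omega$ is a strictly upper-triangular matrix of $1$-forms, so every matrix of regular coefficients extracted from $\omega$ is nilpotent.

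For condition (a), localize at a smooth point of a single component $D_i$, choose local coordinates with $D_i = \{x_1 = 0\}$, and write $\omega = (1/x_1)\sum_k A_k\, dx_k$ with $A_k$ regular. Extracting the order-$(-2)$ part of the Maurer--Cartan equation yields, after restriction to $D_i$, an identity of the form $(\mathrm{id} - \mathrm{ad}_{A_1|_{D_i}})(A_k|_{D_i}) = 0$ for $k \geq 2$. Since $A_1|_{D_i}$ is nilpotent as a matrix, $\mathrm{ad}_{A_1|_{D_i}}$ is nilpotent on $\mathrm{End}(V)$, hence $\mathrm{id} - \mathrm{ad}_{A_1|_{D_i}}$ is invertible and $A_k|_{D_i} = 0$ for $k \geq 2$. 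This forces $\omega$ to have the logarithmic form $A_1\, df_i/f_i + \omega_{\mathrm{reg}}$ near the chosen point.

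For condition (b), localize at a point of an intersection $D_i \cap D_j$, pick adapted coordinates with $D_i = \{x_1=0\}$ and $D_j = \{x_2=0\}$, and run the previous argument along generic points of $D_i$ and of $D_j$ separately. The resulting divisibility conclusions on the coefficient functions combine to yield a local expansion $\omega = \tilde B_1\, df_i/f_i + \tilde B_2\, df_j/f_j + \omega_{\mathrm{reg}}$ with $\tilde B_1,\tilde B_2$ regular at $D_i \cap D_j$. In particular the residue $\mathrm{res}_{D_i}(\omega) = \tilde B_1|_{D_i}$ extends regularly across $D_i \cap D_j$, which is exactly condition (b). One then assembles these local verifications into the required global statements, which is precisely what the forward-referenced Lemmas \ref{l1} and \ref{l2} are designed to do.

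The critical step, and the main obstacle, is the invertibility of $\mathrm{id} - \mathrm{ad}_{A_1|_{D_i}}$, which is the unique place where the strict-filtration (unipotence) hypothesis enters. Without it, the Maurer--Cartan equation alone would not force the tangential components of $\omega$ along $D_i$ to vanish, and neither (a) nor (b) would hold; a nonzero semisimple residue would already violate logarithmicity, as one sees even in the rank-one abelian case.
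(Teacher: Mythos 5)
Your reduction of the statement to the two local claims (logarithmicity of $\omega$, and regularity of its residues on the whole of each $D_i$) is the right one, and your argument for condition (a) is exactly the paper's Lemma \ref{l1}: multiply by a local equation, restrict the Maurer--Cartan relation to $D_i$, obtain $(\mathrm{id}-\mathrm{ad}_{f_0})(g_{i0})=0$ for the tangential coefficients, and invert $\mathrm{id}-\mathrm{ad}_{f_0}$ using nilpotence of the residue.

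The gap is in condition (b). Your divisibility bookkeeping only works at points of $D$ through which exactly two components pass. But $D$ is only a \emph{special} divisor (smooth components, pairwise transversal), not a normal crossing divisor, and in the paper's main example $D=\cup_{i<j}D_{ij}\subset(E^\#)^n$ three or more components meet along the deeper diagonal strata as soon as $n\ge 3$. At such a point the pole order of $\omega$ as a section of $\Omega^1_Y(D)$ is $\ge 3$ and the pairwise relations are not enough. Concretely, take the local model of three pairwise transversal curves $x=0$, $y=0$, $x=y$ in $\mathbb A^2$ and write $\omega=(a\,dx+b\,dy)/(xy(x-y))$ with $a,b$ regular. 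Logarithmicity at generic points of the three components yields $y\mid a$, $x\mid b$, $(x-y)\mid(a+b)$; the residue along $\{x=0\}$ is $-a(0,y)/y^2$, and those relations only force $y\mid a(0,y)$ --- one factor of $y$ where two are needed. So regularity of the residue across the triple point does not follow from your local coefficient analysis. This is precisely what the paper's Lemma \ref{l2} supplies, and its proof uses the Maurer--Cartan equation a second time in an essential, non-algebraic way: the residue $f_{m0}$ is a flat section, $df_{m0}+[\beta_0,f_{m0}]=0$, for the connection $d+\beta_0$ induced on $D_m$, which again has simple poles along the $D_m\cap D_j$; by Deligne's regularity theory together with unipotence of the structure group, such a flat section has logarithmic growth near $D_m\cap(\cup_{j\ne m}D_j)$, and a rational function with logarithmic growth has no pole, so $f_{m0}$ extends to all of $D_m$. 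You need to import this (or an equivalent) argument to close the case of points lying on three or more components.
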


\begin{remark}
There are bundle injections $\Omega^1_Y(\mathrm{log}D_i)\hookrightarrow\Omega^1_Y(D_i)
\hookrightarrow\Omega^1_Y(\sum_{i\in I}D_i)$, inducing an injection 
$\sum_{i\in I}\Omega^1_Y(\mathrm{log}D_i)\hookrightarrow \Omega^1_Y(\sum_{i\in I}D_i)$. Then 
$\Sigma_{log}=\Gamma(Y,\sum_{i\in I}\Omega^1_Y(\mathrm{log}D_i))$. 
\end{remark}

\subsubsection{Computation of $\Sigma_{log}$}\label{sect:cos}

Let $(X,D)$ be as in \S\ref{sect:examples}. Recall that for each pair $(i,j)$ with $i<j\in[n]$, the space of global sections 
$\Gamma(X,\Omega^1_X(\mathrm{log}D_{ij}))$ is a subspace of $\Gamma_{rat}(X,\Omega^1_X)$. Recall from \S\ref{subsect:maps} 
that the sum of these spaces is a subspace 
$$
\bm{\Omega}^1=\sum_{i<j\in[n]}\Gamma(X,\Omega^1_X(\mathrm{log}D_{ij}))\subset \Gamma(X,\Omega^1_X(D)). 
$$

In \S\ref{sect:ehs}, we prove: 
\begin{lemma}\label{lemma:atxdaais}
Assuming that $(X,D)$ are as in \S\ref{sect:examples}, one has $\Sigma_{log}=\bm{\Omega}^1$. 
\end{lemma}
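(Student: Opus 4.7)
The strategy is in two stages. Since $\bm{\Omega}^1 \subseteq \Sigma_{log}$ is tautological, the content is the reverse inclusion. Using the identification $\Sigma_{log} = \Gamma(X, \sum_{i<j}\Omega^1_X(\log D_{ij}))$ provided by the remark after Lemma \ref{lemma:lxbapkv}, the issue is whether taking global sections commutes with this sum of subsheaves of $\Omega^1_X(\log D)$.

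The first stage reduces the question to a surjectivity statement on each individual divisor. Consider the residue short exact sequence of sheaves
$$0 \longrightarrow \Omega^1_X \longrightarrow \sum_{i<j}\Omega^1_X(\log D_{ij}) \longrightarrow \bigoplus_{i<j}(i_{D_{ij}})_*\mathcal{O}_{D_{ij}} \longrightarrow 0,$$
in which the right arrow is the direct sum of the residue maps; its exactness is local, relying only on smoothness of the $D_{ij}$ and their pairwise transversality. Compare this with the direct sum over $i<j$ of the analogous individual sequences $0\to\Omega^1_X\to\Omega^1_X(\log D_{ij})\to(i_{D_{ij}})_*\mathcal{O}_{D_{ij}}\to 0$. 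The resulting commutative ladder and a diagram chase in the associated long cohomology exact sequences show that $\Sigma_{log}=\bm{\Omega}^1$ follows as soon as, for each pair $i<j$, the residue map
$$\mathrm{Res}_{D_{ij}}:\Gamma(X,\Omega^1_X(\log D_{ij}))\longrightarrow\Gamma(D_{ij},\mathcal{O}_{D_{ij}})$$
is surjective: given $\omega\in\Sigma_{log}$ with residues $(r_{ij})$, one lifts each $r_{ij}$ to a section $\omega_{ij}\in\Gamma(X,\Omega^1_X(\log D_{ij}))$, and the difference $\omega-\sum_{i<j}\omega_{ij}$ has vanishing residue on every $D_{ij}$, hence is regular on $X$ and \emph{a fortiori} lies in $\bm{\Omega}^1$.

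The second stage supplies the forms needed to realize every residue. The natural source on $X=(E^\#)^n$ is the Kronecker form: since $E^\#$ is the universal additive extension of $E$, the classical Kronecker--Eisenstein function yields a globally defined meromorphic $1$-form on $E^\#\times E^\#$ whose only singularity is a simple pole along the additive diagonal $\{p(y_1)=p(y_2)\}$, with residue controlled by a prescribed function on that diagonal. Pulling this form back to $(E^\#)^n$ via the projection onto the $(i,j)$-factors and multiplying by pullbacks of regular functions from the remaining $(E^\#)^{n-2}$ produces sections of $\Omega^1_X(\log D_{ij})$. A computation of $\Gamma(D_{ij},\mathcal{O}_{D_{ij}})$ via the affine bundle map $D_{ij}\to E^{n-1}$ induced by $p$ then shows that the forms so obtained realize every element as a residue.

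The main obstacle is this second stage: one must verify that the Kronecker-type $1$-forms genuinely globalize on $(E^\#)^n$ and that the residues so obtained span $\Gamma(D_{ij},\mathcal{O}_{D_{ij}})$. This hinges on the Fay-type functional identities obeyed by the Kronecker function, whose homogeneity properties---flagged in the introduction as the origin of the grading on $\hat{\mathfrak t}_{1,n}^{\mathbb C}$---are precisely what guarantee both global regularity of the relevant pullbacks away from $D_{ij}$ and the correct residue behaviour along it.
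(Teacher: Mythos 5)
Your first stage is sound and is essentially the paper's own reduction: the exact sequence $0\to\Gamma(X,\Omega^1_X)\to\Sigma_{log}\to\oplus_{i<j}\Gamma(D_{ij},\mathcal O_{D_{ij}})$, together with the inclusion $\Gamma(X,\Omega^1_X)\subset\bm{\Omega}^1$ and the vanishing of $\mathrm{Res}_{D_{ij}}$ on $\Gamma(X,\Omega^1_X(\mathrm{log}D_{kl}))$ for $(k,l)\neq(i,j)$, reduces everything to the surjectivity of each residue map $\mathrm{Res}_{D_{ij}}:\Gamma(X,\Omega^1_X(\mathrm{log}D_{ij}))\to\Gamma(D_{ij},\mathcal O_{D_{ij}})$; this is exactly Lemma \ref{tlminis}, and your lift-and-subtract diagram chase is the paper's.

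The gap is that you never establish this surjectivity — you flag it yourself as ``the main obstacle'' — and the mechanism you invoke for it is the wrong one. What is actually needed is, first, the identification $D_{ij}\simeq\mathbb A^1\times(E^\#)^{n-1}$ coming from $\pi^{-1}(0)\simeq\Gamma(E,\Omega^1_E)\simeq\mathbb A^1$, which yields $\Gamma(D_{ij},\mathcal O_{D_{ij}})\simeq\mathbf k[t]$; note that since $\Gamma((E^\#)^{n-2},\mathcal O)=\mathbf k$, ``multiplying by pullbacks of regular functions from the remaining factors'' contributes only scalars and cannot account for this polynomial ring. Second, one needs an explicit family of logarithmic forms whose residues span $\mathbf k[t]$. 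In the paper these are $\omega_{ij}^\alpha=\mathrm{map}_{ij}^*(f_\alpha\,dx/y)$, where the $f_\alpha$ are the coefficients of the algebraic Kronecker series $\mathbf f(z)\in\frac{1}{z}\mathcal A[[z]]$; the expansion (\ref{id:interm}) of $\mathbf f(z)$ in the formal coordinates $(p,t)$ along $\pi^{-1}(0)$ gives $\mathrm{Res}_{D_{ij}}(\omega_{ij}^\alpha)=(-t)^\alpha/\alpha!$ (see (\ref{mu:f:alpha}) and Lemma \ref{lemma:basis:H0EsharpSimplePole}), and these visibly span $\mathbf k[t]$. The Fay identity plays no role at this point: it is a quadratic three-term relation used later to compute the kernel of $\owedge$ and hence the relations of $\mathfrak G$, whereas the residue computation uses only the definition of the $f_\alpha$ via $\tilde\theta_{E,\omega}$ and the coordinate $t$ on $\pi^{-1}(0)$. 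So the right objects (Kronecker-type forms) appear in your sketch, but the computation that constitutes the lemma's actual content is missing, and the justification offered for it would not supply it.
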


Combining Lemmas \ref{1201}, \ref{lemma:lxbapkv} and \ref{lemma:atxdaais}, one gets:
\begin{lemma}\label{lemma:ixdaais}
If $(X,D)$ are as in \S\ref{sect:examples}, then one has  
$\mathrm{Vec}(X,D)=\mathrm{Vec}_{\bm{\Omega}^1}(X,D)$. 
\end{lemma}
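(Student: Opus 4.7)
The plan is to simply chain the two preceding equalities. First, I would apply Lemma \ref{lemma:lxbapkv} to the pair $(X,D)$ of \S\ref{sect:examples} in order to deduce
\[
\mathrm{Vec}(X,D) = \mathrm{Vec}_{\Sigma_{log}}(X,D).
\]
This requires checking that the hypotheses of that lemma hold. Namely, $X = (E^\#)^n$ satisfies $H^0(X,\mathcal O_X) = \mathbf{k}$ and $H^1(X,\mathcal O_X) = 0$ (the vanishing of $H^1$ is the defining feature of the universal additive extension $E^\#$, extended to $(E^\#)^n$ by K\"unneth, as already invoked in the proof of Lemma \ref{1201}), and $D = \cup_{i<j\in[n]} D_{ij}$ has smooth, pairwise transverse components, inherited from the fact that $D_0\subset E^n$ is a special divisor and $D = p^{-1}(D_0)$ for the smooth projection $p:(E^\#)^n\to E^n$. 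Although Lemma \ref{lemma:lxbapkv} is literally stated for projective $Y$, its proof (going through Lemmas \ref{l1} and \ref{l2}) only uses the above cohomological and transversality conditions, so it transfers to the present quasi-projective setting.

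Second, I would invoke Lemma \ref{lemma:atxdaais}, which identifies $\Sigma_{log}$ with the explicit subspace $\bm{\Omega}^1$ of $\Gamma(X,\Omega^1_X(D))$. Since the full subcategory $\mathrm{Vec}_\Sigma(X,D)$ depends on $\Sigma$ only through the subspace, an equality of subspaces yields an equality of categories:
\[
\mathrm{Vec}_{\Sigma_{log}}(X,D) = \mathrm{Vec}_{\bm{\Omega}^1}(X,D).
\]
Chaining the two equalities gives the desired identity $\mathrm{Vec}(X,D) = \mathrm{Vec}_{\bm{\Omega}^1}(X,D)$.

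The role of Lemma \ref{1201} in the citation is to identify $\mathrm{Vec}(X,D)$ with $\mathrm{VBFC}(X,D)_{unip}$, which is needed to insert the present equality into the chain of tensor equivalences (\ref{sequence:equivalences}); strictly speaking it is not used in proving the equality itself. The present statement is a verbatim combination of Lemmas \ref{lemma:lxbapkv} and \ref{lemma:atxdaais}, and I foresee no real obstacle beyond the hypothesis verification mentioned above. All of the substantive work has already been carried out in the two referenced lemmas: Lemma \ref{lemma:lxbapkv} contains the logarithmic-pole characterization of sections appearing in Maurer--Cartan data, while Lemma \ref{lemma:atxdaais} encodes the concrete computation that every logarithmic form along $D = \cup D_{ij}$ with regular residues is a sum of forms logarithmic along a single component $D_{ij}$ at a time.
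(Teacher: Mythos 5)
Your proof is correct and follows exactly the paper's route: the paper derives this lemma simply by "combining Lemmas \ref{1201}, \ref{lemma:lxbapkv} and \ref{lemma:atxdaais}," which is precisely the chaining you carry out. Your additional care about the projective versus quasi-projective hypothesis in Lemma \ref{lemma:lxbapkv} (resolved by noting that its proof via Lemmas \ref{l1} and \ref{l2} only uses the cohomological and transversality conditions) is a sensible clarification of a point the paper glosses over.
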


\subsection{A Lie coalgebra attached to a pair $(Y,D)$ (equiv. (f))}\label{alagatapxd}

Let again $(Y,D)$ be a pair of a quasiprojective $\mathbf k$-variety and a divisor $D\subset Y$, and let $\Sigma
\subset \Gamma(Y,\Omega^1_Y(D))$ be a vector subspace. 

\subsubsection{Relation between $\mathrm{Vec}_\Sigma(Y,D)$ and a category of comodules over a Lie coalgebra}

The composed map $\Sigma\subset \Gamma(Y,\Omega^1_X(D))\stackrel{d}{\to} \Gamma(Y,\Omega^2_X(2D)$ induced by the 
differential will be denoted 
$$
\Sigma\stackrel{d}{\to} \Gamma(Y,\Omega^2_X(2D)); 
$$
the wedge product of forms will be denoted $\owedge:\Lambda^2(\Gamma(Y,\Omega^1_X(D)))\to \Gamma(Y,\Omega^2_X(2D))$, 
and its composed map with the inclusion $\Lambda^2(\Sigma)\subset\Lambda^2(\Gamma(Y,\Omega^1_X(D)))$
will be denoted 
$$
\Lambda^2(\Sigma)\stackrel{\owedge}{\to} \Gamma(Y,\Omega^2_X(2D)).  
$$

The tensor algebra $T(\Sigma)$ is a commutative bialgebra, when equipped with the shuffle product $\sh$ and the 
deconcatenation coproduct $\Delta_{conc}$. 

Let 
$$
\mu:T(\Sigma)\to T(\Sigma)\otimes \Gamma(Y,\Omega^2_X(2D))\otimes T(\Sigma)
$$
be the map defined by 
$$
\mu([h_1|\ldots|h_n]):=\sum_{i=1}^n[h_1|\ldots|h_{i-1}]\otimes dh_i\otimes[h_{i+1}|\ldots|h_n]
+{1\over 2}\sum_{i=1}^{n-1}[h_1|\ldots|h_{i-1}]\otimes(h_i\owedge h_{i+1})\otimes[h_{i+2}|\ldots|h_n], 
$$
for $n\geq 0$, $h_1,\ldots,h_n\in\Sigma$ (in particular, $\mu(1)=0$). Then the following diagram commutes
$$
\xymatrix{
T(\Sigma)\ar^{\!\!\!\!\!\!\!\!\!\!\!\!\!\!\!\!\!\!\!\!\!\!\!\!\mu}[r]\ar_{\Delta_{conc}}[d]& T(\Sigma)\otimes \Gamma(Y,\Omega^2_X(2D))
\otimes T(\Sigma)
\ar^{\mathrm{id}\otimes\mathrm{id}\otimes\Delta_{conc}\oplus\Delta_{conc}\otimes\mathrm{id}\otimes\mathrm{id}}[d]\\
T(\Sigma)^{\otimes 2}\ar_{\!\!\!\!\!\!\!\!\!\!\!\!\!\!\!\!\!\!\!\!\!\!\!\!\mu\otimes\mathrm{id}\oplus\mathrm{id}\otimes\mu}[r]& 
\stackrel{(T(\Sigma)\otimes \Gamma(X,\Omega^2_Y(2D))\otimes T(\Sigma))\otimes T(\Sigma)\oplus}
{\stackrel{T(\Sigma)\otimes(T(\Sigma)\otimes \Gamma(Y,\Omega^2_Y(2D))\otimes T(\Sigma))}
{}}}
$$
This implies that $\mathbf C_\Sigma:=\mathrm{Ker}(\mu)$ is a subbialgebra of the shuffle bialgebra $T(\Sigma)$. 
One associates to it the Lie coalgebra $\mathfrak C_\Sigma:=\mathrm{Coprim}(\mathbf C_\Sigma)=\mathrm{Coker}(\mathbf C_\Sigma^{\otimes 2}\to\mathbf C_\Sigma,
a\otimes b\mapsto a\sh b-a\epsilon(b)-\epsilon(a)b)$, where $\sh$ is the product of $\mathbf C_\Sigma$ and $\epsilon:\mathbf C_\Sigma\to\mathbf k$
is its counit map.   

Finite dimensional comodules over $\mathbf C_\Sigma$ bijectively correspond to finite dimensional comodules 
over the Lie coalgebra $\mathfrak C_\Sigma$. 
These form a tensor category, denoted $\mathrm{Comod}(\mathfrak C_\Sigma)$. One then has a tautological equivalence of
tensor categories 
\begin{equation}\label{iso:16012017}
\mathrm{Vec}_\Sigma(Y,D_Y)\simeq\mathrm{Comod}(\mathfrak C_\Sigma)_{unip}. 
\end{equation} 

\subsubsection{Gradedness in a particular situation}

Assume that $(X,D)$ is as in \S\ref{sect:geomdata}, so $D=\cup_{i\in I}D_i$ is a special divisor in $X$. Then for each 
pair $\{i,j\}\in\mathcal P_2(I)$, the divisors $D_i$ and $D_j$ intersect transversally. The space of global sections 
$\Gamma(X,\Omega^2_X(\mathrm{log}D_i\cap D_j))$ is then a vector subspace of $\Gamma(X,\Omega^2_X(2D))$. 
Recall from \S\ref{subsect:maps} the space 
$$
\bm{\Omega}^2=\sum_{\{i,j\}\in\mathcal P_2(I)}\Gamma(X,\Omega^2_X(\mathrm{log}D_i\cup D_j))\subset  \Gamma(X,\Omega^2_X(2D)) 
$$
and the maps 
$$
d:\bm{\Omega}^1\to\bm{\Omega}^2,\quad\owedge:\Lambda^2(\bm{\Omega}^1)\to\bm{\Omega}^2. 
$$
In \S\S\ref{sec:5:1} and \ref{subsect:5:4}, we prove: 
\begin{lemma}
If $(X,D)$ is as in \S\ref{sect:examples}, then the maps $d,\owedge$ corestrict to maps 
$\bm{\Omega}^1\stackrel{d}{\to}\mathbf I$ and $\Lambda^2(\bm{\Omega}^1)\stackrel{\owedge}{\to}\mathbf I$, 
where $\bm{\Omega}^1$ and $\mathbf I$ are graded by $\mathbb N$ (the integers $\geq 0$) 
and have finite dimensional components. These maps have degree $0$. 
\end{lemma}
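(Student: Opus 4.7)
The strategy is to exploit the affine bundle structure $E^{\sharp}\to E$. Choose a translation-invariant generator $\omega\in H^{0}(E,\Omega^{1}_{E})$ and a compatible affine coordinate $u$ on the fiber of $E^{\sharp}\to E$, so that $(E^{\sharp})^{n}$ carries $n$ fiber-coordinates $u_{1},\dots ,u_{n}$. I will grade the ring of rational $\bullet$-forms on $(E^{\sharp})^{n}$ by declaring $\deg(u_{i})=\deg(du_{i})=1$ and assigning degree $0$ to pullbacks of objects from $E^{n}$ (in particular to translation-invariant $1$-forms on $E$). This induces candidate $\mathbb{N}$-gradings on $\bm{\Omega}^{1}$ and on a well-chosen subspace $\mathbf{I}\subset\bm{\Omega}^{2}$.

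First I would write down a generating family for $\bm{\Omega}^{1}$. From the KZB construction recalled in \S\ref{sect:rwtuzc}, the space $\Gamma(X,\Omega^{1}_{X}(\log D_{ij}))$ is spanned by the translation-invariant $1$-forms on the $E^{\sharp}$-factors together with the Taylor coefficients in $u_{i}-u_{j}$ of the Kronecker--Eisenstein kernel $F(z_{i}-z_{j},u_{i}-u_{j})\,dz_{j}$; these coefficients are monomials in $u_{i}-u_{j}$ times explicit logarithmic forms with simple poles only along $D_{ij}$. This gives a visibly graded spanning set for $\bm{\Omega}^{1}$, with each weight space finite-dimensional (only finitely many Kronecker coefficients of a given weight can appear). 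One then defines $\mathbf{I}$ as the analogous span of $2$-forms obtained by wedging such $1$-forms on pairs of diagonals $D_{ij},D_{kl}$ and by adjoining the $d$-images of the generators of $\bm{\Omega}^{1}$; the same description shows $\mathbf{I}$ is graded with finite-dimensional components.

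The core computation is to verify that $d$ and $\owedge$ really land in $\mathbf{I}$ and preserve the degree. For $d$ this is immediate: applying $d$ to a monomial in $u_{i}-u_{j}$ multiplied by a form on $E^{n}$ either kills the $u$-dependence against $\omega$-type differentials or produces a $du$-factor, in both cases preserving total weight. For $\owedge$, the point is that $k^{(a)}(z_{ij},u_{ij})\,dz_{j}\owedge k^{(b)}(z_{kl},u_{kl})\,dz_{l}$ gives an a priori second-order pole when $\{i,j\}\cap\{k,l\}\neq\emptyset$; the Fay trisecant identity (and its $u$-derivatives) rewrites every such product as a sum of terms of the form $k^{(c)}(z_{ij},u_{ij})\,dz_{j}\owedge k^{(d)}(z_{jk},u_{jk})\,dz_{k}$ with only simple poles, and this rewriting is homogeneous of total $u$-degree $a+b$ because the Fay identity itself is $\mathbb{G}_{a}$-equivariant in the fiber direction.

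The main obstacle is precisely this last homogeneity statement: one must match every term produced by expanding Fay's identity in $u$ against the grading on $\mathbf{I}$, and handle the boundary cases where derivatives $\partial_{u}$ of Kronecker coefficients of different weights pair off. The cleanest way to do this, which I would follow, is to express the Fay identity as a single identity on the generating function $F(z,u)$ and to extract its consequences on $(E^{\sharp})^{2}$ directly, then invoke multiplicativity in $n$. Once this is established, the preservation of the $\mathbb{N}$-grading by both $d$ and $\owedge$, and the finite-dimensionality of each weight component of $\bm{\Omega}^{1}$ and $\mathbf{I}$, follow without further work.
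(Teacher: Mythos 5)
Your overall architecture does match the paper's: grade everything by polynomial degree in the fiber coordinates of $(E^\#)^n\to E^n$, and trace the degree-$0$ property of $\owedge$ back to a homogeneity property of the Fay identity. But there is a genuine gap, and it sits exactly where you write that the rest "follows without further work." The generators are \emph{not} homogeneous in the fiber coordinates (already $f_1=\tfrac12(\tilde c^2-x)$, so $\omega_{ij}^1$ mixes fiber-degrees $2$ and $0$), so neither the grading on $\bm{\Omega}^1$ nor the one on $\mathbf I$ is the restriction of an ambient grading on rational forms: both are \emph{decreed} on a generating family, and such a decree is consistent only if every linear relation among the chosen generators is a consequence of homogeneous ones. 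For $\bm{\Omega}^1$ this already requires proving the generators are linearly independent (Lemma \ref{basis:Esharp:n}, a residue argument you omit). For $\mathbf I=\mathrm{im}(\owedge)$ it is much more serious: you must show that the \emph{only} relations among the images $\omega_{ij}^\alpha\owedge\omega_{kl}^\beta$, $\underline{dp}_i\owedge\omega_{jk}^\alpha,\dots$ are the antisymmetry and Fay relations, i.e.\ that $\mathrm{Ker}(\owedge)$ equals the span $\mathbf K$ of Definition \ref{def:R} and nothing more. You only establish the inclusion $\mathbf K\subseteq\mathrm{Ker}(\owedge)$ together with the homogeneity of the generators of $\mathbf K$; a single additional non-homogeneous element of $\mathrm{Ker}(\owedge)$ would make the grading on $\mathbf I$ ill-defined and destroy the degree-$0$ statement. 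The reverse inclusion is the hard part, proved by the double-residue analysis culminating in Lemmas \ref{map:Sigma:injective} and \ref{lemma:comp:kernel}.

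Two further points. The homogeneity of the Fay relations does not follow from $\mathbb G_a$-equivariance in the fiber direction: that action is unipotent ($\omega_{ij}^\alpha\mapsto\sum_k\tfrac{(-\lambda_{ij})^k}{k!}\,\omega_{ij}^{\alpha-k}$), so it preserves only the associated filtration, not the grading. The mechanism that actually works is that each generating series $\underline{dp}_{ij}/z+\sum_{\alpha\ge 0}\omega_{ij}^\alpha z^\alpha$ is homogeneous of degree $1$ once $z$ is given degree $-1$, so extracting the coefficient of $z^\alpha u^\beta$ from the Fay identity produces a relation pure of degree $\alpha+\beta+2$ — which is what the explicit formula for $T(i,j,k,\alpha,\beta)$ in \S\ref{sect:352} exhibits. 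Finally, your generators are described as Taylor coefficients of $F(z_{ij},u_{ij})\,dz_j$ in the fiber coordinate $u_{ij}$; taken literally these are forms pulled back from $E^n$ and all have degree $0$. The expansion variable must be a formal one, the fiber coordinate entering through the twist $e^{-c_{ij}z}$ (algebraically, through the $\tilde c$-dependence of the $f_\alpha$); since the whole grading is the fiber degree, this is not a cosmetic slip.
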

It follows that the Lie coalgebra $\mathfrak C_{\bm{\Omega}^1}$ is graded and has finite dimensional components. 

Define $\mathfrak G:=\mathfrak C_{\bm{\Omega}^1}^\vee$, the graded dual of $\mathfrak C_{\bm{\Omega}^1}$. 
This is a graded Lie algebra with finite dimensional components. Dualization sets up a tensor equivalence 
\begin{equation}\label{16012017}
\mathrm{Comod}(\mathfrak C_{\bm{\Omega}^1})_{unip}\simeq\mathrm{Mod}(\mathfrak G)_{unip}. 
\end{equation}
Combining (\ref{iso:16012017}) for $\Sigma=\bm{\Omega}^1$ and (\ref{16012017}), we get: 

\begin{lemma}
If $(X,D)$ is as in \S\ref{sect:examples}, then there is a tensor equivalence
$$
\mathrm{Vec}_{\bm{\Omega}^1}(X,D)\simeq\mathrm{Mod}(\mathfrak G)_{unip}. 
$$
\end{lemma}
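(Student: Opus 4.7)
The plan is to combine the two tensor equivalences already established in this subsection, applied to the particular subspace $\Sigma=\bm{\Omega}^1$. First, I would observe that $\bm{\Omega}^1\subset\Gamma(X,\Omega^1_X(D))$ is a vector subspace, so the general construction of the full subcategory $\mathrm{Vec}_\Sigma(Y,D)$ applies with $Y=X$ and $\Sigma=\bm{\Omega}^1$; this produces the Lie coalgebra $\mathfrak{C}_{\bm{\Omega}^1}$ together with the tautological tensor equivalence
$$
\mathrm{Vec}_{\bm{\Omega}^1}(X,D)\simeq\mathrm{Comod}(\mathfrak{C}_{\bm{\Omega}^1})_{unip}
$$
from (\ref{iso:16012017}).

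Second, I would invoke the immediately preceding lemma, which guarantees that in the specific geometric setup of \S\ref{sect:examples} the maps $d:\bm{\Omega}^1\to\bm{\Omega}^2$ and $\owedge:\Lambda^2(\bm{\Omega}^1)\to\bm{\Omega}^2$ are homogeneous of degree $0$ with respect to an $\mathbb{N}$-grading having finite-dimensional components. The construction of $\mathfrak{C}_{\bm{\Omega}^1}$ as a sub-bialgebra of the shuffle algebra $T(\bm{\Omega}^1)$ defined by the kernel of $\mu$ is manifestly compatible with this grading, and the Coprim (coprimitive) construction preserves gradedness; therefore $\mathfrak{C}_{\bm{\Omega}^1}$ is itself $\mathbb{N}$-graded with finite-dimensional components. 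This is precisely the hypothesis needed for the graded dual $\mathfrak{G}:=\mathfrak{C}_{\bm{\Omega}^1}^\vee$ to be a well-defined graded Lie algebra with finite-dimensional components, and for the dualization equivalence (\ref{16012017})
$$
\mathrm{Comod}(\mathfrak{C}_{\bm{\Omega}^1})_{unip}\simeq\mathrm{Mod}(\mathfrak{G})_{unip}
$$
to hold as an equivalence of tensor categories.

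Third, composing these two equivalences yields the asserted tensor equivalence
$$
\mathrm{Vec}_{\bm{\Omega}^1}(X,D)\simeq\mathrm{Mod}(\mathfrak{G})_{unip}.
$$

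The main technical subtlety — and the step most needing care — is the passage from comodules over $\mathfrak{C}_{\bm{\Omega}^1}$ to modules over the graded dual $\mathfrak{G}$. Without finite-dimensionality of the homogeneous components, graded duality would not interchange finite-dimensional comodules with finite-dimensional modules and the Lie cobracket on $\mathfrak{C}_{\bm{\Omega}^1}$ would not simply dualize to a Lie bracket on $\mathfrak{G}$. Thus the real content sits in the previous lemma establishing gradedness and finite-dimensionality; granted that, the present lemma is a formal consequence of composing (\ref{iso:16012017}) and (\ref{16012017}).
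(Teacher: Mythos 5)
Your proposal is correct and follows exactly the paper's route: the lemma is obtained by composing the tautological equivalence (\ref{iso:16012017}) specialized to $\Sigma=\bm{\Omega}^1$ with the graded-duality equivalence (\ref{16012017}), the latter being legitimate precisely because the preceding lemma guarantees that $\mathfrak C_{\bm{\Omega}^1}$ is graded with finite-dimensional components. Your remark that the real content lies in that gradedness/finite-dimensionality statement matches the structure of the paper's argument.
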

This tensor equivalence identifies the obvious (forgetful) fiber functors of both sides with each other. 

Let $\omega\in(\mathfrak G\otimes\bm{\Omega}^1)^\wedge$ be the canonical element (where $(-)^\wedge$ is the degree completion). 
Then $d\omega+[\omega,\omega]=0$ (equality in the degree completion of $\mathfrak G\otimes\bm{\Omega}^2$). Therefore: 
 
\begin{lemma}\label{lemma:diafcottpebov}
$d+\omega$ is a flat connection on the trivial principal $\mathrm{exp}(\mathfrak G)$-bundle over $X-D$. 
\end{lemma}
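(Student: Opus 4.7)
The plan has two parts: first establishing the Maurer--Cartan equation
\[
d\omega+[\omega,\omega]=0
\]
in the degree completion of $\mathfrak G\otimes\bm{\Omega}^2$, and then deducing the flatness of the connection $d+\omega$ on the trivial $\exp(\mathfrak G)$-bundle over $X-D$.

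To identify the canonical element, I would first use that $\mathfrak G=\mathfrak C_{\bm{\Omega}^1}^\vee$ is the graded dual of the Lie coalgebra $\mathfrak C_{\bm{\Omega}^1}$, with $\bm{\Omega}^1$ mapping into $\mathfrak C_{\bm{\Omega}^1}$ in degree one through the construction of the previous subsection. The element $\omega$ is then the image of $\mathrm{id}_{\bm{\Omega}^1}\in\bm{\Omega}^1\otimes(\bm{\Omega}^1)^\vee$ under the induced map into $(\mathfrak G\otimes\bm{\Omega}^1)^\wedge$, with its graded pieces $\omega_n\in\mathfrak G_n\otimes\bm{\Omega}^1_n$ given by the canonical pairing in each finite degree.

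For the main step, the cleanest route is Tannakian. By the equivalence $\mathrm{Vec}_{\bm{\Omega}^1}(X,D)\simeq\mathrm{Mod}(\mathfrak G)_{unip}$, any finite-dimensional $\mathfrak G$-module $V$ with structure map $\rho_V:\mathfrak G\to\mathrm{End}(V)$ produces, by applying $\rho_V\otimes\mathrm{id}$ to $\omega$, an element $\omega_V\in\bm{\Omega}^1\otimes\mathrm{End}(V)$ that satisfies the Maurer--Cartan equation by the very definition of the objects of $\mathrm{Vec}_{\bm{\Omega}^1}(X,D)$. Since the family of finite-dimensional $\mathfrak G$-modules separates elements of $\mathfrak G$ at each finite degree (using that the graded components are finite-dimensional), the identity $d\omega+[\omega,\omega]=0$ propagates back to $\omega$ itself, degree by degree. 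An alternative, more concrete route is to transpose the defining kernel relation for $\mathbf C_{\bm{\Omega}^1}=\ker(\mu)$: the map $\mu$ couples the differential $d:\bm{\Omega}^1\to\bm{\Omega}^2$ and the product $\owedge:\Lambda^2(\bm{\Omega}^1)\to\bm{\Omega}^2$, so its vanishing is precisely dualized to $d\omega+[\omega,\omega]=0$ once one identifies the bracket of $\mathfrak G$ with the transpose of the cobracket of $\mathfrak C_{\bm{\Omega}^1}$.

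Finally, to deduce flatness, I would invoke the standard formula for the curvature of a connection of the form $d+A$ on a trivial principal $G$-bundle, with $A$ a $\mathrm{Lie}(G)$-valued 1-form: the curvature is $dA+\tfrac12[A,A]$, and after matching the $\tfrac12$ normalization appearing in $\mu$ with the Maurer--Cartan equation this vanishes exactly by the previous step. Everything takes place inside the degree completion of $\mathfrak G\otimes\bm{\Omega}^\bullet$, which is well defined because both factors are graded with finite-dimensional components. The principal obstacle is the bookkeeping: carefully matching the dual structures on $\mathfrak G$ and $\mathfrak C_{\bm{\Omega}^1}$ and the normalization of $\mu$ with the Maurer--Cartan convention; once those identifications are pinned down the argument is essentially formal.
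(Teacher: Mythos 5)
Your ``alternative, more concrete route'' is in fact the paper's argument, and it is the one to commit to. The paper presents $\mathfrak G$ as $\mathfrak L((\bm{\Omega}^1)^*)/(\mathbf R)$, where $\mathbf R$ is \emph{by definition} the image of $\mathbf I^*$ under $d^*\oplus\tfrac12\owedge^*$; equivalently, $d\omega+\tfrac12\omega^2$ (with $\omega^2$ computed via $(x\otimes h)\cdot(x'\otimes h'):=[x,x']\otimes(h\owedge h')$) is exactly the element of $\mathfrak L((\bm{\Omega}^1)^*)\hat\otimes\mathbf I$ corresponding to the map $\mathbf I^*\to\mathfrak L((\bm{\Omega}^1)^*)$ whose image is $\mathbf R$, so it vanishes in the quotient $\mathfrak G\hat\otimes\mathbf I$. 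The Maurer--Cartan equation thus holds by construction of $\mathfrak G$, and flatness of $d+\omega$ follows from the curvature formula as you say, all completions being well defined because the gradings have finite-dimensional components. Your first, Tannakian route is genuinely different, and I would not rely on it as stated: the paper's equivalence is with $\mathrm{Mod}(\mathfrak G)_{unip}$, so only \emph{unipotent} finite-dimensional modules arise from objects of $\mathrm{Vec}_{\bm{\Omega}^1}(X,D)$, and the claim that these separate the elements of each graded piece of $\mathfrak G$ is not automatic from finite-dimensionality of the components: the degree-zero generators $Y_i$ act without shifting the degree, so one would have to verify that the finite-dimensional quotients of $\mathfrak G$ by the ideals of large degree are nilpotent before concluding that enough unipotent modules exist. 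That verification is feasible (it essentially amounts to facts about $\mathfrak t_{1,n}$ proved later), but it is extra work that the dualization argument makes unnecessary.
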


\subsection{Computation of $\mathfrak G$ in the context of \S\ref{sect:examples} (equiv. (g))}\label{cogitpc} 

Recall the Lie algebra $\mathfrak t_{1,n}^{\mathbb C}$ from the Introduction. In \S\ref{section:iso:G:t} (Prop. \ref{prop:iso:G:t}), we prove: 

\begin{lemma}
There is an isomorphism of graded Lie algebras $\mathfrak G\simeq\mathfrak t_{1,n}^{\mathbb C}$. 
\end{lemma}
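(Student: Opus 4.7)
The plan is to present $\mathfrak G$ by explicit generators and relations, read off from the degree-$0$ maps $d$ and $\owedge$ of the previous lemma, and match this presentation with the defining presentation of $\mathfrak t_{1,n}^{\mathbb C}$. I will first exhibit a basis of the graded vector space $\bm{\Omega}^1$. Using that $H^0(E^\#,\Omega^1_{E^\#})$ is two-dimensional, spanned by the pullback of the invariant form on $E$ together with an extra form $\omega_\#$ arising from the universal additive extension, the K\"unneth formula yields $2n$ global regular $1$-forms on $(E^\#)^n$: forms $\alpha_i$ of degree $0$ (destined to be dual to $y_i$) and $\beta_i$ of degree $1$ (destined to be dual to $x_i$). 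For each pair $i<j$, the Kronecker/Fay form $K_{ij}$, pulled back via the $(i,j)$-th projection, supplies a logarithmic $1$-form along $D_{ij}$ of degree $1$ (destined to be dual to $t_{ij}$). The first subtask is to show that these $2n + \binom{n}{2}$ forms span $\bm{\Omega}^1$, using the exact sequence (\ref{exact:sequence}), the properness and connectedness of each $D_{ij}$ inside $(E^\#)^n$ (so that $\Gamma(D_{ij},\mathcal O_{D_{ij}})=\mathbf k$), and Lemma \ref{lemma:atxdaais}.

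Next I will compute $d$ and $\owedge$ on this basis, with values in $\bm{\Omega}^2$. The forms $\alpha_i,\beta_i$ are closed, so $d$ vanishes on them; for $K_{ij}$ the crucial point is that the defect making the naive Kronecker form non-closed on $E^n$ is trivialized on $(E^\#)^n$, which is precisely the geometric reason for introducing $E^\#$ (as in \cite{CEE}). Thus $dK_{ij}$ and $\alpha_i \owedge \beta_i$ must be compared, via Poincar\'e residues (Lemma \ref{lemma:143}), to explicit linear combinations of $K_{ij}\owedge(\alpha_i-\alpha_j)$, $K_{ij}\owedge(\beta_i-\beta_j)$ and similar terms; the Fay trisecant identity on $E^\#$ will simultaneously yield the three-term relation
\[
K_{ij}\owedge K_{jk} + K_{jk}\owedge K_{ki} + K_{ki}\owedge K_{ij} = 0.
\]
Recording the residues along each $D_{ij}$ will give the full list of identities among wedge products of the basis.

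Then I will invoke the standard correspondence between graded connected locally finite Lie coalgebras of the shape $\mathfrak C_\Sigma$ and their graded dual Lie algebras: writing $V := \bm{\Omega}^1$, the dual $\mathfrak G=\mathfrak C_V^\vee$ admits the presentation
\[
\mathfrak G \;\simeq\; L(V^\vee)\big/\bigl(\mathrm{im}(d\oplus\owedge)^\perp\bigr),
\]
where $L$ denotes the free graded Lie algebra and the relations are the transposes, under the pairing between $\bm{\Omega}^2$ and its dual, of the image of $d\oplus\owedge : V\oplus\Lambda^2 V\to\bm{\Omega}^2$. Dualizing the basis produces generators $y_i\leftrightarrow\alpha_i^*$, $x_i\leftrightarrow\beta_i^*$, $t_{ij}\leftrightarrow K_{ij}^*$, and transposing each of the identities from the previous step yields one of the defining relations of $\mathfrak t_{1,n}$: the vanishing of $\alpha_i\owedge\alpha_j$ and $\beta_i\owedge\beta_j$ gives (\ref{rel:t:xx:yy}); the residue of $\alpha_i\owedge\beta_j$ along $D_{ij}$ gives (\ref{rel:t:xy:t}); the sum of residues of $\alpha_i\owedge\beta_i$ gives (\ref{rel:t:xy:tt}); the vanishing of the residues of $K_{ij}\owedge\alpha_k$ and $K_{ij}\owedge\beta_k$ for $k\notin\{i,j\}$ gives (\ref{rel:t:x:t:y:t}); and the three-term Fay identity combined with the residue computation along $D_{ij}$ of $K_{ij}\owedge(\alpha_i+\alpha_j)$ and $K_{ij}\owedge(\beta_i+\beta_j)$ gives (\ref{rel:t:xx:t:yy:t}).

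The main obstacle is the middle step: explicitly controlling $d$ and $\owedge$ on the Fay forms $K_{ij}$ at the level of $\bm{\Omega}^2$, with correct signs and normalizations. Once these identities are in hand, the morphism $\mathfrak t_{1,n}^{\mathbb C}\to\mathfrak G$ sending the named generators to $\alpha_i^*,\beta_i^*,K_{ij}^*$ is visibly well-defined; its surjectivity follows from the spanning statement of Step~1, and its injectivity will follow by comparing graded dimensions in each degree, using the finite-dimensionality of the components of $\mathfrak G$ from the preceding lemma together with the known Hilbert series of $\mathfrak t_{1,n}^{\mathbb C}$ computed in \cite{Bez,CEE}.
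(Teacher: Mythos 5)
Your overall strategy (present $\mathfrak G$ by generators and relations dual to a basis of $\bm{\Omega}^1$ and match against the presentation of $\mathfrak t_{1,n}$) is the right one and is the one the paper follows, but there is a genuine gap at the very first step which propagates through everything else. You claim that each $D_{ij}$ is proper and connected, so that $\Gamma(D_{ij},\mathcal O_{D_{ij}})=\mathbf k$, and hence that $\bm{\Omega}^1$ is spanned by $2n+\binom n2$ forms. This is false: $D_{ij}\simeq \mathbb A^1\times (E^\#)^{n-1}$ (via the map $(e_1,\dots,e_n)\mapsto (e_i-e_j,\dots)$ and $\pi^{-1}(0)\simeq\Gamma(E,\Omega^1_E)\simeq\mathbb A^1$), which is not proper, and $\Gamma(D_{ij},\mathcal O_{D_{ij}})\simeq\mathbf k[t]$. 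Consequently the exact sequence (\ref{exact:sequence}) forces, for each pair $i<j$, an \emph{infinite} family of logarithmic forms $\omega_{ij}^\alpha$ with residues $(-t)^\alpha/\alpha!$, $\alpha\geq 0$, and $\bm{\Omega}^1$ is infinite-dimensional (graded with finite-dimensional pieces). You appear to be importing the picture on $E^n$, where the diagonals are proper — but there the Kronecker form is not closed; the passage to $E^\#$ trades that defect precisely for this enlargement of the space of logarithmic forms.

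Because of this, $\mathfrak G$ is generated by $X_i,Y_i$ and infinitely many $T_{ij}^\alpha$, and the substantive content of the proof — which your outline omits — is twofold. First, among the relations coming from $d\omega+\tfrac12\omega^2=0$ one finds $[X_i,T_{ij}^\alpha]=T_{ij}^{\alpha+1}$, which is what makes the map onto a finitely generated Lie algebra possible (and is needed even for your surjectivity claim). Second, the relations dual to the Fay identity are an infinite family with binomial coefficients (the $\sigma''$, $\sigma'''$, $\kappa$, $\kappa'$ relations of the paper), and one must verify that the assignment $T_{ij}^\alpha\mapsto -(\mathrm{ad}\,x_i)^\alpha(t_{ij})$ sends all of them to zero using only the finitely many defining relations of $\mathfrak t_{1,n}$; this is a nontrivial combinatorial computation and is how the paper produces an explicit inverse morphism (rather than your proposed Hilbert-series comparison, which would in any case require first determining the graded dimensions of $\mathfrak G$ from the correct, infinite relation space). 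Your residue computations in the middle step are in the right spirit, but they must be carried out for the full families $\omega_{ij}^\alpha$, not for a single form $K_{ij}$ per pair.
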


\subsection{Relation with the universal KZB connection}\label{sect:rwtuzc}

For $\tau\in\mathfrak H$, set $E_\tau:=\mathbb C/(\mathbb Z+\tau\mathbb Z)$. This is an analytic elliptic curve. 
There is a commutative diagram of analytic varieties 
$$
\xymatrix{
\mathbb C^{2n} \ar[r]\ar[d]& (E_\tau^\#)^n\ar[d]\\ 
\mathbb C^n\ar[r] & E_\tau^n}
$$
By \cite{CEE}, $E_\tau^n$ is equipped with a principal $\mathrm{exp}(\hat{\mathfrak t}_{1,n}^{\mathbb C})$-bundle 
$\mathcal P_{\mathrm{KZB}}$ with flat connection $\nabla_{\mathrm{KZB}}$; the lift of $(\mathcal P_{\mathrm{KZB}},\nabla_{\mathrm{KZB}})$ 
to $\mathbb C^n$ identifies with a principal bundle with flat connection over $\mathbb C^n$  
$$
(\text{trivial }\mathrm{exp}(\hat{\mathfrak t}_{1,n}^{\mathbb C})\text{-bundle},
d+A_{\mathrm{KZB}}).
$$ 
On the other hand, Lemma \ref{lemma:diafcottpebov} gives rise to a principal bundle with flat connection over $(E_\tau^\#)^n$, 
$$
(\text{trivial }\mathrm{exp}(\mathfrak G)\text{-bundle},
d+\omega).
$$
In \S\ref{sect:rwtukc}, we prove: 
\begin{thm}\label{tisaibtfpbwfcoC}
There is an isomorphism between the following principal bundles with flat connections over $\mathbb C^{2n}$: 
\begin{itemize}
\item the pull-back under $\mathbb C^{2n}\to\mathbb C^{n}\to(E_\tau)^n$ of $(\mathcal P_{\mathrm{KZB}},\nabla_{\mathrm{KZB}})$; 
\item the pull-back under $\mathbb C^{2n}\to(E_\tau^\#)^n$ of $((E_\tau^\#)^n\times\mathrm{exp}(\mathfrak G),
d+\omega)$. 
\end{itemize}
\end{thm}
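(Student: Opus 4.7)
The strategy is to pull both connections back to $\mathbb{C}^{2n}$ (with coordinates $(z_i, w_i)_{i=1,\ldots,n}$ lifting those on the universal covers), express each as an explicit $\hat{\mathfrak{t}}_{1,n}^{\mathbb C}$-valued 1-form, and produce an explicit gauge transformation intertwining them. Under the isomorphism $\mathfrak{G} \simeq \mathfrak{t}_{1,n}^{\mathbb C}$ established in the previous subsection, the canonical element $\omega \in (\mathfrak{G} \otimes \bm{\Omega}^1)^{\wedge}$ is nothing but the identity of $\bm{\Omega}^1$ viewed in $\mathrm{End}(\bm{\Omega}^1)$. So the first task is to exhibit a graded basis of $\bm{\Omega}^1$ on $(E_\tau^\#)^n$ dual to the generators $y_i, x_i, t_{ij}$ of $\mathfrak{t}_{1,n}^{\mathbb C}$: in low degrees one expects the two families of 1-forms $dz_i$ and $dw_i$ induced by the affine extension $E_\tau^\# \to E_\tau$, dual to $y_i$ and $x_i$, together with Kronecker--Eisenstein logarithmic forms $\omega_{ij}$ having simple poles along $D_{ij}$, dual to $t_{ij}$. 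The pull-back of $\omega$ to $\mathbb{C}^{2n}$ is then a closed-form $\hat{\mathfrak{t}}_{1,n}^{\mathbb C}$-valued 1-form in $(z_i, w_i)$.

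On the other hand, the explicit formula from \cite{CEE} presents $A_{\mathrm{KZB}}$ on $\mathbb{C}^n$ as a $\hat{\mathfrak{t}}_{1,n}^{\mathbb C}$-valued 1-form in the $z_i$ alone, built from the Kronecker--Eisenstein kernel $k(z, \mathrm{ad}\, x, \tau)$ and the generators, so its pull-back to $\mathbb{C}^{2n}$ is independent of the $w_i$. The natural candidate for the intertwining gauge transformation is
\[
g(z, w) := \exp\Bigl(\sum_{i=1}^n w_i\, x_i\Bigr),
\]
viewed as a map $\mathbb{C}^{2n} \to \exp(\hat{\mathfrak{t}}_{1,n}^{\mathbb C})$: the contribution $g^{-1} dg = \sum_i x_i\, dw_i$ (legal because $[x_i, x_j]=0$) supplies precisely the $dw_i$-components of the pull-back of $\omega$, while conjugation by $g$ should transform the Kronecker--Eisenstein $dz_i$-coefficients of $A_{\mathrm{KZB}}$ into the pull-backs of the forms dual to $t_{ij}$ and $y_i$.

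The technical heart of the argument is the verification of this last transformation, coefficient by coefficient in the grading: one must check that conjugating $k(z_i - z_j, \mathrm{ad}\, x_i)(t_{ij})$ by $\exp(\sum_k w_k \, \mathrm{ad}\, x_k)$ reproduces the pull-back of the logarithmic generator of $\bm{\Omega}^1$ dual to $t_{ij}$. Using the relations (\ref{rel:t:x:t:y:t}) and (\ref{rel:t:xx:t:yy:t}) (namely $[x_k, t_{ij}]=0$ for $k \notin \{i,j\}$ and $[x_i + x_j, t_{ij}]=0$), the adjoint action of $\sum_k w_k\, \mathrm{ad}\, x_k$ on $t_{ij}$ reduces to a shift by a single scalar variable $w_i - w_j$, so the required identity becomes a translation property of the Kronecker--Eisenstein kernel in its second argument. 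Such identities are direct consequences of the Fay-type relations already invoked to prove the gradedness of $\mathfrak{G}$. The hard part will be to organise these identities into a uniform statement valid at all orders of the grading, and to identify the relevant shift parameter with the fiber coordinate $w_i - w_j$ of $(E_\tau^\#)^n$ so cleanly that the Tannakian/Riemann--Hilbert interpretation promised in the introduction becomes manifest; once this matching is carried out, the two pulled-back connections differ exactly by the gauge transformation $g$ and the isomorphism of principal bundles with connections follows.
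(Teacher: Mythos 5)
Your proposal follows the paper's proof essentially verbatim: the intertwining gauge transformation is exactly $\exp(\sum_i c_i x_i)$, the term $g\,d\,g^{-1}$ absorbs the $\sum_i x_i\,\underline{dc}_i$ part of $\omega$, and the relations $[x_k,t_{ij}]=0$ ($k\neq i,j$), $[x_i+x_j,t_{ij}]=0$ reduce the conjugation of $t_{ij}$ to $e^{c_{ij}\mathrm{ad}x_i}(t_{ij})$, which cancels the factor $e^{-c_{ij}z}$ already built into the forms $\omega_{ij}^\alpha$ by (\ref{form:omega:ijalpha}), leaving precisely $A_{\mathrm{KZB}}$. One small correction: no Fay-type identity is needed at this final step --- the matching is a purely formal cancellation once the explicit exponential factor in $\omega_{ij}^\alpha$ is in hand, the Fay relations having already done their work earlier in establishing the flatness of $d+\omega$ and the presentation of $\mathfrak G$.
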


\section{Category equivalences induced by desingularization (equiv. ($\mathrm d$)) }\label{sect:ceibd}

\subsection{A geometric result}

We work over $\mathbb C$. Let $X$ be a smooth variety. 

We define a unipotent vector bundle on $X$ to be an iterated extension of copies of ${\mathcal O}_X$, i.e., 
a vector bundle which admits a filtration with associated graded $\mathcal O_X^{\oplus N}$ for some $N\geq 0$. 

Similarly, a unipotent connection is a pair (bundle, connection) which admits a filtration with associated 
graded $(\mathcal O_X,d)^{\oplus N}$ for some $N\geq 0$.

Let $X$ be a smooth variety and $D$ a divisor in $X$.
Let $\pi: \widetilde{X}\to X$ be a resolution of singularities, such that $\widetilde{D}:=\pi^{-1}(D)$ is a normal crossing divisor and 
$\pi: \tilde X-\tilde D\to X-D$ is an isomorphism. 
Let $V\in \mathrm{VBFC}(\tilde X,\tilde D)_{unip}$, i.e., $V$ is a unipotent vector bundle on $\tilde X$ with a flat unipotent connection $\nabla$ 
outside $\tilde D$, which has first order poles and nilpotent residues at $\tilde D$.  

The following proposition was suggested to us by P. Deligne.

\begin{proposition}\label{prop:Deligne} 
The bundle $V$ descends to a unipotent vector bundle $\overline{V}$ on $X$ with respect to which 
the connection $\nabla$ on $\tilde X- \tilde D\cong X-D$ has first order poles at $D$ with nilpotent residues.   
\end{proposition}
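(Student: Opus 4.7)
The plan is to take $\overline V := \pi_*V$ and verify the three required properties: that $\overline V$ is a unipotent vector bundle, that the flat connection descends, and that the descended object has at most simple poles with nilpotent residues along $D$.

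For the bundle part, I would argue by induction on the length $N$ of a unipotent filtration of $V$, proving simultaneously that $R^i\pi_*V=0$ for $i\geq 1$, that $\pi_*V$ is a unipotent vector bundle of rank $N$ on $X$, and that the adjunction $\pi^*\pi_*V\to V$ is an isomorphism. The base case $V=\mathcal O_{\tilde X}$ amounts to $\pi_*\mathcal O_{\tilde X}=\mathcal O_X$ and $R^i\pi_*\mathcal O_{\tilde X}=0$ for $i\geq 1$, which hold because $X$ is smooth (so $X$ has rational singularities, and $X$ is normal). In the inductive step, starting from a sequence $0\to V'\to V\to\mathcal O_{\tilde X}\to 0$ on $\tilde X$, push forward and use $R^1\pi_*V'=0$ (from the inductive hypothesis) to obtain an exact sequence $0\to\pi_*V'\to\pi_*V\to\mathcal O_X\to 0$, which exhibits $\pi_*V$ as an extension of $\mathcal O_X$ by a unipotent vector bundle, hence itself a unipotent vector bundle. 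The remaining vanishing and the projection-formula isomorphism follow from the long exact sequence.

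Since $\pi$ restricts to an isomorphism on the complements of the divisors, the flat connection $\nabla$ transports to a flat connection $\overline\nabla$ on $\overline V|_{X-D}$. To analyze its poles I would establish the crucial geometric fact that $\pi$ is in fact an isomorphism over a Zariski neighborhood $U_\alpha$ of the generic point $\eta_\alpha$ of each component $D_\alpha$ of $D$. Indeed, any component of $\tilde D$ that dominates $D_\alpha$ must be its (unique) strict transform, since exceptional divisors of $\pi$ contract to loci of codimension $\geq 2$ in $X$; and at the generic point of this strict transform, the local rings of $X$ and $\tilde X$ are DVRs with a common fraction field and matching uniformizers (because $\pi$ is an isomorphism away from $D$), hence coincide. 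Over $U_\alpha$, shrunk to avoid other components of $D$, one therefore has an isomorphism $(\overline V,\overline\nabla)|_{U_\alpha}\simeq(V,\nabla)|_{\pi^{-1}(U_\alpha)}$, so $\overline\nabla$ already has a simple pole with nilpotent residue along $D_\alpha\cap U_\alpha$.

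It remains to promote this codimension-one data to a global statement on $X$. Since $\overline V$ is a vector bundle on the smooth quasiprojective variety $X$, the map $\overline\nabla$ extends from $X-D$ to a map $\overline V\to\overline V\otimes\Omega^1_X(mD)$ for some $m$. The quotient $\Omega^1_X(mD)/\Omega^1_X(D)$ is filtered by sheaves of the form $\Omega^1_X\otimes\mathcal O_D(kD)$ with $1\leq k\leq m-1$; since $D$ is Cartier in the smooth $X$ and hence Cohen--Macaulay, these subquotients have no embedded associated primes, so the associated points of the cokernel are exactly the $\eta_\alpha$. The previous paragraph's local verification at each $\eta_\alpha$ therefore forces $\overline\nabla$ to factor through $\overline V\otimes\Omega^1_X(D)$; the residues are nilpotent because this can be checked at the generic point of each $D_\alpha$. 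The main obstacle, and really the content of the proposition, is this local-to-global step: one has to recognize that the desingularization is trivial at codimension one (even though it does nontrivial work over higher-codimension singularities of $D$), which is precisely what allows the simple-pole and nilpotent-residue structure to descend intact from $\tilde X$ to $X$.
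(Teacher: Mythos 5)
Your proposal is correct. The heart of the descent is the same in both arguments: everything rests on the vanishing of the higher direct images of $\mathcal O_{\tilde X}$ under a resolution of a smooth variety (the input from \cite{CR}). You package this as a global induction on the length of the unipotent filtration, pushing forward the extension $0\to V'\to V\to\mathcal O_{\tilde X}\to 0$ and using $R^1\pi_*V'=0$ to get unipotence of $\pi_*V$ and the isomorphism $\pi^*\pi_*V\simeq V$; the paper instead works locally, using $H^1(\pi^{-1}(U),\mathcal O)=\Ext^1_{\pi^{-1}(U)}(\mathcal O,\mathcal O)=0$ over an affine $U$ to trivialize $V$ on $\pi^{-1}(U)$ and then gluing the spaces of sections. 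These are interchangeable; yours is slightly more canonical in that it never chooses trivializations, at the cost of also tracking $R^{i}\pi_*V=0$ and the base-change map through the induction. Where you genuinely add something is the second half: the paper disposes of the statement about simple poles and nilpotent residues with ``it is clear that\dots'', whereas you give an actual proof --- first the observation that a proper birational morphism to a normal (here smooth) variety is an isomorphism over the complement of a codimension-$\geq 2$ subset, so that $(\overline V,\overline\nabla)$ agrees with $(V,\nabla)$ near the generic point of each component $D_\alpha$, and then the local-to-global step via the fact that $\Omega^1_X(mD)/\Omega^1_X(D)$ has no associated points outside the generic points of $D$ (because $D$ is Cartier in a smooth, hence Cohen--Macaulay, variety). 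Both steps are correct, and the associated-points argument is exactly the right way to turn the codimension-one information into a statement on all of $X$; the nilpotence of the residues then follows by checking $\mathrm{Res}^{\,\mathrm{rk}\,\overline V}=0$ generically on each integral component $D_\alpha$. In short: same mechanism for the descent of the bundle, but a more complete treatment of the connection, filling in a step the paper only asserts.
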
 

\begin{proof}  
Let $U\subset X$ be an affine open set. We claim that the restriction of the bundle $V$ to $\pi^{-1}(U)$ is trivial. This follows immediately 
from the following lemma. 

\begin{lemma}\label{van}
One has $H^i(\pi^{-1}(U),{\mathcal O})=0$ for $i>0$.  
\end{lemma}

\begin{proof} It is well known that $\pi_*{\mathcal O}={\mathcal O}$, i.e., there is no higher direct images; see e.g. the beginning and 
Theorem 1 in \cite{CR}. Thus, by adjunction $H^i(\pi^{-1}(U),{\mathcal O})=H^i(U,{\mathcal O})=0$ for $i>0$ since $U$ is affine.  
\end{proof} 
   
Indeed, by Lemma \ref{van}, we have ${\rm Ext}^1_{\pi^{-1}(U)}(\mathcal{O},\mathcal{O})=0$, so 
$V|_{\pi^{-1}(U)}$ is trivial, as it is unipotent. 

Thus, we see that $V=\pi^*\overline{V}$, where $\overline{V}:=\pi_*V$. In other words, the fibers of $V$ at all points 
of $\pi^{-1}(p)$ for any $p\in X$ are canonically isomorphic to each other, and thus give rise to a well defined vector space, which is the fiber of $\overline{V}$ at $p$, and it varies algebraically in $p$. It is clear that the connection $\nabla$ on $X-D$ has simple poles and nilpotent residues at $D$ with respect to $V$. This proves the proposition. 
\end{proof} 

\subsection{Proof of Lemma \ref{equivd}}

Let $V\in \mathrm{VBFC}(\tilde X,\tilde D)_{unip}$. By Proposition \ref{prop:Deligne},
we can canonically attach to it a bundle $\overline{V}\in \mathrm{VBFC}(X,D)_{unip}$.
This gives rise to a functor 
$$\pi_*: V\in \mathrm{VBFC}(\tilde X,\tilde D)_{unip}\mapsto
\overline{V}\in \mathrm{VBFC}(X,D)_{unip}.$$
 It is clear that this functor is fully faithful, so it remains to show that it is essentially
surjective. To this end, it suffices to note that for any $W \in \mathrm{VBFC}(X,D)_{unip}$, we have $W\cong \pi_*\pi^*W$, where $\pi^*W\in \mathrm{VBFC}(\tilde X,\tilde D)_{unip}$ is the ordinary pullback of $W$. Indeed, this is definitely so outside of a set of codimension $2$ on $X$ (since there the map $\pi$ is an isomorphism). But any isomorphism of vector bundles on $X$ defined outside of a set of codimension $2$ extends to the whole $X$. Thus, the functor $\pi_*$ is an equivalence whose inverse is $\pi^*$. 

\remark{
This argument automatically yields that the flat connection on $\pi^*W$ has simple poles and nilpotent residues on each component of
the exceptional divisor $D'$ of $\pi$. Let us prove this fact independently. Let $C$ be a smooth algebraic curve on $\tilde X$
passing transversally through a generic point $P$ of some component $D_j'$ of $D'$ and having no other intersection
points with $\tilde D$. Then $\pi^*W|_C$ is a vector bundle on $C$ with a unipotent connection outside of $P$, and our job is show that this connection
has a simple pole with respect to $\pi^*W|_C$, and moreover the corresponding residue is nilpotent.
To this end, consider the point $\pi(P)\in D$. Let $D_{i_1},...,D_{i_r}$ be the components of $D$ containing $\pi(P)$.
Let $D_{i_m}$ be described near $P$ by the equation $z_m=0$, where $z_m$ is a rational function on $X$ regular at $P$ with $dz_m(P)\ne 0$. Then, trivializing $W$ near $P$, we get that the connection form takes the form
$$
\omega=\sum_m \frac{A_{i_m}}{z_m}dz_m+...
$$
where ... is the regular part and $A_{i_m}$ is the residue of the connection at $D_{i_m}$. Let $d_m$ be the degree of intersection of $D_{i_m}$ with $C$, i.e.
the order of vanishing of $z_m|_C$ at $P$. Then it is easy to see that the connection on $C$ has a simple pole at $P$ with residue
$A=\sum_m d_mA_{i_m}$. (In fact, if $C$ is generic, then $d_m=1$.) Since the connection on $W$ is unipotent, $A_{i_m}$ are strictly 
upper triangular in the same basis. Hence, so is $A$. Thus $A$ is nilpotent, as desired. }

\section{Elliptic material}\label{sect:ellmat}

This \S\  presents the material related to elliptic curves alluded to in \S\ref{sect:examples}: (a) elliptic curves in char. 0 (\S\ref{subsect:2}); 
(b) the construction of the functor $E\mapsto E^\#$, where for each elliptic curve $E$, $E^\#$ is a surface equipped with an affine fibration 
$E^\#\to E$ (\S\ref{sec:3}). In \S\ref{sect:afioe}, we prove an identity between functions on $E^\#$ derived from the Fay identity, which will be 
used in the sequel of the paper. 


\subsection{Elliptic curves in characteristic zero}
\label{subsect:2}

\subsubsection{Universal elliptic and theta functions}
\label{subsect:uetf}

Let $\underline g_2,\underline g_3$ be formal commutative variables; they generate the polynomial ring 
$\mathbb Q[\underline g_2,\underline g_3]$. 

\begin{lemma}\label{lemma:wp:univ}
There exists a unique family $(a_n(\underline g_2,\underline g_3))_{n\geq 0}$ of elements of $\mathbb Q[\underline g_2,\underline g_3]$, 
such that the element 
\begin{equation}\label{def:wp:univ}
\wp_{univ}:={1\over p^2}+\sum_{n\geq 0}a_n(\underline g_2,\underline g_3)p^n\in
\mathbb Q[\underline g_2,\underline g_3]((p))
\end{equation}
satisfies 
\begin{equation}\label{id:F}
(\wp_{univ}')^2=4\wp_{univ}^3-\underline g_2\wp_{univ}-\underline g_3
\end{equation}
(identity in $\mathbb Q[\underline g_2,\underline g_3]((p))$), where $\wp_{univ}'$ is the derivative of $\wp_{univ}$ 
with respect to $p$. 
\end{lemma}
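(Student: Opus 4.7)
The plan is to view (\ref{id:F}) as an infinite family of coefficient equations in the unknowns $a_n(\underline g_2,\underline g_3)$ and solve it by induction on $n$. Writing $\wp_{univ}=p^{-2}+\tilde f$ with $\tilde f:=\sum_{n\geq 0}a_n p^n$, formal differentiation and cubing yield
$$
(\wp_{univ}')^2 = 4p^{-6} - 4p^{-3}\tilde f' + (\tilde f')^2,
$$
$$
4\wp_{univ}^3 - \underline g_2\wp_{univ} - \underline g_3 = 4p^{-6} + 12p^{-4}\tilde f + 12p^{-2}\tilde f^2 + 4\tilde f^3 - \underline g_2 p^{-2} - \underline g_2\tilde f - \underline g_3,
$$
both viewed as elements of $\mathbb Q[\underline g_2,\underline g_3]((p))$.

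The coefficients of $p^{-6}$ on the two sides agree (both equal $4$), and those of $p^{-5}$ vanish trivially. For each integer $N\geq 0$, I would then compare the coefficients of $p^{N-4}$. The key observation is that $a_N$ appears linearly on each side --- as $-4N\,a_N$ on the left (from the cross term $-4p^{-3}\tilde f'$) and as $12\,a_N$ on the right (from $12p^{-4}\tilde f$) --- while every other contribution to this coefficient involves only $a_k$ with $k\leq N-2$, together with $\underline g_2$ and $\underline g_3$ (which themselves enter only linearly, via the $-\underline g_2\wp_{univ}$ and $-\underline g_3$ terms). Collecting terms, the equation takes the form
$$
-4(N+3)\,a_N = R_N(a_0,\ldots,a_{N-2};\underline g_2,\underline g_3),
$$
where $R_N$ is an explicit polynomial with rational coefficients. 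Since $-4(N+3)\neq 0$ for $N\geq 0$, this determines $a_N$ uniquely in terms of $a_0,\ldots,a_{N-2}$ and $\underline g_2,\underline g_3$, and by induction $a_N\in\mathbb Q[\underline g_2,\underline g_3]$; this simultaneously yields existence and uniqueness.

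There is no serious obstacle here: the argument is a direct bookkeeping of Laurent-series coefficients, and the only substantive point is the non-vanishing of the leading recursion coefficient $-4(N+3)$, which is immediate. As a sanity check, the first few steps give $a_0=a_1=a_3=0$, $a_2=\underline g_2/20$, and $a_4=\underline g_3/28$, recovering the classical low-order Laurent coefficients of the Weierstrass $\wp$-function at the origin.
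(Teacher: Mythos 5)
Your proof is correct, but it takes a genuinely different route from the paper's. The paper does not work with the first-order identity (\ref{id:F}) directly: it first passes to the second-order consequence $\wp_{univ}''=6\wp_{univ}^2-\tfrac12\underline g_2$, whose coefficient recursion has leading factor $n^2-n-12=(n-4)(n+3)$; since this vanishes at $n=4$, the coefficients $a_0,a_2,a_4$ (and the vanishing of the odd ones) must be extracted by hand from (\ref{id:F}) before the recursion takes over for $n\geq 6$, and existence then requires a further step — multiplying the second-order equation by $\wp_{univ}'$ and integrating to see that $(\wp_{univ}')^2-4\wp_{univ}^3+\underline g_2\wp_{univ}$ is constant, with constant term $-\underline g_3$. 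Your direct coefficient comparison in (\ref{id:F}) avoids both complications: the recursion coefficient $-4(N+3)$ never vanishes, so a single induction yields existence and uniqueness simultaneously, with no degenerate index and no integration step (and the parity argument showing $a_N=0$ for odd $N$ falls out of the same recursion). The trade-off is only cosmetic: your right-hand side $R_N$ involves cubic terms in the $a_k$ and is bulkier than the paper's clean quadratic recursion $(n^2-n-12)a_n=6\sum_{p+q=n-2}a_pa_q$, which is the form one would actually use to compute the coefficients. Your sanity-check values $a_2=\underline g_2/20$, $a_4=\underline g_3/28$ agree with the paper's (whose ``$\tfrac1{28}\underline g_4$'' is evidently a typo for $\underline g_3$).
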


This statement is well-known. We give a proof for completeness. 

{\em Proof.} Explicit computation imposes $a_n=0$ for $n$ odd, and 
$$
a_0=0, \quad a_2={1\over 20}\underline g_2,\quad a_4={1\over 28}\underline g_4.
$$
The equation 
\begin{equation}\label{cons:id:F}
\wp_{univ}''=6\wp_{univ}^2-{1\over 2}\underline g_2,
\end{equation} 
consequence of the identity (\ref{id:F}), then implies for 
$n\geq 6$
$$
(n^2-n-12)a_n=6\sum_{p,q\geq 2|p+q=n-2}a_pa_q, 
$$
which determine $(a_n)_{n\geq 0}$ uniquely. Substituting these values in (\ref{def:wp:univ}), one then obtains a solution of (\ref{cons:id:F}), 
which by multiplying by $\wp'_{univ}$ and integrating is such that $(\wp_{univ}')^2-4\wp_{univ}^3+\underline g_2\wp_{univ}$ is 
constant with respect to $p$; as the constant term in this expression is $-\underline g_3$, this solution of (\ref{cons:id:F}) satisfies
(\ref{id:F}). \hfill \qed\medskip 

\begin{remark}\label{rem1}If the variables $\underline g_2,\underline g_3$ are given weights $4,6$, then $a_n$ is a polynomial of 
weight $n+2$. 
\end{remark}

Define 
$$
\zeta_{univ}:={1\over p}-\sum_{n\geq 0}a_n(\underline g_2,\underline g_3){p^{n+1}\over n+1}\in\mathbb Q[\underline g_2,\underline g_3]((p)), 
$$
and 
$$
\tilde\theta_{univ}:=p\cdot \mathrm{exp}\big(-\sum_{n\geq 0}a_n(\underline g_2,\underline g_3){p^{n+2}\over (n+1)(n+2)}\big) 
\in\mathbb Q[\underline g_2,\underline g_3][[p]], 
$$
then 
$$
\wp_{univ}=-\partial_p^2\mathrm{log}\tilde\theta_{univ}. 
$$

\subsubsection{Specializations}\label{specializ}

Let $\mathbf k$ be a field of characteristic 0, let $(E,0,\omega)$ be a triple of: an elliptic curve $E$ over $\mathbf k$, an element 
$0\in E(\mathbf k)$, and a nonzero regular differential $\omega$ on $E$. Assume that $E$ has a model $y^2=4x^3-g_2x-g_3$, where 
$g_2,g_3\in\mathbf k$, such that $0$ corresponds to the point at infinity and $\omega=dx/y$. Then the ring of regular functions on 
$E-\{0\}$ is 
\begin{equation}\label{def:A}
A:=\Gamma(E,\mathcal O_E(*0))=\mathbf k[x,y]/(y^2-4x^3+g_2x+g_3). 
\end{equation}
The elements of $\mathbf k((p))$ obtained from $\wp_{univ}(p),\wp'_{univ}(p)$ by the specialization  
$\mathbb Q[\underline g_2,\underline g_3]\to \mathbf k$, induced by $\underline g_2\mapsto g_2$, $\underline g_3\mapsto g_3$
depend only on the pair $(E,\omega)$, and will henceforth be denoted $\wp_{E,\omega}(p)$, $\wp'_{E,\omega}(p)$. 

If $(E,0,\omega)$ corresponds to $(g_2,g_3)$ and $\alpha\in\mathbf k^\times$, then $(E,0,\alpha\omega)$ corresponds to 
$(\alpha^{-4}g_2,\alpha^{-6}g_3)$. Remark \ref{rem1} in \S\ref{subsect:uetf} then implies the identity $\tilde\theta_{E,\alpha\omega}(p)
=\alpha^2\tilde\theta_{E,\omega}(\alpha^{-2}p)$. 

There is a unique ring morphism 
$$
A\to\mathbf k((p)), 
$$
$$
x\mapsto\wp_{E,\omega}(p), \quad y\mapsto\wp'_{E,\omega}(p). 
$$
This morphism is injective. 

Moreover, there is a derivation $\partial$ of $A$, uniquely determined by 
$$
\partial : x\mapsto y, \quad y\mapsto 6y^2-{1\over 2}g_2. 
$$
This derivation is compatible with the derivation $d/dp$ of 
$\mathbf k((p))$, so that the following diagram commutes
$$
\xymatrix{A\ar[r]\ar_\partial[d]&\mathbf k((p))\ar^{d/dp}[d]\\A\ar[r]&\mathbf k((p))}
$$
The derivation $\partial$ (resp., $d/dp$) corresponds to the regular differential $dx/y$ of $E$ (resp., $dp$ of the formal disc); the 
differentials $dx/y$ and $dp$ correspond to each other under the parametrization of the formal disc around $0$ induced by 
$A\to\mathbf k((p))$. 

\subsubsection{Relation with uniformization}

For $\tau$ a complex number with positive imaginary part, let $\theta(-|\tau):\mathbb C\to\mathbb C$ be the holomorphic
function defined by 
\begin{equation}\label{def:theta}
\theta(z|\tau):={1\over{2\pi\mathrm{i}}}(\mathbf e({z\over 2})-\mathbf e(-{z\over 2}))
\prod_{j\geq 1}(1-\mathbf e(z+j\tau))\prod_{j\geq 1}(1-\mathbf e(-z+j\tau)), 
\end{equation}
where $\mathrm i:=\sqrt{-1}$ and $\mathbf e(z):=\mathrm{exp}(2\pi\mathrm{i}z)$. It is such that 
$$
\theta(z+1|\tau)=-\theta(z|\tau), \quad \theta(z+\tau|\tau)=-e^{-i\pi\tau}e^{-2\pi i z}\theta(z|\tau), \quad \theta'(0|\tau)=1,  
$$
where $'$ means the partial derivative with respect to $z$. 

Assume that $\mathbf k=\mathbb C$ and that $(E,0,\omega)$ is such that $E_{an}\simeq\mathbb C/(\mathbb Z+\tau\mathbb Z)$ 
and that $\omega$ corresponds to $dp$. Then 
\begin{equation}\label{rel:tilde:theta}
\tilde\theta_{\mathbb C/(\mathbb Z+\tau\mathbb Z),dp}(p)=\theta(p|\tau)e^{{1\over 2}G_2(\tau)p^2}
\end{equation}
for $p$ formal near 0 and $G_2(\tau)$ is the quasimodular Eisenstein series given by 
$${1\over 2}G_2(\tau)={\pi^2\over 6}-(2\pi)^2\sum_{n\geq 1}\sigma_1(n)q^n,$$ where
$q=\mathbf e(\tau)$. 

\subsubsection{Analytic Fay identity}

Let $\mathfrak H$ be the set of complex numbers with positive imaginary part. Let $\tau\in\mathfrak H$. 
Fay's identity (see \cite{Fay}) is the identity 
\begin{align}\label{fay:prim}
& \nonumber \theta(p+z|\tau)\theta(p+p'+z'|\tau)\theta(p'|\tau)\theta(z+z'|\tau)
-\theta(p'+z'|\tau)\theta(p+z+z'|\tau)\theta(z|\tau)\theta(p+p'|\tau)
\\ & +\theta(p+p'+z+z'|\tau)\theta(p'-z|\tau)\theta(p|\tau)\theta(z'|\tau)=0
\end{align}
in $\mathrm{Hol}(\mathbb C^4)$, where $(z,z',p,p')$ is the current variable in $\mathbb C^4$. 

The function
$$
\mathbb C^2\times\mathfrak H\to\mathbb C, \quad (p,z,\tau)\mapsto F(p,z|\tau):={\theta(p+z|\tau)\over
\theta(p|\tau)\theta(z|\tau)}
$$
is meromorphic. It expands for $z\to 0$ as a series $\sum_{k\geq-1}F_k(p|\tau)z^k$, where $F_k(p|\tau)\in
\frac{1}{p}\mathrm{Hol}(\mathfrak H)[[p]]$. It follows that $F(p,z|\tau)$ may be viewed as an element 
of ${1\over {pz}}\mathrm{Hol}(\mathfrak H)[[p,z]]$. 

View now $z,z',p,p'$ in (\ref{rel:tilde:theta}) as formal variables. Dividing this equation by the product
$\theta(p|\tau)\theta(p'|\tau)\theta(z|\tau)\theta(z'|\tau)\theta(p+p'|\tau)\theta(z+z'|\tau)$, we obtain the identity
\begin{equation}\label{fay:formal}
F(p,z|\tau)F(p+p',z'|\tau)-F(p',z'|\tau)F(p,z+z'|\tau)+F(p+p',z+z'|\tau)F(p',-z|\tau)=0
\end{equation}
in ${1\over{pp'zz'(p+p')(z+z')}}\mathrm{Hol}(\mathfrak H)[[p,p',z,z']]$.  

\subsubsection{Formal series Fay identity}

Let $(g_2,g_3)\in\mathbb C^2$. Let $(E,0,\omega)$ be the triple of the elliptic curve $y^3=4x^3-g_2x-g_3$, the point at infinity, 
and the differential $\omega=dx/y$. Choose a uniformization $i:E_{an}\stackrel{\sim}{\to}\mathbb C/(\mathbb Z+\tau\mathbb Z)$, 
taking $0$ to the origin. Then there exists a unique $\alpha\in\mathbb C^\times$, such that $\omega=\alpha\cdot i^*(dp)$. 
Computation then yields 
$$
\tilde\theta_{E,\omega}(p)=\alpha^2\theta(\alpha^{-2}p|\tau)e^{{1\over 2}G_2(\tau)(\alpha^{-2}p)^2}
$$
(identity in $\mathbb C[[p]]$). 

View now $p,z$ as formal variables and set 
\begin{equation}\label{def:tilde:F:E:omega}
\tilde F_{E,\omega}(p,z):={\tilde\theta_{E,\omega}(p+z)\over\tilde\theta_{E,\omega}(p)\tilde\theta_{E,\omega}(z)} 
\end{equation}
(element of ${1\over{pz}}\mathbb C[[p,z]]$). 
Then 
$$
\tilde F_{E,\omega}(p,z)=\alpha^{-2}F(\alpha^{-2}p,\alpha^{-2}z|\tau)e^{G_2(\tau)\alpha^{-4}pz}
$$
(identity in ${1\over{pz}}\mathbb C[[p,z]]$). Combining this identity with (\ref{fay:formal}), one gets 
\begin{equation}\label{id:fay:premixte}
\tilde F_{E,\omega}(p,z)\tilde F_{E,\omega}(p+p',z')-\tilde F_{E,\omega}(p',z')\tilde F_{E,\omega}(p,z+z')
+\tilde F_{E,\omega}(p+p',z+z')\tilde F_{E,\omega}(p',-z)=0
\end{equation}
(identity in ${1\over{pp'zz'(p+p')(z+z')}}\mathbb C[[p,p',z,z']]$). 

The coefficients of this identity are polynomials in $(g_2,g_3)$ and therefore arise from specialization of the analogous identity, 
where $\tilde\theta_{E,\omega}$ is replaced by $\tilde\theta_{univ}$, under specialization $\mathbb Q[\underline g_2,\underline g_3]\to
\mathbb C$. It follows that the identity analogous to (\ref{id:fay:premixte}) holds with $\tilde\theta_{E,\omega}$ replaced by $\tilde\theta_{univ}$, 
and then upon further specialization that identity (\ref{id:fay:premixte}) also holds when $(E,0,\omega)$ is defined over a field $\mathbf k$ of 
characteristic $0$.

\subsection{The functor $E\mapsto E^\#$}\label{sec:3}

In this \S\ and in the next one, we choose an elliptic curve $E$ as in \S\ref{specializ}. It is therefore defined over 
a field $\mathbf k$ of characteristic 0, has fixed origin $0$, and a nonzero regular differential $\omega$. 

\subsubsection{A section $\sigma\in\Gamma_{rat}(E\times E,\Omega^1_E\boxtimes\mathcal O_E)$}

\begin{lemma} 
There exists a unique rational section $\sigma$ of the bundle $\Omega^1_E\boxtimes\mathcal O_E$ 
over $E\times E$, regular except for: 
\begin{itemize}
\item a simple pole at $E_{diag}$ with residue $1$, 
\item a simple pole at $\{0\}\times E$ with residue $-1$, 
\item a simple pole at $E\times\{0\}$,
\end{itemize}
and such that the ratio $\sigma/(\omega\otimes 1)$ (a rational function on $E\times E$) is antisymmetric w.r.t. the exchange of variables. 
\end{lemma}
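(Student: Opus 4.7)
I would split the proof into uniqueness and existence. Uniqueness is a one-line cohomology argument: if $\sigma_1$ and $\sigma_2$ both satisfy the conditions, their difference has vanishing residues along each of the three divisors and is therefore a global regular section of $\Omega^1_E \boxtimes \mathcal O_E$. By K\"unneth together with $H^0(E,\Omega^1_E) = \mathbf k \cdot \omega$ and $H^0(E,\mathcal O_E) = \mathbf k$, this global section equals $c \cdot (\omega \otimes 1)$ for some scalar $c$. The ratio with $\omega \otimes 1$ is then the constant $c$, which must be antisymmetric under the swap of factors, forcing $c = 0$.

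For existence, I would produce $\sigma$ explicitly. Over $\mathbb C$, realize $E$ as $\mathbb C/\Lambda$ with $\omega = dz$ and let $\zeta$ denote the Weierstrass zeta-function. Set
$$\sigma := \bigl( \zeta(z_1 - z_2) - \zeta(z_1) + \zeta(z_2) \bigr)\, dz_1 \otimes 1.$$
The quasi-period contributions $\eta(\lambda)$ in $\zeta(z+\lambda) = \zeta(z) + \eta(\lambda)$ cancel in each variable (the two zeta-terms containing that variable pick them up with opposite signs), so $\sigma$ descends to a rational section on $E \times E$. Laurent expansion at each divisor immediately yields the required structure: residue $+1$ at $E_{\mathrm{diag}}$ (from $\zeta(z_1-z_2) \sim 1/(z_1-z_2)$), residue $-1$ at $\{0\} \times E$ (from $-\zeta(z_1) \sim -1/z_1$), and a simple pole at $E \times \{0\}$ (from $\zeta(z_2) \sim 1/z_2$). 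Antisymmetry of $\sigma/(\omega \otimes 1)$ under swap of factors follows from $\zeta(-u) = -\zeta(u)$.

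To pass from $\mathbb C$ to an arbitrary characteristic-zero field $\mathbf k$, I would invoke the universal elliptic curve. The combination $\zeta(z_1-z_2) - \zeta(z_1) + \zeta(z_2)$ is expressible as a rational function of the $\wp(z_i), \wp'(z_i)$ via the classical identity
$$\zeta(u+v) - \zeta(u) - \zeta(v) = \tfrac{1}{2}\, \frac{\wp'(u) - \wp'(v)}{\wp(u) - \wp(v)}$$
applied with $(u,v) = (z_1 - z_2, z_2)$, combined with the $\wp$-addition formula expressing $\wp(z_1 - z_2), \wp'(z_1 - z_2)$ in terms of the $\wp(z_i), \wp'(z_i)$. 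All coefficients then lie in $\mathbb Q[\underline g_2, \underline g_3]$, so by the same descent argument the paper already uses for the Fay identity (\ref{id:fay:premixte}), the construction specializes to any $(E, 0, \omega)$ over $\mathbf k$. The only delicate step is the quasi-periodicity cancellation that pins down the specific three-term combination: any other naive combination of zeta-values would either fail to descend to $E \times E$ or would not match the prescribed residues.
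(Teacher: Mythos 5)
Your argument is correct, and the uniqueness half is essentially the paper's (the difference of two solutions is a global section of $\Omega^1_E\boxtimes\mathcal O_E$, hence $c\cdot(\omega\otimes 1)$, killed by antisymmetry). One small imprecision there: the residue along $E\times\{0\}$ is \emph{not} prescribed by the hypotheses, so you cannot say the difference has vanishing residues along all three divisors; what saves you is that the difference then lies in $H^0(E\times E,\mathcal O_E\boxtimes\mathcal O_E([0]))\otimes\Gamma(E,\Omega^1_E)=\mathbf k\cdot(\omega\otimes 1)$, since a rational function on $E$ with at most one simple pole is constant. For existence the paper takes a shorter, purely algebraic route: it simply writes down
$$\sigma(P,Q)=\frac{y_P+y_Q}{x_P-x_Q}\cdot\frac{dx_P}{2y_P},$$
which makes sense over any $\mathbf k$ of characteristic $0$ and whose local expansions give the three poles and the antisymmetry directly, with no analytic input and no specialization from the universal curve. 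Your transcendental construction produces the same section, but you make it harder than necessary: applying the identity $\zeta(u+v)-\zeta(u)-\zeta(v)=\tfrac12\,\frac{\wp'(u)-\wp'(v)}{\wp(u)-\wp(v)}$ with $(u,v)=(z_1,-z_2)$ (and $\zeta(-z_2)=-\zeta(z_2)$, $\wp'(-z_2)=-\wp'(z_2)$) gives
$$\zeta(z_1-z_2)-\zeta(z_1)+\zeta(z_2)=\frac12\,\frac{\wp'(z_1)+\wp'(z_2)}{\wp(z_1)-\wp(z_2)},$$
which is exactly the paper's formula once you substitute $x=\wp$, $y=\wp'$, $dz_1=dx_P/y_P$; this bypasses both the $\wp$-addition theorem and the descent argument entirely. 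So the content is the same; the paper's version is just the algebraic closed form of your analytic expression.
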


{\em Proof.} If $\sigma_1,\sigma_2$ are two such rational functions, then their difference $\sigma_3$ is regular on $E^2$ and 
is therefore of the form $c\cdot(\omega\otimes 1)$, where $c\in\mathbf k$. The antisymmetry condition then implies that it is zero. 

If $E$ has a model $y^2=4x^3-g_2x-g_3$ and $0$ is the point at infinity in this model, then $\sigma$ is given by 
$$
\sigma(P,Q):={{y_P+y_Q}\over{x_P-x_Q}}\cdot{{dx_P}\over{2y_P}},
$$
where $P=(x_P,y_P)$ and $Q=(x_Q,y_Q)$. \hfill\qed\medskip 

\remark{Assume that $\mathbf k=\mathbb C$ and that a uniformization $E_{an}\simeq\mathbb C/(\mathbb Z+\tau\mathbb Z)$ 
is fixed. Then the image of $\sigma$ under $(E_{an})^2\simeq(\mathbb C/(\mathbb Z+\tau\mathbb Z))^2$ is the differential 
$$
\left({\theta'\over\theta}(p_1-p_2|\tau)-{\theta'\over\theta}(p_1|\tau)+{\theta'\over\theta}(p_2|\tau)\right)dp_1, 
$$ 
where $(p_1,p_2)$ are the coordinates on $\mathbb C^2$. }

\subsubsection{The functor $E\mapsto E^\#$}\label{sect:functor:EEsharp}

Define $E^\#$ to be the moduli space 
of line bundles of degree zero over $E$, equipped with a flat connection. The tensor product of bundles with connections makes $E^\#$ 
into a commutative algebraic group over $\mathbf k$, fitting in an exact sequence 
\begin{equation}\label{ex:seq:E:Esharp}
0\to \Gamma(E,\Omega^1_E)\stackrel{\mathrm{can}}{\to} E^\#\stackrel{\pi}{\to} E\to 0,
\end{equation}
where $\mathrm{can}:\Gamma(E,\Omega^1_E)\to E^\#$ is the canonical group morphism. 
According to \cite{Me,MaMe} (based on \cite{Ro}; for a recent account see \cite{BoK}), $E^\#$ 
is the universal extension of the algebraic group $E$ by vector spaces.

Given a pair $(\mathcal L,\sigma)$ of a line bundle of degree zero on $E$ equipped with a nonzero rational section, one may use 
$\sigma$ as a rational trivialization in order to express connections on $\mathcal L$; this leads to a bijection 
\begin{align*}
\underline{\mathrm{iso}}:\{\text{connections }\nabla\text{ on }\mathcal L\}\leftrightarrow & \{\text{rational differentials }\psi\text{ on }E\text{ regular except for simple 
poles} \\ &\text{ at the zeroes/poles of }\sigma,\text{ and with }\sum_{P\in E}\mathrm{res}_P(\psi)P=(\sigma)\}, 
\end{align*}
$$\nabla\mapsto\nabla(\sigma)\sigma^{-1}.
$$ 
A morphism 
$$
s:E-\{0\}\to E^\#,
$$ which is a rational section of the morphism $\pi:E^\#\to E$ may be constructed as follows. To a 
point $P$ in $E-\{0\}$, it associates the pair of the bundle $\mathcal O(P-0)$ and of the connection $\nabla_P$ corresponding via 
$\underline{\mathrm{iso}}$ to the rational differential $\sigma(-,P)$ (this makes sense as the divisor of this differential is 
$P-0$). The morphism $s$ gives rise to an isomorphism of schemes
\begin{equation}\label{iso:E:sharp}
\tilde s:(E-\{0\})\times \Gamma(E,\Omega^1_E)\to E^\#-\pi^{-1}(0),
\end{equation} 
given by $(P,\tilde c)\mapsto(\mathcal O(P-0),\nabla_P+\tilde c)$. The behavior of $\tilde s$ with respect to the group structures
on both sides is described by
$$
\tilde s(P+P',\tilde c+\tilde c')=\tilde s(P,\tilde c)+\tilde s(P',\tilde c')+f(P,-P')\cdot \mathrm{can}(\omega)\quad (\text{identity in }E^\#), 
$$
for any $P,P'\in E$ such that $P,P'$ and $P+P'$ are $\neq 0$; here $+$ denotes the addition both in $E$, in $\Gamma(E,\Omega^1_E)$ 
and in $E^\#$, $\mathrm{can}$ is as in (\ref{ex:seq:E:Esharp}), and $f(P,P')\in\mathbf k$ is defined by $\sigma(P,P')=f(P,-P')\omega(P)$.

\subsubsection{Formal neighborhood of $\pi^{-1}(0)$ in $E^\#$}

Let $E$ be as in \S\ref{specializ}. The element $\omega=dx/y=dp\in \Gamma(E,\Omega^1_E)$ is a basis of this vector space and sets up an 
isomorphism $\Gamma(E,\Omega^1_E)\simeq \mathbb A^1$. Combining the isomorphism (\ref{iso:E:sharp}) with this isomorphism,  
we obtain an isomorphism 
\begin{equation}\label{formula:calA}
\Gamma(E^\#,\mathcal O_{E^\#}(*\pi^{-1}(0)))\simeq \mathbf k[(E-\{0\})\times\mathbb A^1]=\mathbf k[x,y,\tilde c]/
(y^2=4x^3-g_2x-g_3). 
\end{equation}
(see \S\ref{specializ}). We set
\begin{equation}\label{def:cal:A}
\mathcal A:=\Gamma(E^\#,\mathcal O_{E^\#}(*\pi^{-1}(0))). 
\end{equation}
There is an isomorphism $\pi^{-1}(0)\simeq \Gamma(E,\Omega^1_E)$, and an isomorphism of the formal
neighborhood of $\pi^{-1}(0)$ in $E^\#$ with $\Gamma(E,\Omega^1_E)\times\mathrm{Spf}(\mathbf k[[p]])\simeq\mathbb A^1
\times\mathrm{Spf}(\mathbf k[[p]])=\mathrm{Spf}(\mathbf k[t][[p]])$, corresponding to the injective 
algebra morphism 
\begin{equation}\label{can:morph}
\mathrm{can}:\mathcal A\to\mathbf k[t]((p))
\end{equation}
given by 
\begin{equation}\label{16bis}
x\mapsto\wp_{E,\omega}(p), \quad y\mapsto \wp'_{E,\omega}(p), \quad \tilde c\mapsto t-{{d\mathrm{log}\tilde\theta_{E,\omega}(p)}\over{dp}}. 
\end{equation}

\subsubsection{Uniformization of $E^\#$}\label{section:unif}

Assume that $\mathbf k=\mathbb C$ and that a uniformization 
\begin{equation}\label{unif:E:an}
E_{an}\simeq\mathbb C/(\mathbb Z+\tau\mathbb Z)  
\end{equation}
is fixed, such that $dx/y$ corresponds to $dp$, $p$ being the canonical coordinate on $\mathbb C$. 

There is a morphism $\mathbb Z^2\to\mathbb C^2$, given by $(n,m)\mapsto (n+m\tau,-2\pi\mathrm{i}m)$. Then there is an analytic isomorphism 
$$
E^\#_{an}\simeq\mathrm{Coker}(\mathbb Z^2\to\mathbb C^2), 
$$
whose inverse takes the class of $(p,c)\in\mathbb C^2$ to the bundle $\mathcal O([p]-[0])$ equipped with the 
connection 
\begin{equation}\label{cnx} 
d+({\theta'\over\theta}(z-p|\tau)-{\theta'\over\theta}(z|\tau)+c)dz,  
\end{equation}
$z$ being the standard coordinate on $\mathbb C$. Here $[p]$ is the class of $p$ in $E_{an}$, $[p]-[0]$ is a degree zero divisor, 
$\mathcal O([p]-[0])$ is the associated line bundle, and $\theta(-|\tau)$ is defined by (\ref{def:theta}). 


Taking into account the role of the section $\sigma$ in the definition of the isomorphism $\tilde s$, we derive 
that the composite map 
$$
E_{an}^\#-\pi^{-1}(0)\stackrel{\sim}\to E^\#(\mathbb C)-\pi^{-1}(0)\stackrel{\tilde s}{\simeq} 
(E(\mathbb C)-\{0\})\times\mathbb C\to\mathbb C,   
$$
where the last map is the projection on the last factor, is the map $\tilde c:E^\#_{an}-\pi^{-1}(0)\to\mathbb C$ 
taking the class of $(p,c)$ to 
$$
\tilde c(p,c):=c-(\theta'/\theta)(p|\tau).
$$
The map $\tilde c$ is a rational function on $E^\#$, regular except at $\pi^{-1}(0)$ where it has a simple pole. Taking into 
account (\ref{16bis}) and (\ref{rel:tilde:theta}), the local coordinate systems $(p,c)$ and $(p,t)$ at the neightborhood of $\pi^{-1}(0)$
are related by 
$$
t=c+G_2(\tau)p. 
$$ 
The relation between the algebraic and analytic coordinates on $E^\#$ is then: 
$$
[(p,c)]\leftrightarrow (x,y,\tilde c)=(\wp(p|\tau),\wp'(p|\tau), c-{\theta'\over\theta}(p|\tau)). 
$$

\remark{There is, up to isomorphism, a unique 2-dimensional bundle $\mathcal E$ over $E$, which is a nontrivial 
extension of $\mathcal O$ by itself. One then has an exact sequence $0\to\mathcal O\to\mathcal E\stackrel{\varphi}{\to}\mathcal O\to 0$. 
Let $\mathrm{Tot}(\mathcal F)$ be the total space of a bundle $\mathcal F$ over $E$. Then $\varphi$ induces a morphism $\tilde\varphi:
\mathrm{Tot}(\mathcal E)\to\mathrm{Tot}(\mathcal O)\simeq E\times\mathbb A^1$, and $E^\#$ identifies with the preimage 
$\tilde\varphi^{-1}(E\times\{1\})$. When $\mathbf k=\mathbb C$, this can be checked using a uniformization 
(\ref{unif:E:an}) of $E_{an}$. The bundle $\mathcal E$ then identifies with the bundle $(\mathbb C\times\mathbb C^2)/\mathbb Z^2$, 
where the action of $\mathbb Z^2$ on $\mathbb C\times\mathbb C^2$ is $(n,m)\cdot(z,\vec v):=(z+n+m\tau,\begin{pmatrix} 1 
& -2\pi\mathrm{i}m \\ 0 & 1\end{pmatrix}\cdot\vec v)$. 
}

\subsection{Algebraic Fay identity on $(E^\#)^n$}\label{sect:afioe}

\subsubsection{Elements in $\mathcal A=\Gamma(E^\#,\mathcal O_{E^\#}(*\pi^{-1}(0)))$}\label{section:elements}

Recall from \S\ref{specializ} that $\partial$ is a derivation of $A=\mathbf k[E-\{0\}]$ (see (\ref{def:A})). It may be uniquely extended 
to a derivation of $\mathcal A\simeq A[\tilde c]$, also denoted 
$\partial$, by $\partial(\tilde c)=x$. This extension is compatible with the morphism $\mathrm{can}:\mathcal A\to\mathbf k[t]((p))$ 
and with the derivation $\partial/\partial p$ of the latter ring. 

\begin{lemma}
Let $z$ be a formal variable. One has
$$
{1\over\tilde\theta_{E,\omega}(z)}\mathrm{exp}\big(-\tilde c z-\sum_{k\geq 2}
{1\over k!}\partial^{k-2}(x)z^k\big)-{1\over z}\in\mathcal A[[z]]. 
$$
\end{lemma}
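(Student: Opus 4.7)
The plan is a direct formal power-series computation: factor out the $z^{-1}$ pole coming from $1/\tilde\theta_{E,\omega}(z)$ and check that the complementary factor is a power series in $z$ with coefficients in $\mathcal A$ and constant term $1$; then subtracting $1/z$ kills the singular term and leaves an element of $\mathcal A[[z]]$.

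First, I verify that the exponent lies in $z\mathcal A[[z]]$. Indeed $\tilde c\in\mathcal A$ since $\mathcal A=A[\tilde c]$, and $\partial$ is a derivation of $\mathcal A$ extending the derivation of $A$ given by $\partial(x)=y$ (via $\partial(\tilde c)=x$), so $\partial^{k-2}(x)\in A\subset\mathcal A$ for every $k\geq 2$. Hence
$$
E(z):=-\tilde c\,z-\sum_{k\geq 2}\frac{1}{k!}\partial^{k-2}(x)\,z^k\in z\,\mathcal A[[z]].
$$
Since $\mathbf k$ has characteristic zero and $z\,\mathcal A[[z]]$ is an ideal of $\mathcal A[[z]]$, the exponential $\exp(E(z))$ is a well-defined element of $1+z\,\mathcal A[[z]]\subset\mathcal A[[z]]$.

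Second, from the defining formula
$$
\tilde\theta_{E,\omega}(z)=z\cdot\exp\Bigl(-\sum_{n\geq 0}a_n(g_2,g_3)\tfrac{z^{n+2}}{(n+1)(n+2)}\Bigr)
$$
(specialization of $\tilde\theta_{univ}$), the quotient $\tilde\theta_{E,\omega}(z)/z$ lies in $1+z^2\,\mathbf k[[z]]$. Inverting gives
$$
\frac{1}{\tilde\theta_{E,\omega}(z)}=\frac{1}{z}\,u(z),\qquad u(z)\in 1+z^2\,\mathbf k[[z]]\subset\mathcal A[[z]].
$$
Combining, the product
$$
\frac{\exp(E(z))}{\tilde\theta_{E,\omega}(z)}=\frac{1}{z}\,v(z),\qquad v(z):=u(z)\exp(E(z))\in\mathcal A[[z]],
$$
has $v(0)=u(0)\cdot 1=1$. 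Writing $v(z)=1+z\,w(z)$ with $w(z)\in\mathcal A[[z]]$ yields
$$
\frac{\exp(E(z))}{\tilde\theta_{E,\omega}(z)}-\frac{1}{z}=w(z)\in\mathcal A[[z]],
$$
which is the claim.

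There is no substantive obstacle; the proof is a formal manipulation. The real content of the lemma is the specific choice of exponent $E(z)$ and normalization of $\tilde\theta_{E,\omega}$: using $\wp_{E,\omega}=-\partial_p^2\log\tilde\theta_{E,\omega}$ and a Taylor expansion, one checks that the image under the canonical map $\mathrm{can}:\mathcal A\to\mathbf k[t]((p))$ of $\sum_{k\geq 2}\frac{1}{k!}\partial^{k-2}(x)z^k$ equals $-(\log\tilde\theta_{E,\omega}(p+z)-\log\tilde\theta_{E,\omega}(p)-z\tfrac{d}{dp}\log\tilde\theta_{E,\omega}(p))$, so that the whole expression maps to the Fay-type kernel $\tilde F_{E,\omega}(p,z)e^{-tz}$. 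This identification, though not required for the lemma itself, is what motivates the particular algebraic formula being proven to lie in $\mathcal A[[z]]$ and will be used in the sequel.
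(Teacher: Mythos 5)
Your proof is correct and follows essentially the same route as the paper's: both arguments rest on the two observations that the exponent lies in $z\mathcal A[[z]]$ and that $\tilde\theta_{E,\omega}(z)\in z+z^2\mathbf k[[z]]$, so that $1/\tilde\theta_{E,\omega}(z)$ has a simple pole with residue $1$; the paper merely organizes the conclusion as a sum of two terms each lying in $\mathcal A[[z]]$ rather than factoring out $1/z$ and inspecting the constant term. Your closing remark about the image under $\mathrm{can}$ correctly anticipates the identity the paper proves next and is not needed for the lemma itself.
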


\proof The element  $\tilde\theta_{E,\omega}(z)$ belongs to $z+z^2\mathbf k[[z]]$, therefore 
$$
{1\over\tilde\theta_{E,\omega}(z)}-{1\over z}\in\mathbf k[[z]], 
$$
and moreover $-\tilde c z-\sum_{k\geq 2}{1\over k!}\partial^{k-2}(x)z^k$ belongs to $z\mathcal A[[z]]$, 
therefore
$$
{1\over\tilde\theta_{E,\omega}(z)}\Big(\mathrm{exp}\big(-\tilde c z-\sum_{k\geq 2}
{1\over k!}\partial^{k-2}(x)z^k\big)-1\Big)\in\mathcal A[[z]]. 
$$
All this implies the result. \hfill\qed\medskip

\begin{definition}
The elements $(f_\alpha)_{\alpha\geq-1}$ of $\mathcal A$ are defined by 
\begin{equation}\label{def:f:alpha}
f_\alpha:=\Big[{1\over\tilde\theta_{E,\omega}(z)}\mathrm{exp}\big(-\tilde c z-\sum_{k\geq 2}
{1\over k!}\partial^{k-2}(x)z^k\big)\Big|z^\alpha\Big], 
\end{equation}
where $[-|z^\alpha]$ means the coefficient of $z^\alpha$ in the formal series 
expansion (so $f_{-1}=1$).
\end{definition}

\begin{lemma} The following identity 
\begin{equation}\label{id:interm}
\tilde\theta_{E,\omega}(z)\Big({1\over z}+\sum_{\alpha\geq 0}\mathrm{can}(f_\alpha) z^\alpha\Big)
=e^{-tz}{\tilde\theta_{E,\omega}(p+z)\over\tilde\theta_{E,\omega}(p)}. 
\end{equation}
holds in $\mathbf k[t]((p))[[z]]$. 
\end{lemma}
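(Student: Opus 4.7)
The plan is to evaluate $\mathrm{can}$ directly on the generating-function form of the $f_\alpha$'s and recognize the right-hand side as the exponential of a Taylor expansion of $\log\tilde\theta_{E,\omega}(p+z)$ around $p$.

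First I would rewrite the definition (\ref{def:f:alpha}) as the identity
\begin{equation*}
\tilde\theta_{E,\omega}(z)\Big({1\over z}+\sum_{\alpha\geq 0}f_\alpha z^\alpha\Big)
=\mathrm{exp}\Big(-\tilde c z-\sum_{k\geq 2}{1\over k!}\partial^{k-2}(x)z^k\Big)
\end{equation*}
in $\mathcal A[[z]]$ (using $f_{-1}=1$). Since $\tilde\theta_{E,\omega}(z)$ involves only the formal variable $z$, applying $\mathrm{can}$ to both sides reduces the problem to computing the image under $\mathrm{can}$ of the exponent on the right.

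Next I would use the three inputs provided in \S\ref{sec:3}: (i) $\mathrm{can}(\tilde c)=t-{d\log\tilde\theta_{E,\omega}(p)\over dp}$ and $\mathrm{can}(x)=\wp_{E,\omega}(p)$ from (\ref{16bis}); (ii) the compatibility $\mathrm{can}\circ\partial=(d/dp)\circ\mathrm{can}$ recalled in \S\ref{section:elements}; and (iii) the identity $\wp_{univ}=-\partial_p^2\log\tilde\theta_{univ}$ from \S\ref{subsect:uetf}, which after specialization gives $\wp_{E,\omega}(p)=-(d^2/dp^2)\log\tilde\theta_{E,\omega}(p)$. Combining these yields, for $k\geq 2$,
\begin{equation*}
\mathrm{can}(\partial^{k-2}(x))={d^{k-2}\over dp^{k-2}}\wp_{E,\omega}(p)=-{d^k\over dp^k}\log\tilde\theta_{E,\omega}(p).
\end{equation*}
Therefore
\begin{equation*}
-\mathrm{can}(\tilde c)z-\sum_{k\geq 2}{z^k\over k!}\mathrm{can}(\partial^{k-2}(x))=-tz+z\,{d\log\tilde\theta_{E,\omega}(p)\over dp}+\sum_{k\geq 2}{z^k\over k!}\,{d^k\over dp^k}\log\tilde\theta_{E,\omega}(p).
\end{equation*}

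The key observation is now that the last two terms together form the full Taylor expansion (in $z$) of $\log\tilde\theta_{E,\omega}(p+z)-\log\tilde\theta_{E,\omega}(p)$ around $p$; this is a formal Taylor identity in $\mathbf k((p))[[z]]$ (justified since $\log\tilde\theta_{E,\omega}(p)=\log p+(\text{regular in }p)$, so all $p$-derivatives lie in $\mathbf k((p))$). Substituting and exponentiating yields
\begin{equation*}
\mathrm{exp}\Big(-\mathrm{can}(\tilde c)z-\sum_{k\geq 2}{z^k\over k!}\mathrm{can}(\partial^{k-2}(x))\Big)=e^{-tz}{\tilde\theta_{E,\omega}(p+z)\over\tilde\theta_{E,\omega}(p)},
\end{equation*}
which is (\ref{id:interm}). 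The only potential obstacle is bookkeeping: making sure the formal Taylor expansion is justified in $\mathbf k[t]((p))[[z]]$ (the pole along $p=0$ is swallowed by the pole of $\tilde\theta_{E,\omega}(z)^{-1}$ on the left) and that the extension of $\partial$ to $\mathcal A$ via $\partial(\tilde c)=x$ really is compatible with $d/dp$ on $\mathbf k[t]((p))$, but both points are already addressed in \S\ref{specializ} and \S\ref{section:elements}.
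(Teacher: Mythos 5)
Your proposal is correct and follows essentially the same route as the paper: apply $\mathrm{can}$ to the generating-function identity (\ref{id:f:alpha}), use (\ref{16bis}) together with $\wp_{E,\omega}=-\partial_p^2\log\tilde\theta_{E,\omega}$ and the compatibility of $\partial$ with $d/dp$, and recognize the exponent as $-tz+(e^{z\partial_p}-1)\log\tilde\theta_{E,\omega}(p)$, i.e.\ the Taylor expansion of $\log\tilde\theta_{E,\omega}(p+z)-\log\tilde\theta_{E,\omega}(p)$. No gaps.
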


\proof By Def. \ref{def:f:alpha}, we have 
\begin{equation}\label{id:f:alpha}
\tilde\theta_{E,\omega}(z)\Big({1\over z}+\sum_{\alpha\geq 0}f_\alpha z^\alpha\Big)=
\mathrm{exp}\big(-\tilde c z-\sum_{k\geq 2}
{1\over k!}\partial^{k-2}(x)z^k\big)
\end{equation}
(identity in $\mathcal A((z))$). 

The image of this identity under the morphism $\mathrm{can}:\mathcal A\to\mathbf k[t]((p))$
(see (\ref{can:morph})) is the first equality in the following computation  
\begin{align*}
&\tilde\theta_{E,\omega}(z)\Big({1\over z}+\sum_{\alpha\geq 0}\mathrm{can}(f_\alpha) z^\alpha\Big)
=\mathrm{exp}\left(-\left(t-{\tilde\theta_{E,\omega}'(p)\over\tilde\theta_{E,\omega}(p)}\right)z-\sum_{k\geq 2}
\partial_p^{k-2}(-\partial_p^2\mathrm{log}\tilde\theta_{E,\omega}(p)){z^k\over k!}\right) \\
& = e^{-tz}\mathrm{exp}\big((e^{z\partial_p}-1)\mathrm{log}\tilde\theta_{E,\omega}(p)\big)
=e^{-tz}{\tilde\theta_{E,\omega}(p+z)\over\tilde\theta_{E,\omega}(p)}
\end{align*}
(equality in $\mathbf k[t]((p))[[z]]$), which implies the result. \hfill\qed\medskip

\subsubsection{Operations on $A$ and $\mathcal A$ arising from the group laws of $E$ and $E^\#$}\label{sect:operations}

Let $(E,0)$ be given by the model $y^2=4x^3-g_2x-g_3$ and the point at infinity, 
so that $A=\mathbf k[E-\{0\}]$ is as in (\ref{def:A}). 

\begin{lemma}
The addition law on $(E,0)$ gives rise to a coproduct morphism 
$$
\Delta_A:A\to (A\otimes A)[{y^{(1)}-y^{(2)}\over x^{(1)}-x^{(2)}}],\quad x\mapsto -x^{(1)}-x^{(2)}+{1\over 4}
\Big({y^{(1)}-y^{(2)}\over x^{(1)}-x^{(2)}}\Big)^2,
$$
$$
y\mapsto 
-{1\over 2}(y^{(1)}+y^{(2)})+{3\over 2}(x^{(1)}+x^{(2)}){y^{(1)}-y^{(2)}\over x^{(1)}-x^{(2)}}-{1\over 4}\Big({y^{(1)}-y^{(2)}\over x^{(1)}
-x^{(2)}}\Big)^3,  
$$
where $a^{(1)}:=a\otimes 1$, $a^{(2)}:=1\otimes a$ for $a=x,y$.  
\end{lemma}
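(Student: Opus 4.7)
The plan is to derive both formulas from the classical chord-and-tangent addition law on the Weierstrass model $y^2 = 4x^3 - g_2 x - g_3$, making explicit that the resulting pullback along the addition morphism $\mu : E \times E \to E$ lands in the localized ring $(A\otimes A)[\lambda]$, where $\lambda := (y^{(1)}-y^{(2)})/(x^{(1)}-x^{(2)})$. Since $0 \in E(\mathbf k)$ is the point at infinity, the affine open $E - \{0\} = \operatorname{Spec} A$ is the locus where $(x,y)$ are regular; the locus in $(E-\{0\})^2$ where $P_1 + P_2 \neq 0$ and $P_1 \neq \pm P_2$ is precisely the open subscheme cut out by inverting $x^{(1)} - x^{(2)}$, and on this open $\lambda$ is a regular function, so the coproduct naturally factors through $(A \otimes A)[\lambda]$.

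The key computation is: let $P_i = (x^{(i)}, y^{(i)})$ for $i=1,2$ be the generic points, and consider the line $\ell$ through them, whose equation is $y = \lambda(x - x^{(1)}) + y^{(1)}$. Substituting into $y^2 = 4x^3 - g_2 x - g_3$ yields
\[
4x^3 - \lambda^2 x^2 + (\text{lower order in }x) = 0,
\]
a cubic in $x$ with roots $x^{(1)}, x^{(2)}, x_3$, so by Vi\`ete $x_3 = \tfrac{1}{4}\lambda^2 - x^{(1)} - x^{(2)}$. Since $\mu(P_1,P_2)$ is the reflection of the third intersection point $(x_3, y_3)$ across the $x$-axis, this yields $\Delta_A(x) = x_3$, which is exactly the first formula stated.

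For the $y$-coordinate, I would compute $y_3$ symmetrically: averaging the two line parametrizations $y_3 = \lambda(x_3 - x^{(1)}) + y^{(1)}$ and $y_3 = \lambda(x_3 - x^{(2)}) + y^{(2)}$ gives
\[
y_3 = \tfrac{1}{2}(y^{(1)} + y^{(2)}) + \tfrac{\lambda}{2}\bigl(2x_3 - x^{(1)} - x^{(2)}\bigr).
\]
Substituting $x_3 = \tfrac{1}{4}\lambda^2 - x^{(1)} - x^{(2)}$ gives $2x_3 - x^{(1)} - x^{(2)} = \tfrac{1}{2}\lambda^2 - 3(x^{(1)} + x^{(2)})$, and then $\Delta_A(y) = -y_3$ reproduces the stated expression after a short simplification.

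Finally, one must check that these assignments extend to a ring morphism, i.e.\ that the image of the defining relation $y^2 - 4x^3 + g_2 x + g_3$ vanishes in $(A\otimes A)[\lambda]$. I would do this either geometrically (the addition law lands in $E$, hence the relation is automatically satisfied) or by a direct polynomial identity using $\lambda(x^{(1)}-x^{(2)}) = y^{(1)} - y^{(2)}$ together with the two relations $(y^{(i)})^2 = 4(x^{(i)})^3 - g_2 x^{(i)} - g_3$ in $A\otimes A$. I expect this last verification to be the main (if purely computational) obstacle: one needs to clear the denominator $(x^{(1)}-x^{(2)})^6$ coming from $\lambda^6$ and massage the resulting polynomial identity down to the two Weierstrass relations; the symmetric form of $\Delta_A(y)$ obtained above keeps the bookkeeping manageable, but the check is unavoidable.
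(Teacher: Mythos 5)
Your proposal is correct and matches the paper's proof, which simply records the classical chord-and-tangent addition formulas on the model $y^2=4x^3-g_2x-g_3$ without derivation; your Vi\`ete computation of $x_3$ and the symmetric averaging for $y_3$ reproduce exactly the stated expressions. The ring-morphism verification you flag as the main obstacle is indeed best dispatched by your geometric remark (the addition is a morphism of varieties landing in $E-\{0\}$ on the locus $x^{(1)}\neq x^{(2)}$), so no further computation is needed.
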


\proof The addition $E\times E\to E$ is given by 
$((x_P,y_P),(x_Q,y_Q))\mapsto(x_R,y_R)$, where  
$$
x_R:=-x_P-x_Q+{1\over 4}\Big({y_P-y_Q\over x_P-x_Q}\Big)^2, 
$$
$$
y_R:=-{1\over 2}(y_P+y_Q)+{3\over 2}(x_P+x_Q){y_P-y_Q\over x_P-x_Q}-{1\over 4}\Big({y_P-y_Q\over x_P-x_Q}\Big)^3. 
$$
\hfill\qed\medskip 

Recall that $\mathcal A:=\mathbf k[E^\#-\pi^{-1}(0)]$ is given by (\ref{formula:calA}). 

\begin{lemma}
The addition law on $E^\#$ gives rise to the coproduct morphism 
$$
\Delta_{\mathcal A}:\mathcal A\to (\mathcal A\otimes\mathcal A)[{y^{(1)}-y^{(2)}\over x^{(1)}-x^{(2)}}], 
$$
extending $\Delta_A$ by 
$$
\tilde c\mapsto \tilde c^{(1)}+\tilde c^{(2)}+{y^{(1)}-y^{(2)}\over x^{(1)}-x^{(2)}}. 
$$
\end{lemma}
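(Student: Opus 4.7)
The plan is to dualize the group law on $E^\#$ and verify the formula on the generators $x,y,\tilde c$ separately. Since $\pi:E^\#\to E$ is a morphism of algebraic groups and since $x,y\in\mathcal A$ are pulled back from $A$ via $\pi^*$, the coproduct $\Delta_{\mathcal A}$ restricted to $A\subset\mathcal A$ coincides with $\Delta_A$ (post-composed with $\pi^*\otimes\pi^*$). Thus the formulas for $\Delta_{\mathcal A}(x)$ and $\Delta_{\mathcal A}(y)$ reduce to the previous lemma, and the only new content is the value of $\Delta_{\mathcal A}(\tilde c)$.

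To compute the latter, I would use the cocycle identity for the rational section $\tilde s$ from \S\ref{sect:functor:EEsharp}, namely
$$
\tilde s(P_1+P_2,\tilde c_1+\tilde c_2)=\tilde s(P_1,\tilde c_1)+\tilde s(P_2,\tilde c_2)+f(P_1,-P_2)\,\mathrm{can}(\omega).
$$
Because $\mathrm{can}:\Gamma(E,\Omega^1_E)\hookrightarrow E^\#$ is a subgroup embedding into the fiber $\pi^{-1}(0)$, and because under the identification $\Gamma(E,\Omega^1_E)\simeq\mathbf k\cdot\omega$ the coordinate $\tilde c$ reads off precisely the second argument of $\tilde s$, adding $-f(P_1,-P_2)\,\mathrm{can}(\omega)$ to $\tilde s(P_1+P_2,\tilde c_1+\tilde c_2)$ translates the $\tilde c$-coordinate by $-f(P_1,-P_2)$ while leaving the $\pi$-projection fixed. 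Rearranging the cocycle identity therefore gives, for $Q_i=\tilde s(P_i,c_i\omega)$,
$$
\tilde c(Q_1+Q_2)=\tilde c(Q_1)+\tilde c(Q_2)-f(P_1,-P_2),
$$
and dualizing produces $\Delta_{\mathcal A}(\tilde c)=\tilde c^{(1)}+\tilde c^{(2)}-f(P_1,-P_2)$, which must next be converted to a rational function in the $x^{(i)},y^{(i)}$.

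The final step is to compute $f(P_1,-P_2)$ from its defining equation $\sigma(P,P')=f(P,-P')\,\omega(P)$ and the explicit formula $\sigma(P,Q)=\tfrac{y_P+y_Q}{x_P-x_Q}\cdot\tfrac{dx_P}{2y_P}$; direct substitution expresses $f(P_1,-P_2)$ as a rational function with denominator $x^{(1)}-x^{(2)}$, yielding the announced coproduct $\Delta_{\mathcal A}(\tilde c)=\tilde c^{(1)}+\tilde c^{(2)}+\tfrac{y^{(1)}-y^{(2)}}{x^{(1)}-x^{(2)}}$ after tracking the conventional normalizations of $\omega$, $\sigma$, and $\mathrm{can}$.

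The main obstacle is not conceptual but clerical: the chief subtlety is the sign- and normalization-bookkeeping in the definition of $f$ and of $\mathrm{can}$, since an overall constant in the identification $\Gamma(E,\Omega^1_E)\simeq\mathbf k$ propagates into the final formula. A convenient cross-check, which I would run in parallel, is the analytic verification over $\mathbf k=\mathbb C$ using the uniformization of \S\ref{section:unif}: there $\tilde c(p,c)=c-(\theta'/\theta)(p|\tau)$ and the group law on $E^\#_{an}$ is just vector addition modulo the lattice, so the cocycle relation reduces to the classical addition formula for the Weierstrass $\zeta$ (the linear discrepancy $\theta'/\theta-\zeta$ coming from the factor $e^{G_2p^2/2}$ in (\ref{rel:tilde:theta}) drops out because a linear function is additive). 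The algebraic statement then follows by specialization from the universal family over $\mathbb Q[\underline g_2,\underline g_3]$ as in \S\ref{subsect:uetf}.
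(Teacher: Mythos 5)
Your proposal is correct and follows exactly the route the paper takes: its entire proof is the one-line remark that the lemma ``follows from the description of this addition law in \S 5.2.2'', i.e.\ from the cocycle identity $\tilde s(P+P',\tilde c+\tilde c')=\tilde s(P,\tilde c)+\tilde s(P',\tilde c')+f(P,-P')\,\mathrm{can}(\omega)$ together with the explicit formula for $\sigma$, which is precisely the computation you spell out (your analytic cross-check via the $\zeta$-addition formula is a sensible extra sanity check on the normalization, which is indeed the only delicate point).
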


\proof This follows from the description of this addition law in \S\ref{sect:functor:EEsharp}.
\hfill\qed\medskip 

We now describe the compatibility of these morphisms with $A\to\mathbf k((p))$, $\mathcal A\to\mathbf k[t]((p))$.

\begin{lemma}
There are well-defined morphisms 
$$
(A\otimes A)[{y^{(1)}-y^{(2)}\over x^{(1)}-x^{(2)}}]\to\mathbf k[[p,p']][1/p,1/p',1/(p+p')], 
$$
$$
x^{(1)}\mapsto\wp_{E,\omega}(p), \quad 
x^{(2)}\mapsto\wp_{E,\omega}(p'), \quad 
y^{(1)}\mapsto\wp_{E,\omega}'(p), \quad 
y^{(2)}\mapsto\wp_{E,\omega}'(p'),
$$
$$ 
{y^{(1)}-y^{(2)}\over x^{(1)}-x^{(2)}}\mapsto {1\over pp'(p+p')}\cdot(\text{element of }\mathbf k[[p,p']])
$$
and 
$$
(\mathcal A\otimes \mathcal A)[{y^{(1)}-y^{(2)}\over x^{(1)}-x^{(2)}}]\to
\mathbf k[t,t'][[p,p']][1/p,1/p',1/(p+p')],
$$
extending the previous morphism by 
$$
\tilde c^{(1)}\mapsto t-{d\mathrm{log}\tilde\theta_{E,\omega}(p)\over dp}, \quad 
1\otimes\tilde c\mapsto t'-{d\mathrm{log}\tilde\theta_{E,\omega}(p')\over dp'}. 
$$
\end{lemma}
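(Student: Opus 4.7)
The plan is to build the two morphisms in sequence, reducing the existence of each to a single divisibility check in a formal power series localization. For the first morphism, the canonical morphism $A\to\mathbf k((p))$ of \S\ref{specializ} has image in the subring $\mathbf k[[p]][1/p]$, since $\wp_{E,\omega}\in p^{-2}\mathbf k[[p]]$ and $\wp'_{E,\omega}\in p^{-3}\mathbf k[[p]]$. Tensoring factorwise and using the natural embedding $\mathbf k[[p]][1/p]\otimes\mathbf k[[p']][1/p']\hookrightarrow\mathbf k[[p,p']][1/p,1/p']$, one obtains a ring morphism $\phi\colon A\otimes A\to R$, where $R:=\mathbf k[[p,p']][1/p,1/p',1/(p+p')]$. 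Since $R$ is a localization of the integral domain $\mathbf k[[p,p']]$, by the universal property of localization the extension of $\phi$ to $(A\otimes A)\bigl[\tfrac{y^{(1)}-y^{(2)}}{x^{(1)}-x^{(2)}}\bigr]$ exists and is unique if and only if $\phi(x^{(1)}-x^{(2)})=\wp_{E,\omega}(p)-\wp_{E,\omega}(p')$ divides $\phi(y^{(1)}-y^{(2)})=\wp'_{E,\omega}(p)-\wp'_{E,\omega}(p')$ in $R$.

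The key factorization is
\[
\wp_{E,\omega}(p)-\wp_{E,\omega}(p')=-\frac{(p-p')(p+p')}{p^2p'^2}\,U(p,p'),
\]
with $U\in\mathbf k[[p,p']]^\times$. I would prove this by expanding $\wp_{E,\omega}(p)=p^{-2}+\sum_{n\geq 2}a_np^n$ to obtain
\[
p^2p'^2\bigl(\wp_{E,\omega}(p)-\wp_{E,\omega}(p')\bigr)=-(p-p')(p+p')+p^2p'^2(p-p')G_1(p,p'),
\]
with $G_1(p,p')=\sum_{n\geq 2}a_n(p^{n-1}+p^{n-2}p'+\cdots+p'^{n-1})\in\mathbf k[[p,p']]$, reducing the claim to divisibility of $N:=-(p+p')+p^2p'^2G_1$ by $p+p'$ in $\mathbf k[[p,p']]$. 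Evaluating at $p'=-p$ gives $N|_{p'=-p}=p^4 G_1(p,-p)=p^4\sum_{n\geq 2,\,n\text{ odd}}a_np^{n-1}$, which vanishes because $a_n=0$ for odd $n$ (as noted in the proof of Lemma \ref{lemma:wp:univ}). The unit property of $U$ at the origin follows by matching leading terms. A parallel computation yields $\wp'_{E,\omega}(p)-\wp'_{E,\omega}(p')=(p-p')M(p,p')$ with $M\in\mathbf k[[p,p']][1/p,1/p']$, so the quotient is $-p^2p'^2M/((p+p')U)\in R$, visibly of the form $\tfrac{1}{pp'(p+p')}\cdot(\text{element of }\mathbf k[[p,p']])$ as asserted.

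For the second morphism, the analogous factorwise extension of $\mathcal A\to\mathbf k[t]((p))$ (sending $\tilde c\mapsto t-d\log\tilde\theta_{E,\omega}(p)/dp$, with image in $\mathbf k[t][[p]][1/p]$ since $d\log\tilde\theta_{E,\omega}/dp$ has only a simple pole at $p=0$) yields a morphism $\mathcal A\otimes\mathcal A\to\mathbf k[t,t'][[p,p']][1/p,1/p']$, and the further localization extends by exactly the same divisibility argument: the image of $x^{(1)}-x^{(2)}$ is unchanged, and $\mathbf k[t,t'][[p,p']][1/p,1/p',1/(p+p')]$ is still an integral domain. The main obstacle is the divisibility factorization of the second paragraph; its heart is the observation that $\wp_{E,\omega}$ is an even function, so that the formal analogue of the vanishing of $\wp(P)-\wp(Q)$ on the anti-diagonal $\{P+Q=0\}\subset E\times E$ manifests as the factor $p+p'$ in the formal expansion.
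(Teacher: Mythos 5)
Your proof is correct and follows essentially the same route as the paper: both reduce the statement to the factorization of $\wp_{E,\omega}(p)-\wp_{E,\omega}(p')$ as $(p-p')(p+p')/(p^2p'^2)$ times a unit of $\mathbf k[[p,p']]$, with the factor $p+p'$ coming from the vanishing of $a_n$ for odd $n$ (evenness of $\wp$). Your explicit divisibility criterion for extending the morphism to the adjoined element $\tfrac{y^{(1)}-y^{(2)}}{x^{(1)}-x^{(2)}}$ is a slightly more careful packaging of what the paper does by directly computing the ratio of Laurent expansions, but the substance is identical.
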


\proof The expansion $A\to\mathbf k((p))$ takes $x,y$ to series $1/p^2+\sum_{n\geq 0,\text{even}}a_np^n$ and 
$-2/p^3++\sum_{n\geq 0,\text{odd}}b_np^n$. Then the ratio 
$$
{y^{(1)}-y^{(2)}\over x^{(1)}-x^{(2)}}
$$
expands as 
$$
{(-2/p^3+2/p^{\prime3})+\sum_{n\geq 0,\text{odd}}b_n(p^n-p^{\prime n})\over
(1/p^2-1/p^{\prime2})+\sum_{n\geq 0,\text{even}}a_n(p^n-p^{\prime n})}
$$
which is 
$$
{1\over pp'}{-2(p^{\prime3}-p^3)+\sum_{n\geq 0,\text{odd}}b_n(p^n-p^{\prime n})(pp')^3\over
(p^{\prime2}-p^2)+\sum_{n\geq 0,\text{even}}a_n(p^n-p^{\prime n})(pp')^2}. 
$$
In the second fraction, the numerator factorizes as $(p-p')\cdot(\text{element of }\mathbf k[[p,p']])$, while the denominator 
factorizes as $(p^2-p^{\prime2})\cdot (\text{element of }\mathbf k[[p,p']]\text{ with constant term }-1)$. It follows that the overall 
fraction expands as ${1\over pp'(p+p')}\cdot(\text{element of }\mathbf k[[p,p']])$. All this implies the result. 
\hfill\qed\medskip 

One checks: 
\begin{lemma}
\begin{itemize}
\item[1)] The following diagrams are commutative 
\begin{equation}\label{diag:1}
\xymatrix{A\ar^{\!\!\!\Delta_A}[r]\ar[d] & (A\otimes A)[{y^{(1)}-y^{(2)}\over x^{(1)}-x^{(2)}}]\ar[d]\\ 
\mathbf k((p))\ar[r]& \mathbf k[[p,p']][1/p,1/p',1/(p+p')]} \quad
\xymatrix{\mathcal A\ar^{\!\!\!\Delta_{\mathcal A}}[r]\ar[d] & (\mathcal A\otimes \mathcal A)
[{y^{(1)}-y^{(2)}\over x^{(1)}-x^{(2)}}]\ar[d]\\ 
\mathbf k[t]((p))\ar[r]& \mathbf k[t,t'][[p,p']][1/p,1/p',1/(p+p')]}
\end{equation}
where the bottom maps are given by $p\mapsto p+p'$, $t\mapsto t+t'$. 

\item[2)] The maps $a\mapsto a\otimes 1$, $a\mapsto 1\otimes a$ define morphisms from $A$ (resp., $\mathcal A$) to $A\otimes A$ 
(resp., \linebreak $\mathcal A\otimes\mathcal A$), and therefore also to the localizations $(A\otimes A)[{y^{(1)}-y^{(2)}\over 
x^{(1)}-x^{(2)}}]$ (resp., $(\mathcal A\otimes\mathcal A)[{y^{(1)}-y^{(2)}\over x^{(1)}-x^{(2)}}]$), such that the 
following diagrams commute 
\begin{equation}\label{diag:2}
\xymatrix{A\ar^{\!\!\!-\otimes 1(\text{ resp., }1\otimes-)}[r]\ar[d] & (A\otimes A)[{y^{(1)}-y^{(2)}\over x^{(1)}-x^{(2)}}]\ar[d]\\ 
\mathbf k((p))\ar[r]& \mathbf k[[p,p']][1/p,1/p',1/(p+p')]} \quad
\xymatrix{\mathcal A\ar^{\!\!\!-\otimes 1(\text{ resp., }1\otimes-)}[r]\ar[d] & (\mathcal A\otimes \mathcal A)
[{y^{(1)}-y^{(2)}\over x^{(1)}-x^{(2)}}]\ar[d]\\ 
\mathbf k[t]((p))\ar[r]& \mathbf k[t,t'][[p,p']][1/p,1/p',1/(p+p')]}
\end{equation}
where the bottom maps are given by substitution to $p,t$ of $p,t$ (resp., $p',t'$). 
\end{itemize}
\end{lemma}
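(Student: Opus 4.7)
The plan is to verify each of the four diagrams by evaluating both composites on the algebra generators ($x,y$ for $A$, and additionally $\tilde c$ for $\mathcal A$) and matching the results using the classical addition formulas for Weierstrass $\wp$, $\wp'$, and $\zeta$.

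I would dispatch part 2) first, since it is essentially bookkeeping. The map $a\mapsto a\otimes 1$ on $A$ sends $x\mapsto x^{(1)}$, $y\mapsto y^{(1)}$, and the bottom horizontal arrow is substitution of $(p,t)\mapsto(p,t)$ on the "first factor". Composing, we get $x\mapsto \wp_{E,\omega}(p)$, $y\mapsto \wp'_{E,\omega}(p)$, which agrees with the left–bottom path $A\to\mathbf k((p))\hookrightarrow\mathbf k[[p,p']][1/p,1/p',1/(p+p')]$. The $1\otimes-$ variant, and both $\mathcal A$-versions (adding $\tilde c\mapsto \tilde c^{(1)}$, giving $t-d\log\tilde\theta_{E,\omega}(p)/dp$), work identically. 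The fact that the range of the top arrows lies in the localization by $\lambda:=(y^{(1)}-y^{(2)})/(x^{(1)}-x^{(2)})$ is harmless because $\lambda$ is not touched by these morphisms.

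For part 1), the $A$-diagram: the top–right path sends $x\mapsto\Delta_A(x)=-x^{(1)}-x^{(2)}+\tfrac{1}{4}\lambda^2\mapsto -\wp(p)-\wp(p')+\tfrac{1}{4}\bigl((\wp'(p)-\wp'(p'))/(\wp(p)-\wp(p'))\bigr)^2$, while the left–bottom path sends $x\mapsto\wp_{E,\omega}(p+p')$. Equality of these two expressions is precisely the classical $\wp$ addition formula (in the $y^2=4x^3-g_2x-g_3$ normalization), which itself is just a Vieta reading of the cubic equation obtained by substituting the chord $y=\lambda x+\mu$ into the Weierstrass equation. The identity for $y\mapsto\wp'(p+p')$ follows from the same geometric description of the chord–tangent construction (the third intersection point $(x_3,\lambda x_3+\mu)$ and negation to pass from $P_1*P_2$ to $P_1+P_2$); this is exactly how $\Delta_A(y)$ was derived, so once the identification $x\leftrightarrow\wp_{E,\omega}$ is in place the formula transports. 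The key technical check is that each denominator $(x^{(1)}-x^{(2)})^k$ expands, via the preceding lemma, as an element of $\mathbf k[[p,p']][1/p,1/p',1/(p+p')]$, so the identity makes sense in the target ring.

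For the $\mathcal A$-diagram, commutativity on $x,y$ is inherited from the $A$-diagram, so only $\tilde c$ remains. Top–right sends $\tilde c\mapsto \tilde c^{(1)}+\tilde c^{(2)}+\lambda\mapsto (t-\partial_p\log\tilde\theta_{E,\omega}(p))+(t'-\partial_{p'}\log\tilde\theta_{E,\omega}(p'))+(\wp'(p)-\wp'(p'))/(\wp(p)-\wp(p'))$; left–bottom sends $\tilde c\mapsto (t+t')-d\log\tilde\theta_{E,\omega}(q)/dq\big|_{q=p+p'}$. After cancelling $t+t'$, the required identity is the Weierstrass $\zeta$-addition formula, with $\zeta_{E,\omega}:=\partial_p\log\tilde\theta_{E,\omega}$ (so that $\partial_p\zeta_{E,\omega}=-\wp_{E,\omega}$). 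This is a classical identity, and once the sign/normalization conventions are reconciled it can be verified by differentiating the $\wp$-addition formula in $p$ (or in $p'$): the $\tfrac{1}{4}\lambda^2$ term differentiates to $\tfrac{1}{2}\lambda\cdot\partial_p\lambda$, and matching the $\partial_p\wp(p+p')=-\partial_p\zeta(p+p')$ term on the other side recovers exactly the slope expression on the right.

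The main obstacle I anticipate is purely notational: reconciling the factor/sign conventions between (i) the explicit algebraic coproduct formulas arising from the chord–tangent law on $y^2=4x^3-g_2x-g_3$, (ii) the formula $f(P,-P')\cdot\mathrm{can}(\omega)$ appearing in the $E^\#$-group law of \S\ref{sect:functor:EEsharp}, and (iii) the classical $\zeta$-addition formula. Once matched, the lemma is the universal algebraic shadow of the classical $\wp$-, $\wp'$-, $\zeta$-addition formulas; no further analytic input is needed, and the identities, being polynomial in $g_2,g_3$, hold over any characteristic zero field via the universal $(\wp_{univ},\zeta_{univ},\tilde\theta_{univ})$ construction of \S\ref{subsect:uetf}.
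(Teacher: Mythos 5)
Your overall strategy is the right one and is essentially the only one available: the paper offers no argument beyond ``One checks,'' and the intended verification is exactly what you describe. All maps in sight are algebra morphisms, so it suffices to test commutativity on the generators $x,y$ (and $\tilde c$), where it becomes the addition theorems for $\wp$, $\wp'$ and $\zeta$; the polynomial dependence of all coefficients on $g_2,g_3$ then transports the identities from the analytic setting to $\mathbb Q[\underline g_2,\underline g_3]((p))$ and hence to any $\mathbf k$ of characteristic zero, exactly as the paper does for the Fay identity. Part 2) and the $x,y$-part of 1) are fine as you present them, and you are right that the only technical prerequisite is the preceding lemma guaranteeing that powers of $(x^{(1)}-x^{(2)})^{-1}$ land in $\mathbf k[[p,p']][1/p,1/p',1/(p+p')]$.

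The one step I would not let you defer is the $\tilde c$-verification, which you dismiss as a matter of ``reconciling sign/normalization conventions.'' It is not purely notational. Writing $\zeta:=d\log\tilde\theta_{E,\omega}/dp$ (so $\zeta'=-\wp$), the left--bottom path sends $\tilde c$ to $(t+t')-\zeta(p+p')$, while the top--right path, with the coproduct coefficient as printed, gives $(t-\zeta(p))+(t'-\zeta(p'))+\tfrac{\wp'(p)-\wp'(p')}{\wp(p)-\wp(p')}$. Commutativity would force $\zeta(p+p')-\zeta(p)-\zeta(p')=-\tfrac{\wp'(p)-\wp'(p')}{\wp(p)-\wp(p')}$, whereas the classical addition theorem gives $+\tfrac12$ times that slope; so with the printed coefficient $+1$ on $\lambda=\tfrac{y^{(1)}-y^{(2)}}{x^{(1)}-x^{(2)}}$ the diagram does not close, and the correct coefficient (the one actually produced by $\sigma(P,Q)=\tfrac{y_P+y_Q}{x_P-x_Q}\cdot\tfrac{dx_P}{2y_P}$ and the group-law formula of \S\ref{sect:functor:EEsharp}) is $-\tfrac12$. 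This is an erratum in the preceding lemma rather than in yours, but your proof as written would collide with it head-on, so you should either correct that coefficient or argue independently of it. A clean formula-free route for part 1): under $\mathrm{can}$ the invariant differentials $\underline{dp}=dx/y$ and $\underline{dc}=d\tilde c-x\,dx/y$ pull back to $dp$ and $dt$ respectively, so $(p,t)$ are precisely the logarithm coordinates of the two-dimensional formal group of $E^\#$ at the origin; in characteristic zero the logarithm linearizes the formal group law, which is exactly the statement that $\mathrm{can}$ intertwines $\Delta_A$, $\Delta_{\mathcal A}$ with $p\mapsto p+p'$, $t\mapsto t+t'$, with no explicit addition formula needed.
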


\subsubsection{Algebraic Fay identity on $(E^\#)^2$}\label{sect:alg:fay}

Recall $\tilde F_{E,\omega}(p,z)\in{1\over{pz}}\mathbb C[[p,z]]$ from (\ref{def:tilde:F:E:omega}). 
Let also $t,t'$ be two additional formal variables. 

\begin{lemma}
One has 
\begin{align}\label{id:fay:mixte}
& \nonumber \tilde F_{E,\omega}(p,z)e^{-tz}\tilde F_{E,\omega}(p+p',z')e^{-(t+t')z'}-\tilde F_{E,\omega}(p',z')e^{-t'z'}\tilde F_{E,\omega}(p,z+z')e^{-t(z+z')}
\\ & +\tilde F_{E,\omega}(p+p',z+z')e^{-(t+t')(z+z')}\tilde F_{E,\omega}(p',-z)e^{t'z}=0
\end{align}
(identity in ${1\over{pp'zz'(p+p')(z+z')}}\mathbf k[t,t'][[p,p',z,z']]$).
\end{lemma}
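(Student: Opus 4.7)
The plan is to reduce the claimed identity to the formal series Fay identity (\ref{id:fay:premixte}) by extracting a common exponential factor. The first step is to compute the total exponent appearing in each of the three terms and to observe that they coincide. Explicitly, for the first term the exponent of $e$ is
$$-tz - (t+t')z' = -tz - tz' - t'z',$$
for the second term it is
$$-t'z' - t(z+z') = -tz - tz' - t'z',$$
and for the third term it is
$$-(t+t')(z+z') + t'z = -tz - tz' - t'z' - t'z + t'z = -tz - tz' - t'z'.$$
Thus all three exponentials are equal to the common factor $e^{-tz - tz' - t'z'}$.

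The second step is to factor this common exponential out of (\ref{id:fay:mixte}). What remains inside the brackets is precisely
$$\tilde F_{E,\omega}(p,z)\tilde F_{E,\omega}(p+p',z') - \tilde F_{E,\omega}(p',z')\tilde F_{E,\omega}(p,z+z') + \tilde F_{E,\omega}(p+p',z+z')\tilde F_{E,\omega}(p',-z),$$
which vanishes by the formal series Fay identity (\ref{id:fay:premixte}) already established in the excerpt.

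The third step is a small ring-theoretic verification: one needs to check that the factorization takes place inside the ring ${1\over pp'zz'(p+p')(z+z')}\mathbf k[t,t'][[p,p',z,z']]$. This is immediate since $e^{-tz - tz' - t'z'}$ lies in $\mathbf k[t,t'][[z,z']]$, which is a subring of $\mathbf k[t,t'][[p,p',z,z']]$, so multiplying a rational fraction in the target ring by this exponential stays in the same ring. Hence the argument is essentially a single line, and no serious obstacle arises: the only thing to check is the coincidence of the three exponents, which is an elementary but crucial cancellation.
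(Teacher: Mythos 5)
Your proof is correct and follows the same route as the paper, whose own argument is simply ``follows from identity (\ref{id:fay:premixte})''; the content you supply --- checking that all three exponents equal $-tz-tz'-t'z'$ so that a single invertible factor $e^{-(tz+tz'+t'z')}\in\mathbf k[t,t'][[z,z']]$ can be pulled out --- is exactly the cancellation that makes that one-line deduction work.
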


\proof Follows from identity (\ref{id:fay:premixte}) in ${1\over{pp'zz'(p+p')(z+z')}}\mathbf k[[p,p',z,z']]$. \hfill \qed\medskip 

Let 
$$
\mathbf f(z):={1\over z}+\sum_{\alpha\geq0}f_\alpha z^\alpha\in{1\over z}\mathcal A[[z]]
$$
be the series defined in \S\ref{section:elements}. 

\begin{lemma} (Algebraic Fay identity.)
We have 
\begin{equation}\label{algebraic:fay}
(\mathbf f(z)\otimes 1)\Delta_{\mathcal A}(\mathbf f(z'))-\mathbf f(z+z')\otimes \mathbf f(z')
+(1\otimes\mathbf f(-z))\Delta_{\mathcal A}(\mathbf f(z+z'))=0
\end{equation}
(identity in ${1\over{zz'(z+z')}}(\mathcal A\otimes\mathcal A)[{y^{(1)}-y^{(2)}\over x^{(1)}-x^{(2)}}][[z,z']]$). 
\end{lemma}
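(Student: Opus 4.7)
The plan is to pull the identity (\ref{algebraic:fay}) back from the formal/analytic identity (\ref{id:fay:mixte}), using the injectivity of a suitable version of the canonical map $\mathrm{can}:\mathcal A\to\mathbf k[t]((p))$ on tensor products. The crucial link is identity (\ref{id:interm}), which reads
$$
\mathrm{can}(\mathbf f(z))=\tilde F_{E,\omega}(p,z)\,e^{-tz}
\qquad\text{in }\mathbf k[t]((p))[[z]],
$$
so that $\mathbf f$ is the algebraic lift of the exponentially-corrected Fay kernel.

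First I would set up the map $\Phi:(\mathcal A\otimes\mathcal A)\bigl[\tfrac{y^{(1)}-y^{(2)}}{x^{(1)}-x^{(2)}}\bigr][[z,z']]\to \mathbf k[t,t'][[p,p',z,z']]\bigl[\tfrac{1}{pp'(p+p')}\bigr]$ built from the two maps in the previous lemma, and compute the image of each term of (\ref{algebraic:fay}) under $\Phi$. Using diagram (\ref{diag:2}) and the formula above, the first factors behave as
$$
\Phi(\mathbf f(z)\otimes 1)=\tilde F_{E,\omega}(p,z)e^{-tz},\qquad
\Phi(1\otimes\mathbf f(-z))=\tilde F_{E,\omega}(p',-z)e^{t'z},
$$
$$
\Phi(\mathbf f(z+z')\otimes\mathbf f(z'))=\tilde F_{E,\omega}(p,z+z')e^{-t(z+z')}\cdot\tilde F_{E,\omega}(p',z')e^{-t'z'},
$$
while, using diagram (\ref{diag:1}), the coproduct substitutes $p\rightsquigarrow p+p'$ and $t\rightsquigarrow t+t'$:
$$
\Phi(\Delta_{\mathcal A}\mathbf f(z'))=\tilde F_{E,\omega}(p+p',z')e^{-(t+t')z'},\qquad
\Phi(\Delta_{\mathcal A}\mathbf f(z+z'))=\tilde F_{E,\omega}(p+p',z+z')e^{-(t+t')(z+z')}.
$$
Substituting these into (\ref{algebraic:fay}) produces exactly the left-hand side of identity (\ref{id:fay:mixte}), which vanishes. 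So $\Phi$ applied to the LHS of (\ref{algebraic:fay}) is $0$.

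It then remains to show that $\Phi$ is injective, so that the vanishing can be transported back. Because $\mathcal A\simeq A[\tilde c]$ with $A\hookrightarrow\mathbf k((p))$ injective (\S\ref{specializ}) and $\tilde c\mapsto t-\mathrm{d}\log\tilde\theta_{E,\omega}(p)/\mathrm{d}p$, separating powers of $\tilde c$ (equivalently of $t$, after an invertible change of variable) shows $\mathrm{can}:\mathcal A\to\mathbf k[t]((p))$ is injective. The external tensor product $\mathcal A\otimes\mathcal A\hookrightarrow \mathbf k[t]((p))\otimes\mathbf k[t']((p'))$ is then injective since both factors are free modules over $\mathbf k$, and the image lands inside $\mathbf k[t,t'][[p,p']][1/p,1/p']$. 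Inverting $(y^{(1)}-y^{(2)})/(x^{(1)}-x^{(2)})$ on the source corresponds on the target to inverting its image, which by the explicit expansion computed in the previous lemma differs from $1/(p+p')$ by a unit in $\mathbf k[[p,p']][1/p,1/p']$; hence $\Phi$ remains injective after both localizations and after passing to formal power series in $z,z'$. This is the step that carries the whole argument, and it is the main obstacle: one has to keep careful track of which denominators are inverted where, and to verify that the image of $(y^{(1)}-y^{(2)})/(x^{(1)}-x^{(2)})$ under the coordinate change really is a unit times $1/(p+p')$, so that the localized map is genuinely injective. Once this is checked, $\Phi$ injective and $\Phi(\mathrm{LHS})=0$ together give (\ref{algebraic:fay}). \qed
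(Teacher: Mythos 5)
Your proposal is correct and follows essentially the same route as the paper: identify the image of each term under the canonical expansion maps via (\ref{id:interm}) and the diagrams (\ref{diag:1})--(\ref{diag:2}), match the result with (\ref{id:fay:mixte}), and conclude by injectivity. The only difference is that you spell out the injectivity of the localized map in more detail than the paper, which simply asserts it.
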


This identity gives a family of relations between the $(f_\alpha)_{\alpha\geq 0}$, which are rational functions on $E^\#$, 
regular except at the preimage of $0\in E$; each of these relations is an identity 
of functions on $(E^\#)^2$, regular except at the preimages of $E\times\{0\}$, $\{0\}\times E$ and 
$E_{antidiag}:=\{(x,y)\in E^2|x+y=0\}$.  

\proof According to (\ref{id:interm}), the image of $\mathbf f(z)\in{1\over z}\mathcal A[[z]]$ under 
$\mathrm{can}:{1\over z}\mathcal A[[z]]\to{1\over z}\mathbf k[t]((p))[[z]]$ is $\tilde F_{E,\omega}(p,z)e^{-tz}$. This and the right sides of (\ref{diag:1})
and (\ref{diag:2}) imply that the elements 
$$
\tilde F_{E,\omega}(p,z)e^{-tz}, \quad 
\tilde F_{E,\omega}(p+p',z')e^{-(t+t')z'}, \quad 
\tilde F_{E,\omega}(p',z')e^{-t'z'}, 
$$
$$ 
\tilde F_{E,\omega}(p,z+z')e^{-t(z+z')}, \quad
\tilde F_{E,\omega}(p+p',z+z')e^{-(t+t')(z+z')}, \quad 
\tilde F_{E,\omega}(p',-z)e^{t'z}
$$
of ${1\over{pp'zz'(p+p')(z+z')}}\mathbf k[t,t'][[p,p',z,z']]$ are the images under the canonical map 
$$
{1\over{zz'(z+z')}}(\mathcal A\otimes\mathcal A)[{y^{(1)}-y^{(2)}\over x^{(1)}-x^{(2)}}][[z,z']]\to 
{1\over{zz'(z+z')}}\mathbf k[t,t'][[p,p']][1/p,1/p',1/(p+p')][[z,z']]
$$
of 
$$
\mathbf f(z)\otimes 1, \quad \Delta_{\mathcal A}(\mathbf f(z')), \quad 1\otimes\mathbf f(z'), \quad 
\mathbf f(z+z')\otimes 1, \quad \Delta_{\mathcal A}(\mathbf f(z+z')), \quad 1\otimes\mathbf f(-z). 
$$
(\ref{algebraic:fay}) now follows from the fact that this map is injective, together with identity (\ref{id:fay:mixte}). \hfill\qed\medskip

\subsubsection{An identity on $(E^\#)^n$}\label{sect:fay:En}

For $i<j\in[n]$, we denote by $m_{ij}:(E^\#)^n\to E^\#$ the morphism $(e_1,\ldots,e_n)\mapsto e_i-e_j$. 
For $i<j<k\in[n]$, we denote by $m_{ijk}:(E^\#)^n\to(E^\#)^2$ the morphism $(e_1,\ldots,e_n)\mapsto(e_i-e_j,e_j-e_k)$. 
We denote by $s:(E^\#)^2\to E^\#$ the sum morphism. Then $s\circ m_{ijk}=m_{ik}$.

 The morphism $m_{ij}$ restricts to a morphism of affine varieties
$$
\{(e_1,\ldots,e_n)\in(E^\#)^n|p_1\neq 0,\ldots,p_n\neq 0,p_i\neq p_j\}\stackrel{m_{ij}}{\to}E^\#-\pi^{-1}(0)
$$ 
where $p_1:=\pi(e_1),\ldots,p_n:=\pi(e_n)$. Similarly, $m_{ijk}$ restricts to a morphism 
\begin{align*}
& \{(e_1,\ldots,e_n)\in(E^\#)^n|p_1\neq 0,\ldots,p_n\neq 0,p_i\neq p_j,p_i\neq p_k,p_j\neq p_k\}
\\ & \stackrel{m_{ijk}}{\to}\{(e,e')\in(E^\#)^2|p\neq 0,p'\neq 0,p+p'\neq 0\}
\end{align*}
where $p:=\pi(e)$, $p':=\pi(e')$
and $s$ restricts to a morphism 
$$
\{(e,e')\in(E^\#)^2|p\neq 0,p'\neq 0,p+p'\neq 0\}\stackrel{s}{\to}E^\#-\pi^{-1}(0). 
$$
For $a\in\mathcal A$ and $i\in[n]$, set $a^{(i)}:=1^{i-1}\otimes a\otimes 1^{\otimes n-i}$. 
The regular function rings of the varieties involved in the above morphisms are the following 

\noindent
\begin{tabular}{|l|l|}
\hline $E^\#-\pi^{-1}(0)$ &$\mathcal A$ \\
\hline $\{(e,e')\in(E^\#)^2|p\neq 0,p'\neq 0,p+p'\neq 0\}$&$(\mathcal A\otimes\mathcal A)[{y^{(1)}-y^{(2)}\over x^{(1)}-x^{(2)}}]$ \\
\hline $\{(e_1,\ldots,e_n)\in(E^\#)^n|p_1\neq 0,\ldots,p_n\neq 0,p_i\neq p_j\}$ &$\mathcal A^{\otimes n}[{y^{(i)}+y^{(j)}\over x^{(i)}-x^{(j)}}]$ \\
\hline $\{(e_1,\ldots,e_n)\in(E^\#)^n|p_1\neq 0,\ldots,p_n\neq 0,p_i\neq p_j,p_i\neq p_k,p_j\neq p_k\}$& 
$\mathcal A^{\otimes n}[{y^{(i)}+y^{(j)}\over x^{(i)}-x^{(j)}},{y^{(i)}+y^{(k)}\over x^{(i)}-x^{(k)}},{y^{(j)}+y^{(k)}\over x^{(j)}-x^{(k)}}]$\\ 
\hline
\end{tabular}

The dual to the morphism $s$ is the morphism 
$$
\Delta_{\mathcal A}:\mathcal A\to (\mathcal A\otimes\mathcal A)[{y^{(1)}-y^{(2)}\over x^{(1)}-x^{(2)}}]
$$
defined in \S\ref{sect:operations}. 

The dual to the morphism $m_{ij}$ is the morphism 
$$
S_\mathcal A^{(j)}\Delta_{\mathcal A}^{(ij)}:\mathcal A\to
\mathcal A^{\otimes n}[{y^{(i)}+y^{(j)}\over x^{(i)}-x^{(j)}}], 
$$
where $\Delta_{\mathcal A}^{(ij)}$ is the composition of $\Delta_{\mathcal A}$ with the morphism induced by 
$\mathcal A\otimes\mathcal A\to\mathcal A^{\otimes n}$, $a\otimes b\mapsto 1^{\otimes i-1}\otimes a\otimes 1^{j-i-1}\otimes b\otimes 
1^{\otimes n-j}$ and ${y^{(1)}-y^{(2)}\over x^{(1)}-x^{(2)}}\mapsto{y^{(i)}-y^{(j)}\over x^{(i)}-x^{(j)}}$, $S_\mathcal A$ is the automorphism 
of $\mathcal A$ induced by $x\mapsto x$, $y\mapsto -y$, $\tilde c\mapsto -\tilde c$, and $S_{\mathcal A}^{(i)}$ is the 
isomorphism $\mathcal A^{\otimes n}[{y^{(i)}-y^{(j)}\over x^{(i)}-x^{(j)}}]\to\mathcal A^{\otimes n}[{y^{(i)}+y^{(j)}\over x^{(i)}-x^{(j)}}]$ extending the automorphism $\mathrm{id}^{\otimes i-1}\otimes S_\mathcal A\otimes\mathrm{id}^{\otimes n-i}$ 
of $\mathcal A^{\otimes n}$ by ${y^{(i)}-y^{(j)}\over x^{(i)}-x^{(j)}}\to-{y^{(i)}+y^{(j)}\over x^{(i)}-x^{(j)}}$. 

The dual of the morphism $m_{ijk}$ is the morphism 
$$
m_{ijk}^*:(\mathcal A\otimes\mathcal A)[{y^{(1)}-y^{(2)}\over x^{(1)}-x^{(2)}}]
\to\mathcal A^{\otimes n}[{y^{(i)}+y^{(j)}\over x^{(i)}-x^{(j)}},{y^{(i)}+y^{(k)}\over x^{(i)}-x^{(k)}},{y^{(j)}+y^{(k)}\over x^{(j)}-x^{(k)}}]
$$
induced by $\mathcal A\otimes\mathcal A\ni a\otimes b\mapsto S_\mathcal A^{(j)}\Delta_{\mathcal A}^{(ij)}(a)\cdot
S_\mathcal A^{(k)}\Delta_{\mathcal A}^{(jk)}(b)$, and 
$$
{y^{(1)}-y^{(2)}\over x^{(1)}-x^{(2)}}\mapsto 
-({y^{(i)}+y^{(j)}\over x^{(i)}-x^{(j)}}+{y^{(j)}+y^{(k)}\over x^{(j)}-x^{(k)}}+{y^{(k)}+y^{(i)}\over x^{(k)}-x^{(i)}}).
$$  

We then have $m_{ijk}^*\circ\Delta_{\mathcal A}=S_{\mathcal A}^{(k)}\Delta_{\mathcal A}^{(ik)}$. 

Applying $m_{ijk}^*$ to identity (\ref{algebraic:fay}), we obtain (see \cite{Fay,Mum,Po}):

\begin{lemma} (Algebraic Fay identities on $(E^\#)^n$)
One has
\begin{align}\label{algebraic:fay:ijk}
& \nonumber 
(S_{\mathcal A}^{(j)}\Delta_{\mathcal A}^{(ij)})(\mathbf f(z))
(S_{\mathcal A}^{(k)}\Delta_{\mathcal A}^{(ik)})(\mathbf f(z'))
-
(S_{\mathcal A}^{(j)}\Delta_{\mathcal A}^{(ij)})(\mathbf f(z+z'))
(S_{\mathcal A}^{(k)}\Delta_{\mathcal A}^{(jk)})(\mathbf f(z'))
\\ & 
+
(S_{\mathcal A}^{(k)}\Delta_{\mathcal A}^{(jk)})(\mathbf f(-z))
(S_{\mathcal A}^{(k)}\Delta_{\mathcal A}^{(ik)})(\mathbf f(z+z'))=0 
\end{align}
(identity in ${1\over{zz'(z+z')}}\mathcal A^{\otimes n}[{{y^{(i)}+y^{(j)}}\over{x^{(i)}-x^{(j)}}}, 
{{y^{(i)}+y^{(k)}}\over{x^{(i)}-x^{(k)}}},{{y^{(j)}+y^{(k)}}\over{x^{(j)}-x^{(k)}}}][[z,z']]$). 

\end{lemma}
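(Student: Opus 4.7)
My plan is to obtain the identity purely by pulling back the $n=2$ Algebraic Fay identity (\ref{algebraic:fay}) along the ring homomorphism $m_{ijk}^*$ whose formula is explicitly recorded just above the lemma statement.

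First, I would recall the key compatibility. Since $m_{ijk}:(E^\#)^n\to(E^\#)^2$ and $s:(E^\#)^2\to E^\#$ satisfy $s\circ m_{ijk}=m_{ik}$, the dual ring homomorphisms satisfy
$$
m_{ijk}^*\circ\Delta_{\mathcal A}=m_{ijk}^*\circ s^*=m_{ik}^*=S_{\mathcal A}^{(k)}\Delta_{\mathcal A}^{(ik)},
$$
using $\Delta_{\mathcal A}=s^*$ and $m_{ik}^*=S_{\mathcal A}^{(k)}\Delta_{\mathcal A}^{(ik)}$ (the latter is precisely the formula for the pullback under the morphism $m_{ij}$ specialized to indices $i<k$). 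I would flag this identity early, since it is the only non-trivial input beyond the given definitions.

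Second, I would extend the ring homomorphism $m_{ijk}^*$ coefficientwise to formal power series in $z,z'$, obtaining a map
$$
\tfrac{1}{zz'(z+z')}(\mathcal A\otimes\mathcal A)[\tfrac{y^{(1)}-y^{(2)}}{x^{(1)}-x^{(2)}}][[z,z']]\to
\tfrac{1}{zz'(z+z')}\mathcal A^{\otimes n}[\tfrac{y^{(i)}+y^{(j)}}{x^{(i)}-x^{(j)}},\tfrac{y^{(i)}+y^{(k)}}{x^{(i)}-x^{(k)}},\tfrac{y^{(j)}+y^{(k)}}{x^{(j)}-x^{(k)}}][[z,z']].
$$
Then I apply this to the three summands of (\ref{algebraic:fay}).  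Using multiplicativity of $m_{ijk}^*$, the defining formula $m_{ijk}^*(a\otimes b)=S_{\mathcal A}^{(j)}\Delta_{\mathcal A}^{(ij)}(a)\cdot S_{\mathcal A}^{(k)}\Delta_{\mathcal A}^{(jk)}(b)$, and the compatibility of the previous paragraph, the three terms transform as follows:
$$
m_{ijk}^*\bigl((\mathbf f(z)\otimes 1)\Delta_{\mathcal A}(\mathbf f(z'))\bigr)=(S_{\mathcal A}^{(j)}\Delta_{\mathcal A}^{(ij)})(\mathbf f(z))\cdot(S_{\mathcal A}^{(k)}\Delta_{\mathcal A}^{(ik)})(\mathbf f(z')),
$$
$$
m_{ijk}^*\bigl(\mathbf f(z+z')\otimes\mathbf f(z')\bigr)=(S_{\mathcal A}^{(j)}\Delta_{\mathcal A}^{(ij)})(\mathbf f(z+z'))\cdot(S_{\mathcal A}^{(k)}\Delta_{\mathcal A}^{(jk)})(\mathbf f(z')),
$$
$$
m_{ijk}^*\bigl((1\otimes\mathbf f(-z))\Delta_{\mathcal A}(\mathbf f(z+z'))\bigr)=(S_{\mathcal A}^{(k)}\Delta_{\mathcal A}^{(jk)})(\mathbf f(-z))\cdot(S_{\mathcal A}^{(k)}\Delta_{\mathcal A}^{(ik)})(\mathbf f(z+z')).
$$
Substituting into $m_{ijk}^*$ of (\ref{algebraic:fay}) yields the stated identity.

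The only real obstacle is checking that $m_{ijk}^*$ is well-defined on the localization (i.e.\ that the denominator $y^{(1)}-y^{(2)}$ over $x^{(1)}-x^{(2)}$ has an image that is invertible in the target ring). Geometrically this is immediate since $m_{ijk}$ sends the open set on the left-hand side of its table-entry into the open set where $p\neq 0,p'\neq 0,p+p'\neq 0$; algebraically it corresponds to the elementary verification that the image of $x^{(1)}-x^{(2)}$ through $m_{ijk}^*$ factors through some invertible combination of $x^{(i)}-x^{(j)}$, $x^{(i)}-x^{(k)}$, $x^{(j)}-x^{(k)}$, consistent with the denominators appearing in the target localization. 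Apart from this bookkeeping, the lemma is a formal consequence of (\ref{algebraic:fay}) and functoriality of pullback.
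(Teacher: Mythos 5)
Your proposal is correct and is essentially the paper's own argument: the paper derives (\ref{algebraic:fay:ijk}) precisely by applying $m_{ijk}^*$ to the $n=2$ identity (\ref{algebraic:fay}), using the compatibility $m_{ijk}^*\circ\Delta_{\mathcal A}=S_{\mathcal A}^{(k)}\Delta_{\mathcal A}^{(ik)}$ (which it records just before the lemma, as a consequence of $s\circ m_{ijk}=m_{ik}$) together with the defining formula $m_{ijk}^*(a\otimes b)=S_{\mathcal A}^{(j)}\Delta_{\mathcal A}^{(ij)}(a)\cdot S_{\mathcal A}^{(k)}\Delta_{\mathcal A}^{(jk)}(b)$. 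Your term-by-term computation matches, and the well-definedness on the localization that you flag is already built into the paper's explicit formula for the image of $\frac{y^{(1)}-y^{(2)}}{x^{(1)}-x^{(2)}}$ under $m_{ijk}^*$.
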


\remark{Assume that $\mathbf k=\mathbb C$ and that $(E,0,\omega)=(\mathbb C/(\mathbb Z+\tau\mathbb Z),0,dp)$. 
The image of $\mathbf f(z)$ in ${1\over z}\mathbb C[t]((p))[[z]]$ is then 
$$
{\theta(p+z|\tau)\over\theta(p|\tau)\theta(z|\tau)}e^{-(t-{1\over2}G_2(\tau)p)z}, 
$$ 
so the image of $(S_{\mathcal A}^{(j)}\Delta_{\mathcal A}^{(ij)})(\mathbf f(z))$ in 
${1\over z(p_i-p_j)}\mathbb C[t_1,\ldots,t_n][[p_1,\ldots,p_n,z]]$ 
is 
$$
{\theta(p_{ij}+z|\tau)\over\theta(p_{ij}|\tau)\theta(z|\tau)}e^{-c_{ij}z}, 
$$
where $c_i:=t_i-{1\over2}G_2(\tau)p_i$ and $c_{ij}:=c_i-c_j$. \hfill\qed\medskip}\label{rem:atkatetiof}

Identity (\ref{algebraic:fay:ijk}) then translates as 
\begin{align*}
&  {{\theta(p_{ij}+z|\tau)}\over{\theta(p_{ij}|\tau)\theta(z|\tau)}}e^{-c_{ij}z}{{\theta(p_{ik}+z'|\tau)}\over{\theta(p_{ik}|\tau)\theta(z'|\tau)}}e^{-c_{ik}z'}
-{{\theta(p_{jk}+z'|\tau)}\over{\theta(p_{jk}|\tau)\theta(z'|\tau)}}e^{-c_{jk}z'}{{\theta(p_{ij}+(z+z')|\tau)}\over{\theta(p_{ij}|\tau)\theta(z+z'|\tau)}}e^{-c_{ij}(z+z')}
\\ & +{{\theta(p_{ik}+(z+z')|\tau)}\over{\theta(p_{ik}|\tau)\theta(z+z'|\tau)}}e^{-c_{ik}(z+z')}{{\theta(p_{jk}-z|\tau)}\over{\theta(p_{jk}|\tau)\theta(z|\tau)}}e^{c_{jk}z}
=0.  
\end{align*}
(identity in ${1\over{zz'(z+z')(p_i-p_j)(p_i-p_k)(p_j-p_k)}}\mathbb C[t_1,\ldots,t_n][[p_1,\ldots,p_n,z,z']]$).

\section{Cohomological computations related to $E^\#$}

In this \S, we choose an elliptic curve $E$ with fixed origin $0$, defined over a field $\mathbf k$ of char. 0. 

\subsection{Spaces of differentials on $E^\#$}\label{sect:one:forms}

\subsubsection{Computation of $\Gamma(E^\#,\mathcal O_{E^\#}(\pi^{-1}(0)))$}\label{subsect:4:1}

In \S\ref{section:elements}, we defined elements $f_\alpha\in\mathcal A$ and a morphism $\mathrm{can}:\mathcal A
\to\mathbf k[t]((p))$. As the left-hand side of (\ref{id:interm}) belongs to ${1\over p}\mathbf k[t][[p,z]]$, one has  
$\mathrm{can}(f_\alpha)\in{1\over p}\mathbf k[t][[p]]$ for any $\alpha\geq 0$. 
Since the divisor $\pi^{-1}(0)$ is locally defined by the equation $p=0$ in the plane $(p,t)$, this implies 
that the pole of $f_\alpha$ is simple, therefore
$$
\forall\alpha\geq 0, \quad f_\alpha\in \Gamma(E^\#,\mathcal O_{E^\#}(\pi^{-1}(0))). 
$$
There is an exact sequence 
$$
0\to \Gamma(E^\#,\mathcal O_{E^\#})\to \Gamma(E^\#,\mathcal O_{E^\#}(\pi^{-1}(0)))\stackrel{\mu}{\to}
\Gamma(\mathbb A^1,\mathcal O_{\mathbb A^1}),
$$
where the last map takes a rational function $f$ on $E^\#$ with a simple pole at $\pi^{-1}(0)$ to the restriction at 
$\pi^{-1}(0)\simeq\mathbb A^1$  of the product $f\cdot p$ ($p=0$ being a local equation of $\pi^{-1}(0)$). 

\begin{lemma}\label{lemma:basis:H0EsharpSimplePole}
Set $f_{-1}:=1$. The family $(f_\alpha)_{\alpha\geq-1}$ is a basis of $\Gamma(E^\#,\mathcal O_{E^\#}(\pi^{-1}(0)))$.
The map $\mu$ is such that 
\begin{equation}\label{mu:f:alpha}
\forall\alpha\geq 0, \quad \mu(f_\alpha)=(-t)^\alpha/\alpha!,  
\end{equation}
therefore $\mu$ is surjective.   
\end{lemma}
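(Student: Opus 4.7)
The plan is to deduce both claims from the generating-function identity \eqref{id:interm}, the exact sequence of \S\ref{subsect:4:1}, and the anti-affineness $\Gamma(E^\#, \mathcal{O}_{E^\#}) = \mathbf{k}$ of the universal additive extension (the same type of input as the $H^1(E^\#, \mathcal{O}_{E^\#}) = \mathbf{k}$ invoked later in the paper).

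The computational heart is the formula $\mu(f_\alpha) = (-t)^\alpha/\alpha!$ for $\alpha \ge 0$, which I would obtain by multiplying \eqref{id:interm} through by $p$ and setting $p=0$. On the left, the $p \cdot \tilde\theta_{E,\omega}(z)/z$ term vanishes; each $p \cdot \mathrm{can}(f_\alpha)$ specializes, by definition of $\mu$ (using that $p=0$ is a local equation for $\pi^{-1}(0)$ in the coordinates $(p,t)$), to $\mu(f_\alpha) \in \mathbf{k}[t]$. On the right, $\tilde\theta_{E,\omega}(p) \in p + p^2 \mathbf{k}[[p]]$ gives $(p/\tilde\theta_{E,\omega}(p))|_{p=0} = 1$ and $\tilde\theta_{E,\omega}(p+z)|_{p=0} = \tilde\theta_{E,\omega}(z)$. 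The identity collapses to
$$
\tilde\theta_{E,\omega}(z) \sum_{\alpha \ge 0} \mu(f_\alpha) z^\alpha = e^{-tz} \tilde\theta_{E,\omega}(z)
$$
in $\mathbf{k}[t][[z]]$; writing $\tilde\theta_{E,\omega}(z) = z \cdot u(z)$ with $u(z) \in \mathbf{k}[[z]]^\times$ exhibits $\tilde\theta_{E,\omega}(z)$ as a non-zero-divisor, and cancellation yields $\sum_{\alpha \ge 0} \mu(f_\alpha) z^\alpha = e^{-tz}$, which gives the claimed values. Since $\{(-t)^\alpha/\alpha!\}_{\alpha \ge 0}$ is a $\mathbf{k}$-basis of $\mathbf{k}[t] = \Gamma(\mathbb{A}^1, \mathcal{O}_{\mathbb{A}^1})$, $\mu$ is surjective.

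To conclude the basis statement, anti-affineness turns the exact sequence into a short exact sequence $0 \to \mathbf{k} \cdot f_{-1} \to \Gamma(E^\#, \mathcal{O}_{E^\#}(\pi^{-1}(0))) \stackrel{\mu}{\to} \mathbf{k}[t] \to 0$, and $(f_\alpha)_{\alpha \ge 0}$ lifts a basis of the quotient. Given $g$ in the middle, one writes $\mu(g) = \sum_\alpha c_\alpha (-t)^\alpha/\alpha!$ (a finite sum), so $g - \sum_\alpha c_\alpha f_\alpha \in \mathbf{k} \cdot f_{-1}$; linear independence follows by applying $\mu$ to any vanishing combination $\sum_{\alpha \ge -1} c_\alpha f_\alpha = 0$, forcing $c_\alpha = 0$ for $\alpha \ge 0$ and then $c_{-1} = 0$. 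The only non-routine input is the anti-affineness of $E^\#$; once this is granted (for instance via the standard filtration of $\pi_* \mathcal{O}_{E^\#}$ on $E$ whose graded pieces are $\mathcal{O}_E$, a non-constant global section being ruled out by the non-triviality of the $H^1(E, \mathcal{O}_E)$-class classifying the torsor), the proof is a direct unwinding of the definitions around the $p \to 0$ specialization of \eqref{id:interm}.
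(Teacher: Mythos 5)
Your proposal is correct and follows essentially the same route as the paper: multiply the identity \eqref{id:interm} by $p$, specialize at $p=0$, cancel the non-zero-divisor $\tilde\theta_{E,\omega}(z)$ to get $\sum_{\alpha\ge 0}\mu(f_\alpha)z^\alpha=e^{-tz}$, and then combine surjectivity of $\mu$ with $\Gamma(E^\#,\mathcal O_{E^\#})=\mathbf k\cdot 1$ via the exact sequence to get the basis statement. The only difference is that you supply a justification of the anti-affineness $\Gamma(E^\#,\mathcal O_{E^\#})=\mathbf k$, which the paper simply quotes as known; that is a harmless (and welcome) addition, not a change of method.
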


\proof The map $\mu$ fits in the diagram 
$$
\xymatrix{ 
\Gamma(E^\#,\mathcal O_{E^\#}(\pi^{-1}(0)))\ar^{\mu}@/^2pc/[rrr]\ar[r]\ar@^{^(->}[d]&
{1\over p}\mathbf k[t][[p]]\ar[r]\ar@^{^(->}[d] & \mathbf k[t]\ar^{\!\!\!\!\!\!\!\!\!\!\!\!\sim}[r]& \Gamma(\mathbb A^1,\mathcal O_{\mathbb A^1})
\\ \Gamma(E^\#,\mathcal O_{E^\#}(*\pi^{-1}(0)))\ar^{\ \ \ \ \ \ \ \ \mathrm{can}}[r]& \mathbf k[t]((p))& & }
$$
the map ${1\over p}\mathbf k[t][[p]]\to\mathbf k[t]$ being $f\mapsto (f\cdot p)_{|p=0}$. 
(\ref{id:interm}) implies
$$
e^{-tz}{\tilde\theta_{E,\omega}(p+z)\over\tilde\theta_{E,\omega}(p)}=\tilde\theta_{E,\omega}(z)\big({1\over z}
+\sum_{\alpha\geq 0}\mathrm{can}(f_\alpha)z^\alpha\big)
$$
(identity in ${1\over p}\mathbf k[t][[p,z]]$), therefore 
$$
e^{-tz}\cdot {p\over\tilde\theta_{E,\omega}(p)}\cdot\tilde\theta_{E,\omega}(p+z)
=p\cdot {\tilde\theta_{E,\omega}(z)\over z}+\sum_{\alpha\geq 0}p\mathrm{can}(f_\alpha)\tilde\theta_{E,\omega}(z)z^\alpha
$$
(identity in $\mathbf k[t][[p,z]]$). Evaluating at $p=0$, we get
$$
e^{-tz}\cdot\tilde\theta_{E,\omega}(z)=\sum_{\alpha\geq 0}(p\mathrm{can}(f_\alpha))_{|p=0}\tilde\theta_{E,\omega}(z)z^\alpha
$$ 
(identity in $\mathbf k[t][[z]]$), therefore 
$\sum_{\alpha\geq 0}(p\mathrm{can}(f_\alpha))_{|p=0}z^\alpha=e^{-tz}$, 
therefore for any $\alpha\geq 0$, $\mu(f_\alpha)=(p\mathrm{can}(f_\alpha))_{|p=0}=(-t)^\alpha/\alpha!$, 
proving (\ref{mu:f:alpha}). 

So the image of $(f_\alpha)_{\alpha\geq 0}$ by $\mu$ is a basis of $\Gamma(\mathbb A^1,\mathcal O_{\mathbb A^1})$. This implies 
both that $\mu$ is surjective, and that a basis of $\Gamma(E^\#,\mathcal O_{E^\#}(\pi^{-1}(0)))$ is the union of $(f_\alpha)_{\alpha\geq 0}$
and a basis of $\Gamma(E^\#,\mathcal O_{E^\#})$. The result follows from the fact that this latter space is $\simeq\mathbf k\cdot 1$. 
\hfill \qed\medskip   

\remark{One computes $f_{-1}=1$, $f_0=-\tilde c$, $f_1={1\over 2}(\tilde c^2-x)$. }

\subsubsection{Computation of $\Gamma(E^\#,\Omega^1_{E^\#}(\mathrm{log}\pi^{-1}(0)))$}

As $E^\#$ is an algebraic group, its sheaf $\Omega^1_{E^\#}$ of differentials is isomorphic to a direct sum of 
$\mathrm{dim}E^\#=2$ copies of the trivial bundle, generated by a basis of invariant differentials. 

The differentials 
$$
\underline{dc}:=d\tilde c-xdx/y\quad \text{and}\quad
\underline{dp}:=dx/y
$$
 form such a basis. Note that for $\mathbf k=\mathbb C$ and when a uniformization of 
$E_{an}$ is fixed, they correspond to the differentials $dc,dp$ on $E^\#_{an}$. We have therefore 
$$
\Omega^1_{E^\#}\simeq \mathcal O_{E^\#}\cdot \underline{dc}\oplus\mathcal O_{E^\#}\cdot \underline{dp}. 
$$
One computes 
$$
\Omega^1_{E^\#}(\mathrm{log}\pi^{-1}(0))\simeq \mathcal O_{E^\#}\cdot\underline{dc}\oplus\mathcal O_{E^\#}(\pi^{-1}(0))
\cdot \underline{dp}. 
$$

\begin{lemma}
\begin{itemize}
\item[1)] The family 
\begin{equation}\label{basis:algebraic}
d\tilde c-xdx/y, \quad \omega_\alpha:=f_\alpha\cdot {dx\over y}, \quad \alpha\geq -1, 
\end{equation}
where $f_\alpha$ is as in (\ref{def:f:alpha}), is a basis of $\Gamma(E^\#,\Omega^1_{E^\#}(\mathrm{log}\pi^{-1}(0)))$. 
\item[2)] The residue map 
$$
\Gamma(E^\#,\Omega^1_{E^\#}(\mathrm{log}\pi^{-1}(0)))\to\Gamma(\pi^{-1}(0),\mathcal O_{\pi^{-1}(0)})=\mathbf k[t]
$$
along $\pi^{-1}(0)$ is given by 
$$
\mathrm{res}(\underline{dc})=\mathrm{res}(\omega_{-1})=0, \quad \mathrm{res}(\omega_\alpha)=(-t)^\alpha/\alpha!\quad\mathrm{for}\quad
\alpha\geq 0. 
$$\end{itemize}
\end{lemma}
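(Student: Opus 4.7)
My plan is to reduce both statements to the decomposition of the sheaf $\Omega^1_{E^\#}(\log\pi^{-1}(0))$ that has just been written down in the excerpt, and to the analysis of $\Gamma(E^\#,\mathcal{O}_{E^\#}(\pi^{-1}(0)))$ carried out in Lemma~\ref{lemma:basis:H0EsharpSimplePole}. Concretely, I would first take global sections of the direct sum decomposition
$$
\Omega^1_{E^\#}(\log\pi^{-1}(0))\simeq \mathcal{O}_{E^\#}\cdot\underline{dc}\oplus \mathcal{O}_{E^\#}(\pi^{-1}(0))\cdot\underline{dp},
$$
obtaining
$$
\Gamma(E^\#,\Omega^1_{E^\#}(\log\pi^{-1}(0)))\simeq\Gamma(E^\#,\mathcal{O}_{E^\#})\cdot\underline{dc}\oplus\Gamma(E^\#,\mathcal{O}_{E^\#}(\pi^{-1}(0)))\cdot\underline{dp}.
$$

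Next I would identify the two summands. For the first one I would check that $\Gamma(E^\#,\mathcal{O}_{E^\#})=\mathbf{k}$: an element of this space restricts to a polynomial in $\mathbf{k}[x,y,\tilde c]/(y^2-4x^3+g_2x+g_3)$ (via the trivialization $\tilde s$ of \S\ref{sect:functor:EEsharp}) whose image under the injective map $\mathrm{can}\colon \mathcal{A}\to \mathbf{k}[t]((p))$ must lie in $\mathbf{k}[t][[p]]$; since $\mathrm{can}(x)$, $\mathrm{can}(y)$, $\mathrm{can}(\tilde c)$ have poles at $p=0$ of orders $2$, $3$, $1$ respectively with distinct leading monomials, this forces the polynomial to be a constant. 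This yields the first basis vector $\underline{dc}=d\tilde c-xdx/y$. For the second summand I simply invoke Lemma~\ref{lemma:basis:H0EsharpSimplePole}, according to which $(f_\alpha)_{\alpha\geq -1}$ is a basis of $\Gamma(E^\#,\mathcal{O}_{E^\#}(\pi^{-1}(0)))$; tensoring with $\underline{dp}=dx/y$ gives the basis $(\omega_\alpha)_{\alpha\geq -1}$ and completes part~1).

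For part~2) I would compute the Poincaré residues along $\pi^{-1}(0)$ using the local coordinates $(p,t)$ around $\pi^{-1}(0)$ identified in \S\ref{section:unif}, in which $\pi^{-1}(0)$ is cut out by $p=0$ and $\underline{dp}=dx/y$ corresponds to $dp$. The form $\underline{dc}=d\tilde c-xdx/y$ is translation-invariant on the algebraic group $E^\#$, hence extends to a section of $\Omega^1_{E^\#}$ (without log pole), so its residue vanishes; similarly $\omega_{-1}=\underline{dp}$ is invariant and has residue $0$. For $\alpha\geq 0$, write $\omega_\alpha=f_\alpha\,dp$ in the local coordinates; by (\ref{mu:f:alpha}) of Lemma~\ref{lemma:basis:H0EsharpSimplePole}, we have $(p\cdot\mathrm{can}(f_\alpha))|_{p=0}=(-t)^\alpha/\alpha!$, so the Poincaré residue of $\omega_\alpha$ along $\pi^{-1}(0)\simeq\mathrm{Spec}\,\mathbf{k}[t]$ equals $(-t)^\alpha/\alpha!$, as claimed.

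The main obstacle I anticipate is the verification that $\Gamma(E^\#,\mathcal{O}_{E^\#})=\mathbf{k}$ (the statement tacitly used at the end of the proof of Lemma~\ref{lemma:basis:H0EsharpSimplePole}); the remaining steps are a direct combination of the sheaf-level decomposition with a computation of residues in explicit local coordinates and therefore should not cause any difficulty.
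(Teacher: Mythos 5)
Your proof is correct and is essentially the paper's own argument: part 1) is obtained by taking global sections of the splitting $\Omega^1_{E^\#}(\mathrm{log}\pi^{-1}(0))\simeq \mathcal O_{E^\#}\cdot\underline{dc}\oplus\mathcal O_{E^\#}(\pi^{-1}(0))\cdot\underline{dp}$ and invoking Lemma \ref{lemma:basis:H0EsharpSimplePole}, and part 2) is exactly the computation via (\ref{mu:f:alpha}). The one imprecision is your justification of $\Gamma(E^\#,\mathcal O_{E^\#})=\mathbf k$: distinct monomials $x^ay^b\tilde c^c$ can have the same pole order along $\pi^{-1}(0)$ and cancel at top order (e.g.\ $\tilde c^2-x=2f_1$ has only a simple pole), so "distinct leading monomials" does not by itself force constancy; the fact is nonetheless true (and is also assumed without proof in the paper), and can be checked, for instance, by refining your argument with the grading by degree in $t$ of the leading Laurent coefficients.
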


\proof 1) follows from Lemma \ref{lemma:basis:H0EsharpSimplePole}. 2) follows from (\ref{mu:f:alpha}). \hfill\qed\medskip  

\remark{If $\mathbf k=\mathbb C$ and a uniformization of $E_{an}\simeq\mathbb C/(\mathbb Z+\tau\mathbb Z)$ is fixed as in 
\S\ref{section:unif} (i.e., such that $dx/y\leftrightarrow dp$), the elements (\ref{basis:algebraic}) correspond to the following 
differentials on $E^\#_{an}$  
\begin{equation}\label{diff:forms:E:sharp}
dc,\quad \omega_{-1}:=dp, \quad \omega_\alpha:=[\Big({\theta(p+z|\tau)\over\theta(z|\tau)\theta(p|\tau)}e^{-cz}-{1\over z}\Big)
dp|z^\alpha], \quad 
\alpha\geq 0,  
\end{equation}
where $z$ is a formal variable and the notation $[-|z^\alpha]$ denotes the coefficient of $z^\alpha$ in the expansion in power series at $z=0$
(see \S\ref{subsect:4:1}).}

\subsection{Spaces of differentials  on $(E^\#)^n$}\label{sec:spaces:of:diffs}

\subsubsection{The setup}\label{subsect:5:1}

Let $n$ be an integer $\geq 1$; we set $[n]:=\{1,\ldots,n\}$ and $I:=\{(i,j)|i,j\in[n]$ and $i<j\}$. 

We assume $(X,D)$ to be as in \S\ref{sect:examples}. To emphasize the 
dependence of $D$ in $n$, we sometimes denote it $D^{(n)}$. 

\subsubsection{Computation of $\Gamma(X,\Omega^1_X(\mathrm{log}D_{ij}))$}\label{sect:4:2}\label{sec:coh}

Let $(i,j)\in I$. According to \S\ref{sect:1:2}, the space $\Gamma(X,\Omega^1_X(\mathrm{log}D_{ij}))$ consists of all forms $\alpha$ in 
$\Gamma_{rat}(X,\Omega^1_X)$ such that both $\alpha$ and $d\alpha$ are regular except for simple poles along $D_{ij}$. 
Moreover, the residue along $D_{ij}$ induces an exact sequence 
\begin{equation}\label{exact:D:ij}
0\to \Gamma(X,\Omega^1_X)\to \Gamma(X,\Omega^1_X(\mathrm{log}D_{ij}))\stackrel{\mu_{ij}}{\to} \Gamma(D_{ij},\mathcal O_{D_{ij}}). 
\end{equation}

The automorphism $\mathrm{map}_{ij}$ of $(E^\#)^n$ given by 
$$
\mathrm{map}_{ij}:(e_1,\ldots,e_n)\mapsto(e_i-e_j,e_1,\ldots,e_{j-1},e_{j+1},\ldots,e_n)
$$
induces an isomorphism $D_{ij}\simeq\pi^{-1}(0)\times(E^\#)^{n-1}$. The isomorphism $\pi^{-1}(0)\simeq\mathbb A^1$ 
then induces an isomorphism 
\begin{equation}\label{iso:Dij}
D_{ij}\simeq\mathbb A^1\times(E^\#)^{n-1}. 
\end{equation}
The isomorphisms $\Gamma(E^\#,\mathcal O_{E^\#})\simeq{\mathbf k}$, 
$\Gamma(\mathbb A^1,\mathcal O_{\mathbb A^1})\simeq{\mathbf k}[t]$ then induce an isomorphism  
\begin{equation}\label{H0:Dij}
\Gamma(D_{ij},\mathcal O_{D_{ij}})\simeq{\mathbf k}[t] 
\end{equation}
(in the analytic context, $t=c_{ij}$, see Remark \ref{rem:atkatetiof}). 

\definition{
For $\alpha\geq 0$ and $(i,j)\in I$, define $\omega_{ij}^\alpha$ as the image of $\omega_\alpha$ under the composed map 
\begin{align*}
\Gamma(E^\#,\Omega^1_{E^\#}(\mathrm{log}\pi^{-1}(0)))
& \stackrel{-\otimes 1^{\otimes n-1}}{\to} 
\Gamma(E^\#\times(E^\#)^{n-1},\Omega^1_{E^\#}(\mathrm{log}\pi^{-1}(0))\boxtimes\mathcal O_{E^\#}^{\boxtimes n-1}) \\ & 
\stackrel{\mathrm{map}_{ij}^*}{\to} \Gamma((E^\#)^n,\Omega^1_{(E^\#)^n}(\mathrm{log}D_{ij})). 
\end{align*}}

\begin{lemma}\label{tlminis}
The map $\mu_{ij}$ in (\ref{exact:D:ij}) is surjective. 
\end{lemma}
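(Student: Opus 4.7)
The plan is to exhibit an explicit family of logarithmic forms whose residues span $\Gamma(D_{ij},\mathcal O_{D_{ij}})$, using the $\omega_{ij}^\alpha$ already constructed just before the statement. More precisely, under the identification (\ref{H0:Dij}), the target $\Gamma(D_{ij},\mathcal O_{D_{ij}})$ is $\mathbf k[t]$, which admits $((-t)^\alpha/\alpha!)_{\alpha\geq 0}$ as a $\mathbf k$-basis. So it suffices to produce, for each $\alpha\geq 0$, an element of $\Gamma(X,\Omega^1_X(\mathrm{log}D_{ij}))$ with residue $(-t)^\alpha/\alpha!$. The obvious candidate is $\omega_{ij}^\alpha$.

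The verification is a functoriality statement for the Poincar\'e residue. The automorphism $\mathrm{map}_{ij}$ of $(E^\#)^n$ sends $D_{ij}$ isomorphically onto $\pi^{-1}(0)\times(E^\#)^{n-1}$ and is transverse to this divisor (it is, after all, an automorphism). Hence the residue map along $D_{ij}$ on the source is identified via $\mathrm{map}_{ij}^*$ with the residue map along $\pi^{-1}(0)\times(E^\#)^{n-1}$ on the target, which in turn, by exterior product with the trivial factor, coincides with the residue map $\Gamma(E^\#,\Omega^1_{E^\#}(\mathrm{log}\pi^{-1}(0)))\to\Gamma(\pi^{-1}(0),\mathcal O_{\pi^{-1}(0)})$ tensored with the identity on $\Gamma((E^\#)^{n-1},\mathcal O)=\mathbf k$. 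Combining with the residue computation $\mathrm{res}(\omega_\alpha)=(-t)^\alpha/\alpha!$ of the previous lemma and the definition of $\omega_{ij}^\alpha$, this gives
\[
\mu_{ij}(\omega_{ij}^\alpha)=(-t)^\alpha/\alpha! \qquad (\alpha\geq 0),
\]
under (\ref{H0:Dij}).

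Since the family $((-t)^\alpha/\alpha!)_{\alpha\geq 0}$ spans $\mathbf k[t]=\Gamma(D_{ij},\mathcal O_{D_{ij}})$, the surjectivity of $\mu_{ij}$ follows. The only non-routine input is the identification $\Gamma((E^\#)^{n-1},\mathcal O_{(E^\#)^{n-1}})=\mathbf k$ that underlies (\ref{H0:Dij}); this is available since $E^\#$ is a unipotent extension of a proper variety so that global regular functions on $(E^\#)^{n-1}$ reduce to constants, and the functoriality of the Poincar\'e residue of \S\ref{subsect:pr} under \'etale (here in fact isomorphism) base change, which is a direct local computation using the defining equation $p=0$ of $\pi^{-1}(0)$ transported via $\mathrm{map}_{ij}$. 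No step is really hard; the only point that requires care is keeping track of how the local equation of $D_{ij}$ matches the coordinate $p$ of $\pi^{-1}(0)$ under $\mathrm{map}_{ij}$, so that the residues computed in the two frames agree on the nose.
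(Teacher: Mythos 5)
Your proof is correct and follows the same route as the paper's: both exhibit the forms $\omega_{ij}^\alpha$ and check that their residues along $D_{ij}$ are $(-t)^\alpha/\alpha!$, hence span $\Gamma(D_{ij},\mathcal O_{D_{ij}})\simeq\mathbf k[t]$. You merely spell out the functoriality-of-residue step that the paper compresses into ``one checks''.
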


\proof One checks that the image of $\omega_{ij}^\alpha$ by the residue map $\Gamma(X,\Omega^1_X(\mathrm{log}D_{ij}))
\to \Gamma(D_{ij},\mathcal O_{D_{ij}})\simeq{\mathbf k}[t]$ equals $(-t)^\alpha/\alpha!$. So the composed map 
$\mathrm{Span}((\omega_{ij}^\alpha)_{\alpha\geq 0})\to \Gamma(X,\Omega^1_X(\mathrm{log}D_{ij}))\to \Gamma(D_{ij},\mathcal O_{D_{ij}})$
is surjective. The result follows. \hfill\qed\medskip 

For $i\in[n]$, set 
$$
\underline{dc}_i:=d\tilde c_{(i)}-x_{(i)}dx_{(i)}/y_{(i)},\quad\underline{dp}_i:=dx_{(i)}/y_{(i)}. 
$$
(see \S\ref{subsect:5:1}). 

\begin{lemma}\label{stat:basis:0}
The family $ (\underline{dc}_i,\underline{dp}_i)_{i\in[n]}$ is a basis of $\Gamma(X,\Omega^1_X)$. 
\end{lemma}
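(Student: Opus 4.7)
The plan is to reduce the statement to the $n=1$ case via the Künneth formula, exploiting the triviality of $\Omega^1_{E^\#}$ that was already established in \S\ref{sect:one:forms}. The key point is that on $E^\#$ the cotangent bundle is a trivial rank-2 bundle trivialized by the two invariant differentials $\underline{dc}$ and $\underline{dp}$, so the same will be true for $\Omega^1_X$ on $X=(E^\#)^n$ with $2n$ generators $(\underline{dc}_i,\underline{dp}_i)_{i\in[n]}$; identifying global sections then amounts to computing $\Gamma(X,\mathcal O_X)$.

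The first step is to recall from the discussion preceding Lemma \ref{lemma:basis:H0EsharpSimplePole} the isomorphism of $\mathcal O_{E^\#}$-modules
$$
\Omega^1_{E^\#}\simeq \mathcal O_{E^\#}\cdot\underline{dc}\oplus\mathcal O_{E^\#}\cdot\underline{dp}.
$$
Pulling back along the projections $p_i:(E^\#)^n\to E^\#$ and using the canonical decomposition $\Omega^1_X=\bigoplus_{i=1}^n p_i^*\Omega^1_{E^\#}$ of the cotangent sheaf of a product, one obtains an isomorphism of $\mathcal O_X$-modules
$$
\Omega^1_X\simeq \bigoplus_{i=1}^n\bigl(\mathcal O_X\cdot\underline{dc}_i\oplus\mathcal O_X\cdot\underline{dp}_i\bigr).
$$
Taking global sections gives $\Gamma(X,\Omega^1_X)=\bigoplus_i(\Gamma(X,\mathcal O_X)\underline{dc}_i\oplus\Gamma(X,\mathcal O_X)\underline{dp}_i)$.

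The second step is to show $\Gamma(X,\mathcal O_X)=\mathbf k$. The proof of Lemma \ref{lemma:basis:H0EsharpSimplePole} already records $\Gamma(E^\#,\mathcal O_{E^\#})\simeq \mathbf k\cdot 1$; combining this with the Künneth formula for coherent cohomology over a field,
$$
H^0\bigl((E^\#)^n,\mathcal O\bigr)\simeq H^0(E^\#,\mathcal O_{E^\#})^{\otimes n}=\mathbf k,
$$
yields the required identity. Substituting this into the previous display produces exactly the $\mathbf k$-span of $(\underline{dc}_i,\underline{dp}_i)_{i\in[n]}$; linear independence is immediate from the fact that the $(\underline{dc}_i,\underline{dp}_i)$ trivialize $\Omega^1_X$ at every point.

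I do not expect any serious obstacle: the essential inputs (triviality of $\Omega^1_{E^\#}$ with explicit basis, and $H^0(E^\#,\mathcal O)=\mathbf k$) are already in the text, and Künneth applies without issue since we are working over the field $\mathbf k$. The only point requiring minor care is the invocation of Künneth in coherent cohomology for the quasi-projective (non-proper) variety $E^\#$, which is fine because it holds for arbitrary schemes of finite type over a field provided one works with $\otimes_{\mathbf k}$.
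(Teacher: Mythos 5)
Your proof is correct and follows the same route as the paper: the paper's one-line argument ("as $\Gamma(E^\#,\mathcal O_{E^\#})\simeq\mathbf k$, we have $\Gamma(X,\Omega^1_X)\simeq\Gamma(E^\#,\Omega^1_{E^\#})^{\oplus n}$") is exactly your combination of the product decomposition of $\Omega^1_X$, the triviality of $\Omega^1_{E^\#}$ with basis $\underline{dc},\underline{dp}$, and the K\"unneth-type identification $\Gamma(X,\mathcal O_X)=\mathbf k$. You have merely spelled out the steps the paper leaves implicit.
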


\proof As $\Gamma(E^\#,\mathcal O_{E^\#})\simeq\mathbf k$, we have $\Gamma(X,\Omega^1_X)\simeq \Gamma(E^\#,\Omega^1_{E^\#})^{\oplus n}$, which implies the result. \hfill \qed\medskip 

It follows from the definition of $\omega_{ij}^\alpha$ and from Lemma \ref{stat:basis:0} that 
\begin{equation}\label{basis:Dij}
(\omega_{ij}^\alpha)_{\alpha\geq 0}, \quad (\underline{dc}_i)_{i\in[n]}, \quad (\underline{dp}_i)_{i\in[n]}
\end{equation}
constitutes a family in $\Gamma(X,\Omega^1_X(\mathrm{log}D_{ij}))$. 

\begin{lemma}\label{lemma:basis:Dij}
Family (\ref{basis:Dij}) is a basis of $\Gamma(X,\Omega^1_X(\mathrm{log}D_{ij}))$. 
\end{lemma}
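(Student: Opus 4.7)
The plan is to deduce this from the exact sequence (\ref{exact:D:ij}) together with the two preceding lemmas. By Lemma \ref{tlminis}, the residue map $\mu_{ij}$ is surjective, so (\ref{exact:D:ij}) refines to a short exact sequence of $\mathbf{k}$-vector spaces
\begin{equation*}
0 \to \Gamma(X,\Omega^1_X) \to \Gamma(X,\Omega^1_X(\mathrm{log}D_{ij})) \xrightarrow{\mu_{ij}} \Gamma(D_{ij},\mathcal{O}_{D_{ij}}) \to 0,
\end{equation*}
and under (\ref{H0:Dij}) the right-hand term is identified with $\mathbf{k}[t]$.

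First, I would observe that the elements $\underline{dc}_i$ and $\underline{dp}_i$, being global regular sections of $\Omega^1_X$, lie in the kernel of $\mu_{ij}$, and by Lemma \ref{stat:basis:0} they form a basis of that kernel. Next, from the computation carried out in the proof of Lemma \ref{tlminis}, one has $\mu_{ij}(\omega_{ij}^\alpha) = (-t)^\alpha/\alpha!$, and the family $((-t)^\alpha/\alpha!)_{\alpha\geq 0}$ is obviously a basis of $\mathbf{k}[t]$. Hence the $\omega_{ij}^\alpha$ furnish a set-theoretic section of $\mu_{ij}$, and their images form a basis of $\Gamma(D_{ij},\mathcal{O}_{D_{ij}})$.

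Finally, I would invoke the standard fact that for any short exact sequence $0 \to K \to E \to Q \to 0$ of vector spaces, the union of a basis of $K$ with the lift of a basis of $Q$ is a basis of $E$. Applied here, this gives that the concatenation of $(\underline{dc}_i)_{i\in[n]}$, $(\underline{dp}_i)_{i\in[n]}$, and $(\omega_{ij}^\alpha)_{\alpha\geq 0}$ is a basis of $\Gamma(X,\Omega^1_X(\mathrm{log}D_{ij}))$, which is precisely the family (\ref{basis:Dij}).

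There is essentially no obstacle here: all the real content has been done in the construction of the $\omega_{ij}^\alpha$ (via the pullback along $\mathrm{map}_{ij}$) and in the residue computation of Lemma \ref{tlminis}. The only thing worth double-checking is that the residue along $D_{ij}$ of the pulled-back form $\mathrm{map}_{ij}^*(\omega_\alpha \boxtimes 1)$ coincides, under the identification (\ref{H0:Dij}), with the residue $(-t)^\alpha/\alpha!$ already computed on $E^\#$ along $\pi^{-1}(0)$; but this is just naturality of the Poincaré residue under the isomorphism $D_{ij} \simeq \pi^{-1}(0) \times (E^\#)^{n-1}$ induced by $\mathrm{map}_{ij}$.
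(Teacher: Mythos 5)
Your proof is correct and follows essentially the same route as the paper's: surjectivity of $\mu_{ij}$ via the residues $(-t)^\alpha/\alpha!$ of the $\omega_{ij}^\alpha$, identification of the kernel with $\Gamma(X,\Omega^1_X)$ spanned by the $\underline{dc}_i,\underline{dp}_i$, and the standard kernel-plus-lift-of-quotient argument. The only difference is cosmetic packaging of the same facts from Lemmas \ref{tlminis} and \ref{stat:basis:0}.
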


\proof Since the composed map 
$$
\Gamma(X,\Omega^1_X)\simeq\mathrm{Span}((\underline{dc}_i)_{i\in[n]},(\underline{dp}_i)_{i\in[n]})\to\mathrm{Span}
((\omega_{ij}^\alpha)_{\alpha\geq 0},(\underline{dc}_i)_{i\in[n]},(\underline{dp}_i)_{i\in[n]})
\to 
\Gamma(X,\Omega^1_X(\mathrm{log}D_{ij}))
$$ 
is the canonical map $\Gamma(X,\Omega^1_X)\to \Gamma(X,\Omega^1_X(\mathrm{log}D_{ij}))$, the map 
$$
\mathrm{Span}((\omega_{ij}^\alpha)_{\alpha\geq 0},(\underline{dc}_i)_{i\in[n]},(\underline{dp}_i)_{i\in[n]})
\to \Gamma(X,\Omega^1_X(\mathrm{log}D_{ij}))
$$
is surjective. Moreover, the image of $(\omega_{ij}^\alpha)_{\alpha\geq 0}$ under the residue map is a linearly independent family in 
$\Gamma(D_{ij},\mathcal O_{D_{ij}})$, and the family $((\underline{dc}_i)_{i\in[n]},(\underline{dp}_i)_{i\in[n]})$ is also linearly 
independent, which implies that family (\ref{basis:Dij}) is linearly independent. All this proves the result. 
\hfill\qed\medskip

It follows from the definition of $\omega_{ij}^\alpha$ that 
$$
{\underline{dp}_{ij}\over z}+\sum_{\alpha\geq 0}\omega_{ij}^\alpha z^\alpha
=(S_{\mathcal A}^{(j)}\Delta_{\mathcal A}^{(ij)})(\mathbf f(z))\cdot\underline{dp}_{ij}. 
$$ 
where the notation is as in \S\S\ref{sect:alg:fay}, \ref{sect:fay:En} (identity in $\Gamma_{rat}((E^\#)^n,\Omega^1_{(E^\#)^n})$).

\remark Assume that $\mathbf k=\mathbb C$ and a uniformization $E_{an}\simeq\mathbb C/(\mathbb Z+\tau\mathbb Z)$ as in 
\S\ref{section:unif} (i.e., such that $dx/y\leftrightarrow dp$) is fixed. Then a uniformization of $(E^\#)^n$ is $(E^\#)^n_{an}=
\mathrm{Coker}(\mathbb Z^{2n}\to\mathbb C^n)$, where the morphism $\mathbb Z^{2n}\to\mathbb C^n$ is 
$\delta_{2i-1}\mapsto e_i$, $\delta_{2i}\mapsto \tau e_i$, where $i\in[n]$ (the canonical bases of $\mathbb Z^{2n}$ and $\mathbb C^n$ 
being $(\delta_i)_{i\in[2n]}$ and $(e_i)_{i\in[n]}$). Let $(p_1,c_1,\ldots,p_n,c_n)$ be the standard coordinates on $\mathbb C^n$. Then 
$$
(S_{\mathcal A}^{(j)}\Delta_{\mathcal A}^{(ij)})(\mathbf f(z))={\theta(p_{ij}+z|\tau)\over\theta(z|\tau)\theta(p_{ij}|\tau)}e^{-c_{ij}z}, 
$$ 
so 
\begin{equation}\label{form:omega:ijalpha}
\omega_{ij}^\alpha=[\Big({\theta(p_{ij}+z|\tau)\over\theta(z|\tau)\theta(p_{ij}|\tau)}e^{-c_{ij}z}-{1\over z}\Big)dp_{ij}|z^\alpha] 
\end{equation}
and $p_{ij}:=p_i-p_j$.
The elements $dc_i,dp_i$ ($i\in[n]$) form a basis of $\Gamma((E^\#)^n,\Omega^1_{(E^\#)^n})$
and are the images of the elements $\underline{dc}_i,\underline{dp}_i$.

\subsubsection{Computation of $\sum_{(i,j)\in I}\Gamma(X,\Omega^1_X(\mathrm{log}D_{ij}))$}

Recall that for each $(i,j)\in I$, $\Gamma(X,\Omega^1_X(\mathrm{log}D_{ij}))$ is a vector subspace of $\Gamma_{rat}(X,\Omega^1_X)$. 
Then according to \S\ref{sect:cos},
$$
\bm{\Omega}^1=\sum_{(i,j)\in I}\Gamma(X,\Omega^1_X(\mathrm{log}D_{ij}))
$$
is the sum of these subspaces. It follows from Lemma \ref{lemma:basis:Dij} that the following family 
\begin{equation}\label{diff:E:n}
\underline{dc}_i \quad (i\in[n]),\quad \underline{dp}_i \quad (i\in[n]),\quad \omega_{ij}^\alpha \quad (i<j\in[n], \quad \alpha\geq 0), 
\end{equation}
lies in, and spans, this vector space. 

According to \S\ref{subsect:maps}, the residue along $D_{ij}$ induces a linear map 
$\mathrm{Res}_{D_{ij}}^{(1)}:\bm{\Omega}^1\to \Gamma(D_{ij},\mathcal O_{D_{ij}})$. Composing this map with the isomorphism 
$\Gamma(D_{ij},\mathcal O_{D_{ij}})\simeq{\mathbf k}[t]$
(see (\ref{H0:Dij})), we obtain a linear map 
\begin{equation}\label{rho:i:j}
\varrho_{ij}:\bm{\Omega}^1\to{\mathbf k}[t]. 
\end{equation}

\begin{lemma}\label{Poinc:1}
The map $\varrho_{ij}$ is given by 
$$
\underline{dc}_{i'}\mapsto 0\quad (i'\in[n]),\quad \underline{dp}_{i'}\mapsto 0\quad (i'\in[n]),\quad 
\omega_{i'j'}^\alpha\mapsto \delta_{(i',j'),(i,j)}\cdot(-t)^\alpha/\alpha! \quad ((i,j)\in I).  
$$ 
\end{lemma}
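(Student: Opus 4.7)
The plan is to verify the formula on each of the three types of basis vectors of $\bm{\Omega}^1$ listed in \eqref{diff:E:n}, relying on facts already established earlier in the paper.

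First I would treat the globally regular forms $\underline{dc}_{i'}$ and $\underline{dp}_{i'}$. By Lemma \ref{stat:basis:0} these lie in $\Gamma(X,\Omega^1_X)$, so they are regular along every component $D_{ij}$; hence their images under the residue map $\mathrm{Res}_{D_{ij}}^{(1)}$ vanish, giving $\varrho_{ij}(\underline{dc}_{i'})=\varrho_{ij}(\underline{dp}_{i'})=0$. (Equivalently, this follows from the exact sequence \eqref{exact:sequence} of \S\ref{sect:divisors}.)

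Next I would treat the forms $\omega_{i'j'}^\alpha$ with $(i',j')\neq(i,j)$. By construction, $\omega_{i'j'}^\alpha\in\Gamma(X,\Omega^1_X(\mathrm{log}D_{i'j'}))$. The general fact recorded just after the definition of $\mathrm{Res}_{D_i}^{(1)}$ in \S\ref{subsect:maps} states that the restriction of $\mathrm{Res}_{D_{ij}}^{(1)}$ to $\Gamma(X,\Omega^1_X(\mathrm{log}D_k))$ vanishes whenever $k\neq (i,j)$. Applying this with $k=(i',j')$ yields $\varrho_{ij}(\omega_{i'j'}^\alpha)=0$.

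Finally, I would treat the forms $\omega_{ij}^\alpha$ (the diagonal case). This is precisely the residue computation already carried out in the proof of Lemma \ref{tlminis}: unpacking the definition of $\omega_{ij}^\alpha$ via pullback by $\mathrm{map}_{ij}$ and the identification \eqref{iso:Dij}, one reduces to the residue of $\omega_\alpha$ along $\pi^{-1}(0)\subset E^\#$, which was shown in \S\ref{sect:one:forms} to equal $(-t)^\alpha/\alpha!$ under the isomorphism $\Gamma(\pi^{-1}(0),\mathcal O_{\pi^{-1}(0)})\simeq\mathbf k[t]$. Hence $\varrho_{ij}(\omega_{ij}^\alpha)=(-t)^\alpha/\alpha!$.

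Combining the three cases exhausts the spanning set \eqref{diff:E:n} of $\bm{\Omega}^1$, so the formula for $\varrho_{ij}$ follows by linearity. There is no serious obstacle here: the content of the statement has essentially been recorded piecewise in the elliptic residue computation of \S\ref{sec:coh} and the vanishing principle of \S\ref{subsect:maps}; the only task is to assemble them. The mildly delicate point worth a sentence of care is that the spanning family \eqref{diff:E:n} is not linearly independent across the different $(i,j)$, but this is harmless since $\varrho_{ij}$ is defined on the sum $\bm{\Omega}^1$ and we only need to specify its values on a spanning set.
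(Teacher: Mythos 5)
Your proposal is correct and follows the same route as the paper's (much terser) proof: vanishing on the global forms, vanishing of $\mathrm{Res}^{(1)}_{D_{ij}}$ on $\Gamma(X,\Omega^1_X(\mathrm{log}D_{i'j'}))$ for $(i',j')\neq(i,j)$, and the residue computation $(-t)^\alpha/\alpha!$ from Lemma \ref{tlminis} and \eqref{mu:f:alpha} in the diagonal case. Your closing remark about only needing the family \eqref{diff:E:n} to span (and not to be independent, which is only established afterwards using this very lemma) is a sensible precaution against circularity.
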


\proof This follows from the fact that $\varrho_{ij}$ is the sum of the residue maps computed in \S\ref{sect:4:2}, from (\ref{mu:f:alpha}), 
and from the fact that if $(i,j)\neq(i',j')\in I$, then the residue along $D_{ij}$ of $\omega_{i'j'}^\alpha$ is zero. \hfill\qed\medskip 

\begin{lemma}\label{basis:Esharp:n}
The family (\ref{diff:E:n}) is a basis of $\bm{\Omega}^1=\sum_{(i,j)\in I}\Gamma((E^\#)^n,\Omega^1_{(E^\#)^n}(\mathrm{log}D_{ij}))$.
\end{lemma}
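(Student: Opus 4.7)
The spanning statement is already in hand: by Lemma \ref{lemma:basis:Dij}, the family $(\omega_{ij}^\alpha)_{\alpha\geq 0}\cup(\underline{dc}_i)_{i\in[n]}\cup(\underline{dp}_i)_{i\in[n]}$ is a basis of $\Gamma(X,\Omega^1_X(\mathrm{log}D_{ij}))$ for every $(i,j)\in I$. Taking the union of these bases as $(i,j)$ ranges over $I$ and deleting the repeated copies of the $\underline{dc}_i$'s and $\underline{dp}_i$'s yields exactly the family (\ref{diff:E:n}), and its span is therefore $\sum_{(i,j)\in I}\Gamma(X,\Omega^1_X(\mathrm{log}D_{ij}))=\bm{\Omega}^1$.

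The only nontrivial point is linear independence. The plan is to peel off the components $\omega_{ij}^\alpha$ pair by pair using the residue maps $\varrho_{ij}:\bm{\Omega}^1\to\mathbf k[t]$ of (\ref{rho:i:j}), whose values on the basis elements were computed in Lemma \ref{Poinc:1}. Assume a relation
$$
\sum_{i\in[n]}a_i\,\underline{dc}_i+\sum_{i\in[n]}b_i\,\underline{dp}_i+\sum_{(i,j)\in I}\sum_{\alpha\geq 0}c_{ij}^{\alpha}\,\omega_{ij}^{\alpha}=0
$$
in $\bm{\Omega}^1$ (a finite sum). Applying $\varrho_{i_0 j_0}$ for a fixed pair $(i_0,j_0)\in I$ and using Lemma \ref{Poinc:1} kills all $\underline{dc}_i$, $\underline{dp}_i$ and all $\omega_{ij}^\alpha$ with $(i,j)\neq(i_0,j_0)$, leaving $\sum_{\alpha\geq 0}c_{i_0 j_0}^{\alpha}(-t)^\alpha/\alpha!=0$ in $\mathbf k[t]$. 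Linear independence of the monomials in $t$ forces $c_{i_0 j_0}^{\alpha}=0$ for all $\alpha\geq 0$, and since $(i_0,j_0)\in I$ was arbitrary, all coefficients $c_{ij}^{\alpha}$ vanish.

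The remaining relation $\sum_{i}a_i\,\underline{dc}_i+\sum_{i}b_i\,\underline{dp}_i=0$ then lives inside $\Gamma(X,\Omega^1_X)$, and vanishes by Lemma \ref{stat:basis:0}, giving $a_i=b_i=0$ for all $i\in[n]$. This proves linear independence, and together with the spanning property already noted, establishes that (\ref{diff:E:n}) is a basis of $\bm{\Omega}^1$.

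The main conceptual ingredient is thus the residue calculation of Lemma \ref{Poinc:1}, which encodes the compatibility of the Poincar\'e residue along each component $D_{ij}$ with the explicit normalization $\mathrm{res}(\omega_\alpha)=(-t)^\alpha/\alpha!$ established in Section \ref{sect:one:forms}; once that is granted, the argument above is essentially a diagonal separation of variables, and no further obstacle arises.
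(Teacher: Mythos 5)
Your proof is correct and follows essentially the same route as the paper: the paper likewise deduces spanning from Lemma \ref{lemma:basis:Dij} and gets independence by observing that the $(\omega_{ij}^\alpha)$ map to a linearly independent family under the direct sum of the residue maps (Lemma \ref{Poinc:1}) while the $\underline{dc}_i,\underline{dp}_i$ form a basis of the kernel $\Gamma(X,\Omega^1_X)$ of that map. Applying the residues $\varrho_{i_0j_0}$ one pair at a time, as you do, is just an unpacking of the same argument.
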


\proof Lemma \ref{Poinc:1} implies that the family $(\omega_{ij}^\alpha)_{(i,j)\in I,\alpha\geq 0}$ maps to a basis under the map $\bm{\Omega}^1
\to\oplus_{(i,j)\in I}\Gamma(D_{ij},\mathcal O_{D_{ij}})$. The remaining elements in family (\ref{diff:E:n}) form a basis in the kernel 
$\Gamma(X,\Omega^1_X)$ of this map. It follows that the family (\ref{diff:E:n}) is also linearly independent. \hfill\qed\medskip 

Recall that $\bm{\Omega}^1\hookrightarrow\Gamma_{rat}((E^\#)^n,\Omega^1_{(E^\#)^n})$ and that the bundle 
$\Omega^1_{(E^\#)^n}$ is trivial, namely 
\begin{equation}\label{eq:Omega:1}
\Omega^1_{(E^\#)^n}\simeq\mathcal O_{(E^\#)^n}\otimes
\big(\oplus_{i=1}^n({\mathbf k}\underline{dp}_i\oplus{\mathbf k}\underline{dc}_i)\big).
\end{equation} 
This induces an isomorphism 
$$
\Gamma_{rat}((E^\#)^n,\Omega^1_{(E^\#)^n})\simeq {\mathbf k}((E^\#)^n)
\otimes\big(\oplus_{i=1}^n({\mathbf k}\underline{dp}_i\oplus{\mathbf k}\underline{dc}_i)\big)=\Gamma_{rat}^p\oplus\Gamma_{rat}^c, 
$$
where 
$$
\Gamma_{rat}^p:={\mathbf k}((E^\#)^n)
\otimes(\oplus_{i=1}^n{\mathbf k}\underline{dp}_i),\quad\Gamma_{rat}^c:={\mathbf k}((E^\#)^n)
\otimes(\oplus_{i=1}^n{\mathbf k}\underline{dc}_i). 
$$
Lemma \ref{basis:Esharp:n} then implies: 

\begin{lemma}\label{lemma:basis:HH1}
\begin{itemize}
\item $\bm{\Omega}^1$ is graded for this decomposition, that is 
$$
\bm{\Omega}^1=\bm{\Omega}^1_p\oplus\bm{\Omega}^1_c, \quad\text{where}\quad
\bm{\Omega}^1_p:=\bm{\Omega}^1\cap\Gamma_{rat}^p, \quad 
\bm{\Omega}^1_c:=\bm{\Omega}^1\cap\Gamma_{rat}^c,
$$
\item 
the elements $\underline{dp}_i$ ($i\in[n]$), $\omega_{ij}^\alpha$ ($i<j\in[n]$, $\alpha\geq 0$) form a $\mathbf k$-basis of 
$\bm{\Omega}^1_p$ and therefore 
a linearly independent family of ${\mathbf k}((E^\#)^n)\otimes(\oplus_{i\in[n]}{\mathbf k}\cdot\underline{dp}_i)$,   
\item the elements $\underline{dc}_i$ ($i\in[n]$) form a $\mathbf k$-basis of $\bm{\Omega}^1_c$. 
\end{itemize}
\end{lemma}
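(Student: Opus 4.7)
The plan is to deduce Lemma \ref{lemma:basis:HH1} directly from Lemma \ref{basis:Esharp:n} by verifying that each element of the basis (\ref{diff:E:n}) lies in one of the two summands $\Gamma_{rat}^p$ or $\Gamma_{rat}^c$, and then partitioning the basis accordingly.

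First I would note the two trivial cases: $\underline{dp}_i \in \Gamma_{rat}^p$ and $\underline{dc}_i \in \Gamma_{rat}^c$ by the very definition of these elements as part of the trivialization (\ref{eq:Omega:1}); they are two of the $2n$ invariant generators that span $\oplus_{i}(\mathbf{k}\underline{dp}_i \oplus \mathbf{k}\underline{dc}_i)$ as free $\mathcal O_{(E^\#)^n}$-module generators. So the only real content is to show $\omega_{ij}^\alpha \in \Gamma_{rat}^p$, i.e., that $\omega_{ij}^\alpha$ has no $\underline{dc}_k$-component in the decomposition.

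To prove this I would use the construction of $\omega_{ij}^\alpha$ recalled in \S\ref{sec:spaces:of:diffs}: it is obtained by first forming $\omega_\alpha \boxtimes 1^{\otimes n-1}$ on $E^\#\times(E^\#)^{n-1}$ and then pulling back via the automorphism $\mathrm{map}_{ij}$. By (\ref{basis:algebraic}), one has $\omega_\alpha = f_\alpha\cdot \underline{dp}$, so $\omega_\alpha\boxtimes 1^{\otimes n-1}$ lies in the $\mathcal{O}$-submodule generated by $\underline{dp}\boxtimes 1^{\otimes n-1}$ — the ``$p$-part''. Since $\mathrm{map}_{ij}$ is a group morphism of commutative algebraic groups, it pulls back invariant differentials to invariant differentials; using the explicit formula $(e_1,\ldots,e_n)\mapsto(e_i-e_j,e_1,\ldots,\hat e_j,\ldots,e_n)$ one computes $\mathrm{map}_{ij}^*(\underline{dp}\boxtimes 1^{\otimes n-1}) = \underline{dp}_i - \underline{dp}_j$ and $\mathrm{map}_{ij}^*(\underline{dc}\boxtimes 1^{\otimes n-1}) = \underline{dc}_i - \underline{dc}_j$. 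In particular $\mathrm{map}_{ij}^*$ preserves the $p$/$c$-grading of invariant differentials, and hence carries the $p$-subbundle to the $p$-subspace. Therefore
\[
\omega_{ij}^\alpha = (f_\alpha \circ m_{ij})\cdot(\underline{dp}_i - \underline{dp}_j) \in \Gamma_{rat}^p,
\]
where $m_{ij}:(E^\#)^n\to E^\#$ is the first component of $\mathrm{map}_{ij}$.

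Given this, the decomposition $\bm{\Omega}^1 = \bm{\Omega}^1_p\oplus\bm{\Omega}^1_c$ is immediate by partitioning the basis (\ref{diff:E:n}) of Lemma \ref{basis:Esharp:n} into the two sub-families $\{\underline{dp}_i\}\cup\{\omega_{ij}^\alpha\}$ and $\{\underline{dc}_i\}$; each subfamily, being a subfamily of a basis, is linearly independent, and by what was just shown they span $\bm{\Omega}^1_p$ and $\bm{\Omega}^1_c$ respectively. The only real obstacle is the verification of the pullback formulas for $\mathrm{map}_{ij}^*$ applied to the invariant generators, and this I expect to be a short direct computation (for instance using the chart induced by $\tilde s$ of (\ref{iso:E:sharp}) or the analytic description in \S\ref{section:unif}).
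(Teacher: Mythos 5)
Your proof is correct and is essentially the paper's argument: the paper derives the lemma directly from Lemma \ref{basis:Esharp:n}, the implicit point being exactly the one you make explicit, namely that $\omega_{ij}^\alpha=\mathrm{map}_{ij}^*(f_\alpha)\cdot\underline{dp}_{ij}$ lies in $\Gamma_{rat}^p$ (a fact the paper itself records later, at the start of \S\ref{sect:352}) while the $\underline{dc}_i$ lie in $\Gamma_{rat}^c$, so the basis (\ref{diff:E:n}) splits according to the decomposition $\Gamma_{rat}^p\oplus\Gamma_{rat}^c$. Nothing to add.
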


\subsubsection{Rational $2$-forms on $(E^\#)^n$}

It follows from (\ref{eq:Omega:1}) that 
$$
\Omega^2_{(E^\#)^n}\simeq\mathcal O_{(E^\#)^n}\otimes\Lambda^2(\bigoplus_{i\in[n]}{\mathbf k}\cdot \underline{dc}_i
\oplus\bigoplus_{i\in[n]}{\mathbf k}\cdot \underline{dp}_i), 
$$
therefore 
$$
\Gamma_{rat}((E^\#)^n,\Omega^2_{(E^\#)^n})\simeq{\mathbf k}((E^\#)^n)\otimes
\Lambda^2(\bigoplus_{i\in[n]}{\mathbf k}\cdot \underline{dc}_i\oplus\bigoplus_{i\in[n]}{\mathbf k}\cdot \underline{dp}_i). 
$$
We set 
\begin{align}\label{decomp}
& \Gamma_{rat}^{cc}:={\mathbf k}((E^\#)^n)\otimes\Lambda^2(\oplus_{i\in[n]}{\mathbf k}\cdot \underline{dc}_i), \quad 
\Gamma_{rat}^{cp}:={\mathbf k}((E^\#)^n)\otimes(\oplus_{i,j\in[n]}{\mathbf k}\cdot (\underline{dc}_i\wedge \underline{dp}_j)),\\ & \nonumber 
\Gamma_{rat}^{pp}:={\mathbf k}((E^\#)^n)\otimes\Lambda^2(\oplus_{i\in[n]}{\mathbf k}\cdot \underline{dp}_i). 
\end{align}
Then 
$$
\Gamma_{rat}((E^\#)^n,\Omega^2_{(E^\#)^n})=\Gamma_{rat}^{cc}\oplus\Gamma_{rat}^{cp}\oplus\Gamma_{rat}^{pp}. 
$$
It follows from (\ref{decomp}) that 
\begin{equation}\label{freeness:cc}
\text{the family }(\underline{dc}_i\wedge \underline{dc}_j)_{i<j\in[n]}\text{ is linearly independent over $\mathbf k$ in }\Gamma_{rat}^{cc}, 
\end{equation}
and from (\ref{decomp}) and Lemma \ref{lemma:basis:HH1} that 
\begin{align}\label{freeness:cp}
& \text{the union of two families }(\underline{dc}_i\wedge \underline{dp}_j)_{i,j\in[n]}, 
(\underline{dc}_i\wedge\omega_{jk}^\alpha)_{i\in[n],j<k\in[n],\alpha\geq 0}
\\ & \nonumber \text{ is linearly independent over $\mathbf k$ in }\Gamma_{rat}^{cp},   
\end{align}
where $\wedge$ is the product in the algebra $\mathbf k((E^\#)^n)\otimes\Lambda^\bullet(\oplus_{i\in[n]}\mathbf k\underline{dp}_i\oplus
\mathbf k\underline{dc}_i)$.

\subsection{Computation of the kernel of $\owedge:\Lambda^2(\bm{\Omega}^1)\to\bm{\Omega}^2$}

In this section, we drop $(E^\#)^n$ from the global section notation, so $\Gamma(\mathcal L)$ will mean 
$\Gamma((E^\#)^n,\mathcal L)$. Likewise, $\Gamma_{rat}(\mathcal L)$ means $\Gamma_{rat}((E^\#)^n,\mathcal L)$. 

\subsubsection{Construction of linear maps}\label{sect:351}

According to \S\ref{subsect:maps}, the wedge product of differential forms induces a linear map
\begin{equation}\label{lin:map}
\Lambda^2(\bm{\Omega}^1)\to\bm{\Omega}^2\hookrightarrow\Gamma_{rat}(\Omega^2_{(E^\#)^n})
\end{equation}

According to Lemma \ref{lemma:basis:HH1}, (1), the source of this morphism decomposes as follows 
$$
\Lambda^2(\bm{\Omega}^1)\simeq\Lambda^2(\bm{\Omega}^1_c)
\oplus \bm{\Omega}^1_c\otimes \bm{\Omega}^1_p\oplus\Lambda^2(\bm{\Omega}^1_p), 
$$
while the target decomposes as $\Gamma_{rat}(\Omega^2_{(E^\#)^n})\simeq\Gamma^{cc}_{rat}\oplus\Gamma^{cp}_{rat}
\oplus\Gamma^{pp}_{rat}$. The linear map (\ref{lin:map}) then decomposes as the direct sum of linear maps
\begin{equation}\label{lin:map:cc}
\Lambda^2(\bm{\Omega}^1_c)\to\Gamma^{cc}_{rat},
\end{equation}
\begin{equation}\label{lin:map:cp}
\bm{\Omega}^1_c\otimes \bm{\Omega}^1_p\to\Gamma^{cp}_{rat},
\end{equation}
\begin{equation}\label{lin:map:pp}
\Lambda^2(\bm{\Omega}^1_p)\to\Gamma^{pp}_{rat}. 
\end{equation}
and (\ref{freeness:cc}) and (\ref{freeness:cp}) imply: 
\begin{lemma}
The linear maps (\ref{lin:map:cc}) and (\ref{lin:map:cp}) are injective. 
\end{lemma}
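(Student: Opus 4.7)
The plan is to reduce injectivity of each map to the linear independence statements (\ref{freeness:cc}) and (\ref{freeness:cp}) already recorded above, by writing everything in the bases provided by Lemma \ref{lemma:basis:HH1}.

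For the map (\ref{lin:map:cc}), I would start from the fact that $\bm{\Omega}^1_c$ has basis $(\underline{dc}_i)_{i\in[n]}$ by Lemma \ref{lemma:basis:HH1}, so $\Lambda^2(\bm{\Omega}^1_c)$ has basis $(\underline{dc}_i\underline\wedge\underline{dc}_j)_{i<j\in[n]}$. The map $\owedge$ sends this basis element to $\underline{dc}_i\wedge\underline{dc}_j$ in $\Gamma_{rat}^{cc}$, which is exactly the family appearing in (\ref{freeness:cc}); since that family is $\mathbf k$-linearly independent, the map is injective.

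For the map (\ref{lin:map:cp}), I would use the basis $(\underline{dc}_i)_{i\in[n]}$ of $\bm{\Omega}^1_c$ together with the basis $(\underline{dp}_j)_{j\in[n]}\cup(\omega_{jk}^\alpha)_{j<k\in[n],\,\alpha\geq 0}$ of $\bm{\Omega}^1_p$, again provided by Lemma \ref{lemma:basis:HH1}. A $\mathbf k$-basis of $\bm{\Omega}^1_c\otimes\bm{\Omega}^1_p$ is then the union of $(\underline{dc}_i\otimes\underline{dp}_j)_{i,j\in[n]}$ and $(\underline{dc}_i\otimes\omega_{jk}^\alpha)_{i\in[n],\,j<k\in[n],\,\alpha\geq 0}$. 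Under (\ref{lin:map:cp}) these go to $\underline{dc}_i\wedge\underline{dp}_j$ and $\underline{dc}_i\wedge\omega_{jk}^\alpha$, whose $\mathbf k$-linear independence in $\Gamma_{rat}^{cp}$ is exactly the content of (\ref{freeness:cp}). This forces the map to be injective.

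The only delicate point is that one must be careful that the map $\owedge$ of (\ref{comm:diag:wedge}) really does send basis tensors to the corresponding wedge products inside $\Gamma_{rat}(\Omega^2_{(E^\#)^n})$; but this is immediate from the construction of $\owedge$ in \S\ref{subsect:maps}, namely the commutativity of the diagram (\ref{comm:diag:wedge}) together with the fact that the product $\underline\wedge$ in $\Lambda^\bullet(\bm{\Omega}^1)$ is by definition compatible with the wedge product of rational differential forms. So there is no genuine obstacle here: the statement is really a bookkeeping consequence of Lemma \ref{lemma:basis:HH1} combined with (\ref{freeness:cc}) and (\ref{freeness:cp}); the substantive content has already been done in establishing those basis and independence results.
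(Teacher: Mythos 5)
Your proof is correct and is exactly the argument the paper intends: the lemma is stated there as an immediate consequence of (\ref{freeness:cc}) and (\ref{freeness:cp}), and your write-up simply makes explicit the bookkeeping (bases from Lemma \ref{lemma:basis:HH1} mapping onto the linearly independent families) that the paper leaves implicit.
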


We will now compute the kernel of the map (\ref{lin:map:pp}). 

\subsubsection{Construction of a subspace $\mathbf K$ of $\mathrm{Ker}(\Lambda^2(\bm{\Omega}^1_p)\to\Gamma^{pp}_{rat})$}
\label{sect:352}

By Lemma \ref{lemma:basis:HH1}, a basis of $\bm{\Omega}^1_p$ is given by the family 
$$
\underline{dp}_i \quad (i\in[n]),\quad \omega_{ij}^\alpha \quad (i<j\in[n],\quad \alpha\geq 0). 
$$
One derives from there the following basis of $\Lambda^2(\bm{\Omega}^1_p)$: 
\begin{equation}\label{elt:P}
P(i,j):=\underline{dp}_i\underline\wedge \underline{dp}_j \quad (i<j\in[n]),
\end{equation}
\begin{equation}\label{elt:Q3}
Q(i,j,k,\alpha):=\underline{dp}_i\underline\wedge\omega_{jk}^\alpha \quad 
(i,j,k\in[n] \mathrm{\ all\ different,} \quad j<k,\quad \alpha\geq 0), 
\end{equation}
\begin{equation}\label{elt:Q'2}
Q'(i,j,\alpha):=\underline{dp}_i\underline\wedge\omega_{ij}^\alpha \quad 
(i<j\in[n],\quad \alpha\geq 0), 
\end{equation}
\begin{equation}\label{elt:Q''2}
Q''(i,j,\alpha):=\underline{dp}_j\underline\wedge\omega_{ij}^\alpha \quad 
(i<j\in[n],\quad \alpha\geq 0), 
\end{equation}
\begin{equation}\label{elt:S4}
S(i,j,k,l,\alpha,\beta):=
\omega_{ij}^\alpha\underline\wedge\omega_{kl}^\beta \quad (
i,j,k,l\in[n]\mathrm{\ all\ different,\ }i<j,\quad k<l,\quad i<k,\quad \alpha,\beta\geq 0), 
\end{equation}
\begin{equation}\label{elt:S'3}
S'(i,j,k,\alpha,\beta):=\omega_{ij}^\alpha\underline\wedge\omega_{ik}^\beta \quad (i<j<k\in[n],\quad \alpha,\beta\geq 0), 
\end{equation}
\begin{equation}\label{elt:S''3}
S''(i,j,k,\alpha,\beta):=\omega_{ij}^\alpha\underline\wedge\omega_{jk}^\beta \quad (i<j<k\in[n],\quad \alpha,\beta\geq 0), 
\end{equation}
\begin{equation}\label{elt:S'''3}
S'''(i,j,k,\alpha,\beta):=\omega_{ik}^\alpha\underline\wedge\omega_{jk}^\beta \quad (i<j<k\in[n],\quad \alpha,\beta\geq 0), 
\end{equation}
\begin{equation}\label{elt:S2}
S(i,j,\alpha,\beta):=\omega_{ij}^\alpha\underline\wedge\omega_{ij}^\beta \quad (i<j\in[n],\quad 0\leq \alpha<\beta), 
\end{equation}
where $\underline\wedge$ is the product in the algebra $\Lambda^\bullet(\bm{\Omega}^1_p)$. 

\begin{definition}\label{def:R}
 $\mathbf K\subset\Lambda^2(\bm{\Omega}^1_p)$ is the subspace spanned by 
\begin{equation}\label{elt:R}
R(i,j,\alpha):=
(Q'-Q'')(i,j,\alpha), 
\quad i<j\in[n],\quad \alpha\geq 0, 
\end{equation}
$$
S(i,j,\alpha,\beta), 
\quad i<j,\quad \alpha>\beta\geq 0, 
$$
\begin{align}\label{elt:T}
& T(i,j,k,\alpha,\beta):=
S'(i,j,k,\alpha,\beta)-\sum_{\stackrel{\gamma,\delta\geq 0,}{{\gamma+\delta=\alpha+\beta}}}
\begin{pmatrix} \delta \\ \alpha\end{pmatrix}S''(i,j,k,\delta,\gamma) 
+\sum_{\stackrel{\gamma,\delta\geq 0,}{{\gamma+\delta=\alpha+\beta}}}(-1)^\delta\begin{pmatrix} 
\gamma \\ \beta\end{pmatrix}S'''(i,j,k,\gamma,\delta) \\ & \nonumber
-\begin{pmatrix} \alpha+\beta+1 \\ \beta\end{pmatrix}(Q'(i,k,\alpha+\beta+1)-Q(j,i,k,\alpha+\beta+1))
\\ & \nonumber+\begin{pmatrix} \alpha+\beta+1 \\ \alpha\end{pmatrix}
(Q''(i,j,\alpha+\beta+1)-Q(k,i,j,\alpha+\beta+1))\\ & \nonumber+(-1)^{\alpha+1}(Q(i,j,k,\alpha+\beta+1)-Q''(j,k,\alpha+\beta+1)), 
 \quad i<j<k\in[n],\quad \alpha,\beta\geq 0. 
\end{align}
\end{definition}
The wedge product of forms defines a linear map $\Lambda^2(\bm{\Omega}^1_p)\to\Gamma^{pp}_{rat}$. If $i<j\in[n]$ and 
$\alpha\geq 0$, the image of $R(i,j,\alpha)$ is then $\underline{dp}_{ij}\owedge\omega_{ij}^\alpha$, where $\owedge$ is the 
operation introduced in \S\ref{subsect:maps}, and $\underline{dp}_{ij}:=\underline{dp}_i-\underline{dp}_j$. 
According to (\ref{basis:algebraic}), $\omega_\alpha=f_\alpha\cdot\omega_{-1}$ for $\alpha\geq-1$, so that 
$\omega_{ij}^\alpha=\mathrm{map}_{ij}^*(f_\alpha)\cdot\omega_{ij}^{-1}$ for $\alpha\geq-1$. As $\omega_{ij}^{-1}
=\underline{dp}_{ij}$, this implies $\underline{dp}_{ij}\owedge\omega_{ij}^\alpha=0$. 
The image of $S(i,j,\alpha,\beta)$ is
$\omega_{ij}^\alpha\owedge\omega_{ij}^\beta$, which is $0$ for the same reason.  


The product of identity (\ref{algebraic:fay}) by $\underline{dp}_{ij}\owedge\underline{dp}_{ik}$ expresses as the statement that the element
\begin{align*}
&\Big({{\underline{dp}_{ij}}\over z}+\sum_{\alpha\geq 0}\omega_{ij}^\alpha z^\alpha\Big)\underline\wedge\Big(
{{\underline{dp}_{ik}}\over u}+\sum_{\beta\geq 0}\omega_{ik}^\beta u^\beta\Big)
\\ & +\Big({{\underline{dp}_{jk}}\over u}+\sum_{\alpha\geq 0}\omega_{jk}^\alpha u^\alpha\Big)\underline\wedge\Big(
{{\underline{dp}_{ij}}\over z+u}+\sum_{\beta\geq 0}\omega_{ij}^\beta (z+u)^\beta\Big)
\\ & +\Big({{\underline{dp}_{ik}}\over z+u}+\sum_{\alpha\geq 0}\omega_{ik}^\alpha (z+u)^\alpha\Big)\underline\wedge\Big(
{{\underline{dp}_{jk}}\over -z}+\sum_{\beta\geq 0}\omega_{jk}^\beta (-z)^\beta\Big)
\end{align*}
of the space ${1\over zu(z+u)}\Lambda^2(\bm{\Omega}^1_p)[[z,u]]$ belongs to the kernel of the map
$$
{1\over zu(z+u)}\Lambda^2(\bm{\Omega}^1_p)[[z,u]]\to
{1\over zu(z+u)}\Gamma^{pp}_{rat}[[z,u]]
$$
induced by (\ref{lin:map:pp}). 
As this element is equal to $\sum_{\alpha,\beta\geq 0}T(i,j,k,\alpha,\beta)z^\alpha u^\beta$, this implies that each 
$T(i,j,k,\alpha,\beta)$ belongs to the kernel of the map $\Lambda^2(\bm{\Omega}^1_p)\to \Gamma^{pp}_{rat}$ for $i<j<k\in[n]$, 
$\alpha,\beta\geq 0$. All this implies: 

\begin{lemma}
$\mathbf K$ is contained in the kernel $\mathrm{Ker}(\Lambda^2(\bm{\Omega}^1_p)\to\Gamma^{pp}_{rat})$. 
\end{lemma}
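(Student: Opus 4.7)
The plan is to verify separately that each of the three families of generators of $\mathbf K$ lies in the kernel of the wedge-product map $\owedge:\Lambda^2(\bm{\Omega}^1_p)\to\Gamma^{pp}_{rat}$.

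First, for the generators $R(i,j,\alpha)=Q'(i,j,\alpha)-Q''(i,j,\alpha)$ and $S(i,j,\alpha,\beta)$, I would exploit the structural fact that, by (\ref{basis:algebraic}) and the definition of $\omega_{ij}^\alpha$ via $\mathrm{map}_{ij}^*$, for every $\alpha\geq -1$ one has
\[
\omega_{ij}^\alpha \;=\; \mathrm{map}_{ij}^*(f_\alpha)\cdot \underline{dp}_{ij},
\]
with $\omega_{ij}^{-1}=\underline{dp}_{ij}$. In particular, every $\omega_{ij}^\alpha$ is an $\mathcal O$-multiple of $\underline{dp}_{ij}$, so in $\Gamma^{pp}_{rat}$,
\[
\underline{dp}_{ij}\owedge\omega_{ij}^\alpha = 0 \quad\text{and}\quad \omega_{ij}^\alpha\owedge\omega_{ij}^\beta = 0.
\]
Since the image of $R(i,j,\alpha)$ in $\Gamma^{pp}_{rat}$ is $\underline{dp}_{ij}\owedge\omega_{ij}^\alpha$ and that of $S(i,j,\alpha,\beta)$ is $\omega_{ij}^\alpha\owedge\omega_{ij}^\beta$, both vanish.

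The main obstacle is the third family, $T(i,j,k,\alpha,\beta)$, which encodes the elliptic Fay identity at the level of $2$-forms. Here the strategy is to take the algebraic Fay identity (\ref{algebraic:fay:ijk}) in the ring ${1\over zz'(z+z')}\mathcal A^{\otimes n}[\cdots][[z,z']]$, multiply it by the exterior datum $\underline{dp}_{ij}\wedge\underline{dp}_{ik}$ (or, equivalently, view the factors as generating functions for the $\omega_{\ast\ast}^\alpha$ using the identity ${\underline{dp}_{ij}\over z}+\sum_{\alpha\geq 0}\omega_{ij}^\alpha z^\alpha=(S_{\mathcal A}^{(j)}\Delta_{\mathcal A}^{(ij)})(\mathbf f(z))\cdot\underline{dp}_{ij}$), and re-express the resulting identity as a vanishing statement in ${1\over zu(z+u)}\Gamma^{pp}_{rat}[[z,u]]$. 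Extracting the coefficient of $z^\alpha u^\beta$ produces precisely the element $T(i,j,k,\alpha,\beta)$ under the map $\Lambda^2(\bm\Omega^1_p)\to\Gamma^{pp}_{rat}$, up to the combinatorial identifications $(z+u)^m=\sum_{\gamma+\delta=m}\binom{m}{\gamma}z^\gamma u^\delta$ which generate the binomial coefficients appearing in Definition \ref{def:R}. The bookkeeping of signs and the correct identification of the polar terms (the $Q$, $Q'$, $Q''$ contributions coming from the $1/z$, $1/u$, $1/(z+u)$ poles of $\mathbf f$) is the delicate point, but it is forced by the form of the Fay identity.

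Combining the three vanishings and using bilinearity, one obtains $\mathbf K\subset\mathrm{Ker}(\owedge)$, as claimed.
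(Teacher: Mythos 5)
Your proposal is correct and follows essentially the same route as the paper: the generators $R(i,j,\alpha)$ and $S(i,j,\alpha,\beta)$ are killed because each $\omega_{ij}^\alpha=\mathrm{map}_{ij}^*(f_\alpha)\cdot\underline{dp}_{ij}$ is a function multiple of $\underline{dp}_{ij}$, and the generators $T(i,j,k,\alpha,\beta)$ are identified as the coefficients of $z^\alpha u^\beta$ in the product of the algebraic Fay identity with $\underline{dp}_{ij}\owedge\underline{dp}_{ik}$, written via the generating series $\underline{dp}_{ij}/z+\sum_{\alpha\geq 0}\omega_{ij}^\alpha z^\alpha$. The paper's own proof is carried out at the same level of detail, asserting rather than expanding the coefficient extraction, so your plan matches it.
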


\subsubsection{A complementary subspace of $\mathbf K$ in $\Lambda^2(\bm{\Omega}^1_p)$}
\label{sec:4:5:3}

Let $\mathbf P_2$ (resp., $\mathbf Q_3$, $\mathbf Q'_2$, $\mathbf Q''_2$, $\mathbf S_4$, $\mathbf S'_3$, $\mathbf S''_3$, 
$\mathbf S'''_3$, $\mathbf S_2$) be the linear span of elements (\ref{elt:P}) (resp., (\ref{elt:Q3}), (\ref{elt:Q'2}), (\ref{elt:Q''2}), 
(\ref{elt:S4}), (\ref{elt:S'3}), (\ref{elt:S''3}), (\ref{elt:S'''3}), (\ref{elt:S2})); the notation of these vector spaces has been chosen in 
agreement with the notation for their generating sets, the index indicating the number of free Latin indices. 

According to the beginning of \S \ref{sect:352}, the space $\Lambda^2(\bm{\Omega}^1_p)$ decomposes as a 
direct sum 
$$
\Lambda^2(\bm{\Omega}^1_p)=\mathbf P_2\oplus\mathbf Q_3\oplus\mathbf Q'_2\oplus\mathbf Q''_2\oplus\mathbf S_4\oplus
\mathbf S'_3\oplus\mathbf S''_3\oplus\mathbf S'''_3\oplus\mathbf S_2. 
$$ 
The subspace $\mathbf K\subset \Lambda^2(\Gamma(\mathcal E))$ decomposes as 
$$
\mathbf K=\mathbf S_2+\mathbf Y+\mathbf Z, 
$$
where $\mathbf Y$ is the image of the linear map 
$$
y:\mathbf Q'_2\to \mathbf Q'_2\oplus \mathbf Q''_2, \quad Q'(i,j,\alpha)\mapsto Q'(i,j,\alpha)\oplus (-Q''(i,j,\alpha)), 
$$
and $\mathbf Z$ is the image of the linear map 
$$z:\mathbf S'_3\to \mathbf S'_3\oplus \mathbf S''_3\oplus \mathbf S'''_3\oplus \mathbf Q_3\oplus \mathbf Q'_2\oplus \mathbf Q''_2,
$$
\begin{align*}
&S'(i,j,k,\alpha,\beta)\mapsto S'(i,j,k,\alpha,\beta)\oplus\Big(-\sum_{\gamma,\delta\geq 0,\gamma+\delta=\alpha+\beta}
\begin{pmatrix}\delta\\ \alpha\end{pmatrix}S''(i,j,k,\delta,\gamma)\Big)\\ & 
\oplus\Big(\sum_{\gamma,\delta\geq 0,\gamma+\delta=\alpha+\beta}
(-1)^\delta\begin{pmatrix}\gamma\\ 
\beta\end{pmatrix} S'''(i,j,k,\gamma,\delta)\Big)
\oplus
\Big(\begin{pmatrix}\alpha+\beta+1\\ \beta\end{pmatrix}Q(j,i,k,\alpha+\beta+1)
\\ & -\begin{pmatrix}\alpha+\beta+1\\ \alpha\end{pmatrix}Q(k,i,j,\alpha+\beta+1)+
(-1)^{\alpha+1}Q(i,j,k,\alpha+\beta+1)\Big)
\\ & \oplus\Big(-\begin{pmatrix}\alpha+\beta+1\\ \beta\end{pmatrix}Q'(i,k,\alpha+\beta+1)\Big)\\ & 
\oplus\Big(\begin{pmatrix}\alpha+\beta+1\\ \alpha\end{pmatrix}Q''(i,j,\alpha+\beta+1)+(-1)^\alpha
Q''(j,k,\alpha+\beta+1)\Big).
\end{align*}
We recall that if $\phi:X\to Y$ is a linear map between the vector spaces $X$ and $Y$, then 
the graph $\mathrm{Graph}(\phi)$ of $\phi$ is the subspace of $X\oplus Y$, image of the linear map
$X\to X\oplus Y$, $x\mapsto x\oplus f(x)$. 

The spaces $\mathbf Y$ and $\mathbf Z$ are therefore the graphs of the linear maps $\tilde y:\mathbf Q'_2\to \mathbf Q''_2$ and 
$\tilde z:\mathbf S'_3\to\mathbf S''_3\oplus \mathbf S'''_3\oplus \mathbf Q_3\oplus \mathbf Q'_2\oplus \mathbf Q''_2$ obtained 
from $y$ and $z$ by composition with the direct sum of all the summands but the first of their target spaces. 

\begin{lemma}
Let $A,B,C,D$ be vector spaces and $f:A\to B\oplus C\oplus D$, $g:C\to D$ be linear maps. Then 
the direct sum $A\oplus B\oplus C\oplus D$ admits a direct sum decomposition
$$
A\oplus B\oplus C\oplus D\simeq (B\oplus D)\oplus\mathrm{Graph}(f)\oplus\mathrm{Graph}(g).
$$ 
\end{lemma}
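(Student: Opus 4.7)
The plan is to exhibit an explicit linear isomorphism
$$
\phi:(B\oplus D)\oplus A\oplus C\stackrel{\sim}{\to}A\oplus B\oplus C\oplus D
$$
realizing the three summands on the left as $B\oplus D$, $\mathrm{Graph}(f)$, and $\mathrm{Graph}(g)$ respectively, inside $A\oplus B\oplus C\oplus D$. Write $f=(f_B,f_C,f_D)$ with $f_B:A\to B$, $f_C:A\to C$, $f_D:A\to D$. The embeddings of the three claimed summands into $A\oplus B\oplus C\oplus D$ are
$$
(b,d)\mapsto(0,b,0,d),\qquad a\mapsto(a,f_B(a),f_C(a),f_D(a)),\qquad c\mapsto(0,0,c,g(c)),
$$
so $\phi$ is the sum of these three maps, namely
$$
\phi(b,d,a,c)=(a,\,b+f_B(a),\,c+f_C(a),\,d+f_D(a)+g(c)).
$$

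First I would verify injectivity by reading off the four coordinates in order: the $A$-coordinate gives $a=0$; the $B$-coordinate then gives $b+f_B(0)=b=0$; the $C$-coordinate gives $c+f_C(0)=c=0$; and finally the $D$-coordinate gives $d=0$. This cascade depends only on the fact that, in the definition of the graph of a linear map between two spaces, the first component already determines the element.

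Next I would verify surjectivity by producing an explicit preimage of an arbitrary $(\alpha,\beta,\gamma,\delta)\in A\oplus B\oplus C\oplus D$. Setting $a:=\alpha$, one is forced to take $c:=\gamma-f_C(\alpha)$ (to correct the $C$-coordinate), then $b:=\beta-f_B(\alpha)$ (to correct the $B$-coordinate), and finally $d:=\delta-f_D(\alpha)-g(c)$ (to correct the $D$-coordinate); substituting back into $\phi$ recovers $(\alpha,\beta,\gamma,\delta)$. Translating this isomorphism back to subspaces of $A\oplus B\oplus C\oplus D$ yields the claimed internal direct sum decomposition.

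There is no real obstacle here: the statement is a purely formal linear-algebra fact, and the cascade structure in the injectivity argument shows why the ordering $B\oplus D$, then $\mathrm{Graph}(f)$, then $\mathrm{Graph}(g)$ works, namely because $\mathrm{Graph}(f)$ is the only summand meeting the $A$-coordinate, and $\mathrm{Graph}(g)$ is the only one among the remaining two meeting the $C$-coordinate. The lemma will then be applied with $A=\mathbf S'_3$, $B=\mathbf Q'_2$, $C$ the direct sum of the remaining summands that $\tilde z$ hits outside $\mathbf Q'_2$, and $D$ as dictated by the map $\tilde y$, so that the decomposition of $\Lambda^2(\bm{\Omega}^1_p)$ in \S\ref{sec:4:5:3} produces a canonical complement to $\mathbf K$.
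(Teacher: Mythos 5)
Your proof is correct and is essentially the paper's argument made explicit: the paper applies twice the abstract fact that $\mathrm{Graph}(\phi)$ is a complement of the target in $X\oplus Y$, while you compose the two resulting shears into one explicit triangular isomorphism and verify bijectivity coordinate by coordinate. One minor caveat on your closing remark about the application: in \S\ref{sec:4:5:3} the paper takes $A=\mathbf S'_3$, $B=\mathbf S''_3\oplus\mathbf S'''_3\oplus\mathbf Q_3$, $C=\mathbf Q'_2$ and $D=\mathbf Q''_2$ (with $f=\tilde z$, $g=\tilde y$), so your identification $B=\mathbf Q'_2$ swaps the roles of $B$ and $C$; this does not affect the proof of the lemma itself.
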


{\em Proof.} Recall that if $\phi:X\to Y$ is a linear map, then $\mathrm{Graph}(\phi)$ is a complement of $X$
in $X\oplus Y$. It follows that there is a direct sum decomposition 
\begin{equation}\label{partial:decomp}
A\oplus B\oplus C\oplus D\simeq(B\oplus C\oplus D)\oplus\mathrm{Graph}(f). 
\end{equation}
One similarly has a direct sum decomposition 
$$
C\oplus D\simeq D\oplus\mathrm{Graph}(g),  
$$
therefore 
$$
B\oplus C\oplus D\simeq(B\oplus D)\oplus\mathrm{Graph}(g). 
$$
The result follows from the combination of this decomposition with (\ref{partial:decomp}). 
\hfill \qed\medskip 

Applying this lemma with $A:=\mathbf S'_3$, $B:=\mathbf S''_3\oplus\mathbf S'''_3\oplus\mathbf Q_3$, $C:=\mathbf Q'_2$, 
$D:=\mathbf Q''_2$, $f:=\tilde z$, $g:=\tilde y$, 
we obtains a direct sum decomposition
$$
\mathbf Q_3\oplus\mathbf Q'_2\oplus\mathbf Q''_2\oplus\mathbf S'_3\oplus\mathbf S''_2\oplus\mathbf S'''_3\simeq\mathbf K\oplus 
\mathbf T\oplus(\mathbf S''_3\oplus\mathbf S'''_3\oplus\mathbf Q_3\oplus\mathbf Q''_2). 
$$
Taking the direct sum with $\mathbf P_2\oplus\mathbf S_4\oplus\mathbf S_2$ and distributing the summands, we obtain 
$$
\Lambda^2(\Gamma(\mathcal E))\simeq(\mathbf K\oplus\mathbf S_2\oplus\mathbf T)\oplus(\mathbf S''_3\oplus\mathbf S'''_3\oplus
\mathbf Q_3\oplus\mathbf Q''_2\oplus\mathbf P_2\oplus\mathbf S_4). 
$$
This implies: 
\begin{lemma}\label{lemma:suppl}
A complement of $\mathbf K$ in $\Lambda^2(\bm{\Omega}^1_p)$ is 
$$
\Sigma:=\mathbf S''_3\oplus \mathbf S'''_3\oplus \mathbf Q_3\oplus\mathbf Q''_2\oplus\mathbf P_2\oplus\mathbf S_4. 
$$
A basis of $\Sigma$ is 
\begin{equation}\label{basis:1}
P(i,j)=\underline{dp}_i\underline\wedge\underline{dp}_j, \quad i<j\in[n], 
\end{equation}
\begin{equation}\label{basis:2}
Q(i,j,k,\alpha)=\underline{dp}_i \underline\wedge \omega_{jk}^\alpha, \quad j<k, \quad i\neq j,k, \quad i,j,k\in[n], \quad \alpha\geq 0, 
\end{equation}
\begin{equation}\label{basis:3}
Q'(i,j,\alpha)=\underline{dp}_i\underline\wedge \omega_{ij}^\alpha, \quad i<j\in[n], \quad \alpha\geq 0, 
\end{equation}
\begin{equation}\label{basis:4}
S''(i,j,k,\alpha,\beta)=\omega_{ij}^\alpha\underline\wedge\omega_{jk}^\beta,\quad i<j<k\in[n], \quad \alpha,\beta\geq 0, 
\end{equation}
\begin{equation}\label{basis:5}
S'''(i,j,k,\alpha,\beta)=\omega_{ik}^\alpha\underline\wedge\omega_{jk}^\beta,\quad i<j<k\in[n], \quad \alpha,\beta\geq 0,  
\end{equation}
\begin{align}\label{basis:6}
S(i,j,k,l,\alpha,\beta)=\omega_{ij}^\alpha\underline\wedge\omega_{kl}^\beta, \quad i<j,\quad k<l,\quad i<k,\quad i,j,k,l&
\mathrm{\ all\ different\ in\ }[n], 
\\ & \nonumber \alpha,\beta\geq 0. 
\end{align}
\end{lemma}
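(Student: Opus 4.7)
The plan is to exploit the fact that the generators of $\mathbf K$ (other than the $S(i,j,\alpha,\beta)$'s which sit inside $\mathbf S_2$) are each of the form $x + \phi(x)$ where $\phi$ is a linear map between known summands; this identifies the relevant subspaces of $\mathbf K$ as graphs, and the complementation then follows from an elementary linear-algebra fact about graphs.

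First I would record the obvious decomposition
$$\Lambda^2(\bm{\Omega}^1_p) = \mathbf P_2 \oplus \mathbf Q_3 \oplus \mathbf Q'_2 \oplus \mathbf Q''_2 \oplus \mathbf S_4 \oplus \mathbf S'_3 \oplus \mathbf S''_3 \oplus \mathbf S'''_3 \oplus \mathbf S_2,$$
which is immediate from Lemma \ref{lemma:basis:HH1} and the explicit basis of $\Lambda^2(\bm{\Omega}^1_p)$ listed as (\ref{elt:P})--(\ref{elt:S2}). Within this decomposition, the first family of generators of $\mathbf K$, namely the $S(i,j,\alpha,\beta)$ with $\alpha > \beta \geq 0$, already spans $\mathbf S_2$ (after noting the antisymmetry $S(i,j,\beta,\alpha) = -S(i,j,\alpha,\beta)$). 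The generators $R(i,j,\alpha) = Q'(i,j,\alpha) - Q''(i,j,\alpha)$ lie in $\mathbf Q'_2 \oplus \mathbf Q''_2$ and are precisely the graph of the isomorphism $\tilde y : \mathbf Q'_2 \to \mathbf Q''_2$ sending $Q'(i,j,\alpha) \mapsto Q''(i,j,\alpha)$. Finally, each $T(i,j,k,\alpha,\beta)$, read as written in Definition \ref{def:R}, is of the form $S'(i,j,k,\alpha,\beta) + \tilde z(S'(i,j,k,\alpha,\beta))$ for a well-defined linear map $\tilde z : \mathbf S'_3 \to \mathbf S''_3 \oplus \mathbf S'''_3 \oplus \mathbf Q_3 \oplus \mathbf Q'_2 \oplus \mathbf Q''_2$.

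Next I would invoke the elementary lemma already stated in the text (reformulated here for convenience): given vector spaces $A,B,C,D$ and linear maps $f : A \to B \oplus C \oplus D$ and $g : C \to D$, one has
$$A \oplus B \oplus C \oplus D \simeq (B \oplus D) \oplus \operatorname{Graph}(f) \oplus \operatorname{Graph}(g),$$
whose proof is the two-step application of the fact that $\operatorname{Graph}(\phi)$ is a linear complement of $X$ in $X \oplus Y$ for any $\phi : X \to Y$. Applying this with $A = \mathbf S'_3$, $B = \mathbf S''_3 \oplus \mathbf S'''_3 \oplus \mathbf Q_3$, $C = \mathbf Q'_2$, $D = \mathbf Q''_2$, $f = \tilde z$, $g = \tilde y$, I obtain
$$\mathbf Q_3 \oplus \mathbf Q'_2 \oplus \mathbf Q''_2 \oplus \mathbf S'_3 \oplus \mathbf S''_3 \oplus \mathbf S'''_3 \simeq (\mathbf S''_3 \oplus \mathbf S'''_3 \oplus \mathbf Q_3 \oplus \mathbf Q''_2) \oplus \mathbf{Z} \oplus \mathbf{Y},$$
where $\mathbf Y = \operatorname{Graph}(\tilde y)$ and $\mathbf Z = \operatorname{Graph}(\tilde z)$. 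Summing both sides with the untouched $\mathbf P_2 \oplus \mathbf S_4 \oplus \mathbf S_2$ and using $\mathbf K = \mathbf S_2 \oplus \mathbf Y \oplus \mathbf Z$ gives the desired direct-sum decomposition $\Lambda^2(\bm{\Omega}^1_p) \simeq \mathbf K \oplus \Sigma$. The basis assertion is then just the concatenation of the bases of the six summands of $\Sigma$.

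The only point requiring genuine care is verifying the claim that $T(i,j,k,\alpha,\beta)$ really is of the form $S' + \tilde z(S')$ with $\tilde z$ independent of the chosen element. This is a bookkeeping check: one reads off from (\ref{elt:T}) that the $\mathbf S''_3$-, $\mathbf S'''_3$-, $\mathbf Q_3$-, $\mathbf Q'_2$- and $\mathbf Q''_2$-components depend \emph{linearly} on $(i,j,k,\alpha,\beta)$ through the basis element $S'(i,j,k,\alpha,\beta)$, which is visible because the binomial coefficients and the indices $(\alpha+\beta+1)$, $(\gamma,\delta$ with $\gamma+\delta=\alpha+\beta)$ are determined by $(\alpha,\beta)$ alone. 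I expect this combinatorial verification to be the only nontrivial obstacle; once it is in place, the rest of the argument is the purely formal graph decomposition above.
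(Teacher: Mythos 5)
Your proposal is correct and follows essentially the same route as the paper: the same decomposition of $\Lambda^2(\bm{\Omega}^1_p)$ into the nine summands, the identification of $\mathbf K=\mathbf S_2+\mathbf Y+\mathbf Z$ with $\mathbf Y,\mathbf Z$ the graphs of $\tilde y,\tilde z$, and the same elementary graph-complement lemma applied with $A=\mathbf S'_3$, $B=\mathbf S''_3\oplus\mathbf S'''_3\oplus\mathbf Q_3$, $C=\mathbf Q'_2$, $D=\mathbf Q''_2$. (Only a harmless sign slip: $\tilde y$ sends $Q'(i,j,\alpha)$ to $-Q''(i,j,\alpha)$, which does not affect the graph argument.)
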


\subsubsection{Residue maps}
\label{sect:4:5:4}

According to \S\ref{subsect:maps}, there exists for any $(i,j)\in I$ a linear map 
\begin{equation}\label{res:Dij:2}
\mathrm{Res}_{D_{ij}}^{(2)}:\bm{\Omega}^2\to\sum_{(k,l)\in I|(k,l)\neq(i,j)}\Gamma(D_{ij},\Omega^1_{D_{ij}}(\mathrm{log}D_{ij}\cap D_{kl}))
=:\mathbf D_{ij}^1
\end{equation}
and for any $(k,l)\neq(i,j)\in I$, a linear map 
$$
\mathrm{Res}_{D_{ij}\cap D_{kl}}^{(1)}:\mathbf D_{ij}^1\to \Gamma(D_{ij}\cap D_{kl},\mathcal O_{D_{ij}\cap D_{kl}})=:\mathbf D_{ij,kl}^0. 
$$
The restriction of this map to the summand corresponding to $(k',l')$ is 0 unless $D_{ij}\cap D_{kl}=D_{ij}\cap D_{k'l'}$, which 
happens only if $(k,l)=(k',l')$ or $\{k,l\}\cup\{i,j\}=\{k',l'\}\cup\{i,j\}$.  

\subsubsection{Computation of residue maps}
\label{subsect:4:5:4}

Let $(i,j)\in I$. For any $(k,l)\neq (i,j)\in I$, the isomorphism $D_{ij}\simeq\mathbb A^1\times(E^\#)^{n-1}$ (see (\ref{iso:Dij})) 
takes the divisor $D_{ij}\cap D_{kl}$ to $\mathbb A^1\times D^{(n-1)}_{f_{ij}(k)f_{ij}(l)}$, where $f_{ij}:[n]\to[n-1]$ is the map given by 
$i,j\mapsto j$ and to induce an increasing bijection $[n]-\{i,j\}\to[n-1]-\{j\}$ (the exponent $(n-1)$ means that the divisor is in 
$(E^\#)^{n-1}$). 
This induces an isomorphism of the summand $(k,l)$ of $\mathbf D_{ij}^1$
with 
$$
\Gamma(\mathbb A^1\times(E^\#)^{n-1},\Omega^1_{\mathbb A^1\times(E^\#)^{n-1}}
(\mathrm{log}(\mathbb A^1\times D^{(n-1)}_{f_{ij}(k)f_{ij}(l)}))). 
$$
For $X,Y$ nonsingular varieties and $D\subset Y$ a nonsingular divisor, one has $\Omega^1_{X\times Y}(\mathrm{log}(X\times D))
\simeq\Omega^1_X\boxtimes\mathcal O_Y\oplus\mathcal O_X\boxtimes\Omega^1_Y(\mathrm{log}D)$. Using the identifications
$\Gamma(\mathbb A^1,\mathcal O_{\mathbb A^1})\simeq{\mathbf k}[t]$, $\Gamma(\mathbb A^1,\Omega_{\mathbb A^1})\simeq{\mathbf k}[t]dt$, 
the latter space then identifies
with 
$$
{\mathbf k}[t]dt\oplus \Gamma((E^\#)^{n-1},\Omega^1_{(E^\#)^{n-1}}(\mathrm{log}D^{(n-1)}_{f_{ij}(k)f_{ij}(l)}))[t]. 
$$
Let $I_{(n-1)}$, $\bm{\Omega}^1_{(n-1)}$ be analogues of $I,\bm{\Omega}^1$ with $n$ replaced by $n-1$. The map 
$(k,l)\mapsto(f_{ij}(k),f_{ij}(l))$ induces a surjective map $I-\{(i,j)\}\to I_{(n-1)}$. It follows that  
\begin{equation}\label{isom:Dij:1}
\mathbf D_{ij}^1\simeq{\mathbf k}[t]dt\oplus\bm{\Omega}^1_{(n-1)}[t]. 
\end{equation}

If $(i,j)\neq(k,l)\in I$, then the composition of the isomorphism $D_{ij}\cap D_{kl}\simeq\mathbb A^1\times D^{(n-1)}_{f_{ij}(k)f_{ij}(l)}$
induced by (\ref{iso:Dij}) and of the isomorphism $D^{(n-1)}_{f_{ij}(k)f_{ij}(l)}\simeq\mathbb A^1\times(E^\#)^{n-2}$ induces an 
isomorphism $D_{ij}\cap D_{kl}\simeq(\mathbb A^1)^2\times(E^\#)^{n-2}$ and therefore an isomorphism 
$$
\mathbf D^0_{ij,kl}\simeq{\mathbf k}[t,t']. 
$$

Let $\owedge:\Lambda^2(\bm{\Omega}^1_p)\to\bm{\Omega}^2$ be the linear map induced by the wedge product. One checks that the 
composed map 
$$
\Lambda^2(\bm{\Omega}^1_p)\stackrel{\owedge}{\to}\bm{\Omega}^2\stackrel{\mathrm{Res}_{D_{ij}}^{(2)}}{\to}\mathbf D_{ij}^1
\simeq\bm{\Omega}^1_{(n-1)}[t]\oplus{\mathbf k}[t]dt
$$
has its image contained in $\bm{\Omega}^1_{(n-1)}[t]$. We denote by $\varrho_{ij}^{(2)}:\Lambda^2(\bm{\Omega}^1_p)\to\bm{\Omega}^1_{(n-1)}[t]$ 
the resulting corestricted map. Then the diagram 
\begin{equation}\label{diag:rho:i:j:2}
\xymatrix{ \Lambda^2(\bm{\Omega}^1_p)\ar^{\owedge}[r]\ar_{\varrho_{ij}^{(2)}}[rrdd]&\bm{\Omega}^2\ar^{\mathrm{Res}_{D_{ij}}^{(2)}}[r] & \mathbf D_{ij}^1\\ & & \bm{\Omega}^1_{(n-1)}[t]\oplus\mathbf k[t]dt\ar_{\sim}[u]\\ & & \bm{\Omega}^1_{(n-1)}[t]\ar@{^(->}[u]}
\end{equation}
commutes. 

This diagram implies: 
\begin{lemma}\label{lemma:crit}
The kernel $\mathrm{Ker}(\owedge:\Lambda^2(\bm{\Omega}^1_p)\to\bm{\Omega}^2)$ is contained in the intersection over all $(i,j)\in I$ 
of the kernels of the maps $\varrho_{ij}^{(2)}:\Lambda^2(\bm{\Omega}^1_p)\to\bm{\Omega}^1_{(n-1)}[t]$. 
\end{lemma}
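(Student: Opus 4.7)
The plan is to read off the statement directly from the commutative diagram (\ref{diag:rho:i:j:2}) just established. Fix an element $\eta\in\mathrm{Ker}(\owedge:\Lambda^2(\bm{\Omega}^1_p)\to\bm{\Omega}^2)$ and a pair $(i,j)\in I$; the goal is to show $\varrho_{ij}^{(2)}(\eta)=0$.

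First, I would chase $\eta$ around the diagram along the top row: since $\owedge(\eta)=0$ in $\bm{\Omega}^2$, applying $\mathrm{Res}_{D_{ij}}^{(2)}$ gives $\mathrm{Res}_{D_{ij}}^{(2)}(\owedge(\eta))=0$ in $\mathbf D_{ij}^1$. Next, I would chase along the lower route: by commutativity of (\ref{diag:rho:i:j:2}), the same element of $\mathbf D_{ij}^1$ is obtained as the image of $\varrho_{ij}^{(2)}(\eta)\in\bm{\Omega}^1_{(n-1)}[t]$ under the canonical inclusion $\bm{\Omega}^1_{(n-1)}[t]\hookrightarrow \bm{\Omega}^1_{(n-1)}[t]\oplus\mathbf k[t]dt\simeq\mathbf D_{ij}^1$. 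Injectivity of this inclusion (it is the inclusion of a direct summand) forces $\varrho_{ij}^{(2)}(\eta)=0$. Since $(i,j)\in I$ was arbitrary, $\eta\in\bigcap_{(i,j)\in I}\mathrm{Ker}(\varrho_{ij}^{(2)})$, as required.

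There is essentially no obstacle at this stage: all the substantive work has already been carried out in \S\ref{sect:4:5:4}--\S\ref{subsect:4:5:4}, namely the construction of the residue $\mathrm{Res}_{D_{ij}}^{(2)}:\bm{\Omega}^2\to\mathbf D_{ij}^1$ via Lemma \ref{lemma:143}, the identification (\ref{isom:Dij:1}) of $\mathbf D_{ij}^1$ with $\bm{\Omega}^1_{(n-1)}[t]\oplus\mathbf k[t]dt$, and crucially the verification that the composite $\mathrm{Res}_{D_{ij}}^{(2)}\circ\owedge$ lands in the $\bm{\Omega}^1_{(n-1)}[t]$ summand (which is what makes the corestriction $\varrho_{ij}^{(2)}$ well-defined and the lower triangle of (\ref{diag:rho:i:j:2}) meaningful). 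Granted that input, the present lemma is a purely formal diagram chase.
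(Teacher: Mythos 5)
Your proof is correct and follows exactly the paper's (very terse) argument: the paper simply states that diagram (\ref{diag:rho:i:j:2}) implies the lemma, and your diagram chase — $\owedge(\eta)=0$ forces $\mathrm{Res}_{D_{ij}}^{(2)}(\owedge(\eta))=0$, which by commutativity equals the image of $\varrho_{ij}^{(2)}(\eta)$ under the injective inclusion of the summand $\bm{\Omega}^1_{(n-1)}[t]$ into $\mathbf D_{ij}^1$ — is precisely the intended content. You also correctly identify that the substantive work lives in the earlier construction of the residue maps and of the corestriction $\varrho_{ij}^{(2)}$, not in this lemma itself.
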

This implies: 
\begin{cor}\label{cor:crit}
The kernel $\mathrm{Ker}(\owedge:\Lambda^2(\bm{\Omega}_p^1)\to\bm{\Omega}^2)$ 
is contained in the intersection over all pairs $(i,j)\neq(k,l)\in I$ of the kernels of the maps
$$
\varrho_{f_{ij}(k)f_{ij}(l)}\circ\varrho_{ij}^{(2)}:\Lambda^2(\bm{\Omega}_p^1)\to{\mathbf k}[t,t']. 
$$
\end{cor}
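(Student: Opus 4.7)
The statement is an immediate consequence of the preceding Lemma \ref{lemma:crit}, so my plan is essentially a one-line inclusion of kernels composed with linear maps. Concretely, let $x\in\Lambda^2(\bm{\Omega}_p^1)$ satisfy $\owedge(x)=0$. By Lemma \ref{lemma:crit}, $x$ lies in $\bigcap_{(i,j)\in I}\mathrm{Ker}(\varrho_{ij}^{(2)})$, so $\varrho_{ij}^{(2)}(x)=0\in\bm{\Omega}^1_{(n-1)}[t]$ for every $(i,j)\in I$.

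Next, for any $(k,l)\in I$ with $(k,l)\neq(i,j)$, the map $\varrho_{f_{ij}(k)f_{ij}(l)}$ is the $(n-1)$-variable residue map from (\ref{rho:i:j}) associated with the pair $(f_{ij}(k),f_{ij}(l))\in I_{(n-1)}$, extended $\mathbf k[t]$-linearly so as to produce values in $\mathbf k[t][t']=\mathbf k[t,t']$. I only need to verify that this map is well-defined on the image of $\varrho_{ij}^{(2)}$, which is immediate since $f_{ij}$ defines a surjection $I-\{(i,j)\}\to I_{(n-1)}$ (see \S\ref{subsect:4:5:4}) so every pair $(k,l)\neq(i,j)$ gives a valid index in $I_{(n-1)}$. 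Then by linearity,
$$
\varrho_{f_{ij}(k)f_{ij}(l)}\circ\varrho_{ij}^{(2)}(x)
=\varrho_{f_{ij}(k)f_{ij}(l)}(0)=0.
$$
Hence $x$ lies in the kernel of each composed map, and therefore in the intersection of these kernels taken over all pairs $(i,j)\neq(k,l)\in I$.

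There is no genuine obstacle here: the only conceptual content lies in Lemma \ref{lemma:crit} itself (which uses the commutative diagram (\ref{diag:rho:i:j:2}) together with the factorization through $\bm{\Omega}^1_{(n-1)}[t]$). The corollary then reduces to the trivial observation that if $f(x)=0$ then $g(f(x))=0$ for any linear map $g$. The only care needed is the notational one of confirming that $\varrho_{f_{ij}(k)f_{ij}(l)}$ makes sense as a map from $\bm{\Omega}^1_{(n-1)}[t]$ to $\mathbf k[t,t']$, which we have just observed.
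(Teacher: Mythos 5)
Your proof is correct and matches the paper, which derives the corollary from Lemma \ref{lemma:crit} with no further argument ("This implies:"): once $\varrho_{ij}^{(2)}(x)=0$, post-composing with the linear map $\varrho_{f_{ij}(k)f_{ij}(l)}$ trivially gives zero. Your added check that $\varrho_{f_{ij}(k)f_{ij}(l)}$ is well defined on $\bm{\Omega}^1_{(n-1)}[t]$ is the only content beyond the lemma, and it is handled correctly via the surjection $I-\{(i,j)\}\to I_{(n-1)}$.
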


We now turn to the computation of $\varrho_{ij}^{(2)}$. 
As $\owedge$ vanishes on $\mathbf K\subset\Lambda^2(\bm{\Omega}^1_p)$, so does $\varrho_{ij}^{(2)}$. The restriction of 
$\varrho_{ij}^{(2)}$ to $\Sigma$ can be computed using 
 Lemma \ref{Poinc:1} and \S\ref{subsect:pr}. One gets: 
\begin{lemma} 
Let $(i_0,j_0)\in I$. The map 
$$
\varrho_{i_0j_0}^{(2)}:\Lambda^2(\bm{\Omega}^1_p)\to\bm{\Omega}^1_{(n-1)}[t]
$$
is given by $R\mapsto 0$, and the following formulas: 
\begin{itemize}
\item for $i<j\in[n]$, $P(i,j)\mapsto 0$,  

\item for $i,j,k\in[n]$, $j<k$, $i\neq j,k$, and $\alpha\geq 0$, 
$$
Q(i,j,k,\alpha)\mapsto \left\{
                \begin{array}{ll}
                  -{(-t)^\alpha\over\alpha!}\cdot\underline{dp}_{f_{i_0j_0}(i)}\ \text{if}\  (j,k)=(i_0,j_0), \\
0\ \text{else},                
                \end{array}
              \right.
$$
\item for $i<j\in[n]$, $\alpha\geq 0$, 
$Q'(i,j,\alpha)\mapsto \left\{ \begin{array}{ll} -(-t)^\alpha/\alpha!\cdot\underline{dp}_{i_0}\ \text{if}\  (i,j)=(i_0,j_0), \\ 
 0\ \text{else},\end{array}\right.$
\item for $i<j<k\in[n]$, $\alpha,\beta\geq 0$, 
$$
S''(i,j,k,\alpha,\beta)\mapsto \left\{
                \begin{array}{ll}
             {(-t)^\alpha\over\alpha!}\cdot \omega_{i_0f_{i_0j_0}(k)}^\beta\ \text{if}\  (i,j)=(i_0,j_0),\\
 -{(-t)^\beta\over\beta!}\cdot \omega_{f_{i_0j_0}(i)i_0}^\alpha\ \text{if}\  (j,k)=(i_0,j_0),\\
0\ \text{else}, 
                \end{array}
              \right.
$$
\item for $i<j<k\in[n]$, $\alpha,\beta\geq 0$, 
$$
S'''(i,j,k,\alpha,\beta)\mapsto \left\{
                \begin{array}{ll}
                {(-t)^\alpha\over\alpha!}\cdot (-1)^\beta\omega_{i_0f_{i_0j_0}(j)}^\beta\ \text{if}\  (i,k)=(i_0,j_0),\\
 -{(-t)^\beta\over\beta!}\cdot \omega_{f_{i_0j_0}(i)i_0}^\alpha\ \text{if}\  (j,k)=(i_0,j_0),\\
0\ \text{else}, 
                \end{array}
              \right.
$$
\item for $i,j,k,l$ all different in $[n]$, $ i<j$, $k<l$, $i<k$, $\alpha,\beta\geq 0$, 
$$
S(i,j,k,l,\alpha,\beta)\mapsto \left\{
                \begin{array}{ll}
                {(-t)^\alpha\over\alpha!}\cdot \omega_{f_{i_0j_0}(k)f_{i_0j_0}(l)}^\beta\ \text{if}\  (i,j)=(i_0,j_0),\\
                  -{(-t)^\beta\over\beta!}\cdot \omega_{f_{i_0j_0}(i)f_{i_0j_0}(j)}^\alpha\ \text{if}\  (k,l)=(i_0,j_0),\\0\ \text{else}, 
                \end{array}
              \right.
$$
\end{itemize}
where $f_{i_0j_0}:[n]\to[n-1]$ is as in the beginning of \S\ref{subsect:4:5:4}. 
\end{lemma}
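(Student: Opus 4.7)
The plan is to exploit the factorization in diagram (\ref{diag:rho:i:j:2}), which realizes $\varrho_{i_0j_0}^{(2)}$ as the composition of $\owedge\colon\Lambda^2(\bm{\Omega}^1_p)\to\bm{\Omega}^2$ with the Poincar\'e residue $\mathrm{Res}_{D_{i_0j_0}}^{(2)}$ and the projection $\mathbf D_{i_0j_0}^1\simeq\bm{\Omega}^1_{(n-1)}[t]\oplus\mathbf k[t]dt\twoheadrightarrow\bm{\Omega}^1_{(n-1)}[t]$. Since $\mathbf K\subset\mathrm{Ker}(\owedge)$ was established in \S\ref{sect:352}, the vanishing $\varrho_{i_0j_0}^{(2)}|_{\mathbf K}=0$ is immediate, giving in particular $R\mapsto 0$. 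What remains is to compute $\varrho_{i_0j_0}^{(2)}$ on the basis (\ref{basis:1})--(\ref{basis:6}) of the complement $\Sigma$, case by case.

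The case $P(i,j)=\underline{dp}_i\underline\wedge\underline{dp}_j$ is immediate: its image under $\owedge$ is a global regular 2-form on $X$, on which $\mathrm{Res}_{D_{i_0j_0}}^{(2)}$ vanishes. For the remaining five families, the essential inputs are Lemma \ref{Poinc:1}, asserting that $\omega_{ab}^\gamma$ has a simple pole only along $D_{ab}$ with residue $(-t)^\gamma/\gamma!$ there, and the standard Poincar\'e residue identities
\[
\mathrm{Res}_D(\omega\wedge\eta)=\mathrm{Res}_D(\omega)\cdot\eta|_D,\qquad \mathrm{Res}_D(\eta\wedge\omega)=-\eta|_D\cdot\mathrm{Res}_D(\omega)
\]
when $\omega$ has a simple pole along $D$ and $\eta$ is a regular 1-form along $D$. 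Applied to each of $Q$, $Q'$, $S''$, $S'''$, $S$, this shows immediately that the residue vanishes unless one of the indexing pairs of the $\omega$-factors equals $(i_0,j_0)$; in the surviving cases it equals the scalar $(-t)^\gamma/\gamma!$ times the restriction to $D_{i_0j_0}$ of the companion 1-form, with sign dictated by the order of the wedge.

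These restrictions must then be identified inside $\bm{\Omega}^1_{(n-1)}[t]$ via the isomorphism (\ref{iso:Dij}) $D_{i_0j_0}\simeq\mathbb A^1\times(E^\#)^{n-1}$ induced by $\mathrm{map}_{i_0j_0}$. One checks directly that $\underline{dp}_i|_{D_{i_0j_0}}$ corresponds to $\underline{dp}_{f_{i_0j_0}(i)}$, which accounts for the $Q$- and $Q'$-formulas. For $\omega_{ab}^\gamma$ with $(a,b)\ne(i_0,j_0)$ and $\{a,b\}\cap\{i_0,j_0\}\ne\emptyset$, the definition $\omega_{ab}^\gamma=\mathrm{map}_{ab}^*(\omega_\gamma)$ together with the fact that $\mathrm{map}_{ab}$ factors through the merging of $e_{i_0}$ and $e_{j_0}$ gives the restriction as $\omega_{f_{i_0j_0}(a),f_{i_0j_0}(b)}^\gamma$ on $(E^\#)^{n-1}$, modulo a possible reversal of the pair. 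The reversal, required in the $S'''$-case when $(i,k)=(i_0,j_0)$, is controlled by the antisymmetry
\[
\omega_{ba}^\gamma=(-1)^\gamma\,\omega_{ab}^\gamma,
\]
which follows from the defining generating function (\ref{def:f:alpha}) under the substitution $(p,\tilde c,z)\mapsto(-p,-\tilde c,-z)$, the latter corresponding to inversion in the group $E^\#$; this is precisely what produces the $(-1)^\alpha$ and $(-1)^\beta$ factors in the statement.

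The main obstacle is neither conceptual nor deep: it is the sign bookkeeping in the last paragraph, which must reconcile the Koszul sign from commuting wedges, the parity $\omega_{ba}^\gamma=(-1)^\gamma\omega_{ab}^\gamma$, and the relabeling $f_{i_0j_0}$. Once a consistent convention for the merged index in $(E^\#)^{n-1}$ is fixed, each line of the formulas in the statement follows by tabulating which indexing pair(s) of the basis generator coincide with $(i_0,j_0)$.
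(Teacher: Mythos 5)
Your strategy coincides with the paper's: factor $\varrho_{i_0j_0}^{(2)}$ through $\owedge$ and the Poincar\'e residue via diagram (\ref{diag:rho:i:j:2}), use $\mathbf K\subset\mathrm{Ker}(\owedge)$ to get $R\mapsto 0$, and evaluate the basis of $\Sigma$ with the product rule for residues together with Lemma \ref{Poinc:1}. Your treatment of $P$, $Q$, $Q'$, $S$, and of the sign coming from $\omega_{ba}^\gamma=(-1)^\gamma\omega_{ab}^\gamma$, is fine. The gap is the assertion that $\omega_{ab}^{\gamma}|_{D_{i_0j_0}}$ equals $\omega^{\gamma}_{f_{i_0j_0}(a)f_{i_0j_0}(b)}$ up to that sign whenever $\{a,b\}\cap\{i_0,j_0\}\neq\emptyset$. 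This holds only when $j_0\notin\{a,b\}$: the identification $D_{i_0j_0}\simeq\mathbb A^1\times(E^\#)^{n-1}$ via $\mathrm{map}_{i_0j_0}$ \emph{drops} $e_{j_0}$ and recovers it as $e_{j_0}=e_{i_0}-u$, where $u\in\pi^{-1}(0)$ is the $\mathbb A^1$-coordinate. Since $u$ has vanishing image in $E$ but $c$-coordinate $t$, one has $p_{j_0k}|_{D_{i_0j_0}}=p_{i_0k}$ but $c_{j_0k}|_{D_{i_0j_0}}=c_{i_0k}-t$, so by (\ref{form:omega:ijalpha})
\begin{equation*}
\omega_{j_0k}^{\gamma}\big|_{D_{i_0j_0}}=\sum_{a+b=\gamma}\frac{t^{a}}{a!}\,\omega^{b}_{f_{i_0j_0}(i_0)f_{i_0j_0}(k)}+\frac{t^{\gamma+1}}{(\gamma+1)!}\,\underline{dp}_{f_{i_0j_0}(i_0)f_{i_0j_0}(k)},
\end{equation*}
and similarly $\omega^{\gamma}_{ij_0}|_{D_{i_0j_0}}$ is a translate by $+u$. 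This affects exactly the case $(i,j)=(i_0,j_0)$ of $S''$ and both cases of $S'''$: there the residue is the full $t$-polynomial above times the scalar residue, not the single leading term that you (and, as printed, the statement itself) record.

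That this is not a matter of convention follows from a consistency check your own setup provides: $T(i_0,j_0,k,0,0)$ lies in $\mathbf K\subset\mathrm{Ker}(\owedge)$, hence must go to $0$ under $\varrho^{(2)}_{i_0j_0}$. Direct computation gives $\varrho^{(2)}_{i_0j_0}(S'(i_0,j_0,k,0,0))=\omega^{0}_{f(i_0)f(k)}$ (no twist, since $e_{i_0}$ is retained), $\varrho^{(2)}_{i_0j_0}(S'''(i_0,j_0,k,0,0))=0$, and $\varrho^{(2)}_{i_0j_0}\bigl(Q''(i_0,j_0,1)-Q(k,i_0,j_0,1)\bigr)=t\,(\underline{dp}_{f(i_0)}-\underline{dp}_{f(k)})$, all other terms of $T$ vanishing; so vanishing of $\varrho^{(2)}_{i_0j_0}(T)$ forces $\varrho^{(2)}_{i_0j_0}(S''(i_0,j_0,k,0,0))=\omega^{0}_{f(i_0)f(k)}+t\,\underline{dp}_{f(i_0)f(k)}$, with the extra term. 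With the translated restrictions the binomial cross-terms cancel against the $Q''-Q$ contributions of $T(i,j,k,\alpha,\beta)$ for all $\alpha,\beta$, as they must. You should therefore carry the full translated restriction through the $S''$ and $S'''$ cases (the downstream injectivity argument of Lemma \ref{map:Sigma:injective} survives, since the composed residues remain triangular in $t$ with nonzero leading terms), or at least flag the discrepancy; as written, your justification does not establish those three formulas.
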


\remark{
Let $(k,l)\neq(i,j)\in I$. The commutative diagram 
$$
\xymatrix{
D_{ij}\cap D_{kl}\ar^{\sim}[r]\ar@{^(->}[d]& \mathbb A^1\times D^{(n-1)}_{f_{ij}(k)f_{ij}(l)}\ar@{^(->}[d]\\ 
D_{ij}\ar^{\sim}[r]& \mathbb A^1\times(E^\#)^{n-1}}
$$
implies that the following diagram commutes
$$
\xymatrix{
\bm{\Omega}^1_{(n-1)}[t]\ar@{^(->}[d]\ar^{\varrho_{f_{ij}(k)f_{ij}(l)}}[r]& {\mathbf k}[t',t]\\ \mathbf D_{ij}^1
\ar^{\mathrm{Res}_{D_{ij}\cap D_{kl}}^{(1)}}[r]& \mathbf D_{ij,kl}^0\ar[u]}
$$
where the upper map the tensor product with ${\mathbf k}[t]$ of the analogue of the map from (\ref{rho:i:j}),
with $(n,i,j)$ replaced by $(n-1,f_{ij}(k),f_{ij}(l))$ and $t$ replaced by $t'$.  

Combining diagram (\ref{diag:rho:i:j:2}) with the above diagram, one gets, for any $(i,j)\neq(k,l)\in I$, a commutative
diagram 
$$
\xymatrix{
\Lambda^2(\bm{\Omega}_p^1)\ar^{\owedge}[r]\ar_{\varrho_{ij}^{(2)}}[rrdd]&\bm{\Omega}^2\ar^{\mathrm{Res}_{D_{ij}}^{(2)}}[r] & 
\mathbf D_{ij}^1\ar^{\mathrm{Res}_{D_{ij}\cap D_{kl}}^{(1)}}[r]& \mathbf D_{ij,kl}^0 \\
& & \bm{\Omega}_{(n-1)}^1[t]\oplus{\mathbf k}[t]dt\ar_\sim[u]&  \\
& & \bm{\Omega}_{(n-1)}^1[t]\ar^{\varrho_{f_{ij}(k)f_{ij}(l)}}[r]\ar@{^(->}[u]& \mathbb C[t,t']\ar_{\sim}[uu] }
$$
which gives an interpretation of the map in Corollary \ref{cor:crit}. \hfill \qed\medskip

\subsubsection{Compositions of residue maps}

Computation yields: 

\begin{lemma}
Assume that $i_0,j_0,k_0,l_0\in[n]$ are all distinct, such that $i_0<j_0$, $k_0<l_0$, $i_0<k_0$. Then
$$
\varrho_{f_{i_0j_0}(k_0)f_{i_0j_0}(l_0)}\circ\varrho^{(2)}_{i_0j_0|\Sigma}:\Sigma\to{\mathbf k}[t,t']
$$ 
takes $P_2\oplus Q_3\oplus Q_2\oplus S''_3\oplus S'''_3$ to $0$, and its restriction to $S_4$ is given by 
$$
S(i,j,k,l,\alpha,\beta)\mapsto \left\{
                \begin{array}{ll}
                 {(-t)^\alpha(-t')^\beta\over \alpha!\beta!}\ \text{if}\  (i,j,k,l)=(i_0,j_0,k_0,l_0),\\
                 0\ \text{else}, 
                \end{array}
              \right.
$$
for distinct $i,j,k,l\in[n]$, such that $i<j$, $k<l$, $i<k$, and $\alpha,\beta\geq 0$. 
\end{lemma}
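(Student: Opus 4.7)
The plan is to carry out the composition one basis element at a time, using the explicit formulas for $\varrho^{(2)}_{i_0j_0}$ supplied in the preceding lemma together with the formula for $\varrho_{ij}$ from Lemma \ref{Poinc:1}, which tells us that $\varrho_{ab}$ vanishes on every $\underline{dp}_c$ and every $\omega_{a'b'}^\gamma$ with $(a',b')\neq(a,b)$, and sends $\omega_{ab}^\gamma$ to $(-t)^\gamma/\gamma!$. First I will dispose of $\mathbf P_2$ (its image under $\varrho^{(2)}_{i_0j_0}$ is already zero) and of $\mathbf Q_3,\mathbf Q'_2$ (their images lie in $\underline{dp}_{\bullet}\cdot\mathbf k[t]$, which has trivial residue along any $D^{(n-1)}_{ab}$). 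Hence none of these contribute to the composition.

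Next I will treat $\mathbf S''_3$ and $\mathbf S'''_3$. The previous lemma shows that their images under $\varrho^{(2)}_{i_0j_0}$ are multiples of $\omega_{i_0\,f_{i_0j_0}(k)}^\beta$, $\omega_{f_{i_0j_0}(i)\,i_0}^\alpha$, etc.; the crucial point, which I will verify by unwinding the convention $f_{i_0j_0}(i_0)=f_{i_0j_0}(j_0)=j_0$ used in the residue formulas, is that the index labelled "$i_0$" appearing in these images actually corresponds to $j_0\in[n-1]$. Since $f_{i_0j_0}(k_0),f_{i_0j_0}(l_0)$ both lie in $[n-1]\setminus\{j_0\}$, the pair of indices of the intermediate $\omega$ can never equal $\{f_{i_0j_0}(k_0),f_{i_0j_0}(l_0)\}$, so $\varrho_{f_{i_0j_0}(k_0)f_{i_0j_0}(l_0)}$ kills every such term. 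This is the most delicate step in the argument, because it depends on correctly interpreting the notation; once the interpretation is fixed the vanishing is immediate.

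Finally I will analyze $\mathbf S_4$. By the explicit formula, $\varrho^{(2)}_{i_0j_0}\bigl(S(i,j,k,l,\alpha,\beta)\bigr)$ is nonzero only if $(i,j)=(i_0,j_0)$ or $(k,l)=(i_0,j_0)$. Applying $\varrho_{f_{i_0j_0}(k_0)f_{i_0j_0}(l_0)}$ and using the injectivity of $f_{i_0j_0}$ on $[n]\setminus\{i_0,j_0\}$ reduces the first case to $\{k,l\}=\{k_0,l_0\}$, hence (by $k<l$, $k_0<l_0$) to $(i,j,k,l)=(i_0,j_0,k_0,l_0)$, yielding the stated value $(-t)^\alpha(-t')^\beta/(\alpha!\beta!)$. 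In the second case, combining $(k,l)=(i_0,j_0)$ with the basis constraints $i<k$ and the hypothesis $i_0<k_0$ forces $i=k_0<i_0$, contradicting $i_0<k_0$, so this case is vacuous. This completes the enumeration and establishes both claims of the lemma.
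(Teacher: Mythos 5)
Your computation is correct and is exactly the verification the paper leaves implicit behind its "Computation yields" — composing the explicit formulas for $\varrho^{(2)}_{i_0j_0}$ on each summand of $\Sigma$ with the residue formula of Lemma \ref{Poinc:1}. In particular, you correctly identify the one delicate point: the index of the collapsed divisor appearing in the images of $\mathbf S''_3$ and $\mathbf S'''_3$ must be read via the convention for $f_{i_0j_0}$, so that it never lies in $\{f_{i_0j_0}(k_0),f_{i_0j_0}(l_0)\}$, which is what forces those contributions to vanish.
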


\begin{lemma} Assume that $i_0<j_0<k_0\in[n]$. Then 
$$
\varrho_{i_0f_{i_0k_0}(j_0)}\circ\varrho^{(2)}_{i_0k_0|\Sigma}:\Sigma\to{\mathbf k}[t,t']
$$ 
takes $P_2\oplus Q_3\oplus Q_2\oplus S''_3\oplus S_4$ to $0$, and its restriction to $S'''_3$ is given by 
$$
S'''(i,j,k,\alpha,\beta)\mapsto \left\{
                \begin{array}{ll}
                 {(-t)^\alpha(t')^\beta\over\alpha!\beta!}\ \text{if}\  (i,j,k)=(i_0,j_0,k_0),\\
                 0\ \text{else}, 
                \end{array}
              \right.
$$
for $i<j<k\in[n]$, and $\alpha,\beta\geq 0$. 
\end{lemma}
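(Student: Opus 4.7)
The proof is a direct term-by-term computation on each summand of $\Sigma=\mathbf S''_3\oplus\mathbf S'''_3\oplus\mathbf Q_3\oplus\mathbf Q''_2\oplus\mathbf P_2\oplus\mathbf S_4$, combining the explicit formulas for $\varrho^{(2)}_{i_0k_0}|_{\Sigma}$ furnished by the preceding lemma with Lemma \ref{Poinc:1}, which records that $\varrho_{i_0f_{i_0k_0}(j_0)}:\bm{\Omega}^1_{(n-1)}\to\mathbf k[t']$ annihilates every $\underline{dp}_r$ and $\underline{dc}_r$ and sends $\omega^\alpha_{i'j'}$ to $\delta_{(i',j'),(i_0,f_{i_0k_0}(j_0))}\,(-t')^\alpha/\alpha!$ (writing $t'$ for the residue variable in the second step).

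For $\mathbf P_2$ the first residue already vanishes. For $\mathbf Q_3$ and $\mathbf Q''_2$, it produces a scalar multiple of some $\underline{dp}_r$, which the second residue kills. For $\mathbf S''_3$, the preceding lemma outputs $\omega^\beta_{i_0,\,f_{i_0k_0}(k)}$ when $(i,j)=(i_0,k_0)$ (so $k>k_0>j_0$) and $\omega^\alpha_{f_{i_0k_0}(i),\,i_0}$ when $(j,k)=(i_0,k_0)$ (so $i<i_0$); by injectivity of $f_{i_0k_0}$ on $[n]-\{i_0,k_0\}$, neither output index pair coincides with $(i_0,f_{i_0k_0}(j_0))$, so the second residue annihilates both contributions.

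For $\mathbf S_4$, the first residue outputs $\omega^\beta_{f_{i_0k_0}(k),\,f_{i_0k_0}(l)}$ when $(i,j)=(i_0,k_0)$ (with $k,l\in[n]-\{i_0,k_0\}$, $k<l$, $i<k$) and $\omega^\alpha_{f_{i_0k_0}(i),\,f_{i_0k_0}(j)}$ when $(k,l)=(i_0,k_0)$ (forcing $i<i_0$). A direct inspection of the explicit relabeling $f_{i_0k_0}$, using the distinctness constraint $\#\{i,j,k,l\}=4$ together with the given orderings on $(i,j,k,l)$, shows that no admissible choice produces an index pair equal to $(i_0,f_{i_0k_0}(j_0))$, so $\varrho_{i_0f_{i_0k_0}(j_0)}$ kills all such terms.

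Finally, on $\mathbf S'''_3$ the preceding lemma gives
\[
\varrho^{(2)}_{i_0k_0}(S'''(i,j,k,\alpha,\beta))=\begin{cases}(-t)^\alpha(-1)^\beta/\alpha!\cdot\omega^\beta_{i_0,\,f_{i_0k_0}(j)}&\text{if }(i,k)=(i_0,k_0),\\-(-t)^\beta/\beta!\cdot\omega^\alpha_{f_{i_0k_0}(i),\,i_0}&\text{if }(j,k)=(i_0,k_0),\\0&\text{otherwise.}\end{cases}
\]
Applying $\varrho_{i_0,f_{i_0k_0}(j_0)}$, the second branch vanishes since $f_{i_0k_0}(i)=i<i_0$ cannot equal $i_0$; the first branch contributes nontrivially only if $f_{i_0k_0}(j)=f_{i_0k_0}(j_0)$, which by injectivity of $f_{i_0k_0}$ on $[n]-\{i_0,k_0\}$ forces $j=j_0$, producing $(-t)^\alpha(-1)^\beta/\alpha!\cdot(-t')^\beta/\beta!=(-t)^\alpha(t')^\beta/(\alpha!\beta!)$ as claimed. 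The main obstacle is the $\mathbf S_4$ case, where the concrete description of the relabeling $f_{i_0k_0}$ must be exploited in order to rule out accidental coincidences between the index pair of the intermediate one-form and $(i_0,f_{i_0k_0}(j_0))$.
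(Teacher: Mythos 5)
Your overall strategy is the right one --- indeed the only one available, since the paper proves this lemma with the single phrase ``Computation yields'' --- and your treatment of $\mathbf P_2$, $\mathbf Q_3$, $\mathbf Q''_2$, $\mathbf S''_3$ and $\mathbf S'''_3$ is essentially correct. The gap is in the $\mathbf S_4$ case, which you yourself single out as the main obstacle and then dispose of with ``a direct inspection \dots shows that no admissible choice produces an index pair equal to $(i_0,f_{i_0k_0}(j_0))$''. That inspection is not routine, and it is the one place where the precise definition of the relabeling $f_{i_0k_0}$ decides the outcome. What actually kills $\mathbf S_4$ is that $i_0$ does not lie in the image of $f_{i_0k_0}$ restricted to $[n]-\{i_0,k_0\}$: the isomorphism (\ref{iso:Dij}) is induced by $\mathrm{map}_{i_0k_0}$, which deletes the $k_0$-th coordinate and represents the collapsed pair $\{i_0,k_0\}$ by the single index $i_0$, so that $f_{i_0k_0}(i_0)=f_{i_0k_0}(k_0)=i_0$ and $f_{i_0k_0}$ restricts to the increasing bijection $[n]-\{i_0,k_0\}\to[n-1]-\{i_0\}$. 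Hence for $(i,j)=(i_0,k_0)$ the output $\omega^\beta_{f_{i_0k_0}(k)f_{i_0k_0}(l)}$ has first index $f_{i_0k_0}(k)\neq i_0$, and for $(k,l)=(i_0,k_0)$ the output $\omega^\alpha_{f_{i_0k_0}(i)f_{i_0k_0}(j)}$ has first index $f_{i_0k_0}(i)=i<i_0$; in both cases $\varrho_{i_0f_{i_0k_0}(j_0)}$ returns $0$.

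Be aware that this is exactly the point where the literal wording of \S\ref{subsect:4:5:4} (``$i,j\mapsto j$'', increasing bijection onto $[n-1]-\{j\}$) would lead you astray: under that reading one gets $f_{i_0k_0}(i_0+1)=i_0$ whenever $i_0+1<k_0$, and then $S(i_0,k_0,i_0+1,j_0,\alpha,\beta)$ --- an admissible basis element of $\mathbf S_4$ as soon as $i_0+1<j_0$ --- is sent by $\varrho^{(2)}_{i_0k_0}$ to $\frac{(-t)^\alpha}{\alpha!}\,\omega^\beta_{i_0,f_{i_0k_0}(j_0)}$ and hence by the composed map to $\frac{(-t)^\alpha(-t')^\beta}{\alpha!\,\beta!}\neq 0$, contradicting the claimed vanishing on $\mathbf S_4$. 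So the ``direct inspection'' genuinely has to be written out with the geometrically correct $f_{i_0k_0}$, and cannot be taken from the displayed definition at face value. Two smaller points: in the $\mathbf S''_3$ case the second output, whose index pair is $(f_{i_0k_0}(i),i_0)$, is not excluded by injectivity of $f_{i_0k_0}$ alone --- the correct reason is $f_{i_0k_0}(i)=i<i_0$, the same one you rightly invoke for $\mathbf S'''_3$; and in the $\mathbf S'''_3$ branch with $(i,k)=(i_0,k_0)$ one should note that $j$ ranges over $i_0<j<k_0$, so $j\in[n]-\{i_0,k_0\}$ and the appeal to injectivity there is legitimate.
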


\begin{lemma} Assume that $i_0<j_0<k_0\in[n]$. Then 
$$
\varrho_{i_0f_{i_0j_0}(k_0)}\circ\varrho^{(2)}_{i_0j_0|\Sigma}:\Sigma\to{\mathbf k}[t,t']
$$ 
takes $P_2\oplus Q_3\oplus Q_2\oplus S'''_3\oplus S_4$ to $0$, and its restriction to $S''_3$ is given by 
$$
S''(i,j,k,\alpha,\beta)\mapsto \left\{
                \begin{array}{ll}
                 {(-t)^\alpha(-t')^\beta\over\alpha!\beta!}\ \text{if}\  (i,j,k)=(i_0,j_0,k_0),\\
                 0\ \text{else}, 
                \end{array}
              \right.
$$
for $i<j<k\in[n]$, and $\alpha,\beta\geq 0$. 
\end{lemma}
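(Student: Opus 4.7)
The plan is to compose two already-established ingredients. The first is the preceding lemma, which gives a completely explicit formula for $\varrho^{(2)}_{i_0j_0}$ on the basis \eqref{basis:1}--\eqref{basis:6} of $\Sigma$, with image in $\bm{\Omega}^1_{(n-1)}[t]$. The second is $\varrho_{i_0 f_{i_0j_0}(k_0)}$ applied to the $(n-1)$-point configuration space (with the residue variable renamed $t'$); by Lemma \ref{Poinc:1} it sends, on the basis of $\bm{\Omega}^1_{(n-1)}$ from Lemma \ref{lemma:basis:HH1}, $\underline{dp}_i \mapsto 0$ and $\omega^\beta_{pq}\mapsto \delta_{(p,q),(i_0,f_{i_0j_0}(k_0))}\,(-t')^\beta/\beta!$, and is extended $\mathbf{k}[t]$-linearly.

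With these formulas in hand, the lemma reduces to a finite case analysis over the direct summands $P_2$, $Q_3$, $Q'_2$, $S''_3$, $S'''_3$, $S_4$ of $\Sigma$. The summands $P_2$, $Q_3$, $Q'_2$ are handled at once: either $\varrho^{(2)}_{i_0j_0}$ already vanishes on them, or its image is a scalar multiple of some $\underline{dp}_\star$, which $\varrho_{i_0 f_{i_0j_0}(k_0)}$ annihilates. For $S_4$ and $S'''_3$, whenever $\varrho^{(2)}_{i_0j_0}$ is nonzero the result is a form $\omega^\beta_{p,q}$ whose pair $(p,q)$ never coincides with $(i_0,f_{i_0j_0}(k_0))$: in the $S_4$ case because the surviving indices lie in $f_{i_0j_0}([n]\setminus\{i_0,j_0\})$ avoiding $i_0$; in the $S'''_3$ case because the defining order $i<j<k$ together with $i_0<j_0<k_0$ forces the subscript of the output $\omega$ to contain either an index strictly smaller than $i_0$ or an index strictly smaller than $j_0$, whereas $f_{i_0j_0}(k_0)>j_0$.

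The main calculation is on $S''_3$. When $(i,j)=(i_0,j_0)$, the first residue produces $\tfrac{(-t)^\alpha}{\alpha!}\,\omega^\beta_{i_0,\,f_{i_0j_0}(k)}$, and the Kronecker delta in $\varrho_{i_0 f_{i_0j_0}(k_0)}$ singles out $k=k_0$, yielding exactly $\tfrac{(-t)^\alpha(-t')^\beta}{\alpha!\,\beta!}$. When instead $(j,k)=(i_0,j_0)$, the image is a multiple of $\omega^\alpha_{f_{i_0j_0}(i),\,i_0}$ with $i<i_0$, whose first subscript is strictly less than $i_0$ and therefore does not match $(i_0,f_{i_0j_0}(k_0))$, so the composition vanishes. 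The only genuine (but routine) obstacle is the bookkeeping of the index map $f_{i_0j_0}$ together with the inequalities among $i_0,j_0,k_0$ that guarantee $f_{i_0j_0}(k_0)>j_0$ and hence separate the output of the second residue from the unwanted cases.
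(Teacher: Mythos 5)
Your proposal is correct and is exactly the direct computation the paper leaves implicit (its proof is just ``Computation yields''): compose the explicit formula for $\varrho^{(2)}_{i_0j_0}$ on the basis of $\Sigma$ with $\varrho_{i_0f_{i_0j_0}(k_0)}$, which kills the $\underline{dp}_\star$ and picks out $\omega^\beta_{i_0f_{i_0j_0}(k_0)}$, and run the case analysis; the key case $(i,j)=(i_0,j_0)$, $k=k_0$ in $S''_3$ gives the stated value by injectivity of $f_{i_0j_0}$ on $[n]\setminus\{i_0,j_0\}$. One tiny imprecision: $f_{i_0j_0}(k_0)=k_0-1\geq j_0$ with equality when $k_0=j_0+1$, so your inequality $f_{i_0j_0}(k_0)>j_0$ should be $\geq$, but since the vanishing in the $S'''_3$ case only needs the output index $f_{i_0j_0}(j)<j_0\leq f_{i_0j_0}(k_0)$ (hence the index pairs differ), the argument is unaffected.
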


\subsubsection{Computation of $\mathrm{Ker}(\Lambda^2(\bm{\Omega}^1_p)\to\Gamma^{pp}_{rat})$}

\begin{lemma}\label{map:Sigma:injective}
The map $\owedge_{|\Sigma}:\Sigma\subset\Lambda^2(\bm{\Omega}^1_p)\to\bm{\Omega}^2$ is injective.
\end{lemma}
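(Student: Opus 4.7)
The plan is to show that the restriction $\owedge|_\Sigma$ has no nonzero element in its kernel by exploiting the hierarchy of residue maps studied in the preceding subsections, confronting them with the basis (\ref{basis:1})--(\ref{basis:6}) of $\Sigma$ from Lemma \ref{lemma:suppl}. Given $\sigma\in\Sigma$ with $\owedge(\sigma)=0$, I would expand $\sigma$ as $\sigma_{P}+\sigma_{Q}+\sigma_{Q'}+\sigma_{S''}+\sigma_{S'''}+\sigma_S$ along the six basis types, and successively kill each summand.

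First, for the three ``four-term'' contributions, I would apply Corollary \ref{cor:crit}: every composition $\varrho_{f_{ij}(k)f_{ij}(l)}\circ\varrho^{(2)}_{ij}$ (for $(i,j)\neq(k,l)\in I$) annihilates $\sigma$. The three lemmas computing these compositions on $\Sigma$ show that, for each choice of indices, exactly one of the types $\mathbf{S}_4$, $\mathbf{S}'''_3$, $\mathbf{S}''_3$ contributes, and that the contribution picks out a single basis element against the linearly independent family $\{(\pm t)^\alpha(\pm t')^\beta/(\alpha!\beta!)\}_{\alpha,\beta\geq 0}$ in $\mathbf{k}[t,t']$; meanwhile $\mathbf{P}_2$, $\mathbf{Q}_3$, $\mathbf{Q}'_2$ (and the two ``other'' $\mathbf{S}$-types) map to zero under the relevant composition. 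Letting the index choice vary therefore forces $\sigma_{S}=\sigma_{S'''}=\sigma_{S''}=0$.

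Next I would use the single residues $\varrho^{(2)}_{i_0j_0}$, which annihilate $\sigma$ by Lemma \ref{lemma:crit}. The formula for $\varrho^{(2)}_{i_0j_0}|_\Sigma$ kills all $\mathbf{P}_2$-terms, and sends $Q(i,i_0,j_0,\alpha)\mapsto-\tfrac{(-t)^\alpha}{\alpha!}\underline{dp}_{f_{i_0j_0}(i)}$ and $Q'(i_0,j_0,\alpha)\mapsto-\tfrac{(-t)^\alpha}{\alpha!}\underline{dp}_{i_0}$ into $\bm{\Omega}^1_{(n-1)}[t]$, all other $Q$, $Q'$ basis elements going to zero. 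The family $\{\underline{dp}_k\otimes(-t)^\alpha/\alpha!\}_{k\in[n-1],\alpha\geq 0}$ is linearly independent in $\bm{\Omega}^1_{(n-1)}[t]$ (Lemma \ref{lemma:basis:HH1}); provided the images of $Q'(i_0,j_0,\alpha)$ and of the various $Q(i,i_0,j_0,\alpha)$ occupy distinct $\underline{dp}$-slots of $\bm{\Omega}^1_{(n-1)}$, coefficient extraction gives $c_Q(i,i_0,j_0,\alpha)=c_{Q'}(i_0,j_0,\alpha)=0$ for all $i,\alpha$. Varying $(i_0,j_0)\in I$ kills $\sigma_Q$ and $\sigma_{Q'}$ completely.

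Finally, the remaining contribution $\sigma=\sigma_P=\sum_{i<j}c_P(i,j)\underline{dp}_i\underline{\wedge}\underline{dp}_j$ satisfies $\sum_{i<j}c_P(i,j)\,\underline{dp}_i\wedge\underline{dp}_j=0$ in $\Gamma((E^\#)^n,\Omega^2_{(E^\#)^n})$; since the invariant $2$-forms $\{\underline{dp}_i\wedge\underline{dp}_j\}_{i<j}$ form part of a basis of this global section space (by Lemma \ref{stat:basis:0} together with the trivialisation of $\Omega^2_{(E^\#)^n}$), all $c_P(i,j)$ vanish. The main obstacle in this plan is the bookkeeping in the penultimate step: one must chase through the identification $D_{i_0j_0}\simeq\mathbb{A}^1\times(E^\#)^{n-1}$ and the definition of the relabelling $f_{i_0j_0}$ to confirm that $\underline{dp}_{i_0}$ and the $\underline{dp}_{f_{i_0j_0}(i)}$ ($i\in[n]-\{i_0,j_0\}$) label pairwise distinct factors of $(E^\#)^{n-1}$, so that the apparent ``collision'' between $Q'$ and $Q$ outputs is only notational; once this is checked, the injectivity follows by the above routine.
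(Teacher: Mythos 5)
Your proposal is correct and follows essentially the same route as the paper's proof: kill the $\mathbf S_4$, $\mathbf S'''_3$, $\mathbf S''_3$ parts via the iterated residues of Corollary \ref{cor:crit}, then the $\mathbf Q_3$ and $\mathbf Q'_2$ parts via the single residues $\varrho^{(2)}_{i_0j_0}$ (using exactly the point you flag — that $f_{i_0j_0}$ is injective on $[n]-\{i_0,j_0\}$ with image avoiding $i_0$, so the $Q$ and $Q'$ outputs occupy distinct $\underline{dp}$-slots), and finally the $\mathbf P_2$ part via linear independence of the $\underline{dp}_i\owedge\underline{dp}_j$. No gap; the "main obstacle" you identify is precisely the observation the paper makes explicit.
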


\proof 
Let $\sigma\in\Sigma$ be an element of $\mathrm{Ker}(\owedge:\Lambda^2(\bm{\Omega}^1_p)\to\bm{\Omega}^2)$. 
Then $\sigma$ can be decomposed as 
\begin{align*}
&  \sigma=\sum_{i<j\in[n]}p(i,j)P(i,j)+\underset{\alpha\geq 0}{\underset{\#\{i,j,k\}=3, j<k}{\sum_{i,j,k\in[n],}}}q(i,j,k,\alpha)Q(i,j,k,\alpha)+
\underset{\alpha\geq 0}{\sum_{i<j\in[n]}}q'(i,j,\alpha)Q'(i,j,\alpha)
\\ &  +\underset{\alpha,\beta\geq 0}{\sum_{i<j<k\in[n]}}s''(i,j,k,\alpha,\beta)S''(i,j,k,\alpha,\beta)
+\underset{\alpha,\beta\geq 0}{\sum_{i<j<k\in[n]}}s'''(i,j,k,\alpha,\beta)S'''(i,j,k,\alpha,\beta)
\\ & +\underset{\alpha,\beta\geq 0}{\underset{i<j,k<l,i<k}{\sum_{i,j,k,l\in[n],\#\{i,j,k,l\}=3,}}}s(i,j,k,l,\alpha,\beta)S(i,j,k,l,\alpha,\beta), 
\end{align*}
where $p(i,j)$, etc., are suitable scalars. 

Assume that $i_0,j_0,k_0,l_0\in[n]$ are all distinct, and that $i_0<j_0$, $k_0<l_0$, $i_0<k_0$. By Corollary \ref{cor:crit},  
$\varrho_{f_{i_0j_0}(k_0)f_{i_0j_0}(l_0)}\circ\varrho^{(2)}_{i_0j_0}(\sigma)=0$, so 
$\sum_{\alpha,\beta}s(i_0,j_0,k_0,l_0,\alpha,\beta){(-t)^\alpha(-t')^\beta\over \alpha!\beta!}=0$, therefore 
$s(i_0,j_0,k_0,l_0,\alpha,\beta)=0$. 

Assume that $i_0<j_0<k_0\in[n]$. 

By Corollary \ref{cor:crit}, $\varrho_{i_0f_{i_0k_0}(j_0)}\circ\varrho^{(2)}_{i_0k_0}(\sigma)=0$, so 
$$\sum_{\alpha,\beta}s'''(i_0,j_0,k_0,\alpha,\beta){(-t)^\alpha(t')^\beta\over\alpha!\beta!}=0,$$ therefore 
$s'''(i_0,j_0,k_0,\alpha,\beta)=0$. 

By Corollary \ref{cor:crit}, $\varrho_{i_0f_{i_0j_0}(k_0)}\circ\varrho^{(2)}_{i_0j_0}(\sigma)=0$, so 
$$\sum_{\alpha,\beta}s''(i_0,j_0,k_0,\alpha,\beta){(-t)^\alpha(-t')^\beta\over\alpha!\beta!}=0,$$ therefore 
$s''(i_0,j_0,k_0,\alpha,\beta)=0$. 

It follows that
$$
\sigma=\sum_{i<j\in[n]}p(i,j)P(i,j)+\underset{\alpha\geq 0}{\underset{\#\{i,j,k\}=3, j<k}{\sum_{i,j,k\in[n],}}}q(i,j,k,\alpha)Q(i,j,k,\alpha)+
\underset{\alpha\geq 0}{\sum_{i<j\in[n]}}q'(i,j,\alpha)Q'(i,j,\alpha). 
$$

Assume that $i_0<j_0<k_0\in[n]$. 

By Lemma \ref{lemma:crit}, $\varrho^{(2)}_{i_0j_0}(\sigma)=0$, so 
$$
\sum_{\alpha\geq 0}
(\sum_{i\in[n]-\{i_0,j_0\}}q(i,i_0,k_0,\alpha)\underline{dp}_{f_{i_0j_0}(i)}+
q'(i_0,j_0,\alpha)\underline{dp}_{i_0})(-{(-t)^\alpha\over \alpha!})=0. 
$$
As the restriction of the map $f_{i_0j_0}$ to $[n]-\{i_0,j_0\}$ is injective and as its image does not contain $i_0$, 
we have $q(i,i_0,k_0,\alpha)=0$ for any $i\in[n]-\{i_0,j_0\}$, $\alpha\geq 0$ and $q'(i_0,j_0,\alpha)=0$ for any $\alpha\geq 0$. 

This implies that $\sigma=\sum_{i<j\in[n]}p(i,j)P(i,j)$. The relation $\owedge(\sigma)=0$ yields 
$$
\sum_{i<j\in[n]}p(i,j)\underline{dp}_i\owedge\underline{dp}_j=0.
$$
As the family $(\underline{dp}_i\owedge\underline{dp}_j)$ is 
linearly independent over $\mathbf k$ in 
$\bm{\Omega}^2$, we obtain $p(i,j)=0$ for any $i,j$, therefore $\sigma=0$. 
\hfill\qed\medskip

\begin{lemma}\label{lemma:comp:kernel}
The kernel of $\owedge:\Lambda^2(\bm{\Omega}^1_p)\to\bm{\Omega}^2$, and therefore also of the map  
$\Lambda^2(\bm{\Omega}^1_p)\stackrel{\owedge}{\to}\bm{\Omega}^2\hookrightarrow\Gamma_{rat}^{pp}$, is equal to $\mathbf K$. 
\end{lemma}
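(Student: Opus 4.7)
The plan is to combine the two pieces of information already established: the direct-sum decomposition of $\Lambda^2(\bm{\Omega}^1_p)$ from Lemma \ref{lemma:suppl} and the injectivity result of Lemma \ref{map:Sigma:injective}.

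First I would invoke Lemma \ref{lemma:suppl}, which gives
$$
\Lambda^2(\bm{\Omega}^1_p) = \mathbf K \oplus \Sigma,
$$
so any $\eta \in \Lambda^2(\bm{\Omega}^1_p)$ decomposes uniquely as $\eta = \kappa + \sigma$ with $\kappa \in \mathbf K$ and $\sigma \in \Sigma$. Next, recall (from the discussion in \S\ref{sect:352}, following Definition \ref{def:R} and using the algebraic Fay identity (\ref{algebraic:fay:ijk})) that $\mathbf K \subset \mathrm{Ker}(\owedge)$. Hence $\owedge(\eta) = \owedge(\sigma)$. If $\eta \in \mathrm{Ker}(\owedge)$, then $\owedge(\sigma) = 0$, and Lemma \ref{map:Sigma:injective} (injectivity of $\owedge_{|\Sigma}$) forces $\sigma = 0$, so $\eta = \kappa \in \mathbf K$. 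This gives the inclusion $\mathrm{Ker}(\owedge) \subset \mathbf K$; the reverse inclusion is exactly the content of $\mathbf K \subset \mathrm{Ker}(\owedge)$.

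For the second assertion, the composition $\Lambda^2(\bm{\Omega}^1_p) \stackrel{\owedge}{\to} \bm{\Omega}^2 \hookrightarrow \Gamma_{rat}(X,\Omega^2_X)$ is the map induced from the wedge product via the commutative diagram (\ref{comm:diag:wedge}); its image lies in the summand $\Gamma_{rat}^{pp}$ because the arguments are elements of $\bm{\Omega}^1_p$. Since $\bm{\Omega}^2 \hookrightarrow \Gamma_{rat}(X,\Omega^2_X)$ is injective, the kernel is unchanged, so it equals $\mathbf K$.

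No step here should present a real obstacle: all the substantive work has been done in the preceding lemmas. The only thing to be careful about is that $\mathbf K \subset \mathrm{Ker}(\owedge)$ genuinely holds for every one of its generators --- the relations $R(i,j,\alpha)$ and $S(i,j,\alpha,\beta)$ vanish because $\omega_{ij}^\alpha$ is a scalar multiple of $\underline{dp}_{ij}$ on the formal level $f_\alpha \cdot \omega_{-1}$, and the relations $T(i,j,k,\alpha,\beta)$ vanish by the coefficient extraction from the algebraic Fay identity (\ref{algebraic:fay:ijk}) after tensoring with $\underline{dp}_{ij}\owedge\underline{dp}_{ik}$. All of this is already recorded in the paragraph preceding Lemma \ref{lemma:suppl}, so the proof reduces to a short formal argument.
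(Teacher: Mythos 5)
Your argument is exactly the paper's proof: decompose $\Lambda^2(\bm{\Omega}^1_p)=\mathbf K\oplus\Sigma$ via Lemma \ref{lemma:suppl}, use $\mathbf K\subset\mathrm{Ker}(\owedge)$ established from the Fay identity, and conclude from the injectivity of $\owedge_{|\Sigma}$ (Lemma \ref{map:Sigma:injective}). The proposal is correct and matches the paper's route, with the minor extra (harmless) remark on why the kernel is unchanged under the inclusion $\bm{\Omega}^2\hookrightarrow\Gamma_{rat}^{pp}$.
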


\proof  Recall that $\Lambda^2(\bm{\Omega}^1_p)=\mathbf K\oplus\Sigma$, that 
$\owedge$ is a map $\Lambda^2(\bm{\Omega}^1_p)\to\bm{\Omega}^2$ and that 
$\mathbf K\subset\mathrm{Ker}(\owedge)$. The result now follows from Lemma \ref{map:Sigma:injective}. 
\hfill\qed\medskip 

Combining this result with those from \S\ref{sect:351}, we obtain: 
\begin{prop}
The kernel of the wedge product map $\Lambda^2(\bm{\Omega}^1)\to\bm{\Omega}^2\hookrightarrow\Gamma_{rat}(\Omega^2_{(E^\#)^n})$
is equal to the subspace $\mathbf K$ of $\Lambda^2(\bm{\Omega}^1_p)\subset\Lambda^2(\bm{\Omega}^1)$. 
\end{prop}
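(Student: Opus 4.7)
The plan is to assemble the proposition directly from the two structural results already established in \S\ref{sect:351} and \S\ref{lemma:comp:kernel}, using the decomposition of $\bm{\Omega}^1$ into a ``$c$''-part and a ``$p$''-part from Lemma \ref{lemma:basis:HH1}.

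First, I would invoke the splitting $\bm{\Omega}^1=\bm{\Omega}^1_c\oplus\bm{\Omega}^1_p$ to obtain the canonical decomposition
$$
\Lambda^2(\bm{\Omega}^1)\simeq\Lambda^2(\bm{\Omega}^1_c)\oplus\bigl(\bm{\Omega}^1_c\otimes\bm{\Omega}^1_p\bigr)\oplus\Lambda^2(\bm{\Omega}^1_p),
$$
and compare it with the decomposition $\Gamma_{rat}(\Omega^2_{(E^\#)^n})=\Gamma_{rat}^{cc}\oplus\Gamma_{rat}^{cp}\oplus\Gamma_{rat}^{pp}$ coming from the trivial splitting of $\Omega^1_{(E^\#)^n}$ into $\underline{dc}$- and $\underline{dp}$-parts. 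The wedge product $\owedge$, being bilinear with respect to these decompositions, respects the bigrading, so it splits as the direct sum of the three component maps (\ref{lin:map:cc}), (\ref{lin:map:cp}), (\ref{lin:map:pp}).

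Second, I would recall that the two ``mixed or purely $c$'' components (\ref{lin:map:cc}) and (\ref{lin:map:cp}) were already shown to be injective in \S\ref{sect:351}, by virtue of the linear independence statements (\ref{freeness:cc}) and (\ref{freeness:cp}). Hence the kernel of the full wedge product map agrees with the kernel of its third component (\ref{lin:map:pp}), which by Lemma \ref{lemma:comp:kernel} is precisely $\mathbf K$. Combining the three observations completes the proof.

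There is no serious obstacle: this is a bookkeeping argument that stitches together Lemma \ref{lemma:basis:HH1}, the injectivity in \S\ref{sect:351}, and Lemma \ref{lemma:comp:kernel}. The only point requiring a moment of care is to verify that the wedge product indeed respects the bigrading by $(c,p)$-types; this is immediate from the $\mathcal O_{(E^\#)^n}$-linearity of $\owedge$ and from the fact that each of the three summands on both sides is cut out by the same condition on the number of $\underline{dc}_i$ factors.
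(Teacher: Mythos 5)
Your proposal is correct and is essentially the paper's own argument: the paper likewise decomposes $\Lambda^2(\bm{\Omega}^1)$ and $\Gamma_{rat}(\Omega^2_{(E^\#)^n})$ according to the $(c,p)$-bigrading, invokes the injectivity of the $cc$- and $cp$-components established in \S\ref{sect:351} via (\ref{freeness:cc}) and (\ref{freeness:cp}), and concludes from Lemma \ref{lemma:comp:kernel} that the kernel reduces to $\mathbf K\subset\Lambda^2(\bm{\Omega}^1_p)$. No changes are needed.
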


\section{Presentation and computation of the Lie algebra $\mathfrak G$ (equivs. ($\mathrm{f}$) and ($\mathrm{g}$))}
\label{section:iso:G:t}

\subsection{Grading on $\bm{\Omega}^1$}
\label{sec:5:1} 

Taking into account Lemma \ref{basis:Esharp:n} which makes explicit a basis of $\bm{\Omega}^1$, we may define a
grading on $\bm{\Omega}^1$ by 
$$
\mathrm{deg}(\omega_{ij}^\alpha)=\alpha+1\quad\mathrm{for}\quad\alpha\geq 0,\quad i<j\in[n], \quad
\mathrm{deg}(\underline{dc}_i)=1, \quad \mathrm{deg}(\underline{dp}_i)=0\quad \mathrm{for}\quad i\in[n].  
$$
Then $\bm{\Omega}^1$ is graded in degrees $\geq 0$. For $d\geq 0$, we denote by $\bm{\Omega}^1[d]$ the degree $d$ part of 
$\bm{\Omega}^1$. So 
$$
\bm{\Omega}^1=\oplus_{d\geq 0}\bm{\Omega}^1[d],\quad \bm{\Omega}^1[0]=\oplus_{i\in[n]}\mathbf k \underline{dp}_i,\quad \bm{\Omega}^1[1]=(\oplus_{i\in[n]}\mathbf k \underline{dc}_i)\oplus(\oplus_{i<j\in[n]}\mathbf k\omega_{ij}^0),
$$
$$
\bm{\Omega}^1[d]=\oplus_{i<j\in[n]}\mathbf k\omega_{ij}^{d-1}\quad\mathrm{for}\quad d\geq 2. 
$$

\subsection{A graded space $\mathbf I$}
\label{sec:5:2} 

The linear map $\owedge:\Lambda^2(\bm{\Omega}^1)\to\bm{\Omega}^2$ has been defined in \S\ref{subsect:maps} and its kernel has been 
identified with $\mathbf K$ (see Def. \ref{def:R}) in Lemma \ref{lemma:comp:kernel}. The grading of $\bm{\Omega}^1$ from \S\ref{sec:5:1} 
induces a grading on $\Lambda^2(\bm{\Omega}^1)$. The generators of $\mathbf K\subset\Lambda^2(\bm{\Omega}^1)$ are homogeneous for
this grading: $R(i,j,\alpha)$ is pure of degree $\alpha+1$, and $S(i,j,\alpha,\beta)$ and $T(i,j,k,\alpha,\beta)$ are pure of degree 
$\alpha+\beta+2$. This implies that {\it $\mathbf K$ is a graded subspace of $\Lambda^2(\bm{\Omega}^1)$.} It follows that the quotient space
$\Lambda^2(\bm{\Omega}^1)/\mathbf K$ inherits a grading from $\Lambda^2(\bm{\Omega}^1)$.

We make the following definition: 
\begin{definition}\label{def:I}
$\mathbf I:=\mathrm{im}(\owedge:\Lambda^2(\bm{\Omega}^1)\to\bm{\Omega}^2)$
\end{definition}

As $\mathbf I$ is isomorphic to $\Lambda^2(\bm{\Omega}^1)/\mathbf K$, one defines a grading on $\mathbf I$ by transport of structure. 
Then the composed map $\Lambda^2(\bm{\Omega}^1)\to\Lambda^2(\bm{\Omega}^1)/\mathbf K\simeq\mathbf I$ is compatible with the gradings. 
Therefore: 
\begin{lemma}
The space $\mathbf I$ is equipped with a grading which is compatible with the map $\owedge:\Lambda^2(\bm{\Omega}^1)\to\mathbf I$.  
\end{lemma}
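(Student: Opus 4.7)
The plan is to deduce the lemma from two inputs that are already essentially in place: the graded structure on $\Lambda^2(\bm{\Omega}^1)$ inherited from the grading on $\bm{\Omega}^1$ introduced in \S\ref{sec:5:1}, and the identification $\mathbf I \simeq \Lambda^2(\bm{\Omega}^1)/\mathbf K$ coming from Lemma \ref{lemma:comp:kernel}. First I would equip $\Lambda^2(\bm{\Omega}^1)$ with the grading defined by $\deg(a\,\underline\wedge\, b) := \deg(a)+\deg(b)$ for homogeneous $a,b\in\bm{\Omega}^1$; this is well-defined because $\bm{\Omega}^1$ is graded in non-negative degrees with an explicit basis given in Lemma \ref{basis:Esharp:n}.

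The heart of the proof is then to verify that the subspace $\mathbf K$ is a graded subspace of $\Lambda^2(\bm{\Omega}^1)$. By Definition \ref{def:R}, $\mathbf K$ is generated by the three families $R(i,j,\alpha)$, $S(i,j,\alpha,\beta)$ and $T(i,j,k,\alpha,\beta)$, so it suffices to check that each of these generators is homogeneous. For $R(i,j,\alpha) = Q'(i,j,\alpha)-Q''(i,j,\alpha)$, both terms are wedges of $\underline{dp}_{\bullet}$ (degree $0$) with $\omega_{ij}^\alpha$ (degree $\alpha+1$), giving pure degree $\alpha+1$. Similarly, $S(i,j,\alpha,\beta) = \omega_{ij}^\alpha\,\underline\wedge\,\omega_{ij}^\beta$ has pure degree $\alpha+\beta+2$. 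The most delicate case is $T(i,j,k,\alpha,\beta)$: each $S'$, $S''$, $S'''$ summand contributes degree $\alpha+\beta+2$ because the shifts $\gamma+\delta=\alpha+\beta$ preserve total degree, while the correction terms involve $Q$'s, $Q'$'s and $Q''$'s at parameter $\alpha+\beta+1$, which also give degree $\alpha+\beta+2$. Hence all summands of $T(i,j,k,\alpha,\beta)$ have the same degree $\alpha+\beta+2$, so $T$ is homogeneous as well.

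Once $\mathbf K$ is recognized as a graded subspace, the quotient $\Lambda^2(\bm{\Omega}^1)/\mathbf K$ inherits a well-defined grading, and the canonical projection is degree-preserving by construction. By Lemma \ref{lemma:comp:kernel}, the map $\owedge:\Lambda^2(\bm{\Omega}^1)\to \mathbf I$ factors as
$$
\Lambda^2(\bm{\Omega}^1)\twoheadrightarrow \Lambda^2(\bm{\Omega}^1)/\mathbf K \xrightarrow{\ \sim\ } \mathbf I,
$$
with the second arrow an isomorphism onto $\mathbf I=\mathrm{im}(\owedge)$. Transporting the induced grading on the quotient through this isomorphism endows $\mathbf I$ with the desired grading, with respect to which $\owedge$ is tautologically degree-preserving.

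The only genuine content of the argument lies in the homogeneity verification for the relations in $\mathbf K$, and the subtlety there is solely bookkeeping — tracking that every index shift or binomial-weighted summation in the definition of $T(i,j,k,\alpha,\beta)$ is designed to keep the total degree constant. I would expect this to be the main, if mild, obstacle; everything else is formal.
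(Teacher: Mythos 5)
Your proposal is correct and follows essentially the same route as the paper: the grading on $\Lambda^2(\bm{\Omega}^1)$ induced from \S\ref{sec:5:1}, the homogeneity check on the generators of $\mathbf K$ ($R$ of degree $\alpha+1$, $S$ and $T$ of degree $\alpha+\beta+2$, with the index shifts $\gamma+\delta=\alpha+\beta$ and the $\alpha+\beta+1$ corrections preserving total degree), and transport of the quotient grading through $\Lambda^2(\bm{\Omega}^1)/\mathbf K\simeq\mathbf I$ via Lemma \ref{lemma:comp:kernel}. Nothing to add.
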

Recall that 
\begin{equation}\label{decomp:Lambda2}
\Lambda^2(\bm{\Omega}^1)\simeq\Lambda^2(\bm{\Omega}^1_c)\oplus(\bm{\Omega}^1_c\otimes\bm{\Omega}^1_p)\oplus
\Lambda^2(\bm{\Omega}^1_p),
\end{equation}
that $\mathbf K\subset\Lambda^2(\bm{\Omega}^1_p)$ and that a complement $\Sigma$ of $\mathbf K$ in $\Lambda^2(\bm{\Omega}^1_p)$ has been constructed
in Lemma \ref{lemma:suppl}. It follows that {\it there is an isomorphism}
\begin{equation}\label{isom:I}
\mathbf I\simeq\Lambda^2(\bm{\Omega}^1_c)\oplus(\bm{\Omega}^1_c\otimes\bm{\Omega}^1_p)\oplus\Sigma. 
\end{equation}
One derives from there: 

{\it the images under $\Lambda^2(\bm{\Omega}^1)/\mathbf K\simeq\mathbf I$ of the classes in $\Lambda^2(\bm{\Omega}^1)/\mathbf K$ 
of the elements of the family
$$
(\underline{dc}_i\underline{\wedge}\underline{dc}_j)_{i<j\in[n]}, (\underline{dc}_i\underline{\wedge} \underline{dp}_j)_{i,j\in[n]}, (\underline{dc}_i\underline{\wedge}\omega_{jk}^\alpha)_{i\in[n],j<k\in[n],\alpha\geq 0}, 
(P(i,j))_{i<j\in[n]},  (Q(i,j,k,\alpha))_{i\neq j,k\in[n],\alpha\geq 0}, 
$$
$$ (Q'(i,j,\alpha))_{i<j\in[n],\alpha\geq 0},\quad 
(S''(i,j,k,\alpha,\beta))_{i<j<k\in[n],\alpha,\beta\geq 0},\quad(S''(i,j,k,\alpha,\beta))_{i<j<k\in[n],\alpha,\beta\geq 0},
$$
$$
(S(i,j,k,l,\alpha,\beta))_{i<j,k<l,i<k\in[n],\#\{i,j,k,l\}=4,\alpha,\beta\geq 0},
$$
(see (\ref{basis:1})-(\ref{basis:6})) of elements in $\Lambda^2(\bm{\Omega}^1)$ form a graded basis of $\mathbf I$.}

\subsection{Computation of the map $d:\bm{\Omega}^1\to\bm{\Omega}^2$} \label{comput:d}
\label{subsect:5:3}

The map $d:\bm{\Omega}^1\to\bm{\Omega}^2$ may be computed as follows: 
$$
\underline{dc}_i\mapsto 0, \quad i\in[n], \quad \underline{dp}_i\mapsto 0, \quad i\in[n], \quad \omega_{ij}^\alpha\mapsto \left\{
                \begin{array}{ll}
                  -\underline{dc}_{ij}\owedge\omega_{ij}^{\alpha-1}\ \text{if}\  \alpha>0,\\
-\underline{dc}_{ij}\owedge \underline{dp}_{ij}\ \text{if}\  \alpha=0,       
                \end{array}
              \right.
i<j\in[n]. 
$$
where $\underline{dc}_{ij}:=\underline{dc}_i-\underline{dc}_j$, $\underline{dp}_{ij}:=\underline{dp}_i-\underline{dp}_j$.

\subsection{Grading on the coalgebra $\mathbf C$ and the Lie coalgebra $\mathfrak C$}
\label{subsect:5:4}

It follows from \S\ref{comput:d} that the image of $d$ is contained in $\mathbf I$, and that the resulting corestricted map 
$d:\bm{\Omega}^1\to\mathbf I$ is graded. Recall also that $\owedge:\Lambda^2(\bm{\Omega}^1)\to\bm{\Omega}^2$ corestricts to a 
graded map $\owedge:\Lambda^2(\bm{\Omega}^1)\to\mathbf I$. All this implies that the map $\mu:T(\bm{\Omega}^1)\to
T(\bm{\Omega}^1)\otimes\bm{\Omega}^2\otimes T(\bm{\Omega}^1)$ from \S\ref{alagatapxd} corestricts to a map $T(\bm{\Omega}^1)\to
T(\bm{\Omega}^1)\otimes\mathbf I\otimes T(\bm{\Omega}^1)$. This implies that the bialgebra $\mathbf C:=\mathrm{Ker}(T(\bm{\Omega}^1)\to
T(\bm{\Omega}^1)\otimes\mathbf I\otimes T(\bm{\Omega}^1))$ is graded. Since $\bm{\Omega}^1$ has finite dimensional graded parts, so does 
$\mathbf C$.  

It follows that the Lie coalgebra $\mathfrak C:=\mathrm{Coprim}(\mathbf C)$ is also graded with finite dimensional graded parts.  
We denote by $\mathfrak G$ the graded dual Lie algebra. 

\subsection{Computation of the Lie algebra  $\mathfrak G$} 
\label{subsect:5:5}

Denote by $V^*$ the graded dual of a graded vector space $V$; so for $V=\oplus_{d\in\mathbb Z}V[d]$, 
$V^*=\oplus_{d\in\mathbb Z}V^*[d]$, where $V^*[d]:=V[-d]^*$. For $V=\oplus_{d\in\mathbb Z}V[d]$, 
$W=\oplus_{d\in\mathbb Z}W[d]$ two such spaces, we denote by $V\hat\otimes W$ their completed
tensor product, equal to $\prod_{d,d'}V[d]\otimes W[d']$. 

Denote also by $\mathfrak L$ the free Lie algebra functor and by $\mathfrak L_k$ is degree $k$ component.  

Then $\mathfrak G$ may be presented as the quotient 
$$
\mathfrak G=\mathfrak L((\bm{\Omega}^1)^*)/(\mathbf R), 
$$
where 
$$
\mathbf R:=\mathrm{im}(\mathbf I^*\stackrel{d^*\oplus{1\over2}\owedge^*}{\to}(\bm{\Omega}^1)^*\oplus(\Lambda^2(\bm{\Omega}^1))^*\simeq
\mathfrak L_1((\bm{\Omega}^1)^*)\oplus\mathfrak L_2((\bm{\Omega}^1)^*)) 
$$
and $(\mathbf R)$ is the ideal generated by $\mathbf R$. 

We now compute the space $\mathbf R$. Let 
$$
(X_i)_{i\in[n]}, \quad (Y_i)_{i\in[n]}, \quad(T_{ij}^\alpha)_{i<j\in[n],\alpha\geq 0}
$$
be the basis of $(\bm{\Omega}^1)^*$ dual to the basis (\ref{diff:E:n}) of $\bm{\Omega}^1$.  

\begin{lemma}
The space $\mathbf R$ decomposes as 
\begin{equation}\label{decomp:of:R}
\mathbf R=\mathbf R_{cc}+\mathbf R_{cp}+\mathbf R_{pp}, 
\end{equation}
where
\begin{equation}\label{def:R:cc}
\mathbf R_{cc}=\mathrm{Span}\{[X_i,X_j]|i<j\in[n]\}, 
\end{equation}
\begin{align}\label{def:R:cp}
\mathbf R_{cp}=\mathrm{Span}\{ & 
[X_i,Y_i]-\sum_{j\in[n]|j>i}T_{ij}^0-\sum_{j\in[n]|j<i}T_{ji}^0, \quad i\in[n] ; 
\nonumber \\& 
[X_i,Y_j]+T_{ij}^0, \quad i<j\in[n] ; \quad [X_i,Y_j]+T_{ji}^0, \quad j<i\in[n] ; \quad
\nonumber\\& 
[X_k,T_{ij}^\alpha], \quad i<j\in[n],\quad k\in[n]-\{i,j\},\quad\alpha\geq 0 ; 
\nonumber\\& 
[X_i,T_{ij}^\alpha]-T_{ij}^{\alpha+1}, \quad i<j\in[n],\quad\alpha\geq 0 ; 
\nonumber\\& 
[X_j,T_{ij}^\alpha]+T_{ij}^{\alpha+1}, \quad i<j\in[n],\quad\alpha\geq 0\},  
\end{align}
\begin{align}\label{def:R:pp}
\mathbf R_{pp}=\mathrm{Span}\{& \pi(i,j), \quad i<j\in[n], \nonumber\\& 
\sigma(i,j,k,l,\alpha,\beta), \quad i<j\in[n],\quad k<l\in[n], \quad i<k,\quad\#\{i,j,k,l\}=4,\quad\alpha,\beta\geq 0, \nonumber\\& 
\sigma''(i,j,k,\alpha,\beta),\quad i<j<k\in[n],\quad\alpha,\beta\geq 0, \nonumber\\& 
\sigma'''(i,j,k,\alpha,\beta),\quad i<j<k\in[n],\quad\alpha,\beta\geq 0, \nonumber\\& 
\kappa(k,i,j,\alpha),\quad i<j\in[n], \quad k\in[n]-\{i,j\},\quad \alpha\geq 0, \nonumber\\& 
\kappa'(i,j,\alpha),\quad i<j\in[n],\quad\alpha\geq 0\},  
\end{align}
where
\begin{equation}\label{pi:ij}
\pi(i,j):=[Y_i,Y_j] \quad \text{for}\quad i<j\in[n], 
\end{equation}
\begin{equation}\label{sigma:ijklab}
\sigma(i,j,k,l,\alpha,\beta):=[T_{ij}^\alpha,T_{kl}^\beta]\quad \text{for}\quad i<j\in[n], \ k<l\in[n],\ i<k, \ \#\{i,j,k,l\}=4,\ \alpha,\beta\geq 0, 
\end{equation}
\begin{equation}\label{sigma'':ijkab}
\sigma''(i,j,k,\alpha,\beta):=[T_{ij}^\alpha,T_{jk}^\beta]
+\sum_{\stackrel{\gamma,\delta\geq 0|}{\stackrel{\gamma+\delta=\alpha+\beta}{}}}
\begin{pmatrix} \alpha \\ \gamma\end{pmatrix}[T_{ij}^\gamma,T_{ik}^\delta] 
\quad\text{for}\quad i<j<k\in[n], \quad\alpha,\beta\geq 0,  
\end{equation}
\begin{equation}\label{sigma''':ijkab}
\sigma'''(i,j,k,\alpha,\beta):=[T_{ik}^\alpha,T_{jk}^\beta]
+\sum_{\stackrel{\gamma,\delta\geq 0|}{\stackrel{\gamma+\delta=\alpha+\beta}{}}}
(-1)^{\beta+1}\begin{pmatrix} \alpha \\ \delta\end{pmatrix}[T_{ij}^\gamma,T_{ik}^\delta] 
\quad\text{for}\quad i<j<k\in[n], \quad\alpha,\beta\geq 0,  
\end{equation}
\begin{equation}\label{kappa:ija} 
\kappa(k,i,j,\alpha)=\left\{
                \begin{array}{ll}
[Y_k,T_{ij}^\alpha]
+\sum_{\stackrel{\gamma,\delta\geq 0|}{\stackrel{\gamma+\delta=\alpha+\beta}{}}}
(-1)^\gamma[T_{ki}^\gamma,T_{kj}^\delta]\ \text{if}\  k<i, \\
{}[Y_k,T_{ij}^\alpha]+\sum_{\stackrel{\gamma,\delta\geq 0|}{\stackrel{\gamma+\delta=\alpha+\beta}{}}}
(-1)\cdot \begin{pmatrix} \alpha \\ \delta\end{pmatrix}[T_{ik}^\gamma,T_{ij}^\delta]\ \text{if}\  i<k<j, \\       
{}[Y_k,T_{ij}^\alpha]+\sum_{\stackrel{\gamma,\delta\geq 0|}{\stackrel{\gamma+\delta=\alpha+\beta}{}}}
\begin{pmatrix} \alpha \\ \gamma\end{pmatrix}[T_{ij}^\gamma,T_{ik}^\delta]\ \text{if}\  k>j, 
                \end{array}
              \right.
\quad\text{for}\quad i<j\in[n], \quad\alpha\geq 0, 
\end{equation}
\begin{align}\label{kappa':ija}
\kappa'(i,j,\alpha) & =[Y_i+Y_j,T_{ij}^\alpha]+\sum_{\stackrel{\stackrel{i<k<j}{\gamma+\delta=\alpha-1}}{}}
\begin{pmatrix} \alpha \\ \delta\end{pmatrix}[T_{ik}^\gamma,T_{ij}^\delta]
+\sum_{\stackrel{\stackrel{k>j}{\gamma+\delta=\alpha-1}}{}}
(-1)\cdot \begin{pmatrix} \alpha \\ \gamma\end{pmatrix}[T_{ij}^\gamma,T_{ik}^\delta]
+\sum_{\stackrel{\stackrel{k<i}{\gamma+\delta=\alpha-1}}{}}
(-1)^{\gamma+1}[T_{ki}^\gamma,T_{kj}^\delta] \nonumber \\ & 
\quad\text{for}\quad i<j\in[n], \quad\alpha\geq 0. 
\end{align}
\end{lemma}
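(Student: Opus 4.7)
The plan is to dualize the two maps $d\colon \bm\Omega^1 \to \mathbf I$ and $\owedge\colon \Lambda^2(\bm\Omega^1) \twoheadrightarrow \mathbf I$ that together define $\mathbf R$, organising the computation by the decomposition (\ref{isom:I}) of $\mathbf I$. From the formulas for $d$ recalled in \S\ref{comput:d}, the image of $d$ lies entirely in the $cp$-summand $\bm\Omega^1_c \owedge \bm\Omega^1_p \subset \mathbf I$; and $\owedge$ is block-diagonal with respect to the $cc/cp/pp$ splitting of $\Lambda^2(\bm\Omega^1)$. Hence $d^* \oplus \tfrac12 \owedge^* \colon \mathbf I^* \to (\bm\Omega^1)^* \oplus \Lambda^2((\bm\Omega^1)^*)$ respects the three-way decomposition $\mathbf I^* = \mathbf I^*_{cc} \oplus \mathbf I^*_{cp} \oplus \mathbf I^*_{pp}$, which at once yields the decomposition $\mathbf R = \mathbf R_{cc} + \mathbf R_{cp} + \mathbf R_{pp}$ announced in (\ref{decomp:of:R}).

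For the $cc$-block only $\tfrac12 \owedge^*$ contributes, and the dual of each basis element $(\underline{dc}_i \owedge \underline{dc}_j)_{i<j}$ maps to $X_i \wedge X_j \leftrightarrow [X_i, X_j]$; this gives (\ref{def:R:cc}). For the $cp$-block, a basis of $\mathbf I_{cp}$ is $\underline{dc}_i \owedge \underline{dp}_j$ and $\underline{dc}_i \owedge \omega^\alpha_{jk}$ (with $j<k$). On the $\owedge^*$-side the corresponding dual elements give $[X_i, Y_j]$ and $[X_i, T^\alpha_{jk}]$. On the $d^*$-side one expands $d\omega^0_{kl} = -\underline{dc}_{kl} \owedge \underline{dp}_{kl}$ and $d\omega^{\alpha+1}_{kl} = -\underline{dc}_{kl} \owedge \omega^\alpha_{kl}$ and records, for each basis vector of $\mathbf I_{cp}$, the coefficient it receives from these two identities. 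A four-case analysis (whether $i=j$, and whether $i\in\{j,k\}$) produces precisely the six families listed in (\ref{def:R:cp}).

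The $pp$-block is the main obstacle. Here $d^*$ vanishes, so $\mathbf R_{pp} = \tfrac12 \owedge^*(\mathbf I^*_{pp})$, which by duality equals the annihilator $\mathbf K^\perp$ of $\mathbf K$ in $\Lambda^2((\bm\Omega^1_p)^*)$. For each basis vector $\xi$ of the complement $\Sigma$ supplied by Lemma~\ref{lemma:suppl}, one writes the unique dual element $\xi^\vee \in \mathbf K^\perp$ (the linear form on $\Lambda^2(\bm\Omega^1_p)$ that equals $1$ on $\xi$, vanishes on the other basis vectors of $\Sigma$, and vanishes on $\mathbf K$), and translates it into an element of $\mathfrak L_2$ via $a\wedge b \mapsto [a,b]$. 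The easy cases $P(i,j)\mapsto \pi(i,j)$ and $S(i,j,k,l,\alpha,\beta)\mapsto \sigma(i,j,k,l,\alpha,\beta)$ involve no mixing. The delicate cases are $Q$, $Q'$, $S''$, $S'''$, whose dual elements $\xi^\vee$ acquire extra components because $\xi^\vee$ must vanish on the generators of $\mathbf K$ that involve the omitted elements $Q''$ and $S'$. The relation $R = Q' - Q'' \in \mathbf K$ forces $\xi^\vee(Q') = \xi^\vee(Q'')$, explaining the combination $[Y_i + Y_j, T^\alpha_{ij}]$ in $\kappa'$ and the three case orderings of $k$ versus $(i,j)$ appearing in $\kappa$. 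The relation $T$ of (\ref{elt:T}), which ultimately encodes the Fay identity, expresses $S'(i,j,k,\alpha,\beta)$ as an explicit combination of $S''$, $S'''$, $Q$, $Q'$, $Q''$ with binomial coefficients, and these coefficients are transferred onto the wedges $T^\gamma_{ij} \wedge T^\delta_{ik}$ appearing in $\sigma''$, $\sigma'''$, $\kappa$, $\kappa'$. The chief difficulty is the careful sign and binomial-coefficient bookkeeping, especially matching the three orderings $k<i<j$, $i<k<j$, $i<j<k$ in (\ref{kappa:ija}) with the various $Q$-terms occurring in the $T$-relation.
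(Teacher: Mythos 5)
Your proposal is correct and follows essentially the same route as the paper: the same $cc/cp/pp$ decomposition of $\mathbf I$ coming from (\ref{isom:I}), the same direct reading of the $cc$ and $cp$ blocks, and the same use of the complement $\Sigma$ of $\mathbf K$ from Lemma \ref{lemma:suppl} for the $pp$ block. The only (cosmetic) difference is that you compute $\mathbf R_{pp}$ as the annihilator $\mathbf K^{\perp}$ by writing down the dual basis elements $\xi^{\vee}$, whereas the paper expands $\tfrac12\omega_p^2$ in the basis of $\Sigma$ modulo $\mathbf K$ and reads off the coefficients; these are transposes of the same linear-algebra computation and involve identical sign and binomial bookkeeping.
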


\proof Set 
\begin{equation}\label{def:omega}
\omega:=\sum_{i\in[n]}X_i\otimes\underline{dc}_i+\sum_{i\in[n]}Y_i\otimes\underline{dp}_i
+\sum_{i<j\in[n],\alpha\geq 0}T_{ij}^\alpha\otimes\omega_{ij}^\alpha\in(\bm{\Omega}^1)^*\hat\otimes\bm{\Omega}^1; 
\end{equation}
this is the canonical element of $(\bm{\Omega}^1)^*\hat\otimes\bm{\Omega}^1$. For any Lie algebra $(\mathfrak g,[,]_{\mathfrak g})$, 
define the product $(\mathfrak g\otimes\bm{\Omega}^1)^2\to\mathfrak g\otimes\bm{\Omega}^2$ by 
\begin{equation}\label{product:Omega}
(x\otimes h)\cdot(x'\otimes h'):=[x,x']_{\mathfrak g}\otimes(h\owedge h').
\end{equation} 
Then $d\omega+{1\over2}\omega^2$ is an element of 
$\mathfrak L((\bm{\Omega}^1)^*)\hat\otimes\mathbf I$, and $\mathbf R$ is the image of the map $\mathbf I^*\to\mathfrak 
L((\bm{\Omega}^1)^*)$ induced by this element. So modding out by $(\mathbf R)$ corresponds to formally imposing the relation 
$d\omega+{1\over2}\omega^2=0$. 

Recall that $\bm{\Omega}^1$ decomposes as $\bm{\Omega}^1=\bm{\Omega}^1_c\oplus\bm{\Omega}^1_p$. This induces a decomposition of $(\bm{\Omega}^1)^*\hat\otimes\bm{\Omega}^1$. The corresponding decomposition of $\omega$ is $$ \omega=\omega_c+\omega_p, $$ where 
$$
\omega_c =\sum_{i\in[n]}X_i\otimes\underline{dc}_i, \quad 
\omega_p=\sum_{i\in[n]}Y_i\otimes\underline{dp}_i+\sum_{i<j\in[n],\alpha\geq 0}T_{ij}^\alpha\otimes\omega_{ij}^\alpha. 
$$
It follows from the construction of $\mathbf I$ that the map $\owedge:\Lambda^2(\bm{\Omega}^1)\to \mathbf I$ is compatible with the decompositions (\ref{decomp:Lambda2}) and (\ref{isom:I}) of both sides. Namely, 
$$
\owedge(\Lambda^2(\bm{\Omega}^1_c))=\Lambda^2(\bm{\Omega}^1_c), \quad 
\owedge(\bm{\Omega}^1_c\otimes\bm{\Omega}^1_p)=\bm{\Omega}^1_c\otimes\bm{\Omega}^1_p, \quad
\owedge(\Lambda^2(\bm{\Omega}^1_p))=\Sigma. 
$$
On the other hand, $d:\bm{\Omega}^1\to\mathbf I$ is such that 
$$
d(\bm{\Omega}^1_c)=0, \quad d(\bm{\Omega}^1_p)\subset\bm{\Omega}^1_c\otimes\bm{\Omega}^1_p. 
$$ 
Then $d\omega+{1\over2}\omega^2$ decomposes as 
$$
d\omega+{1\over2}\omega^2={1\over2}\omega_c^2+(d\omega_p+\omega_p\cdot\omega_c)+{1\over2}\omega_p^2
$$
according to the decomposition of $\mathfrak L((\bm{\Omega}^1)^*)\hat\otimes\mathbf I$ induced by (\ref{isom:I}).  
The decomposition (\ref{isom:I}) induces a decomposition of $\mathbf I^*$, and therefore of the map $\mathbf I^*\to\mathfrak L((\bm{\Omega}^1)^*)$ as the sum of three maps 
\begin{equation}\label{maps}
\Lambda^2(\bm{\Omega}^1_c)^*\to\mathfrak L((\bm{\Omega}^1)^*), \quad (\bm{\Omega}^1_c\otimes\bm{\Omega}^1_p)^*\to\mathfrak L((\bm{\Omega}^1)^*), \quad \Sigma^*\to\mathfrak L((\bm{\Omega}^1)^*). 
\end{equation}
These maps correspond respectively to ${1\over2}\omega_c^2$, $d\omega_p+\omega_c\cdot\omega_p$, and ${1\over2}\omega_p^2$. 
One then has the decomposition (\ref{decomp:of:R})
where $\mathbf R_{cc}$, $\mathbf R_{cp}$ and $\mathbf R_{pp},$ are the images of the maps induced by (\ref{maps}). 

One computes 
$$
\omega_c^2={1\over2}\sum_{i,j\in[n]}[X_i,X_j]\otimes(\underline{dc}_i\owedge\underline{dc}_j)
=\sum_{i<j\in[n]}[X_i,X_j]\otimes (\underline{dc}_i\owedge\underline{dc}_j) ; 
$$
since $(\underline{dc}_i\owedge\underline{dc}_j)_{i<j\in[n]}$ is a linearly independent family of $\Lambda^2(\bm{\Omega}^1_c)$, one obtains (\ref{def:R:cc}).

One computes 
\begin{align*}
& d\omega_p+{1\over2}\omega_p\cdot\omega_c\\ & 
=\sum_{i<j\in[n],\alpha\geq 0}T_{ij}^\alpha\otimes d(\omega_{ij}^\alpha)
+\sum_{i,j\in[n]}[X_i,Y_j]\otimes(\underline{dc}_i\owedge\underline{dp}_j)
+{1\over2}\sum_{k\in[n],i<j\in[n],\alpha\geq 0}[X_k,T_{ij}^\alpha]\otimes(\underline{dc}_k\owedge\omega_{ij}^\alpha) 
\\ & 
=\sum_{i\in[n]}([X_i,Y_i]-\sum_{j|j>i}T_{ij}^0-\sum_{j|j<i}T_{ji}^0)\otimes(\underline{dc}_i\owedge\underline{dp}_i)
\\ & 
+\sum_{i<j\in[n]}([X_i,Y_j]+T_{ij}^0)\otimes(\underline{dc}_i\owedge\underline{dp}_j)
+\sum_{i>j\in[n]}([X_i,Y_j]+T_{ji}^0)\otimes(\underline{dc}_i\owedge\underline{dp}_j)
\\ & 
+\sum_{i<j\in[n],k\neq i,j,\alpha\geq 0}[X_k,T_{ij}^\alpha]\otimes(\underline{dc}_k\owedge\omega_{ij}^\alpha)
\\ & 
+\sum_{i<j\in[n]}([X_i,T_{ij}^\alpha]-T_{ij}^{\alpha+1})\otimes(\underline{dc}_i\owedge\omega_{ij}^\alpha)
+\sum_{i<j\in[n]}([X_j,T_{ij}^\alpha]+T_{ij}^{\alpha+1})\otimes(\underline{dc}_j\owedge\omega_{ij}^\alpha). 
\end{align*}

As the second factors in the last expressions form a basis of $\bm{\Omega}_c^1\otimes\bm{\Omega}_p^1$, one gets (\ref{def:R:cp}).

One computes 
\begin{align*}
& {1\over2}\omega_p^2={1\over2}\sum_{i,j\in[n]}[Y_i,Y_j]\otimes(\underline{dp}_i\owedge\underline{dp}_j)
+\sum_{i<j\in[n],k\in[n],\alpha\geq 0}[Y_k,T_{ij}^\alpha]\otimes(\underline{dp}_k\owedge\omega_{ij}^\alpha)
\\ & +{1\over2}\sum_{i<j\in[n],k<l\in[n],\alpha,\beta\geq 0}[T_{ij}^\alpha,T_{kl}^\beta]\otimes
(\omega_{ij}^\alpha\owedge\omega_{kl}^\beta)
\end{align*}
which expresses as follows 
\begin{align}\label{expr:omega:p:2}
{1\over2}\omega_p^2= & \sum_{i<j\in[n]}\pi(i,j)\otimes P(i,j)
+\sum_{\stackrel{i<j\in[n],k<l\in[n],i<k,}{\stackrel{\#\{i,j,k,l\}=4,}{\stackrel{\alpha,\beta\geq0}{}}}}
\sigma(i,j,k,l,\alpha,\beta)\otimes S(i,j,k,l,\alpha,\beta)
\nonumber \\ & +\sum_{\stackrel{i<j<k\in[n],}{\stackrel{\alpha,\beta\geq 0}{}}}\sigma''(i,j,k,\alpha,\beta)\otimes S''(i,j,k,\alpha,\beta)
+\sum_{\stackrel{i<j<k\in[n],}{\stackrel{\alpha,\beta\geq 0}{}}}\sigma'''(i,j,k,\alpha,\beta)\otimes S'''(i,j,k,\alpha,\beta)
\nonumber \\ & 
+\sum_{\stackrel{i<j\in[n],}{\stackrel{k\neq i,j,}{\stackrel{\alpha\geq 0}{}}}}\kappa(k,i,j,\alpha)\otimes Q(k,i,j,\alpha)
+\sum_{\stackrel{i<j\in[n],}{\stackrel{\alpha\geq 0}{}}}\kappa'(i,j,\alpha)\otimes Q'(i,j,\alpha), 
\end{align}
where $\pi(i,j)$, $\sigma(i,j,k,l,\alpha,\beta)$, $\sigma''(i,j,k,\alpha,\beta)$, $\sigma'''(i,j,k,\alpha,\beta)$, $\kappa(k,i,j,\alpha)$, and $\kappa'(i,j,\alpha)$ 
are given by (\ref{pi:ij}), (\ref{sigma:ijklab}), (\ref{sigma'':ijkab}), (\ref{sigma''':ijkab}), (\ref{kappa:ija}), (\ref{kappa':ija}). 

As the second factors in (\ref{expr:omega:p:2}) form a basis of $\Sigma$, one gets (\ref{def:R:pp}). \hfill\qed\medskip

\subsection{An isomorphism $\mathfrak t_{1,n}^{\mathbb C}\simeq\mathfrak G$}

Recall that $\mathfrak G=\mathfrak L((\bm{\Omega}^1)^*)/(\mathbf R)$, where 
$\mathbf R=\mathbf R_{pp}+\mathbf R_{cp}+\mathbf R_{pp}$, and $\mathbf R_{pp}, \mathbf R_{cp},\mathbf R_{pp}$
are given by (\ref{def:R:cc}), (\ref{def:R:cp}), (\ref{def:R:pp}). 

Recall that $\mathfrak t_{1,n}$ is the Lie algebra with generators $x_i,y_i$ ($i\in[n]$), $t_{ij}$ ($i\neq j\in[n]$), 
and relations (\ref{rel:t:xx:yy}), (\ref{rel:t:xy:t}), (\ref{rel:t:xy:tt}), (\ref{rel:t:x:t:y:t}), (\ref{rel:t:xx:t:yy:t}).

\begin{lemma}\label{lemma:5:2}
There is a unique morphism of Lie algebras $\mathfrak t_{1,n}\to\mathfrak G$, given by 
\begin{equation}\label{assignment:t:G}
x_i\mapsto X_i\quad (i\in[n]), \quad y_i\mapsto Y_i\quad (i\in[n]), \quad t_{ij}\mapsto -T_{ij}^0\quad (i<j\in[n]), \quad 
t_{ij}\mapsto -T_{ji}^0\quad (i>j\in[n]). 
\end{equation}
\end{lemma}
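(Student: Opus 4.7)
The plan is straightforward: uniqueness is automatic since the assignment (\ref{assignment:t:G}) is specified on a generating set of $\mathfrak t_{1,n}$, so one only has to check that the images of $x_i,y_i,t_{ij}$ satisfy, in $\mathfrak G$, each of the defining relations (\ref{rel:t:xx:yy})--(\ref{rel:t:xx:t:yy:t}) of $\mathfrak t_{1,n}$. Since $\mathfrak G=\mathfrak L((\bm\Omega^1)^*)/(\mathbf R)$ and $\mathbf R=\mathbf R_{cc}+\mathbf R_{cp}+\mathbf R_{pp}$ is given explicitly by (\ref{def:R:cc})--(\ref{def:R:pp}), this amounts to matching each relation of $\mathfrak t_{1,n}$ with a distinguished element of $\mathbf R$ and observing that the higher-$\alpha$ corrections in those defining generators collapse at $\alpha=0$ because the relevant summation indices run over the empty set ($\gamma+\delta\leq-1$).

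The step-by-step matching is as follows. Relation (\ref{rel:t:xx:yy}) on the $x$'s is $[X_i,X_j]\in\mathbf R_{cc}$, and on the $y$'s it is $\pi(i,j)=[Y_i,Y_j]\in\mathbf R_{pp}$. Relation (\ref{rel:t:xy:t}): for $i<j$, $[x_i,y_j]=t_{ij}$ becomes $[X_i,Y_j]+T_{ij}^0=0$ (third line of (\ref{def:R:cp})), while $t_{ij}=t_{ji}$ is tautological from the assignment. Relation (\ref{rel:t:xy:tt}) is the first line of (\ref{def:R:cp}). For (\ref{rel:t:x:t:y:t}), when $\#\{i,j,k\}=3$, the $x$-case is the fourth line of (\ref{def:R:cp}) at $\alpha=0$, while the $y$-case is $\kappa(k,i,j,0)\in\mathbf R_{pp}$: the binomial correction sum in (\ref{kappa:ija}) is indexed by $\gamma+\delta=-1$ and is therefore empty, leaving $\kappa(k,i,j,0)=[Y_k,T_{ij}^0]$. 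For (\ref{rel:t:xx:t:yy:t}) (reading $[x_i+x_j,t_{ij}]$ as is natural), the $x$-case is obtained by adding the last two lines of (\ref{def:R:cp}) at $\alpha=0$ so that the $T_{ij}^1$-terms cancel, and the $y$-case is $\kappa'(i,j,0)\in\mathbf R_{pp}$, where again the sums in (\ref{kappa':ija}) are empty at $\alpha=0$ (index range $\gamma+\delta=-1$), yielding $\kappa'(i,j,0)=[Y_i+Y_j,T_{ij}^0]$.

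Once these six families of relations have been ticked off, the universal property of the Lie algebra presentation of $\mathfrak t_{1,n}$ produces the desired morphism. The main obstacle is purely bookkeeping: one must be careful with the sign conventions $t_{ij}\mapsto -T_{ij}^0$ (respectively $-T_{ji}^0$ for $i>j$) so that the two relations $[x_i,y_j]=t_{ij}=t_{ji}$ are both absorbed by the single pair of relations of $\mathbf R_{cp}$ distinguishing $i<j$ from $j<i$, and similarly in (\ref{rel:t:xy:tt}), where the sum splits into $j>i$ and $j<i$ pieces matching the two sums in the first line of (\ref{def:R:cp}). No nontrivial computation beyond this sign/index accounting and the observation that empty sums vanish is required.
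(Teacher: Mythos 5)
Your proposal is correct and follows essentially the same route as the paper: uniqueness is free from the generating set, and existence is checked relation-by-relation by matching each defining relation of $\mathfrak t_{1,n}$ with an explicit generator of $\mathbf R_{cc}$, $\mathbf R_{cp}$, or $\mathbf R_{pp}$ at $\alpha=0$, noting that the binomial correction sums in $\kappa$ and $\kappa'$ are empty there (and summing the last two lines of (\ref{def:R:cp}) to cancel the $T_{ij}^{1}$ terms for $[x_i+x_j,t_{ij}]=0$). The only discrepancies are cosmetic line-counting offsets in your references to (\ref{def:R:cp}); the elements of $\mathbf R$ you actually invoke are the right ones.
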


\proof There is a unique morphism from the free Lie algebra with generators $x_i,y_i$ ($i\in[n]$), $t_{ij}$ ($i\neq j\in[n]$) to 
$\mathfrak G$, 
defined by (\ref{assignment:t:G}). 
By relation (\ref{def:R:cc}) of $\mathfrak G$, this morphism takes the first part of relation (\ref{rel:t:xx:yy}) of $\mathfrak t_{1,n}$ to 0. 
By the first line of relation (\ref{def:R:pp}) of $\mathfrak G$, it takes the second part of relation (\ref{rel:t:xx:yy}) of $\mathfrak t_{1,n}$ to 0. 
By the second line of relation (\ref{def:R:cp}) of $\mathfrak G$, it satisfies the first part of relation (\ref{rel:t:xy:t}) of $\mathfrak t_{1,n}$. 
The second part of relation (\ref{rel:t:xy:t}) of $\mathfrak t_{1,n}$ is satisfied by construction. By the first line of relation (\ref{def:R:cp}) 
of $\mathfrak G$, it satisfies relation (\ref{rel:t:xy:tt}) of $\mathfrak t_{1,n}$. By the third line of relation (\ref{def:R:cp}) 
of $\mathfrak G$ for $\alpha=0$, it satisfies the first part of relation (\ref{rel:t:x:t:y:t}) of $\mathfrak G$. By the fifth line of 
relation (\ref{def:R:pp}) of $\mathfrak G$ for $\alpha=0$, it satisfies the second part of relation (\ref{rel:t:x:t:y:t}) of $\mathfrak G$. 
By the sum of the two last lines of relation (\ref{def:R:cp}) of $\mathfrak G$ for $\alpha=0$, it satisfies the first part of relation 
(\ref{rel:t:xx:t:yy:t}) of $\mathfrak G$. By the last line of relation (\ref{def:R:pp}) of $\mathfrak G$ for $\alpha=0$, it satisfies the 
second part of relation (\ref{rel:t:xx:t:yy:t}) of $\mathfrak G$. 

All this implies that the above morphism factors through a morphism $\mathfrak t_{1,n}\to\mathfrak G$. \hfill\qed\medskip

\begin{lemma}\label{lemma:5:3}
There is a unique Lie algebra morphism $\mathfrak G\to\mathfrak t_{1,n}$, such that 
\begin{equation}\label{assignment:G:t}
X_i\mapsto x_i\quad (i\in[n]), \quad Y_i\mapsto y_i\quad (i\in[n]), \quad T_{ij}^\alpha\mapsto -(\mathrm{ad}x_i)^\alpha(t_{ij})
\quad (i<j\in[n],\alpha\geq 0).  
\end{equation}
\end{lemma}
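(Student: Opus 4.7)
The plan is to define the morphism first on the free Lie algebra $\mathfrak L((\bm{\Omega}^1)^*)$ via (\ref{assignment:G:t}) and then verify that each relation in $\mathbf R=\mathbf R_{cc}+\mathbf R_{cp}+\mathbf R_{pp}$, as listed in the previous lemma, is sent to $0$ in $\mathfrak t_{1,n}^{\mathbb C}$. The set $\mathbf R_{cc}$ matches the first half of (\ref{rel:t:xx:yy}) verbatim. For the six families in $\mathbf R_{cp}$: the first three correspond directly to (\ref{rel:t:xy:tt}) and (\ref{rel:t:xy:t}); the identity $[X_k,T_{ij}^\alpha]=0$ for $k\neq i,j$ uses $[x_k,x_i]=0$ (so that $\mathrm{ad}\,x_k$ commutes with $(\mathrm{ad}\,x_i)^\alpha$) together with $[x_k,t_{ij}]=0$ from (\ref{rel:t:x:t:y:t}); the relation $[X_i,T_{ij}^\alpha]=T_{ij}^{\alpha+1}$ is immediate from the definition of $T_{ij}^{\alpha+1}$; and $[X_j,T_{ij}^\alpha]=-T_{ij}^{\alpha+1}$ follows by pushing $\mathrm{ad}\,x_j$ through $(\mathrm{ad}\,x_i)^\alpha$ (via $[x_i,x_j]=0$) and invoking $[x_j,t_{ij}]=-[x_i,t_{ij}]$ from (\ref{rel:t:xx:t:yy:t}).

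Among the relations in $\mathbf R_{pp}$, the element $\pi(i,j)$ maps to $[y_i,y_j]=0$ by (\ref{rel:t:xx:yy}), while for $\sigma(i,j,k,l,\alpha,\beta)$ with $\#\{i,j,k,l\}=4$ I will use the commutations $[x_i,x_k]=[x_i,t_{kl}]=[x_k,t_{ij}]=0$ (from (\ref{rel:t:xx:yy}) and (\ref{rel:t:x:t:y:t})) to pull $(\mathrm{ad}\,x_i)^\alpha$ and $(\mathrm{ad}\,x_k)^\beta$ outside, reducing to $[t_{ij},t_{kl}]=0$; this last identity follows by writing $t_{ij}=[x_i,y_j]$ and applying Jacobi, since both $x_i$ and $y_j$ commute with $t_{kl}$. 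The heart of the proof is the four Fay-type families $\sigma'',\sigma''',\kappa,\kappa'$. Their base cases can be verified directly: $\sigma''(i,j,k,0,0)\mapsto[t_{ij},t_{ik}+t_{jk}]=0$ and $\sigma'''(i,j,k,0,0)\mapsto[t_{ik},t_{jk}]-[t_{ij},t_{ik}]=0$, both consequences of the classical $4T$-type identity $[t_{ij}+t_{ik}+t_{jk},\,t_{ab}]=0$ for $\{a,b\}\subset\{i,j,k\}$, which itself follows from Jacobi applied to $t_{ab}=[x_a,y_b]$ using (\ref{rel:t:x:t:y:t}) and (\ref{rel:t:xx:t:yy:t}); $\kappa(k,i,j,0)\mapsto -[y_k,t_{ij}]=0$ by (\ref{rel:t:x:t:y:t}); and $\kappa'(i,j,0)\mapsto -[y_i+y_j,t_{ij}]=0$ by (\ref{rel:t:xx:t:yy:t}).

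For general $\alpha,\beta$, the plan is to proceed by induction on the total $\mathrm{ad}\,x_i$-degree, applying $\mathrm{ad}\,x_i$ (and analogously $\mathrm{ad}\,x_j,\mathrm{ad}\,x_k$) to relations at lower degree: the $\mathbf R_{cp}$ identities already proved control how those operators act on each $T_{ab}^\alpha$, and a relation of the same shape with total degree raised by $1$ emerges modulo earlier relations. The hard part will be matching the binomial coefficients $\binom{\alpha}{\gamma}$ and the alternating signs $(-1)^\gamma,(-1)^\beta$ that appear in $\sigma'',\sigma''',\kappa$ after the induction step; clean combinatorial bookkeeping is needed. A more conceptual alternative, which I expect also to work, is to repackage the four families into a single identity on the generating series
\[
\mathbf T_{ij}(z):=-\sum_{\alpha\geq 0}\frac{z^\alpha}{\alpha!}\,(\mathrm{ad}\,x_i)^\alpha(t_{ij})\in\mathfrak t_{1,n}^{\mathbb C}[[z]],
\]
so that $\sigma'',\sigma'''$ appear as the coefficient-by-coefficient expansion of a Lie-algebraic Fay-type commutation identity for the $\mathbf T_{ij}(z)$ that mirrors the algebraic Fay identity (\ref{algebraic:fay:ijk}) of \S\ref{sect:afioe}, while $\kappa,\kappa'$ arise from a companion identity involving $y_k$. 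This parallelism is morally explained by the construction of $\mathbf R_{pp}$ as the graded dual of the wedge-product relations in $\Lambda^2(\bm{\Omega}^1)$, which themselves originate from Fay.
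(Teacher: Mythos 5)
Your overall strategy is the same as the paper's: map the free Lie algebra on $X_i,Y_i,T_{ij}^\alpha$ to $\mathfrak t_{1,n}$ by (\ref{assignment:G:t}) and check that each generator of $\mathbf R_{cc}$, $\mathbf R_{cp}$, $\mathbf R_{pp}$ dies. Your treatment of $\mathbf R_{cc}$, $\mathbf R_{cp}$, of $\pi(i,j)$ and of $\sigma(i,j,k,l,\alpha,\beta)$ matches the paper, and your base cases $\alpha=\beta=0$ for $\sigma'',\sigma''',\kappa,\kappa'$ are correct.

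However, there is a genuine gap: the verification of $\sigma''(i,j,k,\alpha,\beta)$, $\sigma'''(i,j,k,\alpha,\beta)$, $\kappa(k,i,j,\alpha)$ and $\kappa'(i,j,\alpha)$ for general $\alpha,\beta$ is the entire substance of this lemma, and you do not carry it out in either of your two proposed routes. You explicitly defer "matching the binomial coefficients and the alternating signs" to unspecified "combinatorial bookkeeping," and the generating-series alternative is only asserted to be something you "expect also to work." The paper's proof is precisely this bookkeeping, done by a direct (non-inductive) chain of identities: one rewrites $(\mathrm{ad}x_j)^\beta(t_{jk})$ as $(-\mathrm{ad}x_k)^\beta(t_{jk})$ via $[x_j+x_k,t_{jk}]=0$, pulls the $\mathrm{ad}$'s outside the bracket using (\ref{rel:t:x:t:y:t}), applies $[t_{ij}+t_{ik},t_{jk}]=0$, re-expands with the Leibniz rule $(\mathrm{ad}x)^n[a,b]=\sum_k\binom{n}{k}[(\mathrm{ad}x)^ka,(\mathrm{ad}x)^{n-k}b]$, and collapses the double sum with the hockey-stick identity $\sum_{\alpha''=0}^{\delta}\binom{\gamma+\alpha''}{\gamma}=\binom{\alpha}{\delta}$; the $\kappa$ family additionally requires a three-way case analysis ($k<i$, $i<k<j$, $k>j$) with distinct sign patterns, which you do not mention. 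Your induction sketch can in fact be made to close (applying $\mathrm{ad}x_i$ to the image of $\sigma''(\alpha,\beta)$ yields the image of $\sigma''(\alpha+1,\beta)$ via Pascal's rule, and $\mathrm{ad}x_j$ supplies the missing $\sigma''(0,N+1)$), but until that computation — or the generating-series identity — is actually exhibited, the proof of the lemma is not complete.
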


\proof There is a unique morphism from the free Lie algebra with generators $X_i,Y_i$ ($i\in[n]$), $T_{ij}^\alpha$ ($i<j\in[n],\alpha\geq 0$) 
to $\mathfrak t_{1,n}$, defined by (\ref{assignment:G:t}). 
By the first part of relation (\ref{rel:t:xx:yy}) of $\mathfrak t_{1,n}$, this morphism takes relation (\ref{def:R:cc}) of $\mathfrak G$ to 0. 
By relation (\ref{rel:t:xy:tt}) of $\mathfrak t_{1,n}$, this morphism takes the first line of relation (\ref{def:R:cp}) of $\mathfrak G$ to 0. 
By relation (\ref{rel:t:xy:t}) of $\mathfrak t_{1,n}$, this morphism takes the second line of relation (\ref{def:R:cp}) of $\mathfrak G$ to 0.  
By combining relations $[x_k,x_i]=0$ and $[x_k,t_{ij}]=0$ ($i<j\in[n]$, $k\in[n]-\{i,j\}$) of $\mathfrak t_{1,n}$, we see that 
this morphism takes the third line of relation (\ref{def:R:cp}) of $\mathfrak G$ to 0. By the definitions of the images of 
$X_i$ and $T_{ij}^\alpha$, we see that this morphism takes the fourth line of relation (\ref{def:R:cp}) of $\mathfrak G$ to 0. 
Combining the definitions of the images of $X_j$ and $T_{ij}^\alpha$ with the relations $[x_i,x_j]=0$ and $[x_i+x_j,t_{ij}]=0$
of $\mathfrak t_{1,n}$, we see that this morphism takes the last line of relation (\ref{def:R:cp}) of $\mathfrak G$ to 0. 
By the second relations of (\ref{rel:t:xx:yy}) of $\mathfrak t_{1,n}$, this morphism takes the first line of relation (\ref{def:R:pp}) 
of $\mathfrak G$ to 0. 

Let $i,j,k,l,\alpha,\beta$ be as in the second line of relation (\ref{def:R:pp}) of $\mathfrak G$. It follows from relations 
$[x_i,t_{kl}]=[y_j,t_{kl}]=0$ and $[x_i,y_j]=t_{ij}$ in $\mathfrak{t}_{1,n}$ that $[t_{ij},t_{kl}]=0$ holds in $\mathfrak{t}_{1,n}$.  Moreover, 
relations $[x_i,t_{kl}]=0$, 
$[x_k,t_{ij}]=0$ and $[x_i,x_k]=0$ imply that relation $[(\mathrm{ad}x_i)^\alpha(t_{ij}),(\mathrm{ad}x_k)^\beta(t_{kl})]=0$ holds in $\mathfrak{t}_{1,n}$. This implies that the morphism takes the second line of relation (\ref{def:R:pp}) of $\mathfrak G$ to 0. 

Let $i,j,k,\alpha,\beta$ be as in the third line of relation (\ref{def:R:pp}) of $\mathfrak G$. The following equalities hold in 
$\mathfrak{t}_{1,n}$:  
\begin{align}\label{eq:ijkalphabeta}
&\nonumber [(\mathrm{ad}x_i)^\alpha(t_{ij}),(\mathrm{ad}x_j)^\beta(t_{jk})]
\\ & \nonumber 
= [(\mathrm{ad}x_i)^\alpha(t_{ij}),(-\mathrm{ad}x_k)^\beta(t_{jk})] \ (\text{by}\ [x_j+x_k,t_{jk}]=0\ \text{and}\ [x_j,x_k]=0)\\
&\nonumber =(\mathrm{ad}x_i)^\alpha(-\mathrm{ad}x_k)^\beta([t_{ij},t_{jk}]) \ (\text{by}\ [x_i,t_{jk}]=[x_k,t_{ij}]=0)\\ 
&\nonumber=-(\mathrm{ad}x_i)^\alpha(-\mathrm{ad}x_k)^\beta([t_{ij},t_{ik}]) \ (\text{by}\ [t_{ij},x_i+x_j]=[t_{ij},y_k]=0\ \text{and}\ [x_i,y_k]=t_{ik},\ 
[x_j,y_k]=t_{jk},\\ &  \nonumber\text{which imply}\ [t_{ij}+t_{ik},t_{jk}]=0)\\
&\nonumber=
-\sum_{\gamma=0}^\alpha\begin{pmatrix}\alpha\\ \gamma\end{pmatrix}[(\mathrm{ad}x_i)^\gamma(t_{ij}),(\mathrm{ad}x_i)^{\alpha-\gamma}
(-\mathrm{ad}x_k)^\beta(t_{ik})]\ (\text{by}\ [x_k,t_{ij}]=0)\\
&=
-\sum_{\gamma=0}^\alpha\begin{pmatrix}\alpha\\ \gamma\end{pmatrix}[(\mathrm{ad}x_i)^\gamma(t_{ij}),
(\mathrm{ad}x_i)^{\alpha+\beta-\gamma}(t_{ik})]\ (\text{by}\ [x_i+x_k,t_{ik}]=0\ \text{and}\ [x_i,x_k]=0). 
\end{align}
It follows that the morphism takes the third line of relation (\ref{def:R:pp}) of $\mathfrak G$ to 0. 

Let $i,j,k,\alpha,\beta$ be as in the fourth line of relation (\ref{def:R:pp}) of $\mathfrak G$. Taking into account that identity 
(\ref{eq:ijkalphabeta}) holds more generally under the assumption $\#\{i,j,k\}=3$, exchanging $j$ and $k$ in this identity, and 
replacing the mute index $\gamma$ by $\delta$, one gets 
$$
[(\mathrm{ad}x_i)^\alpha(t_{ik}),(\mathrm{ad}x_k)^\beta(t_{jk})]
=-\sum_{\delta=0}^\alpha\begin{pmatrix}\alpha\\ \delta\end{pmatrix}[(\mathrm{ad}x_i)^\delta(t_{ik}),
(\mathrm{ad}x_i)^{\alpha+\beta-\delta}(t_{ij})], 
$$
which using $[x_j,x_k]=0$ and $[x_j+x_k,t_{jk}]=0$ for rewriting the second factor of the first bracket, gives
$$
[(\mathrm{ad}x_i)^\alpha(t_{ik}),(\mathrm{ad}x_j)^\beta(t_{jk})]
=(-1)^\beta\sum_{\delta=0}^\alpha\begin{pmatrix}\alpha\\ \delta\end{pmatrix}[(\mathrm{ad}x_i)^{\alpha+\beta-\delta}(t_{ij}),
(\mathrm{ad}x_i)^\delta(t_{ik})].  
$$
It follows that the morphism takes the fourth line of relation (\ref{def:R:pp}) of $\mathfrak G$ to 0. 

Let $i,j,k,\alpha$ be as in the fifth line of relation (\ref{def:R:pp}) of $\mathfrak G$. Then 
\begin{align}\label{eq:ijkalpha}
& \nonumber [y_k,(\mathrm{ad}x_i)^\alpha(t_{ij})]=-\sum_{\stackrel{\alpha',\alpha''\geq 0}{\stackrel{\alpha'+\alpha''=\alpha-1}{}}}
(\mathrm{ad}x_i)^{\alpha'}([t_{ik},(\mathrm{ad}x_i)^{\alpha''}(t_{ij})])\ (\text{by}\ [y_k,t_{ij}]=0\ \text{and}\ [y_k,x_i]=-t_{ik})\\
& \nonumber=-\sum_{\stackrel{\alpha',\alpha''\geq 0}{\stackrel{\alpha'+\alpha''=\alpha-1}{}}}
(\mathrm{ad}x_i)^{\alpha'}([t_{ik},(-\mathrm{ad}x_j)^{\alpha''}(t_{ij})])\ (\text{by}\ [x_i,x_j]=[x_i+x_j,t_{ij}]=0)\\
& \nonumber=-\sum_{\stackrel{\alpha',\alpha'',\alpha'''\geq 0}{\stackrel{\alpha'+\alpha''+\alpha'''=\alpha-1}{}}}
\begin{pmatrix}\alpha'+\alpha''\\ \alpha'\end{pmatrix}
[(\mathrm{ad}x_i)^{\alpha'}(t_{ik}),(\mathrm{ad}x_i)^{\alpha''}(-\mathrm{ad}x_j)^{\alpha'''}(t_{ij})] \\ 
&\nonumber=-\sum_{\stackrel{\alpha',\alpha'',\alpha'''\geq 0}{\stackrel{\alpha'+\alpha''+\alpha'''=\alpha-1}{}}}
\begin{pmatrix}\alpha'+\alpha''\\ \alpha'\end{pmatrix}
[(\mathrm{ad}x_i)^{\alpha'}(t_{ik}),(\mathrm{ad}x_i)^{\alpha''+\alpha'''}(t_{ij})]\ (\text{by}\ [x_i,x_j]=[x_i+x_j,t_{ij}]=0)\\
&\nonumber=-\sum_{\stackrel{\gamma,\delta\geq 0}{\stackrel{\gamma+\delta=\alpha-1}{}}}
(\sum_{\alpha''=0}^\delta\begin{pmatrix}\gamma+\alpha''\\ \gamma\end{pmatrix})
[(\mathrm{ad}x_i)^\gamma(t_{ik}),(\mathrm{ad}x_i)^\gamma(t_{ij})]\ (\text{replacing}\ \alpha',\alpha'+\alpha''\ \text{by}\ \gamma,\delta)\\
&=-\sum_{\stackrel{\gamma,\delta\geq 0}{\stackrel{\gamma+\delta=\alpha-1}{}}}
\begin{pmatrix}\alpha\\ \delta\end{pmatrix}
[(\mathrm{ad}x_i)^\gamma(t_{ik}),(\mathrm{ad}x_i)^\delta(t_{ij})]\ (\text{using}\ 
\sum_{\alpha''=0}^\delta\begin{pmatrix}\gamma+\alpha''\\ \gamma\end{pmatrix}=\begin{pmatrix}\gamma+\delta+1\\ \gamma+1\end{pmatrix}
=\begin{pmatrix}\alpha\\ \delta\end{pmatrix})
\end{align}
It follows that the morphism takes the fifth line of relation (\ref{def:R:pp}) of $\mathfrak G$ to 0 when $i<k<j$. Exchanging the roles of 
$\gamma$ and $\delta$, equality (\ref{eq:ijkalpha}) can be rewritten as follows 
$$
[y_k,(\mathrm{ad}x_i)^\alpha(t_{ij})]=\sum_{\stackrel{\gamma,\delta\geq 0}{\stackrel{\gamma+\delta=\alpha-1}{}}}
\begin{pmatrix}\alpha\\ \gamma\end{pmatrix}[(\mathrm{ad}x_i)^\gamma(t_{ij}),(\mathrm{ad}x_i)^\delta(t_{ik})],  
$$
which implies that the morphism takes the fifth line of relation (\ref{def:R:pp}) of $\mathfrak G$ to 0 when $k>j$.

One also has 
\begin{align*}
& [y_k,(\mathrm{ad}x_i)^\alpha(t_{ij})]=-\sum_{\stackrel{\alpha',\alpha''\geq 0}{\stackrel{\alpha'+\alpha''=\alpha-1}{}}}
(\mathrm{ad}x_i)^{\alpha'}([t_{ik},(-\mathrm{ad}x_j)^{\alpha''}(t_{ij})])\ (\text{by the beginning of (\ref{eq:ijkalpha})})\\ 
& =-\sum_{\stackrel{\alpha',\alpha''\geq 0}{\stackrel{\alpha'+\alpha''=\alpha-1}{}}}
(\mathrm{ad}x_i)^{\alpha'}(-\mathrm{ad}x_j)^{\alpha''}([t_{ik},t_{ij}])\ (\text{using}\ [x_j,t_{ik}]=0)\\ 
& =\sum_{\stackrel{\alpha',\alpha''\geq 0}{\stackrel{\alpha'+\alpha''=\alpha-1}{}}}
(\mathrm{ad}x_i)^{\alpha'}(-\mathrm{ad}x_j)^{\alpha''}([t_{ki},t_{kj}])\ (\text{using}\ t_{ik}=t_{ki}\ \text{and}\ [t_{ik},t_{ij}+t_{jk}]=0)\\ 
& =\sum_{\stackrel{\alpha',\alpha''\geq 0}{\stackrel{\alpha'+\alpha''=\alpha-1}{}}}
[(\mathrm{ad}x_i)^{\alpha'}(t_{ki}),(-\mathrm{ad}x_j)^{\alpha''}(t_{kj})]\ (\text{using}\ [x_i,t_{kj}]=[x_j,t_{ki}]=0)\\ 
& =\sum_{\stackrel{\gamma,\delta\geq 0}{\stackrel{\gamma+\delta=\alpha-1}{}}}
[(-\mathrm{ad}x_k)^\gamma(t_{ki}),(\mathrm{ad}x_k)^\delta(t_{kj})]\ (\text{replacing}\ \alpha',\alpha''\ \text{by}\ \gamma,\delta\ \text{and 
using }[x_i,x_k]=[x_i+x_k,t_{ik}]=0), 
\end{align*}
which implies that the morphism takes the fifth line of relation (\ref{def:R:pp}) of $\mathfrak G$ to 0 when $k<j$. All this implies
that the morphism takes the fifth line of relation (\ref{def:R:pp}) of $\mathfrak G$ to 0 in all cases. 

Let $i,j,\alpha$ be as in the last line of relation (\ref{def:R:pp}) of $\mathfrak G$. One has
\begin{align}\label{eq:Term:partial}
& \nonumber [y_i+y_j,(\mathrm{ad}x_i)^\alpha(t_{ij})]=\sum_{\alpha'+\alpha''=\alpha-1}(\mathrm{ad}x_i)^{\alpha'}([-\sum_{k\neq i,j}t_{ik},
(\mathrm{ad}x_i)^{\alpha''-1}(t_{ij})]) \\ 
&=\sum_{k\neq i,j}\sum_{\alpha'+\alpha''=\alpha-1}-(\mathrm{ad}x_i)^{\alpha'}(-\mathrm{ad}x_j)^{\alpha''}([t_{ik},t_{ij}])
=\sum_{k\neq i,j}\mathrm{Term}_k, 
\end{align}
where the second equality relies on $[y_i+y_j,x_i]=-\sum_{k\neq i,j}t_{ik}$ and where $\mathrm{Term}_k$ is the summand 
corresponding to $k$ in the last expression. 
One has 
\begin{align}\label{eq:Termk:1}
&\nonumber \mathrm{Term}_k=
\sum_{\alpha'+\alpha''=\alpha-1}(\mathrm{ad}x_i)^{\alpha'}(-\mathrm{ad}x_j)^{\alpha''}([t_{ki},t_{kj}])
=\sum_{\gamma+\delta=\alpha-1}[(\mathrm{ad}x_i)^\gamma(t_{ki}),(-\mathrm{ad}x_j)^\delta(t_{kj})]
\\&=\sum_{\gamma+\delta=\alpha-1}(-1)^\gamma[(\mathrm{ad}x_k)^\gamma(t_{ki}),(\mathrm{ad}x_k)^\delta(t_{kj})],  
\end{align}
where the first equality relies on $[t_{ki},t_{ij}+t_{kj}]=0$ and the second equality is obtained by replacing $\alpha',\alpha''$ by 
$\gamma,\delta$ and by using $[x_j,t_{ki}]=[x_i,t_{kj}]=0$. 

One also has 
\begin{align}\label{eq:Termk:2}
&\nonumber \mathrm{Term}_k=
\sum_{\stackrel{\alpha',\alpha'',\alpha'''\geq 0}{\stackrel{\alpha'+\alpha''+\alpha'''=\alpha-1}{}}}
-\begin{pmatrix}\alpha'+\alpha''\\ \alpha'\end{pmatrix}
[(\mathrm{ad}x_i)^{\alpha'}(t_{ik}),(\mathrm{ad}x_i)^{\alpha''}(-\mathrm{ad}x_j)^{\alpha'''}(t_{ij})]\ (\text{using}\ [x_j,t_{ik}]=0)
\\ 
&\nonumber
=\sum_{\stackrel{\gamma,\delta\geq 0}{\stackrel{\gamma+\delta=\alpha-1}{}}}
\sum_{\alpha''=0}^\delta -\begin{pmatrix}\gamma+\alpha''\\ \gamma\end{pmatrix}
[(\mathrm{ad}x_i)^\gamma(t_{ik}),(\mathrm{ad}x_i)^\delta(t_{ij})]\ (\text{renaming}\ \alpha',\alpha''+\alpha'''\text{ as }
\gamma,\delta)
\\ &
=\sum_{\stackrel{\gamma,\delta\geq 0}{\stackrel{\gamma+\delta=\alpha-1}{}}}
-\begin{pmatrix}\alpha\\ \delta\end{pmatrix}
[(\mathrm{ad}x_i)^\gamma(t_{ik}),(\mathrm{ad}x_i)^\delta(t_{ij})]
\ (\text{using}\ \sum_{\alpha''=0}^\delta\begin{pmatrix}\gamma+\alpha''\\ \gamma\end{pmatrix}
=\begin{pmatrix}\gamma+\delta+1\\ \gamma+1\end{pmatrix}=\begin{pmatrix}\alpha\\ \delta\end{pmatrix}),
\end{align}
which implies 
\begin{equation}\label{eq:Termk:3}
\mathrm{Term}_k=\sum_{\stackrel{\gamma,\delta\geq 0}{\stackrel{\gamma+\delta=\alpha-1}{}}}
\begin{pmatrix}\alpha\\ \gamma\end{pmatrix}[(\mathrm{ad}x_i)^\gamma(t_{ij}),(\mathrm{ad}x_i)^\delta(t_{ik})]. 
\end{equation}
Substituting in (\ref{eq:Term:partial}) the identities (\ref{eq:Termk:1}) when $k<i$, (\ref{eq:Termk:2}) when $i<k<j$ and (\ref{eq:Termk:3}) 
when $k>j$, one sees that the morphism takes the last line of line of relation (\ref{def:R:pp}) of $\mathfrak G$ to 0. 
All this proves Lemma \ref{lemma:5:3}. \hfill\qed\medskip
 
Combining Lemmas \ref{lemma:5:2} and \ref{lemma:5:3}, one obtains:  

\begin{prop}\label{prop:iso:G:t}
Formula (\ref{assignment:G:t}) gives rise to an isomorphism of Lie algebras $\mathfrak G\to\mathfrak t_{1,n}^{\mathbb C}$. 
\end{prop}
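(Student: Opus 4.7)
The plan is to produce mutually inverse Lie algebra morphisms between $\mathfrak{t}_{1,n}^{\mathbb{C}}$ and $\mathfrak{G}$, using the explicit presentation of $\mathfrak{G}$ by generators $(X_i)_{i\in[n]}$, $(Y_i)_{i\in[n]}$, $(T_{ij}^{\alpha})_{i<j,\alpha\geq 0}$ modulo the relations $\mathbf{R}_{cc}+\mathbf{R}_{cp}+\mathbf{R}_{pp}$ computed in \S\ref{subsect:5:5}.

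First I would construct a morphism $\varphi:\mathfrak{t}_{1,n}\to\mathfrak{G}$ by sending $x_i\mapsto X_i$, $y_i\mapsto Y_i$, and $t_{ij}\mapsto -T_{ij}^{0}$ (with $T_{ji}^0:=T_{ij}^0$ for $i>j$), and then verify that each of the relations (\ref{rel:t:xx:yy})--(\ref{rel:t:xx:t:yy:t}) is killed. The relations $[x_i,x_j]=0$ follow directly from $\mathbf{R}_{cc}$, the relations $[x_i,y_j]=t_{ij}$ (for $i\neq j$) and $[x_i,y_i]=-\sum_{j\neq i}t_{ij}$ from the first two lines of $\mathbf{R}_{cp}$, the relations $[y_i,y_j]=0$ from the first line of $\mathbf{R}_{pp}$, and $[x_k,t_{ij}]=0$ from the third line of $\mathbf{R}_{cp}$ (specialised at $\alpha=0$). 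The relations $[y_k,t_{ij}]=0$ and $[x_i+x_j,t_{ij}]=0=[y_i+y_j,t_{ij}]$ follow from specialising the $\kappa,\kappa'$ relations and combining the fourth and fifth lines of $\mathbf{R}_{cp}$ at $\alpha=0$.

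Next I would construct the candidate inverse $\psi:\mathfrak{G}\to\mathfrak{t}_{1,n}^{\mathbb{C}}$ by $X_i\mapsto x_i$, $Y_i\mapsto y_i$, $T_{ij}^{\alpha}\mapsto -(\mathrm{ad}\, x_i)^{\alpha}(t_{ij})$ for $i<j$, $\alpha\geq 0$. The routine relations ($\mathbf{R}_{cc}$, first three lines of $\mathbf{R}_{cp}$, first line of $\mathbf{R}_{pp}$, and second line of $\mathbf{R}_{pp}$) reduce immediately to $[x_i,x_j]=0$, $[x_i,y_j]$-relations, $[y_i,y_j]=0$ and $[x_k,t_{ij}]=0$. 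The nontrivial content, and where the main work lies, is in checking the three-index families $\sigma'',\sigma''',\kappa,\kappa'$. The key algebraic identities in $\mathfrak{t}_{1,n}$ that make these vanish are: $[x_i+x_j,t_{ij}]=0$ (allowing $\mathrm{ad}\,x_i$ to be traded for $-\mathrm{ad}\,x_j$ when applied to $t_{ij}$), $[t_{ij},t_{ik}+t_{jk}]=0$ (the ``infinitesimal 4T relation'' derived from $[t_{ij},y_k]=0=[t_{ik},x_j]-[t_{jk},x_i]$ considerations), and the commuting of $x_k$ with both $x_i,x_j$ and $t_{ij}$ when $k\notin\{i,j\}$. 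The $\kappa'$ identity additionally uses the crossed relation $[y_i+y_j,x_i]=-\sum_{k\neq i,j}t_{ik}$.

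The main obstacle is bookkeeping the binomial identities that appear in (\ref{sigma'':ijkab})--(\ref{kappa':ija}). The crucial combinatorial identity one has to produce at several points is $\sum_{\alpha''=0}^{\delta}\binom{\gamma+\alpha''}{\gamma}=\binom{\gamma+\delta+1}{\gamma+1}=\binom{\alpha}{\delta}$, which arises when one iterates $\mathrm{ad}\,x_i$ and redistributes the index via the Leibniz rule; this is the combinatorial heart of the $\kappa$-relation and of $\sigma''$. Once each relation has been checked to lie in the kernel of $\psi$, the map $\psi$ is well defined. Finally, $\psi\circ\varphi$ is the identity on generators $x_i,y_i,t_{ij}$ (noting $T_{ij}^{0}\mapsto-t_{ij}$, so $\varphi(t_{ij})=-T_{ij}^{0}\mapsto t_{ij}$), and $\varphi\circ\psi$ sends $X_i\mapsto X_i$, $Y_i\mapsto Y_i$, and $T_{ij}^{\alpha}\mapsto -(\mathrm{ad}\,X_i)^{\alpha}(-T_{ij}^{0})=(\mathrm{ad}\,X_i)^{\alpha}(T_{ij}^{0})$; by iterating the relation $[X_i,T_{ij}^{\alpha}]=T_{ij}^{\alpha+1}$ (fourth line of $\mathbf{R}_{cp}$) one obtains $(\mathrm{ad}\,X_i)^{\alpha}(T_{ij}^{0})=T_{ij}^{\alpha}$, so $\varphi\circ\psi=\mathrm{id}_{\mathfrak{G}}$, and the two maps are mutually inverse isomorphisms.
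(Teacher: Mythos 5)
Your proposal is correct and follows essentially the same route as the paper: two explicit morphisms on generators (the same assignments, with the same signs), a relation-by-relation check in both directions hinging on $[x_i+x_j,t_{ij}]=0$, the 4T-type identity $[t_{ij},t_{ik}+t_{jk}]=0$, and the binomial identity $\sum_{\alpha''=0}^{\delta}\binom{\gamma+\alpha''}{\gamma}=\binom{\alpha}{\delta}$, followed by the observation that the composites are the identity on generators. The only (welcome) addition is that you spell out $\varphi\circ\psi=\mathrm{id}$ via $(\mathrm{ad}X_i)^{\alpha}(T_{ij}^0)=T_{ij}^{\alpha}$, a step the paper leaves implicit.
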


\section{Elements of a description of $\mathrm{VBFC}(X,D)_{unip}$ (equiv. ($\mathrm e$))}\label{sect:description}

\subsection{Reduction of a space of forms to $\Sigma_{log}$}

Let $\g$ be a finite dimensional nilpotent Lie algebra over $\mathbb C$, $G$ the corresponding group. 
Let $X$ be a smooth complex algebraic variety with a divisor $D$. 

Recall also that a 1-form $\alpha$ on $X- D$ is called logarithmic at $D$ if both $\alpha$ and $d\alpha$ have simple poles at $D$
(see \S\ref{sect:ldffrncd}). 
This is equivalent to saying that if $D$ is locally defined by the equation $z=0$ near its generic point then $\alpha=f\frac{dz}{z}+\beta$, where $f$ is a regular function and $\beta$ a regular 1-form. 

Let $\omega\in \Omega^1(X- D,\g)$ be a 1-form satisfying the Maurer-Cartan equation 
$$
d\omega+{1\over 2}\omega^2=0 
$$
where $\omega^2$ is defined using (\ref{product:Omega}).
In other words, $d+\omega$ is a flat connection on the trivial $G$-bundle on $X- D$.

\begin{lemma}\label{l1} If $\omega$ has a first order pole at $D$ then it is logarithmic. 
\end{lemma}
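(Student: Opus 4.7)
The claim is local along $D$: it suffices to check at the generic (smooth) point $p$ of each component of $D$. There I pick local coordinates $(z,w_1,\dots,w_{n-1})$ with $D=\{z=0\}$. The hypothesis that $\omega$ has a first-order pole along $D$ lets me write
$$
\omega=\frac{A}{z}\,dz+\sum_i\frac{b_i}{z}\,dw_i,
$$
with $A$ and $b_i$ regular $\g$-valued functions; the form $\omega$ is then logarithmic at $D$ precisely when $b_i|_{z=0}=0$ for every $i$.

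My plan is to read off these vanishings from the double-pole part of the Maurer--Cartan equation $d\omega+\tfrac12\omega^2=0$. A direct computation, using $dz\wedge dz=0$, shows that the $z^{-2}$-component of $d\omega$ is $-\sum_i\frac{b_i(0,w)}{z^2}\,dz\wedge dw_i$, while the $z^{-2}$-component of $\tfrac12\omega^2$ (with $\omega^2$ as in (\ref{product:Omega})) is
$$
\sum_i\frac{[A(0,w),b_i(0,w)]}{z^2}\,dz\wedge dw_i+\sum_{i<j}\frac{[b_i(0,w),b_j(0,w)]}{z^2}\,dw_i\wedge dw_j.
$$
Setting $A_0(w):=A(0,w)$ and $\beta_i(w):=b_i(0,w)$, the vanishing of the total $z^{-2}$-coefficient forces
$$
\bigl(\mathrm{id}-\mathrm{ad}\,A_0(w)\bigr)\bigl(\beta_i(w)\bigr)=0,\qquad [\beta_i(w),\beta_j(w)]=0.
$$

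The first relation is where the hypothesis that $\g$ is nilpotent becomes essential: by Engel's theorem $\mathrm{ad}\,A_0(w)\in\mathrm{End}(\g)$ is a nilpotent endomorphism for every $w$, hence $\mathrm{id}-\mathrm{ad}\,A_0(w)$ has all eigenvalues equal to $1$ and is invertible. It follows that $\beta_i(w)\equiv 0$, i.e., $b_i=z\tilde b_i$ with $\tilde b_i$ regular, and $\omega$ is logarithmic at $D$. The only real content of the argument is this invertibility step; the bracket relation $[\beta_i,\beta_j]=0$ drops out of the computation but is not needed for the conclusion. The one bookkeeping obstacle is carefully separating the $z^{-2}$- and $z^{-1}$-parts after Taylor-expanding $A$ and the $b_i$ in $z$, which is otherwise routine.
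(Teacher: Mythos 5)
Your proof is correct and is essentially the paper's own argument: the paper multiplies $\omega$ by $z$ to get a regular form $\psi$ and restricts the identity $zd\psi=dz\wedge\psi-\tfrac12\psi^2$ to $z=0$, which is exactly your extraction of the $z^{-2}$ Laurent coefficient, leading to the same relation $\beta_i=[A_0,\beta_i]$ and the same invertibility of $\mathrm{id}-\mathrm{ad}\,A_0$ by nilpotence of $\mathfrak g$. The only differences are cosmetic (you also record the relation $[\beta_i,\beta_j]=0$, which, as you note, is not needed).
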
 

\begin{proof} Let $z,x_1,...,x_n$ be local coordinates such that $D$ is locally defined by $z=0$ near a smooth point. 
Let $\omega=f\frac{dz}{z}+\sum_i \frac{g_i}{z}dx_i$, where $f,g_i$ are regular functions. Our job is to show that 
$g_i$ vanish at $z=0$. 

Let $\psi=z\omega$. Then $\psi=fdz+\sum_i g_idx_i$, a regular 1-form. We have $d\psi=dz\wedge \omega+zd\omega$. Thus from the Maurer-Cartan equation we have that 
$$
zd\psi=dz\wedge \psi-{1\over 2}\psi^2.
$$
Thus, $dz\wedge \psi-{1\over 2}\psi^2$ vanishes coefficientwise at $z=0$. In particular, for the coefficient of $dz\wedge dx_i$ we get 
that $g_i-[f,g_i]$ vanishes at $z=0$. In other words, if $g_{i0},f_0$ are restrictions of $g_i,f$ to $z=0$, then 
$$
g_{i0}=[f_0,g_{i0}].
$$
But $f_0\in \g$, which is a nilpotent Lie algebra. Thus, $g_{i0}=0$, as desired. 
\end{proof} 

Now assume that $D=\cup_{j=1}^N D_j$ is a union of smooth irreducible divisors, intersecting pairwise transversally. 

\begin{lemma}\label{l2}  Let $\omega\in\Omega^1_{log}(X-D)$. Then for any $1\le m\le N$, the residue of $\omega$ at $D_m$ 
(originally defined generically on $D_m$) extends to a regular function on the whole $D_m$. 
\end{lemma}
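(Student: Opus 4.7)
The plan is to establish the claim locally on $X$: I fix a point $p \in D_m$ and show that the residue of $\omega$ along $D_m$ extends to a regular function in a Zariski neighborhood of $p$ on $D_m$; patching these local verifications over a cover of $D_m$ then yields the global statement.

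At a point $p$ of $D_m$ not lying on any other component $D_j$ ($j \neq m$), only $D_m$ passes through $p$. Writing $\omega = c\, dz/z + \beta$ in a local equation $z$ of $D_m$, with $c$ a regular function and $\beta$ a regular $1$-form, the residue along $D_m$ near $p$ is simply $c|_{z = 0}$, which is regular. This already identifies the residue on the smooth locus of $D_m$ relative to the other components, which is what is meant by "originally defined generically on $D_m$".

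The substantive case is that of a point $p \in D_m$ lying on additional components $D_{j_2}, \ldots, D_{j_r}$. Here I would fix local coordinates $z_1, \ldots, z_r, x_1, \ldots, x_{n-r}$ near $p$ with $D_m = \{z_1 = 0\}$ and $D_{j_i} = \{z_i = 0\}$ for $i \geq 2$. The key input is the alternative description of $\Omega^1_X(\log D)$ for a RNCD recalled in \S\ref{sect:ldffrncd}: any section of $\Omega^1_X(\log D)$ near $p$ admits a representation
$$
\omega = \sum_{i=1}^{r} c_i\, \frac{dz_i}{z_i} + \beta,
$$
where the coefficients $c_i$ are regular functions near $p$ and $\beta$ is a regular $1$-form near $p$. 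The residue of $\omega$ along $D_m$ near $p$ is then $c_1|_{z_1 = 0}$, which is manifestly regular on $D_m$ in a neighborhood of $p$ and visibly agrees, on the intersection of that neighborhood with the generic locus of $D_m$, with the residue computed there. Since $p \in D_m$ was arbitrary, the local regular extensions glue to a regular function on the whole of $D_m$.

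The main — and only — point requiring genuine justification is that $\omega$ locally has the stated explicit form in coordinates adapted to the NCD. This is not an obstacle because it is exactly the equivalence between the two definitions of $\Omega^1_X(\log D)$ for a RNCD recorded in \S\ref{sect:ldffrncd} (following \cite{EV}); once that description is invoked, the extension of the residue is automatic and the lemma follows immediately.
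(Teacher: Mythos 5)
Your argument has a genuine gap, and in fact it proves too much: it never uses the Maurer--Cartan equation or the nilpotency of $\mathfrak g$, whereas the statement is false without them. The standing hypothesis in this subsection is only that the components $D_j$ are smooth and intersect \emph{pairwise} transversally (a special divisor in the sense of \S\ref{sect:divisors}), not that $D$ is a normal crossing divisor; this matters for the intended application, where $D=\cup_{i<j}D_{ij}\subset (E^\#)^n$ and the three components $D_{12},D_{13},D_{23}$ all contain the small diagonal, along which their conormals are linearly dependent. The local normal form $\omega=\sum_i c_i\,dz_i/z_i+\beta$ with $c_i$ regular, which you invoke from \S\ref{sect:ldffrncd}, is stated (and true) only for RNCDs; at a non-normal-crossing point it is simply unavailable, and your argument collapses there. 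Nor can you retreat to the good locus and extend afterwards: the bad locus $D_{12}\cap D_{13}$ has codimension $1$ in $D_{12}$, so no Hartogs-type argument applies.

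To see that the hypothesis you dropped is essential, take $X=\mathbb C^2$, $D=\{xy(x-y)=0\}$ (three pairwise transversal lines, not an NCD at the origin) and
$$
\omega=\frac{y\,dx-x\,dy}{xy(x-y)}.
$$
Then $xy(x-y)\,\omega$ is regular and $d\omega=\dfrac{dx\wedge dy}{xy(x-y)}$, so $\omega$ and $d\omega$ both have simple poles, i.e.\ $\omega\in\Omega^1_{log}(X-D)$; yet its residue along $\{x=0\}$ equals $-1/y$, which does not extend across the origin. (Of course this $\omega$ does not satisfy the Maurer--Cartan equation.) The paper's proof is of a different nature: it restricts the flat connection $d+\omega$ to $D_m$, obtaining $d+\beta_0$, uses Deligne's regularity theory (logarithmic growth of flat sections) to show that $d+\beta_0$ still has first order poles along $D_m\cap D_i$, observes that the residue $f_{m0}$ is a flat section of the adjoint bundle for $d+\beta_0$, and then uses unipotency of $G$ to conclude that $f_{m0}$ has logarithmic growth, hence no pole, along $D_m\cap(\cup_{i\neq m}D_i)$. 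Flatness and unipotency are exactly the two inputs your proof omits, and they cannot be omitted.
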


\begin{proof} For each $m$, let $f_{m0}$ be the residue of $\omega$ at $D_m$. This is a regular function on 
\linebreak $D_m- \cup_{j:j\ne m}D_j$. 

Let $p\in D_m$ be a generic point and $z,x_1,...,x_n$ be local coordinates near $p$ such that $D_m$ is locally defined by $z=0$. 
Using that $\omega\in\Omega^1_{log}(X-D)$, on a small ball $B_p$ around $p$ one may write $\omega$ as $f\frac{dz}{z}+\beta$, 
where $\beta$ is regular. Let $\beta_0$ be the restriction of $\beta$ 
to $B_p\cap D_m$. Then $\beta_0$ is a regular 1-form. Moreover, it is easy to check that $\beta_0$ is canonically attached to $\omega$, 
i.e., it does not depend on the choice of coordinates and the representation $\omega=f\frac{dz}{z}+\beta$. Therefore, $\beta_0$ is defined 
globally on $D_m- \cup_{i\ne m}D_i$. 

Moreover, the Maurer-Cartan equation for 
$\omega$ implies the Maurer-Cartan equation for $\beta_0$, i.e., $d+\beta_0$ is a flat connection on the trivial bundle over 
$D_m- \cup_{i\ne m}D_i$. Namely, $d+\beta_0$ is the "restriction" of the flat connection 
$d+\omega$ to the pole divisor $D_m$. Concretely, if $D_m^\varepsilon$ is a perturbation of $D_m$ 
(defined locally near some point) then $d+\beta_0$ is the limit of the restriction of $d+\omega$ 
to $D_m^\varepsilon$ as $\varepsilon\to 0$. 

Since the connection $d+\omega$ has first order poles, it follows from the work of Deligne (\cite{Del}) that so does the 
connection $d+\beta_0$. Indeed, pick $q\in D_m\cap D_i$ for some $i$, and let us work on a small ball $B_q$ around $q$. 
Let $C$ be a generic smooth curve in $D_m\cap B_q$ passing through $q$. We need to show that 
the restriction of $d+\beta_0$ to $C$ has a first order pole at $q$. To this end,  
consider a generic perturbation $C_\varepsilon$ of $C$ in $X$, still passing through $q$ (i.e., $C_\varepsilon$ is not contained in $D_m$). 
Then the restriction of $d+\omega$ to $C_\varepsilon$ has a pole only at $q$ inside $B_q$ (because of pairwise transversality of $D_j$). Moreover, this pole is simple, 
since by \cite{Del}, flat sections of $d+\omega|_{C_\varepsilon}$ have logarithmic growth near $q$ (as $\omega$ has first order poles).
Hence, taking the limit $\varepsilon\to 0$, we find that $d+\beta_0|_C$ has a simple pole at $q$, as claimed.  
  
Also, $f_{m0}$ is a flat section of the adjoint bundle for the trivial 
$G$-bundle with the connection $d+\beta_0$, i.e., 
$$
df_{m0}+[\beta_0,f_{m0}]=0.
$$
Since $G$ is unipotent and $\beta_0$ has simple poles, this means that $f_{m0}$ has logarithmic growth when approaching $D_m\cap(\cup_{i|i\neq n} D_i)$. 
Hence $f_{m0}$ cannot have a pole at $D_m\cap(\cup_{i|i\neq n} D_i)$, i.e., it extends to a regular function on $D_m$, as desired. 
\end{proof} 

\remark{One can also prove that $\omega$ can locally be written as $\sum_j \omega_j$, 
where $\omega_j$ is a logarithmic form with pole only on $D_j$. Indeed, according to Lemma \ref{l2}, 
locally near each point $p\in D_j$ we may extend $f_{j0}$ to a regular function $f_j$ on a neighborhood of $p$. 
Also near $p$ the divisor $D_j$ may be defined by the equation $z_j=0$. Then since $\omega\in\Omega^1_{log}(X-D)$, 
$\omega-\sum_{j: p\in D_j} f_j\frac{dz_j}{z_j}$ is regular near $p$, which implies the required statement.  
\hfill \qed\medskip} 

\subsection{Equality $\bm{\Omega}^1=\Sigma_{log}$}\label{sect:ehs}

Recall that $\bm{\Omega}^1$ is the subspace 
$$
\sum_{i\in I}\Gamma(X,\Omega^1_X(\mathrm{log}D_i))
$$ 
of $\Gamma_{rat}(X,\Omega^1_X(D))$. 

On the other hand, $\Sigma_{log}$ is the subspace 
$$
\{\alpha\in\Gamma_{rat}(X,\Omega^1_X(D))|\alpha\in \Gamma(X,\Omega^1_X(\mathrm{log}D))\text{ and }\forall i\in I,\quad 
\mathrm{res}_{D_i}(\alpha)\in \Gamma(D_i,\mathcal O_{D_i})\}
$$
of $\Gamma_{rat}(X,\Omega^1_X(D))$. 

\begin{lemma}
One has $\bm{\Omega}^1=\Sigma_{log}$. 
\end{lemma}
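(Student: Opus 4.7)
The plan is to establish the two inclusions $\bm{\Omega}^1 \subseteq \Sigma_{log}$ and $\Sigma_{log} \subseteq \bm{\Omega}^1$ separately. The forward inclusion is formal, following directly from the definitions of the logarithmic sheaves. The reverse inclusion is the substantive content of the lemma and rests on the surjectivity of the residue maps $\mu_{ij}$ that was established in Lemma~\ref{tlminis}.

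\textbf{Forward direction.} Any $\alpha \in \bm{\Omega}^1$ is a finite sum $\alpha = \sum_{(i,j) \in I}\alpha_{ij}$ with $\alpha_{ij} \in \Gamma(X, \Omega^1_X(\log D_{ij}))$. Each summand, together with its differential, has at most a simple pole along the single component $D_{ij} \subseteq D$, so $\alpha \in \Gamma(X, \Omega^1_X(\log D))$. For fixed $(k,l) \in I$, the summands $\alpha_{ij}$ with $(i,j) \neq (k,l)$ are regular at the generic point of $D_{kl}$, so $\mathrm{res}_{D_{kl}}(\alpha) = \mathrm{res}_{D_{kl}}(\alpha_{kl})$, and the latter lies in $\Gamma(D_{kl}, \mathcal{O}_{D_{kl}})$ by the very definition of the map $\mu_{kl}$. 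Hence $\alpha$ satisfies both conditions defining $\Sigma_{log}$.

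\textbf{Reverse direction.} Let $\alpha \in \Sigma_{log}$. For each $(i,j) \in I$, the residue $r_{ij} := \mathrm{res}_{D_{ij}}(\alpha)$ lies in $\Gamma(D_{ij}, \mathcal{O}_{D_{ij}})$. By Lemma~\ref{tlminis}, the map $\mu_{ij}$ is surjective, so one can choose $\alpha_{ij} \in \Gamma(X, \Omega^1_X(\log D_{ij}))$ with $\mu_{ij}(\alpha_{ij}) = r_{ij}$. Set $\beta := \alpha - \sum_{(i,j) \in I} \alpha_{ij}$. Then $\beta \in \Gamma(X, \Omega^1_X(\log D))$, and for each $(k,l) \in I$ one has $\mathrm{res}_{D_{kl}}(\beta) = r_{kl} - r_{kl} = 0$ (the summands $\alpha_{ij}$ with $(i,j) \neq (k,l)$ contribute zero residue along $D_{kl}$). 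Consequently $\beta$ has no pole at the generic point of any component of $D$; being a rational form on the smooth variety $X = (E^\#)^n$ whose polar locus has codimension $\geq 2$, normality forces $\beta \in \Gamma(X, \Omega^1_X)$. Since regular $1$-forms trivially belong to every $\Gamma(X, \Omega^1_X(\log D_{ij}))$, we have $\beta \in \bm{\Omega}^1$, and therefore $\alpha = \beta + \sum_{(i,j) \in I} \alpha_{ij} \in \bm{\Omega}^1$.

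\textbf{Main obstacle.} The only nontrivial input is the surjectivity of $\mu_{ij}$ from Lemma~\ref{tlminis}, which was obtained by writing down the explicit forms $\omega_{ij}^\alpha$ coming from the universal theta expansion and checking that their residues realize the polynomial basis $(-t)^\alpha/\alpha!$ of $\Gamma(D_{ij}, \mathcal{O}_{D_{ij}}) \simeq \mathbf{k}[t]$. Once this is available, the rest of the argument is essentially a bookkeeping exercise: subtract off the residues pair by pair to reduce to a globally regular form, and use normality of $X$ to conclude. No delicate analysis at the triple intersections $D_{ij} \cap D_{jk} \cap D_{ik}$ is needed, because such intersections have codimension $\geq 2$ and are absorbed by the normality step.
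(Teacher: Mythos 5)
Your proof is correct and follows essentially the same route as the paper's: the formal inclusion $\bm{\Omega}^1\subseteq\Sigma_{log}$, then the surjectivity of the residue maps from Lemma~\ref{tlminis} combined with the fact that a logarithmic form with vanishing residues is regular (codimension $\geq 2$ polar locus). The paper packages the reverse inclusion as a diagram chase through the exact sequence $0\to\Gamma(X,\Omega^1_X)\to\Sigma_{log}\to\oplus_{i}\Gamma(D_i,\mathcal O_{D_i})$ rather than subtracting off residues element by element, but this is only a cosmetic difference.
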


\proof For each $i\in I$, one has the inclusion $\Gamma(X,\Omega^1_X(\mathrm{log}D_i))\subset \Gamma(X,\Omega^1_X(\mathrm{log}D))$ and 
the map $\mathrm{res}_{D_i}:\Gamma(X,\Omega^1_X(\mathrm{log}D))\to \Gamma(D_i-(\cup_{j|j\neq i}D_j),\mathcal O_{D_i})$ maps 
$\Gamma(X,\Omega^1_X(\mathrm{log}D_j))$ to 0 if $j\neq i$, and to $\Gamma(D_i,\mathcal O_{D_i})$ if $j=i$. All this implies that
$$
\bm{\Omega}^1\subset\Sigma_{log}. 
$$ 
It follows from the definition of $\Sigma_{log}$ that this space fits in an exact sequence 
$$
0\to \Gamma(X,\Omega^1_X)\to\Sigma_{log}\stackrel{\oplus_{i\in I}\mathrm{res}_{D_i}}{\to}\oplus_{i\in I}\Gamma(D_i,\mathcal O_{D_i}). 
$$
In \S\ref{sec:coh}, it is proved that for each $i\in I$, the map 
$\mathrm{res}_{D_i}:\Gamma(X,\Omega^1_X(\mathrm{log}D_i))\to \Gamma(D_i,\mathcal O_{D_i})$
is surjective (Lemma \ref{tlminis}). 
Together with the fact that the restriction of $\mathrm{res}_{D_i}$ to $\Gamma(D_j,\mathcal O_{D_j})$
is 0 if $i\neq j$, this implies that the composed map 
$$
\bm{\Omega}^1\hookrightarrow\Sigma_{log}\stackrel{\oplus_{i\in I}\mathrm{res}_{D_i}}{\to}
\oplus_{i\in I}\Gamma(D_i,\mathcal O_{D_i})
$$
is surjective. 

On the other hand, we also have an inclusion $\Gamma(X,\Omega^1_X)\subset\bm{\Omega}^1$. We have therefore a 
commutative diagram 
$$
\xymatrix{ & \Sigma_{log}\ar^{\oplus_{i\in I}\mathrm{res}_{D_i}}[rd]&
\\ \Gamma(X,\Omega^1_X)\ar@{^{(}->}[ru]\ar@{^{(}->}[rd]  & & \oplus_{i\in I}\Gamma(D_i,\mathcal O_{D_i})\\ & \bm{\Omega}^1
\ar@{^{(}->}[uu]\ar^a[ur]& }
$$
where the map $a$ is surjective. It follows that the map $\oplus_{i\in I}\mathrm{res}_{D_i}$ is surjective as well and that the map 
$\bm{\Omega}^1\hookrightarrow\Sigma_{log}$ is an isomorphism. This proves the result. \hfill\qed\medskip

\section{Relation with the universal KZB connection}\label{sect:rwtukc}

Let us fix $\tau\in\mathfrak H$. In \S\ref{sect:rwtuzc}, we attach to $\tau$ a principal bundle with flat connection over $\mathbb C^n$  
$$
(\text{trivial }\mathrm{exp}(\hat{\mathfrak t}_{1,n}^{\mathbb C})\text{-bundle},
d+A_{\mathrm{KZB}}), 
$$ 
a principal bundle with flat connection over $(E_\tau^\#)^n$, 
$$
(\text{trivial }\mathrm{exp}(\mathfrak G)\text{-bundle},
d+\omega), 
$$
and maps $\mathbb C^{2n}\to\mathbb C^n$, $\mathbb C^{2n}\to(E_\tau^\#)^n$.

We will construct an isomorphism between the lifts to $\mathbb C^{2n}$ of these two pairs of bundles with flat connection, 
thereby proving Theorem \ref{tisaibtfpbwfcoC}. 

\subsection{A flat connection on $(E^\#_\tau)^n$}

Recall from (\ref{def:omega}) the element $\omega\in\Gamma_{rat}((E^\#_\tau)^n,\Omega^1_{(E^\#_\tau)^n})\hat\otimes\mathfrak G$. 
It follows from Lemma \ref{lemma:diafcottpebov} that $d+\omega$
is a flat connection on the trivial bundle over $(E^\#_\tau)^n$ with group $\mathrm{exp}(\mathfrak G)$. 
It follows from (\ref{def:omega}) and (\ref{form:omega:ijalpha}) that the lift to $\mathbb C^{2n}$ of $\omega$ is
$$
\tilde\omega=\sum_{i\in[n]}X_i\cdot dc_i+\sum_{i\in[n]}Y_i\cdot dp_i
+\sum_{i<j\in[n],\alpha\geq 0}T_{ij}^\alpha[\Big({\theta(p_{ij}+z|\tau)
\over\theta(z|\tau)\theta(p_{ij}|\tau)}-{1\over z}\Big)e^{-c_{ij}z}dp_{ij}|z^\alpha]
$$
where $(p_1,c_1,\ldots,p_n,c_n)$ are the standard coordinates on $\mathbb C^n$. 

\subsection{The universal KZB system}

The corresponding configuration space for $E_\tau$ is 
$$
C(E_\tau,n):=\{(p_1,\ldots,p_n)\in E_\tau|p_i\neq p_j\text{ for }i<j\in[n]\}. 
$$
In \cite{CEE}, we defined a pair $(\mathcal P_{\mathrm{KZB}},\nabla_{\mathrm{KZB}})$ of a principal $\mathrm{exp}(\hat{\mathfrak t}_{1,n}^{\mathbb C})$-bundle $\mathcal P_{\mathrm{KZB}}$ over $C(E_\tau,n)$ and of a flat connection $\nabla_{\mathrm{KZB}}$ over it.  
The projection $\mathbb C\to E_\tau$ gives rise to a fibered product 
$$
\tilde C(E_\tau,n):=C(E_\tau,n)\times_{(E_\tau)^n}\mathbb C^n.
$$
Then 
$$
\tilde C(E_\tau,n)=\{(p_1,\ldots,p_n)\in\mathbb C^n|p_i-p_j\notin\mathbb Z+\tau\mathbb Z\text{ for }i<j\in[n]\}. 
$$
There is a natural projection $\tilde C(E_\tau,n)\stackrel{p}{\to}C(E_\tau,n)$ with covering group $\mathbb Z^{2n}$. There is a 
natural isomorphism of $p^*\mathcal P_{\mathrm{KZB}}$ with the trivial principal bundle 
over $C(E_\tau,n)$ with group $\mathrm{exp}(\hat{\mathfrak t}_{1,n}^{\mathbb C})$. The pull-back $p^*\nabla_{\mathrm{KZB}}$ of the 
KZB connection over $\tilde C(E_\tau,n)$ is then the operator 
$$
d+A_{\mathrm{KZB}},
$$ 
where $A_{\mathrm{KZB}}\in \Gamma(\tilde C(E_\tau,n),
\Omega^1_{\tilde C(E_\tau,n)}\otimes\hat{\mathfrak t}_{1,n}^{\mathbb C})$ is given by 
$$
A_{\mathrm{KZB}}:=-\sum_{i\in[n]}\Big(-y_i+\sum_{j|j\in[n],j\neq i}
\big({\theta(p_{ij}+\mathrm{ad}x_i|\tau)\mathrm{ad}x_i\over\theta(p_{ij}|\tau)\theta(\mathrm{ad}x_i|\tau)}-1\big)(y_j)\Big) dp_i,  
$$
where $p_{ij}:=p_i-p_j$; the expression under the second sum sign should be computed as $\sum_{\alpha\geq 0}F_\alpha(p_{ij})
(\mathrm{ad}x_i)^\alpha(y_j)dp_i$, where the function 
$(p,x)\mapsto{{\theta(p+x|\tau)x}\over{\theta(p|\tau)\theta(x|\tau)}}-1$ is viewed as formal in $x$ and meromorphic in $p$, 
expanding as $\sum_{\alpha\geq 0}F_\alpha(p)x^\alpha$. Note also that the function $F_0(p)$ in this expansion is $0$. 
Therefore 
$$
A_{\mathrm{KZB}}=-\sum_{i\in[n]}\Big(-y_i+\sum_{j|j\in[n],j\neq i}
\big({\theta(p_{ij}+\mathrm{ad}x_i|\tau)\over\theta(p_{ij}|\tau)\theta(\mathrm{ad}x_i|\tau)}-{1\over\mathrm{ad}x_i}\big)(t_{ij})\Big) dp_i,  
$$ 
with the same conventions as above, based on the fact that $(p,x)\mapsto{{\theta(p+x|\tau)}\over{\theta(p|\tau)\theta(x|\tau)}}-{1\over x}
=:g(p,x)$ may be viewed as formal in $x$ and meromorphic in $p$. As $g(p,x)=g(-p,-x)$, one has
$$
\text{if }i\neq j\in[n]\text{, then }
\big({\theta(p_{ij}+\mathrm{ad}x_i|\tau)\over\theta(p_{ij}|\tau)\theta(\mathrm{ad}x_i|\tau)}-{1\over\mathrm{ad}x_i}\big)(t_{ij})
+\big({\theta(p_{ji}+\mathrm{ad}x_j|\tau)\over\theta(p_{ji}|\tau)\theta(\mathrm{ad}x_j|\tau)}-{1\over\mathrm{ad}x_j}\big)(t_{ij})=0,
$$
so 
$$
A_{\mathrm{KZB}}:=\sum_{i\in[n]}y_i dp_i
-{1\over 2}\sum_{i\neq j\in[n]}\big({\theta(p_{ij}+\mathrm{ad}x_i|\tau)\over\theta(p_{ij}|\tau)\theta(\mathrm{ad}x_i|\tau)}-
{1\over\mathrm{ad}x_i}\big)(t_{ij})\cdot dp_{ij}.   
$$

\subsection{Relation between the two systems}

The image of $\tilde\omega$ under the isomorphism $\mathfrak{t}_{1,n}\simeq\mathfrak G$ is 
$$
\mathrm{im}(\tilde\omega)=
\sum_{i\in[n]}x_i\cdot dc_i+\sum_{i\in[n]}y_i\cdot dp_i
-\sum_{i<j\in[n],\alpha\geq 0}
e^{-c_{ij}\mathrm{ad}x_i}
\Big({\theta(p_{ij}+\mathrm{ad}x_i|\tau)
\over\theta(\mathrm{ad}x_i|\tau)\theta(p_{ij}|\tau)}-{1\over \mathrm{ad}x_i}\Big)(t_{ij})
dp_{ij}
$$
The expression $(p_1,\ldots,c_n)\mapsto e^{\sum_{i\in[n]} c_i x_i}$ defines a holomorphic map $\mathbb C^{2n}\to
\mathrm{exp}(\mathfrak G)$. One may therefore conjugate $d+\mathrm{im}(\tilde\omega)$ by this map. 
One has 
$$
e^{\sum_{i\in[n]} c_i x_i}de^{-\sum_{i\in[n]} c_i x_i}=d-\sum_{i\in[n]}x_i\cdot dc_i, 
$$
and 
$$
e^{\sum_{i\in[n]} c_i x_i}t_{ij}e^{-\sum_{i\in[n]} c_i x_i}=e^{c_{ij}\mathrm{ad}x_i}(t_{ij}),  
$$
moreover $e^{\sum_{i\in[n]} c_i x_i}$ commutes with all the $x_k$, $k\in[n]$, so  
\begin{align*}
e^{\sum_{i\in[n]} c_i x_i}(d+\mathrm{im}(\tilde\omega))e^{-\sum_{i\in[n]} c_i x_i}
& =d+\sum_{i\in[n]}x_i\cdot dc_i
-\sum_{i<j\in[n],\alpha\geq 0}
\Big({\theta(p_{ij}+\mathrm{ad}x_i|\tau)
\over\theta(\mathrm{ad}x_i|\tau)\theta(p_{ij}|\tau)}-{1\over \mathrm{ad}x_i}\Big)(t_{ij})
\\ & =d+A_{\mathrm{KZB}}. 
\end{align*}

We have proved: 
\begin{thm}
The map $\mathbb C^{2n}\to\mathrm{exp}(\mathfrak G)$, $(p_1,\ldots,c_n)\mapsto e^{\sum_{i\in[n]}c_ix_i}$ 
sets up an isomorphism between the following principal bundles with flat connections over $\mathbb C^{2n}$: 
\begin{itemize}
\item the pull-back under $\mathbb C^{2n}\to\mathbb C^{n}\to(E_\tau)^n$ of $(\mathcal P_{\mathrm{KZB}},\nabla_{\mathrm{KZB}})$; 
\item the pull-back under $\mathbb C^{2n}\to(E_\tau^\#)^n$ of $((E_\tau^\#)^n\times\mathrm{exp}(\mathfrak G),
d+\omega)$ given by Lemma \ref{lemma:diafcottpebov}. 
\end{itemize}
\end{thm}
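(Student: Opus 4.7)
The plan is to prove the theorem by direct computation, exhibiting the gauge transformation $g(p_1,c_1,\ldots,p_n,c_n) := e^{\sum_{i\in[n]} c_i x_i}$ (which is well-defined as a map $\mathbb C^{2n}\to\exp(\mathfrak t_{1,n}^{\mathbb C})$, using the identification $\mathfrak G\simeq\mathfrak t_{1,n}^{\mathbb C}$ from Prop.~\ref{prop:iso:G:t}) as the required isomorphism. Concretely, I will show that conjugation of the connection $d+\mathrm{im}(\tilde\omega)$ by $g$ yields exactly $d+A_{\mathrm{KZB}}$.

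First, I will write down the explicit form of $\tilde\omega$ on $\mathbb C^{2n}$: using the canonical element (\ref{def:omega}) together with the uniformization formula (\ref{form:omega:ijalpha}) for $\omega_{ij}^\alpha$, the pull-back of $\omega$ becomes
\[
\tilde\omega = \sum_{i\in[n]} X_i\,dc_i + \sum_{i\in[n]} Y_i\,dp_i + \sum_{i<j,\,\alpha\geq 0} T_{ij}^\alpha\cdot\Bigl[\Bigl(\tfrac{\theta(p_{ij}+z|\tau)}{\theta(z|\tau)\theta(p_{ij}|\tau)}-\tfrac{1}{z}\Bigr)e^{-c_{ij}z}\,dp_{ij}\,\Big|\,z^\alpha\Bigr].
\]
Next, I apply the Lie algebra isomorphism $\mathfrak G\to\mathfrak t_{1,n}^{\mathbb C}$, i.e.\ $X_i\mapsto x_i$, $Y_i\mapsto y_i$, and (crucially) $T_{ij}^\alpha\mapsto -(\mathrm{ad}\,x_i)^\alpha(t_{ij})$ (cf.\ (\ref{assignment:G:t})). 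Summing over $\alpha$ re-packages the $T$-contribution into the generating series $e^{-c_{ij}\mathrm{ad}\,x_i}\bigl(\tfrac{\theta(p_{ij}+\mathrm{ad}\,x_i|\tau)}{\theta(p_{ij}|\tau)\theta(\mathrm{ad}\,x_i|\tau)}-\tfrac{1}{\mathrm{ad}\,x_i}\bigr)(t_{ij})$, giving the formula for $\mathrm{im}(\tilde\omega)$ already displayed in the excerpt.

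The remaining step is the gauge transformation. Since the $x_i$ commute pairwise by relation (\ref{rel:t:xx:yy}), $g\,d\,g^{-1} = d - \sum_i x_i\,dc_i$, which cancels the $c$-part of $\mathrm{im}(\tilde\omega)$. For the $t_{ij}$-contribution, the relations $[x_k,t_{ij}]=0$ for $k\notin\{i,j\}$ (see (\ref{rel:t:x:t:y:t})) and $[x_i+x_j,t_{ij}]=0$ (see (\ref{rel:t:xx:t:yy:t})) give
\[
g\cdot t_{ij}\cdot g^{-1} = e^{\sum_k c_k\,\mathrm{ad}\,x_k}(t_{ij}) = e^{(c_i+c_j)\mathrm{ad}\,x_i-c_j(\mathrm{ad}\,x_i+\mathrm{ad}\,x_j)}(t_{ij}) = e^{c_{ij}\mathrm{ad}\,x_i}(t_{ij}),
\]
and hence the exponential factor $e^{-c_{ij}\mathrm{ad}\,x_i}$ in $\mathrm{im}(\tilde\omega)$ is exactly cancelled after conjugation. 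Combining with the observation $y_j\,dp_j$ terms match trivially (the $y_i$ are untouched since $g$ involves only $x$'s and those commute among themselves but not with $y_i$; however, the $y$-part of $\mathrm{im}(\tilde\omega)$ reads $\sum y_i\,dp_i$ which is already fixed by $g$ only if we treat it carefully — see below), we obtain the desired equality with $d+A_{\mathrm{KZB}}$.

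The one subtle point, which I see as the main potential obstacle, is to verify that the $\sum_i y_i\,dp_i$ term is unaffected by the gauge transformation even though $[x_i,y_i]\neq 0$ in $\mathfrak t_{1,n}^{\mathbb C}$. The resolution is that this term gets conjugated to $\sum_i e^{\sum_k c_k\,\mathrm{ad}\,x_k}(y_i)\,dp_i$, producing additional pieces involving $t_{ij}$'s (via $[x_j,y_i]=t_{ij}$), which must be shown to combine with the transformation of the $t_{ij}$-series to reproduce $A_{\mathrm{KZB}}$. I expect this recombination to be a short algebraic manipulation: the shift $y_i\rightsquigarrow y_i+\sum_{j\neq i}\tfrac{e^{c_{ij}\mathrm{ad}\,x_i}-1}{\mathrm{ad}\,x_i}(t_{ij})\cdot(\text{sign})$ combines with the $\alpha=0$ truncation term $-\tfrac{1}{\mathrm{ad}\,x_i}(t_{ij})$ and the asymmetry coming from $e^{-c_{ij}\mathrm{ad}\,x_i}$ to give exactly the compact form $\bigl(\tfrac{\theta(p_{ij}+\mathrm{ad}\,x_i|\tau)}{\theta(p_{ij}|\tau)\theta(\mathrm{ad}\,x_i|\tau)}-\tfrac{1}{\mathrm{ad}\,x_i}\bigr)(t_{ij})\cdot dp_{ij}$. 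Once this bookkeeping is checked, the identification $g^{-1}\circ(d+\mathrm{im}(\tilde\omega))\circ g = d+A_{\mathrm{KZB}}$ is complete, and $g$ furnishes the required isomorphism of principal bundles with flat connection.
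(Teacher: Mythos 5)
Your route is the same as the paper's: identify $\mathfrak G$ with $\mathfrak t_{1,n}^{\mathbb C}$, pull everything back to $\mathbb C^{2n}$, and gauge-transform $d+\mathrm{im}(\tilde\omega)$ by $g=e^{\sum_k c_kx_k}$. The one point you flag as unresolved --- the conjugation of $\sum_i y_i\,dp_i$ --- is in fact the only nontrivial part of the verification (the paper's displayed computation is silent on it), and the cancellation you predict does occur, by exactly the mechanism you describe. To close it: set $u=\sum_k c_kx_k$; from $[x_k,y_i]=t_{ik}$ ($k\neq i$), $[x_i,y_i]=-\sum_{k\neq i}t_{ik}$ and $[u,t_{ik}]=c_{ik}\,\mathrm{ad}x_i(t_{ik})$ one gets $(\mathrm{ad}\,u)^m(y_i)=-\sum_{k\neq i}c_{ik}^m(\mathrm{ad}x_i)^{m-1}(t_{ik})$ for $m\geq 1$, hence
$$
e^{\mathrm{ad}\,u}(y_i)=y_i-\sum_{k\neq i}\frac{e^{c_{ik}\mathrm{ad}x_i}-1}{\mathrm{ad}x_i}(t_{ik}),
$$
and, pairing the terms for $\{i,j\}$ via $t_{ji}=t_{ij}$, $c_{ji}=-c_{ij}$, $\mathrm{ad}x_j(t_{ij})=-\mathrm{ad}x_i(t_{ij})$, the correction to $\sum_i y_i\,dp_i$ is $-\sum_{i<j}\frac{e^{c_{ij}\mathrm{ad}x_i}-1}{\mathrm{ad}x_i}(t_{ij})\,dp_{ij}$. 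On the other side, be careful that the image of $\sum_{\alpha}T_{ij}^\alpha\omega_{ij}^\alpha$ is $-\bigl(F(z)e^{-c_{ij}z}-\tfrac1z\bigr)\big|_{z=\mathrm{ad}x_i}(t_{ij})\,dp_{ij}$ with $F(z)=\tfrac{\theta(p_{ij}+z|\tau)}{\theta(p_{ij}|\tau)\theta(z|\tau)}$, i.e.\ the $1/z$ is subtracted \emph{after} multiplying by $e^{-c_{ij}z}$; this is the ``$\alpha=0$ truncation asymmetry'' you mention, and it is not literally the operator $e^{-c_{ij}\mathrm{ad}x_i}\circ\bigl(F(\mathrm{ad}x_i)-\tfrac1{\mathrm{ad}x_i}\bigr)$ written in the displayed formula (the two readings differ by $\tfrac{e^{-c_{ij}z}-1}{z}$). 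With the correct reading, conjugation by $g$ multiplies the symbol by $e^{c_{ij}z}$, giving
$$
-\Bigl(F(z)-\frac1z\Bigr)\Big|_{z=\mathrm{ad}x_i}(t_{ij})\,dp_{ij}+\frac{e^{c_{ij}\mathrm{ad}x_i}-1}{\mathrm{ad}x_i}(t_{ij})\,dp_{ij},
$$
whose second piece cancels the $y$-correction above, leaving exactly $d+A_{\mathrm{KZB}}$. With this check written out, your argument is complete and coincides with the paper's.
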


\end{document}